  \definecolor{dark-red}{rgb}{0.6,0.15,0.15}
   \definecolor{dark-blue}{rgb}{0.15,0.15,0.6}
   \definecolor{medium-blue}{rgb}{0,0,0.5}
\numberwithin{equation}{section}
\newtheorem{thm}{Theorem}[section]
\newtheorem{theorem}{Theorem}[section]
\newtheorem{cor}{Corollary}[section]
\newtheorem{Corollary}{Corollary}[section]
\newtheorem{proposition}{Proposition}[section]
\newtheorem{Proposition}{Proposition}[section]
\newtheorem{lem}{Lemma}[section]
\newtheorem{lemma}{Lemma}[section]
\newtheorem{Lemma}{Lemma}[section]
\theoremstyle{definition}
\newtheorem{defn}{Definition}[section]
\newtheorem{notation}{Notation}[section]
\newtheorem{rem}{Remark}[section]
\newtheorem{warn}{Warning}[section]
\newtheorem{slogan}{Slogan}[section]
\newtheorem*{term}{Terminology}
\let\c@equation=\c@thm
\let\c@lem=\c@thm
\let\c@theorem=\c@thm
\let\c@lemma=\c@thm
\let\c@Theorem=\c@thm
\let\c@Lemma=\c@thm
\let\c@cor=\c@thm
\let\c@corollary=\c@thm
\let\c@Corollary=\c@thm
\let\c@conj=\c@thm
\let\c@conjecture=\c@thm
\let\c@prop=\c@thm
\let\c@proposition=\c@thm
\let\c@Proposition=\c@thm
\let\c@defn=\c@thm
\let\c@definition=\c@thm
\let\c@Definition=\c@thm
\let\c@notation=\c@thm
\let\c@note=\c@thm
\let\c@exmp=\c@thm
\let\c@ex=\c@thm
\let\c@exmps=\c@thm
\let\c@rem=\c@thm
\let\c@warn=\c@thm
\let\c@claim=\c@thm
\let\c@convention=\c@thm
\let\c@conventions=\c@thm
\let\c@quest=\c@thm
\let\c@facts=\c@thm
\let\c@slogan=\c@thm
\newcommand{\F}{\mathbb{F}}
\newcommand{\N}{\mathbb{N}}
\newcommand{\Z}{\mathbb{Z}}
\newcommand{\Q}{\mathbb{Q}}
\newcommand{\R}{\mathbb{R}}
\newcommand{\ZZ}{\mathbb{Z}}
\newcommand{\FF}{\mathbb{F}}
\newcommand{\smsh}{\wedge}
\newcommand{\xra}{\xrightarrow}
\newcommand{\pt}{\mathrm{pt}}
\newcommand{\afifteen}{y}
\newcommand{\aseven}{x}
\def\makeop#1{\expandafter\def\csname #1\endcsname{\mathop{\mathrm{#1}}\nolimits}}
\newcommand{\kappabar}{\bar{\kappa}}
\newcommandx{\irina}[2][1=]{\todo[linecolor=green,backgroundcolor=green!25,bordercolor=green,#1]{#2}}
\newcommandx{\cuong}[2][1=]{\todo[linecolor=red,backgroundcolor=red!25,bordercolor=red,#1]{#2}}
\newcommandx{\agnes}[2][1=]{\todo[linecolor=blue,backgroundcolor=blue!25,bordercolor=blue,#1]{#2}}
\newcommand{\mylabel}[2]{#2\def\@currentlabel{#2}\label{#1}}
\newcommand{\Ceta}{C_{\eta}}
\title[The topological modular forms of $\mathbb{R}P^2$ and $\mathbb{R}P^2 \wedge \mathbb{C}P^2$]{The topological modular forms of $\mathbb{R}P^2$ and $\mathbb{R}P^2 \wedge \mathbb{C}P^2$} 
\date{\today}
\author[Agn\`es Beaudry]{Agn\`es Beaudry}
\address{Department of Mathematics, University of Colorado, Boulder, Boulder, CO, Campus Box 395, Boulder, CO, 80309, USA}
\email{agnes.beaudry@colorado.edu }
\author[Irina Bobkova]{Irina Bobkova}
\address{Department of Mathematics, Texas A\&M University, College Station, TX, 77843, USA}
\email{ibobkova@math.tamu.edu}
\author[Viet-Cuong Pham]{Viet-Cuong Pham}
\address{Max-Planck Institute for Mathematics, Vivatsgasse 7, 53111 Bonn, Germany}
\email{vcpham@math.unistra.fr}
\author[Zhouli Xu]{Zhouli Xu}
\address{Department of Mathematics, UC San Diego, La Jolla, CA 92093, USA}
\email{xuzhouli@ucsd.edu  }
\thanks{This material is based upon work supported by the National Science Foundation under grants No.~DMS--1906227, DMS-2005627 and DMS-2043485.}
\begin{document}
\maketitle
\begin{abstract}
In this paper, we study the elliptic spectral sequence computing $tmf_*(\R P^2)$ and $tmf_* (\mathbb{R} P^2 \wedge \mathbb{C} P^2)$. Specifically, we compute all differentials and resolve exotic extensions by $2$,  $\eta$, and $\nu$. For  $tmf_* (\mathbb{R} P^2 \wedge \mathbb{C} P^2)$, we also compute the effect of the $v_1$-self maps of $\mathbb{R} P^2 \wedge \mathbb{C} P^2$ on $tmf$-homology. \end{abstract}

\setcounter{tocdepth}{1}
\tableofcontents


\section{Introduction}
\subsection{Motivation}
Topological modular forms ($tmf$) are ubiquitous in algebraic topology and homotopy theory. The goal of this paper is to compute the $tmf$-homology of two spaces, namely $\mathbb{R}P^2$ and $ \mathbb{R}P^2 \wedge \mathbb{C}P^2$, and to determine the differentials and extensions in their elliptic spectral sequences.

We approach this problem from the point of view of stable homotopy theory. As is common, we let $V(0)$ denote the cofiber of multiplication by 2 on the sphere spectrum. Then 
\[V(0) \simeq \Sigma^{-1} \Sigma^{\infty} \R P^2\] and, via the suspension isomorphism, computing $tmf_*V(0) \cong \pi_* tmf \wedge V(0)$ is equivalent to computing the $tmf$-homology of $\R P^2$. Similarly, let $Y$ be the smash product of $V(0)$ with $C_{\eta}$, the cofiber of the stable Hopf map $\eta$. Then 
\[Y \simeq \Sigma^{-3}\R P^2 \wedge \mathbb{C} P^2\] 
and computing $tmf_*Y$ is equivalent to computing the $tmf$-homology of $\R P^2 \wedge \mathbb{C} P^2  $. In this paper, we compute the elliptic spectral sequence for both $tmf \wedge V(0)$ and $tmf \wedge Y$. From this computation, we deduce $tmf_*V(0)$ and $tmf_*Y$ and provide information about their module structure over $tmf_*$. In particular, we resolve all exotic $2,\eta, \nu$ extensions as well as compute the effect of $v_1$-self maps of $Y$ on $tmf_*Y$. Note that determining the $tmf_*$-module structure is much less straightforward than a simple degree-wise computation of $tmf_*V(0)$ or $tmf_*Y$.

Knowing the homology of basic spaces is part of a full understanding of any generalized homology theory. So we see these computations as having independent and fundamental interest. They are, at the very least, an addition to the slim bank of examples of computations in $tmf$-homology theory of spaces and finite spectra.

However, our motivation for doing this runs deeper and this computation is part of a more ambitious program, coming from chromatic homotopy theory. Specifically, our main goal in doing this computation is not just to understand the structure of $tmf_*V(0)$ and $tmf_*Y$ as $tmf_*$-modules, but more-so  {\emph{to fully compute their elliptic spectral sequences}. To explain this, we let $K(2)$ denote the Morava $K$-theory spectrum  and $E_2$ the Lubin-Tate spectrum (also often called Morava $E$-theory).

In the sequence of papers \cite{GHMV1, ghmr, HKM, GHM_pic, GH_bcdual, GoerssSplit, henn_res}, Goerss, Henn, Karamanov, Mahowald and Rezk carry out a program for studying $K(2)$--local homotopy theory at $p=3$ using the theory of \emph{finite resolutions}. These are sequences of spectra built from the $K(2)$-localization of $tmf$ (and $tmf$ with level structures) that resolve the $K(2)$-local sphere. Finite resolutions give rise to Bousfield-Kan spectral sequences. Let us call these \emph{finite resolution spectral sequences}. The input is $K(2)$-local $tmf$-homology, possibly with level structures, and the output is $K(2)$-local homotopy groups. The ultimate goal is to use finite resolutions to compute $\pi_*L_{K(2)}S^0$, but an intermediate step is the computations of the homotopy groups of $L_{K(2)}F$ for some key finite spectra $F$, such as the prime 3 Moore spectrum $V(0)$ \cite{HKM} and the cofiber of its $v_1$-self map, commonly denoted $V(1)$ \cite{GHMV1}. So, to use the finite resolution approach to $K(2)$-local homotopy, a key input is $\pi_*L_{K(2)}(tmf \wedge F)$. This can be computed via the $K(2)$-local $E_2$-based Adams-Novikov spectral sequence (which can also be cast as a homotopy fixed point spectral sequence).  This spectral sequence  receives a map from  the elliptic spectral sequence of $tmf \wedge F$. Understanding the elliptic spectral sequence of $tmf \wedge F$ thus provides key input for $K(2)$-local computations.

Recently, there have been significant advancements towards carrying out an analogous program at the prime $p=2$. See \cite{BeaudryResolutions, BeaudryMoore, BobkovaGoerss, BGH}. But the program is still in progress. For example, 
the only complete computation of the $K(2)$-local homotopy groups of a finite spectrum at $p=2$ is the computation of $\pi_*L_{K(2)}Z$ for $Z \in \mathcal{Z}$, where $\mathcal{Z}$ is the class of Bhattacharya-Egger spectra admitting a $v_2$-self map. See \cite{PE2,PE3} and also \cite{2019arXiv190913379B}. The motivation for this project is to add to this bank of computations, namely, to study $L_{K(2)}V(0)$, $L_{K(2)}Y$, but also $L_{K(2)}A_1$ where $A_1$ is the cofiber of a $v_1$-self map of $Y$. For this, we found the need to understand the elliptic spectral sequence of $tmf \wedge V(0)$, $tmf\wedge Y$ and $tmf\wedge A_1$. In \cite{PhamA1}, the third author computes a $K(2)$-local $E_2$-based Adams-Novikov spectral sequence converging to $\pi_*L_{K(2)}(tmf\wedge A_1)$. From this computation, one can deduce that of the elliptic spectral sequence of $tmf\wedge A_1$.

Here, we study instead the elliptic spectral sequences of $tmf \wedge V(0)$ and $tmf\wedge Y$. For $F$ either $V(0)$ or $Y$, $tmf_*F=0$ for $*<0$ and $tmf_{*}F$ is determined by its values in the range $0\leq * <192$. In this paper, we obtain the following result, where the definition of what we mean by \emph{exotic extensions} is given in \Cref{defnexoext}. 
\begin{theorem}
The elliptic spectral sequence for $tmf\wedge V(0)$ is depicted in Figures~\ref{V0-0-50}, \ref{figured5d7}, \ref{figd9andthensome} and \ref{figd9andthensomemore}. The $tmf$-homology of $V(0) \simeq \Sigma^{-1}\Sigma^{\infty}\R P^{2}$, namely
\[tmf_* V(0) \cong \widetilde{tmf}_{*+1}\R P^2,\]
 together with \textbf{all exotic $2, \eta$ and $\nu$ extensions} in the corresponding elliptic spectral sequence is as displayed in Figures~\ref{V0-48-96-ext} and \ref{V0-144-192-ext} in degrees $0\leq * <192$. 
 
Similarly, the elliptic spectral sequence for $tmf\wedge Y$  is depicted in  Figures~\ref{gensone}, \ref{d5d9one}, \ref{d5d9three}, \ref{d5d9four}, \ref{d11d23one} and \ref{d11d23four}. The $tmf$-homology of $Y \simeq \Sigma^{-3}\Sigma^{\infty}\R P^2 \wedge \mathbb{C} P^2$,  namely
\[tmf_* Y \cong \widetilde{tmf}_{*+3}\R P^2 \wedge \mathbb{C} P^2,\]
   together with \textbf{all exotic $2, \eta$ and $\nu$ extensions and almost all exotic $v_1$-extensions} in the corresponding elliptic spectral sequence is as displayed in Figures~\ref{exoextY096} and \ref{exoextY96144}  in degrees $0\leq * <192$.  In particular, 
   \[2(\widetilde{tmf}_*(\R P^2 \wedge \mathbb{C} P^2))=0.\]
\end{theorem}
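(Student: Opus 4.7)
The plan is to run the elliptic (descent) spectral sequence for $tmf \wedge V(0)$ and $tmf \wedge Y$ in stages, bootstrapping from the fully known spectral sequence for $tmf$ via the two cofiber sequences
\[ S^0 \xrightarrow{\,2\,} S^0 \to V(0), \qquad S^1 \xrightarrow{\,\eta\,} S^0 \to \Ceta, \]
and the identification $Y \simeq V(0) \wedge \Ceta$. First I would construct the $E_2$-page for $tmf \wedge V(0)$ as the algebraic mapping cone of multiplication by $2$ on the $E_2$-page for $tmf$ (a free-resolution computation over the Weierstrass Hopf algebroid), and then construct the $E_2$-page for $tmf \wedge Y$ as the analogous cone on $\eta$ over the $V(0)$-page. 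The resulting charts have finitely many named generators in $0 \leq t-s < 192$, and the periodicity by $\Delta^8$ will be used to extend every conclusion to arbitrary stems.

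For the differentials, I would first import the Bauer/Bruner--Rognes differentials in the $tmf$ spectral sequence via naturality of the unit maps $tmf \to tmf \wedge V(0) \to tmf \wedge Y$, and then detect the remaining ones through the geometric boundary theorem applied to each cofiber sequence above: a $d_r$ in the spectral sequence for $tmf \wedge V(0)$ whose source lifts off the image of $tmf$ must map under the Bockstein to a $d_r$ in the $tmf$ chart, and analogously for $Y$ relative to $V(0)$. The Leibniz rule with respect to the $tmf$-module structure then propagates these through the generators, and any remaining $d_r$'s are forced by sparsity of targets and by the requirement that the output be compatible with the known multiplicative relations in $tmf$.

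Exotic $2$, $\eta$, and $\nu$ extensions are the main obstacle and I would handle them with three complementary tools. First, Moss's convergence theorem lets me upgrade Massey products visible on the $E_\infty$ page to Toda brackets in $\pi_*(tmf \wedge F)$, so bracket identities such as $\langle 2, \eta, -\rangle$, $\langle \eta, \nu, -\rangle$, and $\langle \nu, 2, -\rangle$ force hidden multiplications once their defining data can be traced back to $\pi_* tmf$. Second, the Bockstein homomorphisms from the two cofiber sequences impose compatibility constraints: an exotic $2$-extension in $tmf_* V(0)$ must map consistently to the corresponding $\eta$-extension in $tmf_*$, and similarly for $Y$ over $V(0)$. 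Third, the $tmf_*$-module structure severely restricts the possibilities, so once the prior methods eliminate the majority of ambiguities, the remaining cases in $0 \leq * < 192$ form a small finite list that can be dispatched individually. The assertion $2 \cdot \widetilde{tmf}_*(\R P^2 \wedge \mathbb{C} P^2) = 0$ falls out as a consequence once every $2$-extension in $tmf_* Y$ is shown to vanish.

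Finally, for the $v_1$-self maps of $Y$, I would lift the algebraic $v_1$-action already visible on $E_2$ (as multiplication by a $BP$-theoretic class) to $\pi_*(tmf \wedge Y)$ by Toda-bracket shuffles combined with naturality along the self-map and comparison with the $tmf_*$-module structure. The hardest part in practice will be the combinatorial bookkeeping in the middle of the range, say $48 \lesssim * \lesssim 144$, where differentials and extensions from the two cofiber sequences interact and where hidden $\nu$-extensions not forced by Moss or by the Bockstein must be pinned down by ad hoc juggling arguments; this is precisely where the residual ambiguity in the $v_1$-extensions (reflected in the hedged phrase "almost all" in the theorem statement) is likely to persist.
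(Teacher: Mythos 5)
Your outline matches the paper at the level of the $E_2$-pages (both are built as algebraic cones, first on multiplication by $2$ over the Weierstrass Hopf algebroid and then on $\eta$ over the $V(0)$-page) and at the level of the basic differential toolkit (naturality from $tmf$, the geometric boundary theorem, linearity over the $tmf$ spectral sequence, sparseness). But as a proof of the theorem as stated --- which claims \emph{all} exotic $2$, $\eta$ and $\nu$ extensions --- it has concrete gaps that the listed tools cannot close. The single most important omission is Brown--Comenetz duality: the paper proves $I_{\mathbb{Q}/\mathbb{Z}}(tmf/(2^{\infty}, c_4^{\infty}, \Delta^{\infty})) \simeq \Sigma^{20} tmf$ and deduces that a nontrivial $r$-multiplication on the $c_4$-power torsion in stem $k$ forces one in stem $171-k-|r|$ (for $V(0)$; $173-k-|r|$ for $Y$). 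Essentially all of the exotic extensions in stems $96$--$191$, and several below, are obtained only by dualizing algebraic multiplications in low stems; Moss's theorem, Bockstein compatibility and the module structure do not reach them, and ``ad hoc juggling'' is not a substitute. A second omission on the $Y$ side is the structural analysis of $\kappabar$-families (every class in filtration $\geq 4$ is a $\kappabar$-multiple, $\kappabar$-free families of permanent cycles are truncated by exactly one other family): this pairing principle, together with $\Delta^8$-injectivity, is what forces the long $d_{11}$--$d_{23}$ differentials, and nothing in your plan produces them. The last differential pair ($d_{13}$ on $\Delta^6 a[30,2]$ versus $d_{19}$ on $\Delta^6 a[30,0]$) additionally requires the truncated and co-truncated Adams towers of Section 2.2 plus a hidden $\nu^2$-extension established via a Toda bracket in a skeleton; your framework has no analogue of this step.

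Two further points. For the $v_1$-extensions in $tmf_*Y$ you propose Toda-bracket shuffles and naturality along the self-map, but the paper's answer genuinely depends on which of the eight $v_1$-self-maps is chosen (type $I$ versus type $II$), and is extracted from the known homotopy of $tmf \wedge A_1[ij]$ via the cofiber sequence $\Sigma^2 Y \xrightarrow{v_1} Y \to A_1$; a bracket argument that does not reference the cofiber cannot see this dependence. Finally, a handful of potential $\nu$-extensions (e.g.\ on $a[31,1]$ and $a[36,2]$) are ruled out only by comparing Adams filtrations in the classical Adams spectral sequence for $tmf \wedge V(0)$ and $tmf \wedge Y$, computed with Bruner's Ext program --- these are invisible to the elliptic spectral sequence methods you list. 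The vanishing $2\cdot\widetilde{tmf}_*(\mathbb{R}P^2\wedge\mathbb{C}P^2)=0$ also does not simply ``fall out'': it uses the specific fact that any nonzero $2$-extension in $tmf_*Y$ would have to be $\eta$-divisible (applying the $\langle 2,\eta,-\rangle$ lemma to $X = tmf\wedge C_{\eta}$), plus one residual case in stem $57$ eliminated by duality.
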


\begin{rem}
Computing exotic extensions in this sense of \Cref{defnexoext} can (and does in some places here) leave ambiguity about the module structure. However, this definition of exotic extensions, which we borrowed from \cite{morestems}, is very standard in these kinds of large spectral sequence computations.
\end{rem}

\subsection{Methods and comparison with existing work}

To say a few words about our techniques, the major input in our computation is the elliptic spectral sequence of $tmf$, which was first computed by Hopkins and Mahowald \cite[Ch. 15]{tmfbook}, and later by Bauer \cite{tbauer}. The computation of the spectral sequence for $tmf_*V(0)$ is straightforward given that data, while that of $tmf_*Y$ is more intricate. The technique we use for the latter relies on an observation of the third author from \cite{PhamA1}.  For both $V(0)$ and $Y$, computation of the exotic extensions requires work and new input. Several techniques are used to achieve this, and the most interesting among these is probably the Brown-Comenetz ``self-duality'' of  $tmf_*V(0)$ and $tmf_*Y$. See \Cref{thm:duality}.

In \cite{BrunerRognesbook} (soon to be published), Bruner and Rognes do a thorough investigation of $tmf$.  A main tool used in \cite{BrunerRognesbook} to answer computational questions about $tmf$ and its modules is the \emph{classical Adams spectral sequence}. (Note that the study of the classical Adams spectral sequence of $tmf$ probably goes back to Hopkins and Mahowald, and later to Henriques in \cite[Chapter 13]{tmfbook}.) 
Among many other topics, including duality for topological modular forms which is relevant for our approaches, they study the classical 
Adams spectral sequence of $tmf$ smashed with many finite spectra, including a study of $tmf$ smashed with $V(0)$.
 In particular, they also compute $tmf_*V(0)$, determining all but a few $2,\eta, \nu$-multiplications as well as $v_1^4$-multiplications. Here, we deliberately use the word \emph{multiplication} in contrast to the word \emph{extension} discussed above to emphasize that Bruner--Rognes name all classes, which leads them to a more precise determination of multiplicative relations.
Recently, Bruner and Rognes shared their charts and an advanced copy of some of the chapters of their forthcoming book with us.
However, our results were obtained independently from theirs and via different methods. So the two approaches complement one another. 
We also use a few results on the classical Adams spectral sequence of $tmf_*$ which we verified against both \cite[Chapter 13]{tmfbook} and \cite[Chapters 5,9]{BrunerRognesbook}. Furthermore,  \cite[Chapter 10]{BrunerRognesbook} is a direct reference of \Cref{thm:duality}, which is used extensively in this paper.

Finally, we reiterate that for our applications, namely, as an input in the finite resolution approach to $K(2)$-local homotopy theory, it is important to understand specifically the elliptic spectral sequence instead of the classical Adams spectral sequence because of its close relationship to the homotopy fixed point spectral sequence, a key tool in chromatic homotopy theory (see the discussion above).

\subsection{Organization of the paper}
In \Cref{secbackground}, we discuss the elliptic spectral sequences and other key tools used later in the paper. In \Cref{secV0E2}, we review the computation of the $E_2$-term of the elliptic spectral sequence for $tmf \wedge V(0)$. In \Cref{secV0diffext} we compute the differentials and some exotic extensions. In \Cref{secYE2} we turn to the computation of the $E_2$-term of the elliptic spectral sequence for $tmf \wedge Y$ and in  \Cref{secYdiffext} we compute the differentials and exotic extensions.

\subsection{Acknowledgements}
 
We thank Robert Bruner and John Rognes for useful discussions and their generosity in sharing some charts and chapters as well as the front matter of their book \cite{BrunerRognesbook}. 
We are extremely grateful to Hans-Werner Henn and Vesna Stojanoska for useful conversations along the way.  In particular, Henn could very well have been a co-author given the extent of interactions we had with him on this project.

Computations like these are much harder without effective drawing tools and spectral sequence programs. We are thankful to Tilman Bauer (luasseq) and Hood Chatham (spectralsequences) for their  \LaTeX \ spectral sequence programs. While the charts in this paper have mostly been re-drawn with Hood's program, early versions of our computations (before spectralsequences was written) were facilitated by Bauer's program and his kindness in helping us make it work in such large scales. Classic but not least, we thank Bruner for his Ext-program which is an ever-useful tool.

 Finally, the second and third authors also thank l'Universit\'e de Strasbourg for its support during part of the project.


\section{Background}\label{secbackground}
In this section, we review some of the key tools that will be used in the paper.
\subsection{The elliptic spectral sequence}
We begin with the elliptic spectral sequence.
Let
\[
(A, \Lambda)=(\mathbb{Z}[a_1, a_2, a_3, a_4, a_6], \mathbb{Z}[a_1, a_2, a_3, a_4, a_6, s, r, t])
\]
with
\[
 |a_i|=2i, |r|=4, |s|=2, |t|=6
\]
be the Hopf algebroid of Weierstrass elliptic curves. Then the elliptic spectral sequence has the form \cite{tbauer}
\[
E_2^{s,t-s}=\Ext^{s,t}_{\Lambda}(A,A) \Longrightarrow \pi_{t-s}tmf.
\]

Consider the map 
\[\Omega SU(4)\rightarrow \Omega SU\simeq BU\]
induced by the usual inclusion $SU(4)\rightarrow SU$.
Let $X(4)$ be the Thom spectrum of the associated virtual vector bundle over $\Omega SU(4)$. These spectra play a crucial role in the study of nilpotence and periodicity in chromatic homotopy theory, in particular, in the work of Ravenel \cite{ravloc}.  As outlined in \cite[Ch. 9]{tmfbook}, the elliptic spectral sequence is the $X(4)$-based Adams spectral sequence for $tmf$. See also  \cite{Rezk512}.

Let us spell this out. We let $R = tmf$ and $E = tmf\wedge X(4)$. 
Then 
\[E\wedge_{R}E \simeq tmf\wedge X(4)\wedge X(4).\]
Let $\overline{E}$ be the fiber of the unit map $R\rightarrow E$. For any $tmf$-module $M$, one can construct the Adams tower 
\[ \xymatrix{
 M\ar[d] &\ar[l] \ar[d]\overline{E}\wedge_{R} M &\ar[l]\ar[d] \overline{E}\wedge_{R}\overline{E}\wedge_{R}M 
 &...\ar[l] \\
 E\wedge_{R}M\ar@{.>}[ur] & E\wedge_{R}\overline{E}\wedge_{R}M\ar@{.>}[ur] & E\wedge_{R}\overline{E}\wedge_{R}\overline{E}\wedge_{R}M\ar@{.>}[ur] 
 &
}\]
by splicing together the cofiber sequences
\[\overline{E}^{\wedge_{R}(n+1)}\wedge _{R}M \rightarrow \overline{E}^{\wedge_{R}n}\wedge _{R}M\rightarrow E\wedge_{R}\overline{E}^{\wedge_{R}n}\wedge _{R}M.\]
We abbreviate
\begin{align*}
X_k&:=\overline{E}^{\wedge_{R}k}\wedge _{R}M \simeq \overline{X(4)}^{\wedge k}\wedge M, \\
I_k &:=E\wedge_{R}\overline{E}^{\wedge_{R}k}\wedge _{R}M \simeq X(4)\wedge \overline{X(4)}^{\wedge k}\wedge M
\end{align*}
where $\overline{X(4)}$ is the fiber of the unit map $S^0\rightarrow X(4)$. As a consequence, the associated spectral sequence is identified with the $X(4)$-based Adams spectral sequence for $M$.

However, we have that the Hopf algebroid 
\[(\pi_*(E), \pi_*(E\wedge_{R}E)) = (\pi_{*}(tmf\wedge X(4)), \pi_*(tmf\wedge X(4)\wedge X(4)))\] 
is isomorphic to $(A, \Lambda)$. 
In particular, it is flat. Therefore, the $E_2$-term of the associated spectral sequence is identified with 
\[E_2^{s,t}(M)\cong \mathrm{Ext}^{s,t}_{\Lambda}(A, \pi_*(E\wedge_{R} M)). \]
See \cite{BakerLazarev}.
When $M=tmf$, this is precisely the elliptic spectral sequence, and more generally, this is the elliptic spectral sequence for the $tmf$-module $M$.

According to Bousfield \cite[Theorem 6.5]{Bousfield}, since $X(4)$ is connected and $\pi_0(X(4))\cong \Z$, if $M$ is connective, then $L_{X(4)}M\simeq M$ and the spectral sequence converges to $\pi_*(M)$. In particular, if $F$ is a finite spectrum, then the elliptic spectral sequence for $tmf\wedge F$ reads as
\[E_2^{s,t-s} = \mathrm{Ext}^{s,t}_{\Lambda}(A, \pi_*(tmf\wedge X(4)\wedge F)) \Longrightarrow \pi_{t-s}(tmf\wedge F). \]
 To simplify the notation, we put
 \[ \mathcal{F}_*(F):= \pi_*(tmf\wedge X(4)\wedge F)\]
 noting that this is a $\Lambda$-comodule.

 \subsection{(co)Truncated spectral sequences} \label{sectrunss}

   We will use the (co-)truncation of the spectral sequence associated to a tower of cofibrations. We will recall the constructions and their basic properties.
 Let 
 \[\xymatrix{
 X_0\ar[d] &\ar[l] \ar[d]X_1 &\ar[l]\ar[d] X_2 &\ar[l]\ar[d] X_3 &\ar[l] \ldots & \ar[l]\ar[d] X_{n-1} &\ar[l]  X_n& ... \ar[l]   \\
 I_0\ar@{.>}[ur] & I_1 \ar@{.>}[ur]& I_2\ar@{.>}[ur] & I_3 \ar@{.>}[ur] &  & I_{n-1} \ar@{.>}[ur]&
}\]
 be a tower of cofibrations of spectra. Let $(E_{r}^{*,*}, d_r)_{r\geq 1}$ be the associated spectral sequence.

 Let $X_i/X_n$ be the cofiber of the evident map $X_n\to X_i$. 
For any $n\in \N$, there is a tower of fibrations, which we call the \emph{$n$-truncated tower}:
\[\xymatrix{
 X_0/X_{n}\ar[d] &\ar[l] \ar[d]X_1/X_{n} &\ar[l]\ar[d] X_2/X_{n} 
 &\ar[l] \ldots & \ar[l]\ar[d] X_{n-1}/X_{n} &\ar[l]  \pt  \\
 I_0\ar@{.>}[ur] & I_1 \ar@{.>}[ur]& I_2\ar@{.>}[ur] 
 &  & I_{n-1} \ar@{.>}[ur]&
}\]
 We denote the terms of the resulting spectral sequence by $E_{r,< n}^{s,t}$. This spectral sequence computes the homotopy groups of
 \[ \mathrm{sk}_{n-1}X_0 := X_0/X_n.\]
 There is a natural map from the original tower to the $n$-truncated tower. Let
 \[T_{r}^{s,t} \colon {E}_r^{s,t} \to {E}_{r,< n}^{s,t}   \] 
be the induced map between the respective ${E}_r$-terms. Then ${E}_{2,< n}^{s,t} = 0$ for $s\geq n$, while
 $T_{2}^{s,t}$ is an isomorphism if $s< n-1$ and an injection if $s = n-1$. More generally, we have:
\begin{Lemma}\label{TruncSS} For every $r\geq 2$, the map $T_{r}^{s,t}$ has the following properties:
\begin{itemize}	
	\item[(i)] $T^{s,t}_{r}$ is injective for $s\leq n-1$, and
	\item[(ii)] $T^{s,t}_{r}$ is bijective for $s\leq n-1-(r-1)$.
\end{itemize}
\end{Lemma}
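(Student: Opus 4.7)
The plan is to induct on $r$, with the $r=2$ case handled by a direct analysis of $d_1$-cohomology and the inductive step by a short diagram chase exploiting naturality of $d_r$. For the base case, I would first observe that in the $n$-truncated tower the cofibers $I_k$ are present (and identical to the originals) exactly for $k<n$: indeed, the cofiber of $X_{i+1}/X_n\to X_i/X_n$ is still $I_i$ for $i<n-1$, and the cofiber of $\mathrm{pt}\to X_{n-1}/X_n$ is $I_{n-1}$. Hence $T_1^{s,t}$ is an isomorphism for $s<n$ and $E_{1,<n}^{s,t}=0$ for $s\geq n$. Taking $d_1$-cohomology, the only filtration where the two $d_1$-complexes diverge is $s=n-1$, whose outgoing differential lands in $E_1^{n,\cdot}$ in the original tower but in $0$ after truncation. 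This gives bijectivity of $T_2^{s,t}$ for $s\leq n-2$ and injectivity for $s\leq n-1$, which is exactly (i) and (ii) at $r=2$.

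For the inductive step, assume (i) and (ii) at stage $r$. Since $E_{r+1}^{s,t}$ is the cohomology of $d_r$ on $E_r^{s,t}$ and $T_{r+1}$ is induced from $T_r$, a standard kernel--image chase suffices. For \emph{injectivity} at $s\leq n-1$, take $x\in E_{r+1}^{s,t}$ with $T_{r+1}(x)=0$, choose a representative $\tilde{x}\in \ker d_r\subset E_r^{s,t}$, and write $T_r(\tilde{x})=d_r(\tilde{y})$ for some $\tilde{y}\in E_{r,<n}^{s-r,\cdot}$. Since $s-r\leq n-1-r\leq n-1-(r-1)$, the inductive bijectivity of $T_r$ lifts $\tilde{y}$ to some $y\in E_r^{s-r,\cdot}$ with $T_r(y)=\tilde{y}$. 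Naturality yields $T_r(\tilde{x}-d_r(y))=0$, and the inductive injectivity of $T_r$ at $s\leq n-1$ forces $\tilde{x}=d_r(y)$, hence $x=0$.

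For \emph{surjectivity} at $s\leq n-1-r$, pick $[\tilde{y}]\in E_{r+1,<n}^{s,t}$ with representative $\tilde{y}\in \ker d_r$. As $s\leq n-1-r\leq n-1-(r-1)$, inductive bijectivity supplies $y\in E_r^{s,t}$ with $T_r(y)=\tilde{y}$. Naturality then gives $T_r(d_r(y))=d_r(\tilde{y})=0$, so $d_r(y)\in \ker T_r^{s+r,\cdot}$; since $s+r\leq n-1$, inductive injectivity forces $d_r(y)=0$, so $[y]\in E_{r+1}^{s,t}$ is the desired preimage. The main obstacle in this argument is really only the bookkeeping of the two shifting index ranges---once the base case is pinned down from the vanishing $E_{1,<n}^{s,t}=0$ for $s\geq n$, the inductive step is essentially forced by naturality, with no deeper input required.
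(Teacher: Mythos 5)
Your proof is correct and follows essentially the same route as the paper's: induction on $r$, with the inductive step a naturality-plus-diagram-chase using the inductive injectivity at $s\leq n-1$ and bijectivity at $s\leq n-(r-1)-1$ in exactly the same places. The only difference is that you spell out the $E_1$-level justification of the base case, which the paper simply asserts.
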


\begin{proof}
We prove this by induction on the $r$. From the above discussion, (i) and (ii) hold for $r=2$. Suppose both hold for some $r\geq 2$. 

We prove that (i) holds at $E_{r+1}$. Let $[x]\in E_{r+1}^{s,t}$ be represented by an element $x\in E_{r}^{s,t}$ such that $s\leq n-1$ and $T_{r+1}^{s,t}([x]) = 0$. So $T_{r}^{s,t}(x)$ is the target of a $d_r$-differential. That is, there exists $y\in E^{s-r,t-r-1}_{r,< n}$ such that $d_r(y) = T_{r}^{s,t}(x)$. Since $s-r\leq n-r$, $T_{r}^{s-r,*}$ is bijective by the induction hypothesis. It follows that there exists $\overline{y}\in E^{s-r,t-r-1}_{r}$ such that $T_{r}^{s-r,t-r-1}(\overline{y}) = y$. So, by naturality and the hypothesis that $T_{r}^{s,t}$ is injective, $d_r(\overline{y}) = x$. This means that $[x] = 0$, and hence $T_{r+1}^{s,t}$ is injective when $s\geq n-1$.

Now, we prove that (ii) holds at $E_{r+1}$. Let $[x]\in E_{r+1,< n}^{s,t}$ with $s\leq n-r-1$. We need to show that $[x]$ is in the image of $T_{r+1}^{s,t}$. By the induction hypothesis, there is a class $\overline{x}\in E_{r}^{s,t}$ such that $T_{r}^{s,t}(\overline{x}) = x$. It suffices to prove that $\overline{x}$ is a $d_r$-cycle. By naturality, 
\[T_{r}^{s+r,t+r-1}(d_r(\overline{x})) = d_r(T_{r}^{s,t}(\overline{x})) = d_r(x) = 0.\] 
Since $d_r(x)\in E_{r,< n}^{s+r,t+r-1}$ and $s+r\leq n-1$, the induction hypothesis implies that $d_r(\overline{x}) = 0$. 
\end{proof}

Next, we look at the co-truncated spectral sequence.
Consider the following tower of fibrations, which we call the \emph{$n$-co-truncated tower},
\[\xymatrix{
 Y_0\ar[d] &\ar[l]_{\id} \ar[d]Y_1 &\ar[l]_{\id}  \ldots &\ar[l]_{\id} \ar[d] Y_{n}=X_n &\ar[l]\ar[d] X_{n+1} & \ar[l]\ar[d] X_{n+2}
  & \ldots \ar[l]  \\
 J_0\ar@{.>}[ur] & J_1 \ar@{.>}[ur]&& J_n=I_n \ar@{.>}[ur] &  I_{n+1} \ar@{.>}[ur]
 &
 I_{n+2} \ar@{.>}[ur]
}\]
where $Y_0 = \ldots = Y_n = X_n$ and $J_0 = \ldots = J_{n-1} =pt $.
We denote by $E_{r,\geq n}^{s,t}$ the $r$-term of the spectral sequence associated to this tower. There is an obvious map from the $n$-co-truncated tower to the original one. This map induces a map of spectral sequences:
\[cT_r^{s,t} \colon E_{r, \geq n}^{s,t}\rightarrow E_r^{s,t}.\]
We observe that $E_{r, \geq n}^{s,t}=0$ for $s<n$, and that $cT_2^{s,*}$ is a bijection for $s\geq n+1$ and a surjection for $s=n$.
The following lemma is proved as in \Cref{TruncSS}.

\begin{lemma}\label{cTruncSS} For every $r\geq 2$, the map $cT_{r}^{s,t}$ has the following properties:
\begin{itemize}	
	\item[(i)] $cT^{s,t}_{r}$ is surjective for $s \geq n$, and
	\item[(ii)] $cT^{s,t}_{r}$ is bijective for $s \geq n+r-1$.
\end{itemize}
\end{lemma}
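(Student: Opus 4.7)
The plan is to prove \Cref{cTruncSS} by induction on $r$, mirroring the proof of \Cref{TruncSS} with the roles of ``injective'' and ``surjective'' exchanged. The base case $r=2$ is precisely the observation stated immediately before the lemma: $cT_2^{s,*}$ is surjective for $s\geq n$ and bijective for $s \geq n+1 = n+r-1$. So I would assume (i) and (ii) hold at some $r\geq 2$ and deduce them at $r+1$.

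For the inductive step of (i), take $[x] \in E_{r+1}^{s,t}$ with $s \geq n$, represented by a $d_r$-cycle $x \in E_r^{s,t}$. By (i) at level $r$, choose a lift $\bar{x} \in E_{r,\geq n}^{s,t}$ with $cT_r^{s,t}(\bar{x}) = x$; it then suffices to show that $\bar{x}$ is itself a $d_r$-cycle, since in that case $[\bar{x}] \in E_{r+1,\geq n}^{s,t}$ and $cT_{r+1}^{s,t}([\bar{x}]) = [x]$. By naturality,
\[cT_r^{s+r,t+r-1}\bigl(d_r(\bar{x})\bigr) = d_r\bigl(cT_r^{s,t}(\bar{x})\bigr) = d_r(x) = 0,\]
and since $s+r \geq n+r \geq n+r-1$, the inductive hypothesis (ii) says $cT_r^{s+r,t+r-1}$ is bijective, forcing $d_r(\bar{x}) = 0$.

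For the inductive step of (ii), assume $s \geq n+r$ and let $[\bar{x}] \in E_{r+1,\geq n}^{s,t}$ satisfy $cT_{r+1}^{s,t}([\bar{x}]) = 0$. Then $cT_r^{s,t}(\bar{x}) = d_r(y)$ for some $y \in E_r^{s-r,t-r+1}$. Because $s - r \geq n$, the inductive hypothesis (i) at level $r$ produces a lift $\bar{y} \in E_{r,\geq n}^{s-r,t-r+1}$ with $cT_r^{s-r,t-r+1}(\bar{y}) = y$. By naturality, $cT_r^{s,t}\bigl(d_r(\bar{y})\bigr) = d_r(y) = cT_r^{s,t}(\bar{x})$, and since $s \geq n+r \geq n+r-1$ the inductive hypothesis (ii) makes $cT_r^{s,t}$ injective, so $d_r(\bar{y}) = \bar{x}$ and hence $[\bar{x}] = 0$.

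The argument is essentially formal once the base case is in hand, and the only real ``obstacle'' is bookkeeping: at each step one must confirm that the bidegree produced by shifting along a $d_r$-differential still lies in the range where the corresponding clause of the inductive hypothesis applies. These numerical checks ($s+r \geq n+r-1$ in the proof of (i), and $s-r \geq n$ together with $s \geq n+r-1$ in the proof of (ii)) are the exact mirrors of the inequalities that appear in \Cref{TruncSS}, so no new ideas are required beyond the dualization of the injectivity/surjectivity pattern.
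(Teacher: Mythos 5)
Your proof is correct and is exactly what the paper intends: the paper simply states that \Cref{cTruncSS} ``is proved as in \Cref{TruncSS},'' and your induction dualizes that argument (exchanging injectivity and surjectivity) with the right bidegree checks at each step.
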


We will be applying this technology to $2$-local spectra. 
As described in \cite[Section 7]{tbauer}, one can simplify the computation of the cohomology of the Weierstrass Hopf algebroid 
\[(A_{(2)}, \Lambda_{(2)}) \cong (A\otimes \Z_{(2)} , \Lambda\otimes \Z_{(2)}) \] 
as follows.
 Let $A'$ denote $\Z_{(2)}[a_1,a_3]$ and $f \colon A \rightarrow A'$ the evident projection. Let $\Lambda'$ denote $A'\otimes_A \Lambda \otimes_A A',$ which is isomorphic to $A'[s,t]/{\sim}$, where the 
relations ${\sim}$ are generated by
\[s^4-6st+a_1s^3-3a_1t-3a_3s=0\]
\[s^6-27t^2+3a_1s^5-9a_1s^2t+3a_1^2s^4-9a_1^2st+a_1^3s^3-27a_3t=0.\] 
The map between Hopf algebroids
 \[ f \colon (A_{(2)}, \Lambda_{(2)})  \to (A', \Lambda')\]
  induces an equivalence of the associated categories of comodules \cite[Sections 2 \& 7]{tbauer}, where
 \[ N \mapsto A'\otimes_{A_{(2)}}N \]
 for an $(A_{(2)}, \Lambda_{(2)})$-comodule $N$.
  When $F$ is the $2$-localization of a finite spectrum, the ${E}_2$-term of the elliptic spectral sequence for 
   \[tmf \wedge F \simeq tmf_{(2)} \wedge F\] 
is isomorphic to
  \[
E_2^{s,t}(tmf \smsh F) \cong \Ext^{s,t}_{\Lambda'}(A', A'\otimes_A\mathcal{F}_*(F)).
 \]

\begin{rem}\label{rem:mapss}
 The spectrum $tmf\wedge X(4)$ is a complex oriented ring spectrum (e.g., $A = \pi_*(tmf \wedge X(4))$ is concentrated in even degrees).
Let us denote by 
\[H \colon MU\rightarrow tmf\wedge X(4)\] 
 the map of ring spectra inducing the complex orientation of $tmf\wedge X(4)$ given by the completion of the universal Weierstrass curve at the origin. 
Then $H$ induces a homomorphism of Hopf algebroids 
\[H_* \colon (MU_*, MU_*MU)\rightarrow ((tmf\wedge X(4))_*, (tmf\wedge X(4)\wedge X(4))_*) = (A,\Lambda).\]
Recall that $MU_* \cong \mathbb{Z}[x_1, x_2,\ldots]$ with $|x_i|=2i$ and $MU_*MU \cong MU_*[m_1, m_2, \ldots]$ with $|m_i|=2i$. We note that $H_*(x_1)= \pm a_1$. 
This is discussed in \cite[(3.2)]{tbauer}.

For any finite spectrum $F$, $H$ also induces a map from the Adams--Novikov spectral sequence for $\pi_*(F)$ to the elliptic spectral sequence for $\pi_*(tmf\wedge F)$, which  converges to the Hurewitz map 
$h \colon \pi_*(F) \to \pi_*( tmf\wedge F)$. Moreover, the induced map at the $E_2$ is induced by $H_*$.
 \end{rem}

\subsection{Duality}

In this section, we discuss Brown-Comenetz duality for $tmf$. This will be used for determining some of the exotic extensions in our spectral sequences. First, we introduce the following notation. 

\begin{notation}
Let $A$ be a graded module over a graded commutative ring $S$ and $x\in S$. We let $\Sigma^r A$ be the module determined by $(\Sigma^r A)_t = A_{t-r}$. We denote by $\Gamma_x A$ the $x$-power torsion of $A$, i.e.,
\[
\Gamma_x A = \{m\in A \mid x^{i} m = 0, i \gg 0\},
\]
and by $A/(x^{\infty})$ the module that fits into the exact sequence of $S$-modules
\[ A\rightarrow A\left[\frac{1}{x}\right]\rightarrow A/(x^{\infty}) \rightarrow 0.
\]
We will also denote by $A^{\vee}$ the Pontryagin dual $A$, i.e.,
\[
(A^{\vee})_* = \Hom((A)_{-*}, \Q/\Z)
\] with the $S$-module structure given by $(r.f)(m) = (-1)^{|r||f|}  f(rm)$ for every $r\in S_{|r|}$, $f\in  (A^{\vee})_{|f|}$ and $m\in A_{|m|}$.

Now suppose that $R$ is a commutative ring spectrum (e.g., $R=tmf$) and $M$ is a $R$-module. For any $x\in \pi_*(R)$, we define $M\left[\frac{1}{x}\right]$ to be 
\[M\left[\frac{1}{x}\right]= \hocolim(M\xrightarrow{x} \Sigma^{-|x|}M\xrightarrow{x}\Sigma^{-2|x|} M\xrightarrow{x}...).\]
We define $M/(x^{\infty})$ to be the cofiber of the natural map $M\rightarrow M\left[\frac{1}{x}\right]$.
Inductively, if $(x_1, x_2, ..., x_n)$ is a sequence of element of $\pi_*R$, then we define 
\[M/(x_1^{\infty}, x_2^{\infty}, ..., x_n^{\infty}) = (M/(x_1^{\infty}, x_2^{\infty}, ...,x_{n-1}^{\infty}))/(x_n^{\infty}).\]

With this notation, using the long exact sequence on homotopy groups, we see that the cofiber sequence 
\[ M\rightarrow M\left[\frac{1}{x}\right] \rightarrow M/(x^{\infty})
\]
gives rise to the short exact sequence of $\pi_*(R)$-modules
\[ 0\rightarrow \pi_*(M)/(x^{\infty})\rightarrow \pi_*(M/(x^{\infty}))\rightarrow \Gamma_{x}(\pi_{*-1}(M))\rightarrow 0.
\]

\end{notation}

Let $I_{\Q/\Z}$ be the spectrum representing the Pontryagin dual of stable homotopy groups, so that for a spectrum $X$,
\[   I_{\Q/\Z}^{q}(X): = \Hom(\pi_{q}X, \Q/\Z) .\]
Then the the Brown-Comenetz dual of a spectrum $X$ is defined to be 
\[I_{\Q/\Z}(X) =F(X,I_{\Q/\Z}).\]

The literature contains a variety of references and methods for studying dualities of $tmf$ and related spectra. To name a few, we note work of Mahowald--Rezk \cite{MRBCdual}, of Stojanoska \cite{StojanoskaDuality, Stojanoska-Descent} and of Greenlees \cite{MR3423453}. While the identification of $I_{\Q/\Z}(tmf)$ is known to experts, there is no direct reference in the literature. (The work of Greenlees and Stojanoska \cite{GreenleesStojanoska} describes the relationship between various forms of duality, but this work does not directly apply to $tmf$.)
Upcoming work of Bruner--Rognes \cite[Chapter 10]{BrunerRognesbook} and Bobkova--Stojanoska will soon fill this gap and provide a reference for the following result.

\begin{thm} \label{thm:duality}
There is an equivalence of $tmf$-modules 
\[I_{\mathbb{Q}/{\mathbb{Z}}} (tmf/(2^{\infty}, c_4^{\infty}, \Delta^{\infty})) \simeq \Sigma^{20} tmf.\]
\end{thm}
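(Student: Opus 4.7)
The plan is to bootstrap from Stojanoska's Anderson self-duality $I_{\mathbb{Z}}(Tmf)\simeq \Sigma^{21}Tmf$ for the compactified, non-connective topological modular forms spectrum, and derive the stated Brown--Comenetz duality by exploiting the compatibility of Anderson duality with iterated localisation. Write $X := tmf/(2^{\infty}, c_4^{\infty}, \Delta^{\infty})$.

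First I would reduce $I_{\mathbb{Q}/\mathbb{Z}}$ to $I_{\mathbb{Z}}$. Each iterated quotient $Y/x^{\infty}$ is by construction $x$-power torsion on homotopy, and since $2$, $c_4$ and $\Delta$ all act faithfully on $tmf_{\mathbb{Q}}$, the spectrum $X$ is rationally trivial. The cofiber sequence $I_{\mathbb{Z}}\to I_{\mathbb{Q}}\to I_{\mathbb{Q}/\mathbb{Z}}$ applied to $X$ then yields an equivalence $I_{\mathbb{Q}/\mathbb{Z}}(X)\simeq \Sigma I_{\mathbb{Z}}(X)$, so it suffices to prove $I_{\mathbb{Z}}(X)\simeq \Sigma^{19}tmf$.

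Next I would view $X$ as an iterated Greenlees-style local cohomology construction on $tmf$ at the ideal $J=(2,c_4,\Delta)$, assembled from the three cofiber sequences $Y\to Y[1/x]\to Y/x^{\infty}$ for $x = 2, c_4, \Delta$ in turn. Anderson duality sends each such cofiber sequence to a fiber sequence, trading localisation at $x$ for $x$-adic completion. I would then replace $tmf$ by $Tmf$ inside $X$ by a careful comparison argument: $tmf[1/\Delta]$ and $Tmf[1/\Delta]$ both coincide with the periodic spectrum $TMF$, so the two triple quotients $tmf/(2^{\infty}, c_4^{\infty}, \Delta^{\infty})$ and $Tmf/(2^{\infty}, c_4^{\infty}, \Delta^{\infty})$ should agree after verifying that the negative-degree ``cusp'' contribution distinguishing $Tmf$ from $tmf$ is invisible under this particular sequence of quotients. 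Feeding Stojanoska's $I_{\mathbb{Z}}(Tmf)\simeq \Sigma^{21}Tmf$ into the resulting iterated fiber-sequence picture, and accumulating the degree shifts produced by each of the three dualised cofiber sequences, then yields the desired $\Sigma^{19}tmf$.

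The principal obstacle is the careful bookkeeping: the $+21$ from Stojanoska, the contributions from the three local cohomology steps, and the $+1$ from passing from $I_{\mathbb{Z}}$ to $I_{\mathbb{Q}/\mathbb{Z}}$ must assemble exactly to the advertised shift of $+20$, and the swap $tmf\leftrightarrow Tmf$ inside the triple quotient requires a clean justification via (for example) the arithmetic fracture square comparing $tmf$, $Tmf$ and $TMF$. A potentially cleaner alternative would be to first establish the algebraic Matlis-type duality $H^{3}_{J}(tmf_{*})^{\vee}\cong \Sigma^{20}tmf_{*}$ for the Cohen--Macaulay graded ring $tmf_{*}$, and then lift the algebraic statement to a spectrum-level equivalence via the local cohomology spectral sequence $H^{s}_{J}(\pi_{*}tmf)\Rightarrow \pi_{t-s}X$, checking that no exotic extensions or nontrivial differentials obstruct the identification.
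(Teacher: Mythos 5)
Two preliminary remarks. First, the paper does not actually prove this theorem: it explicitly defers to forthcoming work of Bruner--Rognes and of Bobkova--Stojanoska, so your proposal is being measured against that intended argument rather than a proof in the text. Second, your opening reduction is correct: $X:=tmf/(2^{\infty},c_4^{\infty},\Delta^{\infty})$ is rationally trivial, so $I_{\Q}(X)\simeq \ast$ and $I_{\Q/\Z}(X)\simeq \Sigma I_{\Z}(X)$, and the general local-duality formalism you invoke is also fine: since $M/x^{\infty}\simeq \Sigma\Gamma_{x}M$ and $F(\Gamma_{x}M,I_{\Z})\simeq (I_{\Z}M)^{\wedge}_{x}$, one gets $I_{\Z}\bigl(M/(2^{\infty},c_4^{\infty},\Delta^{\infty})\bigr)\simeq \Sigma^{-3}(I_{\Z}M)^{\wedge}_{J}$ with $J=(2,c_4,\Delta^{8})$.

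The gap is the swap of $tmf$ for $Tmf$ inside the triple quotient, and your own bookkeeping already shows it cannot work as stated. Applying the displayed formula to $M=Tmf$ and Stojanoska's $I_{\Z}(Tmf)\simeq\Sigma^{21}Tmf$ gives $I_{\Z}\bigl(Tmf/(2^{\infty},c_4^{\infty},\Delta^{\infty})\bigr)\simeq\Sigma^{18}Tmf^{\wedge}_{J}$; if the two triple quotients agreed and $Tmf^{\wedge}_{J}$ were ($2$-completed) $tmf$, you would land on $\Sigma^{18}tmf$ rather than the required $\Sigma^{19}tmf$. The missing suspension is exactly the cusp contribution you hoped was invisible, and it is not: from $tmf\to Tmf\to TMF$ the octahedral axiom gives a cofiber sequence $\Gamma_{\Delta^{8}}tmf\to\Gamma_{\Delta^{8}}Tmf\to\tau_{\leq -1}Tmf$, and $\tau_{\leq -1}Tmf$ is already $\Delta^{8}$-power torsion (its homotopy vanishes in positive degrees, so inverting $\Delta^{8}$ kills it) while its homotopy contains nonzero $(2,c_4)$-power torsion classes --- for instance the $\Z/8\subseteq\pi_{-25}Tmf$ Anderson-dual to $\nu\in\pi_{3}tmf$ is annihilated by $2^{3}$ and by $c_4^{4}$. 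Hence $\Gamma_{J}(\tau_{\leq -1}Tmf)\not\simeq\ast$ and $tmf/(2^{\infty},c_4^{\infty},\Delta^{\infty})\not\simeq Tmf/(2^{\infty},c_4^{\infty},\Delta^{\infty})$; any derivation from Anderson self-duality of $Tmf$ must confront this term, and it is precisely where the extra shift of one arises. Your fallback via Matlis duality has a parallel problem: $\pi_{*}tmf$ is far from Cohen--Macaulay over $\Z[c_4,\Delta^{8}]$ (the $\nu$-, $\kappa$- and $\kappabar$-families are all $(2,c_4)$-power torsion), so $H^{s}_{J}(\pi_{*}tmf)$ is nonzero for $s=0,1,2,3$ and there is no single top local cohomology group to dualize; and even granting the algebraic duality, one must still construct a spectrum-level pairing $X\wedge\Sigma^{20}tmf\to I_{\Q/\Z}$ realizing it --- checking that differentials and extensions do not obstruct an identification is not a substitute for producing the map.
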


\begin{rem}
Here and below, ``$-/\Delta^{\infty}$'', we really mean $-/(\Delta^8)^{\infty}$ as $\Delta$ is an element of the $E_2$-term of the elliptic spectral sequence but it does not survive to the $E_{\infty}$-term. However, $\Delta^8$ survives and detects a class in  $\pi_{192}tmf$.
Note also that the class $c_4 \in \pi_8tmf$ reduces to $v_1^4 \in tmf\wedge V(0)$ and so $c_4$-power torsion is the same as $v_1$-power torsion when the latter makes sense.
\end{rem}

\begin{cor}\label{cor:duality}
There are equivalences of $tmf$-modules 
\begin{enumerate}
\item
$I_{\mathbb{Q}/{\mathbb{Z}}} (tmf\wedge V(0)/(2^{\infty}, c_4^{\infty}, \Delta^{\infty})) \simeq \Sigma^{19} tmf\wedge V(0)$, and
\item $I_{\mathbb{Q}/{\mathbb{Z}}} (tmf\wedge Y/(2^{\infty}, c_4^{\infty}, \Delta^{\infty})) \simeq \Sigma^{17} tmf\wedge Y$.
\end{enumerate}
\end{cor}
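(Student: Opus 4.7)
The plan is to derive Corollary 2.3 as a formal consequence of Theorem 2.2 by smashing with the finite spectra $V(0)$ and $Y$ and using Spanier--Whitehead duality to absorb them into the Brown--Comenetz dual.

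First I would verify that forming $M/(x^{\infty})$ commutes with smashing by a finite spectrum. Since $M[1/x]$ is defined as a homotopy colimit and $M/(x^{\infty})$ as the cofiber of $M \to M[1/x]$, and both homotopy colimits and cofibers commute with $-\wedge F$ for a finite spectrum $F$, one obtains a natural equivalence of $tmf$-modules
\[(tmf \wedge F)/(2^{\infty}, c_4^{\infty}, \Delta^{\infty}) \simeq tmf/(2^{\infty}, c_4^{\infty}, \Delta^{\infty}) \wedge F,\]
using that $2$, $c_4$, $\Delta^8$ all live in $\pi_* tmf$, so the self-maps being inverted act through the $tmf$-factor.

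Second, for any $tmf$-module $M$ and dualizable spectrum $F$, a standard adjunction computation gives a natural equivalence of $tmf$-modules
\[
I_{\mathbb{Q}/\mathbb{Z}}(M \wedge F) \simeq F(M, F(F, I_{\mathbb{Q}/\mathbb{Z}})) \simeq F(M, I_{\mathbb{Q}/\mathbb{Z}}) \wedge DF \simeq I_{\mathbb{Q}/\mathbb{Z}}(M) \wedge DF,
\]
where $DF = F(F, S^0)$ is the Spanier--Whitehead dual; the $tmf$-action is carried throughout by the $M$-factor. Taking $M = tmf/(2^{\infty}, c_4^{\infty}, \Delta^{\infty})$ and applying \Cref{thm:duality} yields
\[I_{\mathbb{Q}/\mathbb{Z}}\bigl((tmf\wedge F)/(2^{\infty}, c_4^{\infty}, \Delta^{\infty})\bigr) \simeq \Sigma^{20} tmf \wedge DF.\]

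The remaining step is to identify $DV(0)$ and $DY$. Dualizing the cofiber sequence $S^0 \xrightarrow{2} S^0 \to V(0)$ gives $DV(0) \simeq \Sigma^{-1} V(0)$, while dualizing $S^1 \xrightarrow{\eta} S^0 \to \Ceta$ gives $D\Ceta \simeq \Sigma^{-2}\Ceta$. Since $Y \simeq V(0) \wedge \Ceta$, these combine to give $DY \simeq \Sigma^{-3} Y$. Substituting into the displayed equivalence above produces $\Sigma^{20}tmf \wedge \Sigma^{-1}V(0) \simeq \Sigma^{19}tmf\wedge V(0)$ for part (1) and $\Sigma^{20}tmf\wedge \Sigma^{-3}Y \simeq \Sigma^{17}tmf\wedge Y$ for part (2). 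The argument is essentially formal; the only point requiring care is that each step is arranged as an equivalence of $tmf$-modules, which works because at every stage the $tmf$-action lives on the factor that is neither finite nor the dualizing spectrum $I_{\mathbb{Q}/\mathbb{Z}}$, so smashing with $DF$ is $tmf$-linear.
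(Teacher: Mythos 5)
Your argument is correct and is evidently the intended derivation: the paper states \Cref{cor:duality} with no proof, and the standard route is exactly what you do --- commute $-/(2^{\infty},c_4^{\infty},\Delta^{\infty})$ past the finite smash factor, use $I_{\mathbb{Q}/\mathbb{Z}}(M\wedge F)\simeq I_{\mathbb{Q}/\mathbb{Z}}(M)\wedge DF$, and plug in $DV(0)\simeq \Sigma^{-1}V(0)$, $DY\simeq \Sigma^{-3}Y$ to get the shifts $20-1=19$ and $20-3=17$. All steps, including the $tmf$-linearity bookkeeping, check out.
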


In the proof of the result below, we use the following lemma.
\begin{lem}\label{lemc4div}
For $\mathcal{X} = tmf \wedge V(0)$ or $tmf\wedge Y$ and $a\in \pi_*\mathcal{X}$, $c_4a$ is divisible by $\Delta^8$ if and only if $a$ is divisible by $\Delta^8$.
\end{lem}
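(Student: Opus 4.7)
The plan is to use the Brown--Comenetz self-duality from \Cref{cor:duality}. The forward direction is immediate, since $a = \Delta^8 b$ gives $c_4 a = \Delta^8(c_4 b)$. For the converse, set $\mathcal{M} := \mathcal{X}/(2^\infty, c_4^\infty, \Delta^\infty)$ and let $c=19$ or $17$ according as $\mathcal{X} = tmf \wedge V(0)$ or $tmf \wedge Y$. The equivalence $I_{\mathbb{Q}/\mathbb{Z}}(\mathcal{M}) \simeq \Sigma^c \mathcal{X}$ of $tmf$-modules yields, on homotopy, an isomorphism of $\pi_* tmf$-modules
\[
\pi_k \mathcal{X} \;\cong\; \Hom\!\bigl(\pi_{-k-c}\,\mathcal{M},\; \mathbb{Q}/\mathbb{Z}\bigr),
\]
in which multiplication by any $x \in \pi_* tmf$ on the left is (up to sign) the transpose of multiplication by $x$ on $\pi_*\mathcal{M}$.

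Unwinding this duality, an element $a\in\pi_k\mathcal{X}$ is divisible by $\Delta^8$ if and only if the associated functional $\phi_a$ annihilates
\[
K_n \,:=\, \ker\!\bigl(\Delta^8 : \pi_n \mathcal{M} \to \pi_{n+192}\mathcal{M}\bigr)
\]
for $n = -k-c$, while $c_4 a$ is divisible by $\Delta^8$ if and only if $\phi_a$ annihilates $c_4 \cdot K_{n-8} \subseteq \pi_n\mathcal{M}$. By Pontryagin duality for discrete abelian groups (using injectivity of $\mathbb{Q}/\mathbb{Z}$), the implication ``$c_4a$ divisible by $\Delta^8$ $\Rightarrow$ $a$ divisible by $\Delta^8$'' holds for all $a$ if and only if
\[
K_n \;\subseteq\; c_4 \cdot K_{n-8} \qquad \text{for every } n,
\]
i.e., every $\Delta^8$-torsion class in $\pi_*\mathcal{M}$ can be written as $c_4$ times another $\Delta^8$-torsion class.

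This last containment is the main obstacle. The plan for verifying it is to identify $\pi_*\mathcal{M}$ via the defining cofiber sequences
\[
\mathcal{X}/(2^\infty, c_4^\infty) \longrightarrow \mathcal{X}/(2^\infty, c_4^\infty)[1/\Delta^8] \longrightarrow \mathcal{M}
\]
and its predecessors for $2^\infty$ and $c_4^\infty$, and then read off the $c_4$-divisibility of the $\Delta^8$-torsion from the ESS chart of $\pi_*\mathcal{X}$ already produced earlier in the paper. Because $c_4$ and $\Delta^8$ act nilpotently on $\mathcal{M}$, this reduces to a finite, per-degree case check. The intuition is that classes in $\pi_*\mathcal{M}$ arising from the boundary of the $c_4$-inversion automatically carry $c_4$-preimages, and those preimages can be chosen to be $\Delta^8$-torsion; executing this bookkeeping against the computed charts is where the real work lies.
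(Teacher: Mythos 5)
Your duality reduction is formally sound as far as it goes: the identification $\pi_k\mathcal{X}\cong\Hom(\pi_{-k-c}\mathcal{M},\Q/\Z)$, the translation of ``$a$ is divisible by $\Delta^8$'' into ``$\phi_a$ annihilates $K_{-k-c}$'' (using injectivity of $\Q/\Z$, so the image of a transpose is the annihilator of the kernel), and the resulting equivalence of the converse implication with the containment $K_n\subseteq c_4\cdot K_{n-8}$ are all correct manipulations. But the proof stops exactly where the mathematical content begins: you never verify that containment, and you say yourself that executing the bookkeeping ``is where the real work lies.'' That is a genuine gap, not a routine omission. Worse, the reduction points in an unhelpful direction. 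The containment is a statement about $\pi_*\mathcal{M}=\pi_*(\mathcal{X}/(2^\infty,c_4^\infty,\Delta^\infty))$, and the paper only obtains a usable description of these groups (the short exact sequence \eqref{Step3} in the proof of \Cref{lem:duality}) \emph{by invoking} \Cref{lemc4div} to show that $\Delta^8$ acts injectively on $\pi_*(\mathcal{X}/(2^\infty,c_4^\infty))$. Avoiding that circularity means carrying along the extra $\Gamma_{\Delta^8}$ boundary terms from each localization cofiber sequence, so the ``finite per-degree check'' you defer is at least as hard as the statement you set out to prove, and you would still ultimately need the module structure of $\pi_*\mathcal{X}$ that the direct argument uses.

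The paper's proof bypasses the duality entirely and argues on $\pi_*\mathcal{X}$ itself: write $0\to T_{c_4}\to\pi_*\mathcal{X}\to F_{c_4}\to 0$ with $T_{c_4}$ the $c_4$-power torsion. From the computed $E_\infty$-term, multiplication by $\Delta^8$ is bijective on $T_{c_4}$ (using that there is no $c_4$-torsion in stems $176$ through $191$), so every $c_4$-power-torsion class is $\Delta^8$-divisible and the claim is automatic there; and any element mapping nontrivially to $F_{c_4}$ is detected in filtration at most $2$, where the $E_\infty$-term is free over $\F_2[v_1^4,\Delta^8]$, which settles the $c_4$-free part. A general $a$ with $c_4a$ divisible by $\Delta^8$ is then written as $\Delta^8 b+c$ with $c\in T_{c_4}$, and surjectivity of $\Delta^8$ on $T_{c_4}$ finishes. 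If you already know the charts well enough to carry out your proposed bookkeeping, this direct two-step argument is available and far shorter; I would drop the Brown--Comenetz detour here and reserve duality for the extension problems where the paper actually needs it.
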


\begin{rem}
The proof makes use of the structure of the $E_{\infty}$-terms of the elliptic spectral sequences as a module over $\F_2[c_4, \Delta^8]$. So this is a bit premature but we want to have this result here to gather all our techniques in one place. We note that the logic of the argument is not circular as the determination of the $E_{\infty}$-terms do not require this lemma. 
\end{rem}
\begin{proof}
Let $\mathcal{X}$ be $tmf\wedge V(0)$. The homotopy groups of $\mathcal{X}$ decompose as 
\[ 0 \to T_{c_4} \to \pi_*\mathcal{X} \to F_{c_4} \to 0
\]
Here, $T_{c_4}$ is the subgroup of $c_4$-torsion elements, and $F_{c_4} =  \pi_*(\mathcal{X})/T_{c_4}$. 
By the calculation of the $E_{\infty}$-term of the elliptic spectral sequence for $\mathcal{X}$, multiplication by $\Delta^8$ induces a bijective endormorphism of $T_{c_{4}}$ in every stem and an injective endormorphism of $F_{c_{4}}$. Furthermore, there are no non-trivial $c_4$-torsion elements in the stems between $176$ and $191$, and hence $T_{c_4}$ satisfies the conclusion of the lemma. Any element that maps non-trivially to $F_{c_4}$ is detected in filtrations less than or equal to 2 of the $E_\infty$-term of the elliptic spectral sequence. This part of the $E_{\infty}$-term is free as a module over $\F_2[v_1^4,\Delta^8]$, and hence satisfies the conclusion of the lemma. (Note that in the elliptic spectral sequence $c_4$ is detected by $v_1^4$.) Now, suppose we have $a \in \pi_*\mathcal{X}$ such that $a\not\in T_{c_4}$ and $c_4a$ is divisible by $\Delta^8$. Then by our remarks on $F_{c_4}$, $a = \Delta^8b+c$ for some element $c\in T_{c_4}$. Since $\Delta^8$ is surjective on $T_{c_4}$, we see that $c$ is in the image of $\Delta^8$ and so the claim holds.

For $\mathcal{X} = tmf\wedge Y$, a similar argument applies.
\end{proof}

\begin{cor} \label{lem:duality}
We have the following isomorphisms of $\pi_*tmf$-modules 
\begin{enumerate}[(1)]
\item $\Gamma_{c_4}(\pi_{*}(tmf\wedge V(0))/(\Delta^{\infty}))^{\vee}\cong \Gamma_{c_4}(\pi_{*-21}(tmf\wedge V(0)))$, and
\item $\Gamma_{c_4}(\pi_{*}(tmf\wedge Y)/(\Delta^{\infty}))^{\vee}\cong \Gamma_{c_4}(\pi_{*-19}(tmf\wedge Y))$.
\end{enumerate}
\end{cor}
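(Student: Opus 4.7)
The plan is to combine \Cref{cor:duality} with a cofiber-sequence analysis of the auxiliary spectrum $W=\mathcal{X}/(2^\infty,c_4^\infty,\Delta^\infty)$, where $\mathcal{X}$ denotes $tmf\wedge V(0)$ or $tmf\wedge Y$. Let $Q = \mathcal{X}/(\Delta^\infty)$. First I would observe that $\pi_*\mathcal{X}$ is $2$-power torsion (since $V(0)$ and $Y$ are $2$-torsion spectra) and hence so is $\pi_*Q$; thus $\mathcal{X}[1/2]\simeq *\simeq Q[1/2]$, giving $\mathcal{X}/(2^\infty)\simeq \Sigma\mathcal{X}$ and $Q/(2^\infty)\simeq \Sigma Q$. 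Since the iterated quotient $W$ is order-independent up to equivalence, this reduces $W$ to $\Sigma\bigl(Q/(c_4^\infty)\bigr)$.

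Next, the cofiber sequence $Q\to Q[1/c_4]\to Q/(c_4^\infty)$ gives rise on $\pi_*$ to the short exact sequence of $\pi_*tmf$-modules
\[0\to \pi_{n-1}Q/c_4^\infty \to \pi_n W \to \Gamma_{c_4}\pi_{n-2}Q\to 0.\]
Dualizing this sequence (exact since $\mathbb{Q}/\mathbb{Z}$ is injective) and invoking \Cref{cor:duality} to identify $(\pi_*W)^\vee\cong \pi_{*-s}\mathcal{X}$, with $s=19$ for $V(0)$ and $s=17$ for $Y$, yields
\[0\to (\Gamma_{c_4}\pi_{*-2}Q)^\vee \to \pi_{*-s}\mathcal{X} \to (\pi_{*-1}Q/c_4^\infty)^\vee \to 0.\]

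The remaining task is to identify the injection on the left with the inclusion $\Gamma_{c_4}\pi_{*-s}\mathcal{X}\hookrightarrow \pi_{*-s}\mathcal{X}$. First, $\pi_*Q/c_4^\infty$ is $c_4$-divisible by construction (any class $[x/c_4^k]$ equals $c_4\cdot [x/c_4^{k+1}]$), so its Pontryagin dual $(\pi_*Q/c_4^\infty)^\vee$ is $c_4$-torsion-free; consequently any $c_4$-torsion element of $\pi_{*-s}\mathcal{X}$ lies in the kernel of the right-hand map, hence in the image of $(\Gamma_{c_4}\pi_{*-2}Q)^\vee$. Second, the explicit $E_\infty$-page computed in later sections shows that $c_4$-torsion exponents in $\pi_*\mathcal{X}$ (and hence in $\pi_*Q$) are uniformly bounded, which makes $(\Gamma_{c_4}\pi_*Q)^\vee$ itself $c_4$-power torsion and places its image inside $\Gamma_{c_4}\pi_{*-s}\mathcal{X}$. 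Together these give $(\Gamma_{c_4}\pi_{*-2}Q)^\vee\cong \Gamma_{c_4}\pi_{*-s}\mathcal{X}$, and the Pontryagin duality convention $(A^\vee)_n=\Hom(A_{-n},\mathbb{Q}/\mathbb{Z})$ absorbs the internal shift $*-2$ into an external shift, producing $\Gamma_{c_4}(\pi_*Q)^\vee\cong \Gamma_{c_4}\pi_{*-(s+2)}\mathcal{X}$ with $s+2=21$ for $V(0)$ and $s+2=19$ for $Y$. The main obstacle I anticipate is precisely the second observation: the uniform boundedness of $c_4$-torsion exponents in $\pi_*\mathcal{X}$ is not a formal consequence of Brown--Comenetz duality and must be extracted from the explicit elliptic spectral-sequence computations carried out in the remainder of the paper.
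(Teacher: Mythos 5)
Your argument is correct and rests on the same two pillars as the paper's: the Brown--Comenetz identification of \Cref{cor:duality}, and dualizing the torsion/divisible short exact sequence for a $(-)/(x^\infty)$ construction so that $\Gamma_{c_4}$ of the dual is exactly the dual of the $\Gamma_{c_4}$-part. The genuine difference is bookkeeping of the three quotients. The paper processes them in the order $(2^\infty, c_4^\infty, \Delta^\infty)$: after the shift $\mathcal{X}/(2^\infty)\simeq\Sigma\mathcal{X}$ and the $c_4$-sequence, it must pass the $\Delta^\infty$-quotient through that sequence, which costs an injectivity argument ($\Delta^8$ acts injectively on both $\Gamma_{c_4}(\pi_*\mathcal{X})$ and, via \Cref{lemc4div}, on $(\pi_*\mathcal{X})/(c_4^\infty)$) plus a 9-lemma. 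You instead invoke order-independence of the iterated quotient and do $\Delta^\infty$ first, which collapses everything to a single short exact sequence --- cleaner, but note that the cost has not disappeared, only moved: your conclusion is stated for $\Gamma_{c_4}\bigl(\pi_*(\mathcal{X}/(\Delta^\infty))\bigr)$, whereas the form the paper actually uses in \Cref{rem:usingduality} is $\Gamma_{c_4}(\pi_*\mathcal{X})/(\Delta^\infty)$; identifying the two again requires that $\Delta^8$ act injectively on $\pi_*\mathcal{X}$ (so that $\pi_*(\mathcal{X}/(\Delta^\infty))\cong(\pi_*\mathcal{X})/(\Delta^\infty)$) and that the $c_4$-free part $F_{c_4}$ stay $c_4$-torsion-free after inverting $\Delta^8$, which is precisely \Cref{lemc4div}. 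You should make that reconciliation explicit. On the one point where you are more careful than the paper --- that the Pontryagin dual of the $c_4$-power-torsion module is again $c_4$-power torsion only because the torsion exponents are uniformly bounded (by degreewise finiteness and $\Delta^8$-periodicity of $\Gamma_{c_4}$, read off the $E_\infty$-page) --- you are right that this is not formal, and the paper elides it; flagging it is a genuine improvement.
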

\begin{proof}
In this proof, we let $\mathcal{X}=tmf\wedge V(0)$.
Since $\pi_*\mathcal{X}$ is $2$-power torsion, we have
$\mathcal{X}[1/2] \simeq *$.
Thus, 
\begin{equation}\label{Step1}\mathcal{X}/(2^{\infty})\simeq \Sigma \mathcal{X}.
\end{equation}
The long exact sequence in homotopy associated to the cofiber sequence 
\[\mathcal{X}/(2^{\infty})\rightarrow \mathcal{X}/(2^{\infty})\left[\frac{1}{c_4}\right]\rightarrow \mathcal{X}/(2^{\infty}, c_4^{\infty}),\] 
gives an exact sequence
\begin{equation}\label{Step2}0\rightarrow (\pi_*\mathcal{X}/(2^{\infty}))/(c_4^{\infty})\rightarrow  \pi_*(\mathcal{X}/(2^{\infty}, c_4^{\infty}))\rightarrow \Gamma_{c_4}\pi_{*-1}(\mathcal{X}/(2^{\infty}))\rightarrow 0.
\end{equation}
By (\ref{Step1}), we have that 
		\[(\pi_*(\mathcal{X}/(2^{\infty})))/(c_4^{\infty}) \cong (\pi_{*-1}\mathcal{X})/(c_4^{\infty})\]
	and that 
		\[ \Gamma_{c_4}(\pi_{*-1}(\mathcal{X}/(2^{\infty})))\cong \Gamma_{c_4}(\pi_{*-2} \mathcal{X}).\]
		Since $\Delta^{8}$ acts injectively on $\pi_*\mathcal{X}$, it also acts injectively on $\Gamma_{c_4}(\pi_{*-2}\mathcal{X})$. Moreover, $\Delta^8$ acts injectively on $(\pi_*\mathcal{X})/(c_4^{\infty})$ by \Cref{lemc4div}.
		The short exact sequence \eqref{Step2} then shows that $\Delta^{8}$ acts injectively on $\pi_*(\mathcal{X}/(2^{\infty}, c_4^{\infty}))$.
		Therefore, we have that 
		\[\pi_*(\mathcal{X}/(2^{\infty}, c_4^{\infty}, \Delta^{\infty})) \cong (\pi_*\mathcal{X}/(2^{\infty}, c_4^{\infty}))/(\Delta^{\infty}).\] 
		The 9-lemma then implies that the following is a short exact sequence of $\pi_*tmf$-modules:
	\begin{equation}\label{Step3}0\rightarrow (\pi_{*-1}\mathcal{X})/(c_4^{\infty},\Delta^{\infty})\rightarrow  \pi_*(\mathcal{X}/(2^{\infty}, c_4^{\infty},\Delta^{\infty}))\rightarrow \Gamma_{c_4}(\pi_{*-2}\mathcal{X})/(\Delta^{\infty})\rightarrow 0.
\end{equation}
 By applying $\Hom(-, \Q/\Z)$ to this exact sequence, we obtain that 
 \[0\to (\Gamma_{c_4}(\pi_{*-2}\mathcal{X})/(\Delta^{\infty}))^{\vee}\rightarrow \pi_*(\mathcal{X}/(2^{\infty}, c_4^{\infty},\Delta^{\infty}))^{\vee} \rightarrow ((\pi_{*-1}\mathcal{X})/(c_4^{\infty},\Delta^{\infty}))^{\vee}\rightarrow 0,\]
 is an exact sequence of $\pi_*tmf$-modules.
 
 We see that the right most term is $c_4$-free and the left most term is $c_4$-torsion. In particular, it follows that 
 \begin{align*}\label{Step4}
  (\Gamma_{c_4}(\pi_{*-2}\mathcal{X})/(\Delta^{\infty}))^{\vee}&\cong \Gamma_{c_4}(\pi_*(\mathcal{X}/(2^{\infty}, c_4^{\infty},\Delta^{\infty}))^{\vee})\\ 
  &\cong \Gamma_{c_4}(\pi_{*}I_{\Q/\Z} (\mathcal{X}/(2^{\infty}, c_4^{\infty}, \Delta^{\infty}) )),
 \end{align*}
where the second isomorphism comes from the definition of the Brown-Comenetz dual $I_{\Q/\Z}(\mathcal{X}/(2^{\infty}, c_4^{\infty}, \Delta^{\infty}) )$.
 Together with \Cref{cor:duality}, we obtain an isomorphism of $\pi_*tmf$-modules
 \begin{align*}
 (\Gamma_{c_4}(\pi_*\mathcal{X})/(\Delta^{\infty}))^{\vee}&\cong \Sigma^{2} (\Gamma_{c_4}(\pi_{*-2}\mathcal{X})/(\Delta^{\infty}))^{\vee}\\
 &\cong \Sigma^{2} \Gamma_{c_4} \pi_*(I_{\Q/\Z}(\mathcal{X}/(2^{\infty}, c_4^{\infty}, \Delta^{\infty}) ))\\
 &\cong \Sigma^{2}\Sigma^{19}\Gamma_{c_4}(\pi_*\mathcal{X})\\
 &\cong \Sigma^{21}\Gamma_{c_4}(\pi_*\mathcal{X}). 
\end{align*}

Substituting $\mathcal{X}$ for $tmf\wedge Y$ and this last $19$ with $17$ gives the result for $Y$. 
\end{proof}

\begin{rem}\label{rem:usingduality}\label{lem:duality-for-extensions}
We will explain how to use \Cref{lem:duality} to compute extensions. Continue to let $\mathcal{X}=tmf\wedge V(0)$.
Let $K$ denote the kernel of the homomorphism induced by multiplication by $\Delta^8$ on $\Gamma_{c_4}(\pi_*\mathcal{X})/(\Delta^{\infty})$. Since multiplication by $\Delta^{8}$ induces an isomorphism 
\begin{equation}
\label{deltaisomorphism2}
\Gamma_{c_4}(\pi_*\mathcal{X}) \xrightarrow{\cong} \Gamma_{c_4}(\pi_{*+192}\mathcal{X})
\end{equation}
for $*\geq 0$, we see that, for $-192\leq t<0$,
\[ K_t \cong \Gamma_{c_4}(\pi_*\mathcal{X})/(\Delta^{\infty})_t.
\]
The Snake Lemma applied to the following diagram
\[\xymatrix{ 
&0 \ar[d]\ar[r] & \Gamma_{c_4}(\pi_*\mathcal{X})\left[\frac{1}{\Delta^8}\right] \ar[r]^{\Delta^8} \ar[d] &\Gamma_{c_4}(\pi_*\mathcal{X})\left[\frac{1}{\Delta^8}\right] \ar[r] \ar[d] &0\\
0\ar[r]&K \ar[r]& \Gamma_{c_4}(\pi_*\mathcal{X})/(\Delta^{\infty})\ar[r]^{\Delta^8} & \Gamma_{c_4}(\pi_*\mathcal{X})/(\Delta^{\infty})&
}\]
gives rise to the exact sequence
\[0\rightarrow \Gamma_{c_4}(\pi_*\mathcal{X})\xrightarrow{\Delta^{8}} \Gamma_{c_4}(\pi_{*+192}\mathcal{X})\rightarrow K\rightarrow 0 .
\]
Using \eqref{deltaisomorphism2} again, the homomorphism $\Gamma_{c_4}(\pi_{*+192}\mathcal{X})\rightarrow K$ in the above short exact sequence induces an isomorphism
\[\Gamma_{c_4}(\pi_*\mathcal{X})_t \rightarrow K_{t-192} \cong \Gamma_{c_4}\pi_*(\mathcal{X}/(\Delta^{\infty}))_{t-192}
\]
for $0\leq t<192$. 

Now let $r$ be an element of $\pi_{l}(tmf)$. If $0\leq k<192-l$, multiplication by $r$ induces a commutative diagram
\[\xymatrix{\Gamma_{c_4}(\pi_{*}\mathcal{X})_k \ar[r]^-{\cong}\ar[d]^{r} & K_{k-192} \cong \Gamma_{c_4}(\pi_*\mathcal{X})/(\Delta^{\infty})_{k-192}\ar[d]^{r}\\
\Gamma_{c_4}(\pi_*\mathcal{X})_{k+l} \ar[r]^-{\cong} & K_{k+l-192}\cong \Gamma_{c_4}(\pi_*\mathcal{X})/(\Delta^{\infty})_{k+l-192}.
}\]
By applying the Pontryagin dual to this commutative diagram, together with \Cref{lem:duality}, we obtain the commutative diagram
\[\xymatrix{\Hom(\Gamma_{c_4}(\pi_{*}\mathcal{X})_k,\Q/\Z)  &  \Gamma_{c_4}(\pi_*\mathcal{X})_{171-k} \ar[l]_-{\cong}\\
\Hom(\Gamma_{c_4}(\pi_*\mathcal{X})_{k+l}, \Q/\Z)\ar[u]^-{r^{\vee}} &  \Gamma_{c_4}(\pi_*\mathcal{X})_{171-k-l}\ar[u]^-{r} \ar[l]_-{\cong}.
}\]
As a consequence, the cardinality of the image of 
\[r \colon\Gamma_{c_4}(\pi_{*}\mathcal{X})_k \rightarrow \Gamma_{c_4}(\pi_*\mathcal{X})_{k+l}\] is the same as that of 
\[r \colon \Gamma_{c_4}(\pi_{*}\mathcal{X})_{171-k-l} \rightarrow \Gamma_{c_4}(\pi_*\mathcal{X})_{171-k}.\] 
In particular, this means that a non-trivial multiplication by $r$ on stem $k$ forces a non-trivial multiplication by $r$ on stem $171-k-l$.

Similarly, for $tmf\wedge Y$ we obtain that a non-trivial multiplication by $r$ on stem $k$ forces a non-trivial multiplication by $r$ on stem $173-k-l$.
\end{rem}

\subsection{The Geometric Boundary Theorem}
We also make use of the following result, due to Bruner  \cite{Brunergeo}. A standard reference is Theorem 2.3.4 of \cite{ravgreen}. We apply this Theorem 2.3.4 to the $X(4)$-based Adams-Novikov spectral sequence and the cofiber sequence
\begin{align*}
tmf \wedge S^0 \xrightarrow{2} tmf  \wedge S^0 \xrightarrow{i} tmf \wedge V(0) \xrightarrow{p} tmf \wedge S^1. \end{align*}
Using $X(4)_*tmf \cong A$ and $X(4)_*(tmf\wedge V(0)) \cong A/2$, we have $X(4)_*p=0$ and hence a short exact sequence
\begin{align}\label{coftmfv0}
0 \to A  \xrightarrow{2} A \to A/2 \to 0 .
\end{align}

\begin{thm}[Geometric Boundary Theorem]\label{lem:half-the-extensions-pre}\label{lem:half-the-extensions}\label{geobound}
There are maps 
\[\delta_r \colon E_r^{s,t}(V(0)) \to E_r^{s+1,t}(S^0)\] such that
\[\delta_2 =\delta \colon E_2^{s,t}(V(0) ) \to E_2^{s+1,t}(S^0)\]
is the connecting homomorphism arising from \eqref{coftmfv0}.
 For all $r$, 
\[\delta_r d_r = d_r \delta_r\]
and $\delta_{r+1}$ is induced by $\delta_r$. Furthermore, $\delta_{\infty}$ is a filtered form of 
\[p_* \colon \pi_*tmf\wedge V(0) \to \pi_{*+1}tmf.\]
\end{thm}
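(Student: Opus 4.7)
The plan is to invoke the general geometric boundary theorem of Bruner~\cite{Brunergeo}, as formulated in Theorem~2.3.4 of~\cite{ravgreen}, in the $X(4)$-based Adams spectral sequence framework described in Section~\ref{secbackground}. Once the hypotheses of that theorem are checked in our setting, the construction of the maps $\delta_r$ and all of the claimed properties follow.

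First I would verify the input: the cofiber sequence
\[
tmf \xrightarrow{2} tmf \xrightarrow{i} tmf\wedge V(0) \xrightarrow{p} \Sigma tmf
\]
must induce the short exact sequence~\eqref{coftmfv0} of $\Lambda$-comodules after applying $X(4)_*(-)$. This is essentially asserted in the excerpt: $X(4)_*(tmf) \cong A$, $X(4)_*(tmf\wedge V(0)) \cong A/2$, and $X(4)_*(p) = 0$, so the long exact sequence on $X(4)$-homology collapses exactly to~\eqref{coftmfv0}.

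Next I would construct the $\delta_r$. Smash the cofiber sequence with the $X(4)$-based Adams tower $\{\overline{E}^{\wedge_R k}\wedge_R -\}_k$ of Section~\ref{secbackground}. Because $X(4)_*(p) = 0$, the map $p$ has $X(4)$-Adams filtration at least one and therefore lifts to an honest map of Adams towers that raises filtration by one. The resulting morphism of exact couples yields maps $\delta_r$ of bidegree $(1,0)$ at every page. The identification $\delta_2 = \delta$ with the algebraic connecting homomorphism is standard: both are computed by the snake lemma applied to the short exact sequence of cobar complexes associated to~\eqref{coftmfv0}.

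The remaining assertions are then formal. The compatibility $\delta_r d_r = d_r \delta_r$ and the inductive passage from $\delta_r$ to $\delta_{r+1}$ hold because $\delta_r$ arises from a morphism of spectral sequences. The statement that $\delta_\infty$ is a filtered form of $p_*$ unwinds to: if $x \in \pi_{t-s}(tmf\wedge V(0))$ is detected by $[\alpha] \in E_\infty^{s,t}(V(0))$, then $p_*(x) \in \pi_{t-s+1}(tmf)$ lies in Adams filtration at least $s+1$ with leading term $\delta_\infty([\alpha]) \in E_\infty^{s+1,t}(S^0)$; this is immediate from the Adams-tower construction. The only potential obstacle is bookkeeping the filtration shifts carefully, but since this is a direct application of an established theorem whose hypotheses visibly hold in our situation, no essential difficulty is anticipated.
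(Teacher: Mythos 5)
Your proposal matches the paper's treatment: the paper gives no independent proof, but simply cites Bruner's geometric boundary theorem in the form of Theorem~2.3.4 of \cite{ravgreen} and verifies the hypothesis that $X(4)_*p=0$, so that the cofiber sequence induces the short exact sequence \eqref{coftmfv0}. Your additional sketch of how the $\delta_r$ are constructed from a filtration-raising lift of $p$ is a correct unwinding of that cited theorem, but the essential content and route are the same.
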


\subsection{Further observations on extensions}
Here,  we collect a few classical but useful extension results. Note that, in this paper, we use Definition 2.10 of \cite{morestems} as our definition of an \emph{exotic extension}. See Section 2.1 of that reference for a detailed discussion. However, briefly, we have
\begin{defn}[{Definition 2.10 \cite{morestems}}]\label{defnexoext}
 Let $\alpha \in \pi_*tmf$ be an element detected by $a$ on the $E_{\infty}$-term of the elliptic spectral sequence for $tmf$.
An \emph{exotic extension by $\alpha$} is a pair of elements $b$ and $c$ on the $E_{\infty}$-term of the elliptic spectral sequence for $M$ (where $M$ is a $tmf$-module) such that
\begin{enumerate}[(1)]
\item $ab =0$ on the $E_{\infty}$-term,
\item there is an element $\beta$ detected by $b$ such that $\alpha \beta$ is detected by $c$,
\item if an element $\beta'$ detected by $b'$ is such that $\alpha \beta'$ is detected by $c$, then the filtration of $b'$ is less than or equal to that of $b$.
\end{enumerate}
\end{defn}
Note that this implies that if both $\alpha \beta$ and $\alpha \beta'$ are detected by $c$ as in \Cref{exoext}, there is no exotic extension from $b'$ to $c$.
\begin{figure}[h]
\includegraphics[width=0.4\textwidth]{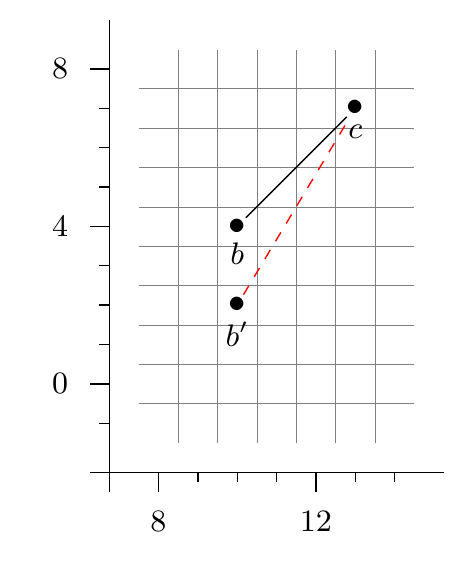}
\caption{  Here there is no exotic extensions from $b'$ to $c$, and so the dashed line would not be drawn.}
\label{exoext}
\end{figure}

\begin{lemma}\label{lem2exttrick}
Let $X$ be a spectrum. Consider the long exact sequence in homotopy
\[
\ldots \to \pi_{n} X \xra{i} \pi_n( X \wedge V(0)) \xra{p} \pi_{n-1} X \xrightarrow{2} \ldots
\]
associated to the cofiber sequence $X \xra{2} X \to X \wedge V(0)$.
Let $a \in \pi_{n-1}X$ be an element of order $2$. If $a' \in \pi_{n}( X \wedge V(0))$ is such that $p_*(a')=a$, then 
\[2a'=i_*(\eta a) \in \pi_{n}X \wedge V(0).\] 
\end{lemma}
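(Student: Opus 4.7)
The strategy is to reduce the lemma to the classical spectrum-level identity
\[
2 \cdot \mathrm{id}_{V(0)} = i \circ \eta \circ p \quad \text{in } [V(0), V(0)],
\]
where $i \colon S^0 \to V(0)$ and $p \colon V(0) \to S^1$ are the structure maps in the defining cofiber sequence $S^0 \xrightarrow{2} S^0 \xrightarrow{i} V(0) \xrightarrow{p} S^1$. Granted this identity, smashing with the identity of $X$ and precomposing with $a' \colon S^n \to X \wedge V(0)$ immediately yields
\[
2 a' = (1_X \wedge i \eta p) \circ a' = i_*\bigl(\eta \cdot p_*(a')\bigr) = i_*(\eta a),
\]
using $p_*(a') = a$ together with the fact that $1_X \wedge \eta \colon \Sigma X \to X$ implements the $\eta$-action of $\pi_1 S^0$ on $\pi_* X$. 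Note that the ``order $2$'' hypothesis on $a$ is automatic from exactness: $a \in \mathrm{im}(p_*)$ forces $2a = 0$.

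To prove the displayed identity, I would apply the contravariant functor $[-, V(0)]$ to the cofiber sequence. The relation $i \circ 2 = 0$ yields $i^*(2 \cdot \mathrm{id}_{V(0)}) = 0$, so $2 \cdot \mathrm{id}_{V(0)}$ lies in the image of $p^* \colon [S^1, V(0)] \to [V(0), V(0)]$. A short computation with the long exact sequence on $\pi_*$ --- using $\pi_1 S^0 = \Z/2\langle \eta \rangle$ and $\pi_0 S^0 = \Z$ with multiplication by $2$ injective --- identifies $[S^1, V(0)] = \pi_1 V(0) \cong \Z/2$, generated by $i \eta$. Hence $2 \cdot \mathrm{id}_{V(0)} \in \{0,\, i \eta p\}$.

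The main obstacle is the last step: ruling out the possibility $2 \cdot \mathrm{id}_{V(0)} = 0$, i.e., showing that the mod $2$ Moore spectrum is not $2$-torsion (equivalently, $[V(0), V(0)] \cong \Z/4$ at the prime $2$, rather than $\Z/2 \oplus \Z/2$). This is a classical fact, verifiable in several standard ways: via the Adams spectral sequence, where $h_0$ is a nonzero permanent cycle detecting $2 \cdot \mathrm{id}_{V(0)}$; via the Toda bracket relation $\eta^2 \in \langle 2, \eta, 2 \rangle$; or by a direct analysis of the attaching maps of $V(0) \wedge V(0)$ at the prime $2$. I would invoke this as a standard input from the literature (e.g., Toda's \emph{Composition methods} or Ravenel's green book), and the proof is complete.
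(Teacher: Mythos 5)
Your argument is correct. The paper does not actually prove this lemma --- it simply records it as ``a classical result'' and cites \cite[Lemma 3.1.5]{BGH} --- so you have supplied a genuine proof where the paper defers to the literature. Your reduction is the standard one and all the steps check out: $2\cdot\mathrm{id}_{X\wedge V(0)} = 1_X\wedge(2\cdot\mathrm{id}_{V(0)})$, so everything follows from the spectrum-level identity $2\cdot\mathrm{id}_{V(0)} = i\circ\eta\circ p$; the exact sequence $[S^1,V(0)]\xrightarrow{p^*}[V(0),V(0)]\xrightarrow{i^*}\pi_0 V(0)$ together with $\pi_1 V(0)\cong\Z/2\{i\eta\}$ correctly pins $2\cdot\mathrm{id}_{V(0)}$ down to the two-element set $\{0,\ i\eta p\}$; and you rightly identify the nonvanishing of $2\cdot\mathrm{id}_{V(0)}$ (equivalently $[V(0),V(0)]\cong\Z/4$) as the only real content. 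That nonvanishing is indeed classical and each of the verifications you list works; perhaps the quickest is the one you mention last: if $2\cdot\mathrm{id}_{V(0)}$ were zero, the cofiber sequence $V(0)\to V(0)\wedge V(0)\to\Sigma V(0)$ would split, contradicting $Sq^2\neq 0$ on $H^0(V(0)\wedge V(0);\F_2)$, which is forced by the Cartan formula. Your aside that the order-$2$ hypothesis on $a$ is automatic from exactness is also correct. In short: same statement, but your write-up is self-contained where the paper's is a citation; the only thing you leave to the literature is the single classical fact that you explicitly isolate, which is a reasonable place to stop.
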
 
\begin{proof}
This is a classical result. See, for example, \cite[Lemma 3.1.5.]{BGH}. 
\end{proof}

\begin{rem}
\Cref{lem2exttrick} will be used with $X=tmf$ and $tmf\wedge C_{\eta}$ where $C_{\eta}$ is the cofiber of the Hopf map $\eta \colon S^1 \to S^0$.
This gives all exotic 2-extensions in the elliptic spectral sequences for $tmf \wedge V(0)$ and $tmf\wedge Y$, since $Y \simeq C_{\eta}\wedge V(0)$. 
\end{rem}

Finally, we have the following classical result which is an analogue of \Cref{lem2exttrick}.
\begin{lem}\label{propextnutrick}
Let $b \in \pi_nX$ be such that $\eta b=0$. If $b' \in \pi_{n+2}(C_{\eta} \wedge X)$ is such that $p_*b' =b$ in the long exact sequence on homotopy groups associated to 
\[ \Sigma X \xrightarrow{\eta} X \xrightarrow{i} X\wedge C_{\eta}  \xrightarrow{p} \Sigma^2 X , \]
then
$\eta b' = i_*(\nu b)$.
\end{lem}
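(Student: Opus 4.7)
The plan is to follow the proof template of \Cref{lem2exttrick}, which is treated in \cite[Lemma 3.1.5]{BGH}, with the roles of $2$ and $\eta$ replaced by $\eta$ and $\nu$ respectively.

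First, apply $p_*$ to $\eta b'$ and use $\pi_*S^0$-linearity together with the hypothesis $\eta b = 0$:
\[
p_*(\eta b') = \eta \cdot p_*(b') = \eta b = 0.
\]
By exactness of the long exact sequence associated to the cofiber sequence $\Sigma X \xrightarrow{\eta} X \xrightarrow{i} X\wedge C_\eta \xrightarrow{p} \Sigma^2 X$, we have $\eta b' \in \ker(p_*) = \mathrm{im}(i_*)$, so $\eta b' = i_*(\xi)$ for some $\xi \in \pi_{n+3}X$. The class $\xi$ is determined only modulo $\ker(i_*) = \eta \cdot \pi_{n+2} X$. Different choices of the lift $b'$ differ by $i_*(y)$ for $y \in \pi_{n+2}X$, and $\eta \cdot i_*(y) = i_*(\eta y)$ lies in $i_*(\eta \pi_{n+2} X) = 0$, so the identification $\eta b' = i_*(\nu b)$ is independent of the choice of $b'$.

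The main step is to show $\xi \equiv \nu b \pmod{\eta \pi_{n+2}X}$. Since $\eta b = 0$, the representing map $\tilde{b}\colon S^n \to X$ satisfies $\tilde{b}\eta \simeq 0$, hence extends to a map $\hat{b}\colon \Sigma^n C_\eta \to X$. By naturality of the cofiber sequence, the long exact sequence, and the $\pi_*S^0$-module action along $\hat{b}$ (smashed with $C_\eta$), the identification of $\xi$ reduces to the universal identity
\[
\eta \cdot \iota_{(2)} = i_*(2\nu) \quad \text{in } \pi_3 C_\eta \cong \mathbb{Z}/12,
\]
where $\iota_{(2)} \in \pi_2 C_\eta \cong \mathbb{Z}$ is the generator with $p_*(\iota_{(2)}) = 2 \in \pi_0 S^0$. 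This identity is a classical Toda bracket computation: $\iota_{(2)}$ is realized as the coextension of $2$ through $C_\eta$, and multiplication by $\eta$ extracts the bracket $\langle \eta, 2, \eta\rangle = 2\nu$, which after 2-localization has trivial indeterminacy (using $\eta^3 = 4\nu$).

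The main obstacle is carrying out the naturality reduction precisely and confirming the Toda bracket identification with the correct signs; both steps are standard. As an alternative, one may invoke Moss's convergence theorem for Toda brackets in the Adams spectral sequence, or simply adapt the argument of \cite[Lemma 3.1.5]{BGH} verbatim, substituting $C_\eta$ for $V(0)$ and $\nu$ for $\eta$ throughout.
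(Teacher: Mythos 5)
Your overall template --- reduce to a universal case via an extension $\hat{b}\colon \Sigma^n C_\eta \to X$ of $b$ and then pin down the universal identity with a Toda bracket --- is the right one, and the bookkeeping with $p_*$, $\ker i_* = \eta\,\pi_{n+2}X$, and independence of the choice of lift is fine. The gap is in the reduction itself. After smashing $\hat{b}$ with $C_\eta$, the universal case lives in $\pi_3(C_\eta\wedge C_\eta)$: one must show that a lift $\iota'\in\pi_2(C_\eta\wedge C_\eta)$ of the bottom class $\iota\in\pi_0 C_\eta$ satisfies $\eta\iota' = i_*(\nu\iota)$. Instead you reduce to the identity $\eta\,\iota_{(2)} = i_*(2\nu)$ in $\pi_3 C_\eta$, where $\iota_{(2)}$ lifts $2\in\pi_0 S^0$. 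That identity is true (it is exactly $\langle\iota,\eta,2\rangle\eta = \iota\langle\eta,2,\eta\rangle = 2\nu\iota$), but it is merely the instance $X=S^0$, $b=2$ of the lemma being proved; a general $b$ with $\eta b=0$ need not factor through multiplication by $2$ (take $b=\nu$), so the general case does not follow from it by naturality. Worse, pushed forward to $\pi_3(C_\eta\wedge C_\eta)$ your identity degenerates to $0=0$, because $2\nu\iota = \eta\cdot\langle\iota,\eta,2\rangle$ lies in the image of the connecting map $\eta\colon\pi_2C_\eta\to\pi_3C_\eta$ and hence in $\ker i_*$.

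Concretely, two inputs are missing. First, one needs $\eta\iota'\neq 0$ in $\pi_3(C_\eta\wedge C_\eta)$; the paper obtains this from the non-splitting of $C_\eta\to C_\eta\wedge C_\eta\to\Sigma^2C_\eta$, detected by $Sq^4$ in $H^*(C_\eta\wedge C_\eta;\Z/2)$. Nothing in your argument plays this role, and without it $\eta b'=0$ cannot be ruled out. Second, to land on $i_*(\nu\iota)$ rather than $i_*(2\nu\iota)$, one uses that $i_*(2\nu\iota)=0$ in $\pi_3(C_\eta\wedge C_\eta)$; this is where the bracket $\langle\eta,2,\eta\rangle\ni 2\nu$ actually enters. (Your aside that this bracket has trivial indeterminacy is also off --- the indeterminacy is $\eta\pi_2S^0=\{0,\eta^3\}$ with $\eta^3=4\nu\neq 0$ --- though this is harmless once one multiplies by $\iota$, since $\iota\eta=0$.) The same issue affects the proposed verbatim adaptation of \cite[Lemma 3.1.5]{BGH}: the crux there is the corresponding non-splitting of $V(0)\wedge V(0)$ detected by $Sq^2$, and you would still need to supply and verify its $C_\eta$-analogue rather than cite it.
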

\begin{proof}
First, consider $b = \iota \in \pi_0C_{\eta}$ given by the inclusion $S^0 \to C_{\eta}$ of the bottom cell.  We have a cofiber sequence
\[ C_{\eta} \xrightarrow{i} C_{\eta} \wedge C_{\eta} \xrightarrow{p} \Sigma^2 C_{\eta}\]
which is not split because of the non-triviality of $Sq^4$ in $H^*(C_{\eta}\wedge C_{\eta}, \Z/2)$.
We get a diagram
\[\xymatrix{ 
\pi_{2}C_{\eta}  \ar[r]^-{i_*} \ar[d]^-\eta& \pi_2C_{\eta} \wedge C_{\eta} \ar[r]^-{p_*}  \ar[d]^-{\eta} & \pi_2\Sigma^2C_{\eta} \ar[r] & 0 \\
\pi_{3}C_{\eta}  \ar[r]^-{i_*} & \pi_{3} (C_{\eta} \wedge C_{\eta})  & &  }\]
For any $b' \in \pi_2(C_{\eta}\wedge C_{\eta})$ such that $p_*b'=\iota$, we must have $\eta b' \neq 0$, else we could split the above cofiber sequence. Since $\eta \iota=0$, $\eta b' \in i_*(\pi_3C_{\eta})$, where $\pi_3C_{\eta} \cong \Z/4\{\nu \iota\}$. Now, in $\pi_*C_{\eta}$, we can form the bracket
\[ \langle \iota, \eta, 2\rangle \in \pi_2C_{\eta}\] 
with indeterminacy 
\[2\pi_2 C_{\eta} + \iota \pi_2 S^0  = 2 \pi_2C_{\eta} \cong 2 \Z.\] 
So, $\langle \iota, \eta, 2\rangle \eta $ contains a unique element.
In $\pi_*S^0$, we also have $2\nu \in \langle \eta, 2, \eta\rangle $ with indeterminacy $\eta\pi_2S^0$. 
It follows that
\[ \langle \iota, \eta, 2\rangle \eta     = \iota  \langle \eta, 2, \eta\rangle = \iota 2\nu \neq 0 \in \pi_3C_{\eta}.  \]
So $i_*(2\nu)=0$ and $\eta b' = i_*(\nu \iota)$. 

For the general case, note that any class $b \colon S^n \to X$ such that $\eta b=0$ can be extended to a map $\bar{b} \colon \Sigma^n C_{\eta} \to X$. The claim then follows from the commutativity of the following diagram
\[\xymatrix{\Sigma^n C_{\eta} \ar[r] \ar[d]^-{\bar b}& \Sigma^n C_{\eta}\wedge C_{\eta} \ar[r] \ar[d]^-{ \bar b \wedge C_{\eta}}  & \Sigma^{n+2} C_{\eta} \ar[d]^-{\Sigma^2 \bar b} \\
X \ar[r]^-{i} & X \wedge C_{\eta} \ar[r]^-{p} & \Sigma^2 X } \] 
Then $b' = (\bar b \wedge C_\eta)_*\iota$ satisfies $\eta b' = i_*(\nu b)$. Now, suppose that $p_*\tilde{b}' = b$. Then $\tilde{b}' -b' \in \ker p_* =\im i_*$. Therefore, $\eta (\tilde{b}' -b' ) =0$ so, $\eta \tilde{b}' = i_*(\nu b)$ as well.
\end{proof}

\subsection{Self-maps and their cofiber}\label{v1selfmaps}

It is well-known that $V(0)$ admits $v_1^4$ self-maps, i.e., maps $\Sigma^8 V(0) \to V(0)$ which induce multiplication by $v_1^4$  in $K(1)$-homology for $K(1)$ the first Morava $K$-theory. The map on $MU$-homology is given by multiplication by $x_1^4\in MU_8$. Under the map from the Adams-Novikov spectral sequence of $V(0)$ to that of the ellpitic spectral sequence of $tmf \wedge V(0)$, $x_1$ maps to $v_1$ on the $E_2$-term. See the discussion surrounding \eqref{eqx1a1v1}. Any $v_1^4$ self-map is detected by the same-named element. The spectral sequence inherits an action of $v_1^4$ and the differentials are $v_1^4$-linear.

Recall that we let $Y$ be the spectrum $V(0) \wedge C_\eta$. In \cite{DM}, Davis and Mahowald show that there exist $v_1$ self-maps of $Y$, i.e., maps $\Sigma^2 Y\rightarrow Y$ which induce multiplication by $v_1$ in $K(1)_*Y$. Any of these is detected by the element  $v_1$ on the $E_2$-term of elliptic spectral sequence for $tmf \wedge Y$ and the differentials are $v_1$-linear.

In \Cref{lemv1ext}, we will be studying the $v_1$-multiplication in $tmf_*Y$. Some of the answers will \emph{depend on the choice of $v_1$-self map}, so we give a bit of background here on this subject. This material can be found in \cite{DM}.

In \cite{DM}, the authors show that there are in fact $8$ $v_1$-self maps of $Y$.
They also show that a $v_1$-self map of $Y$ is detected in the Adams spectral sequence by an element of $\Ext^{1,3}_{\mathcal{A}}(H^*(Y), H^*(Y))$, where $\mathcal{A}$ denotes the Steenrod algebra at the prime $2$. 

A class of $\Ext^{1,3}_{\mathcal{A}}(H^*(Y), H^*(Y))$ is represented by a short sequence of $\mathcal{A}$-modules:
\[ 0 \rightarrow \Sigma^2 H^*(Y) \rightarrow M \rightarrow H^*(Y)\rightarrow 0.
\]
Let $\mathcal{A}(1)$ be the sub-algebra of the Steenrod algebra generated by $Sq^1$ and $Sq^2$.
We know that $\Ext^{1,3}_{\mathcal{A}(1)}(H^*(Y), H^*(Y)) \cong \F_2$ and its unique non-trivial class is represented by the short exact sequence of $\mathcal{A}(1)$-module
\[ 0 \rightarrow \Sigma^2 H^*(Y) \rightarrow A(1) \rightarrow H^*(Y)\rightarrow 0,
\]
where $A(1)$ is isomorphic to $\mathcal{A}(1)$ as an $\mathcal{A}(1)$-module, thus the notation. 
Davis and Mahowald showed that a class of $\Ext^{1,3}_{\mathcal{A}}(H^*(Y), H^*(Y))$ which detects a $v_1$-self map of $Y$ is sent to the unique non-trivial class of $\Ext^{1,3}_{\mathcal{A}(1)}(H^*(Y), H^*(Y))$ (via the map induced by the inclusion $\mathcal{A}(1)\subset \mathcal{A}$). 

To put an $\mathcal{A}$-module structure on $A(1)$, it suffices to specify the $Sq^4$ action. Indeed, the action of $Sq^{k}$, for $k\geq 8$ on $A(1)$ is trivial for degree reasons. By the Adem relations, there must be a non-trivial $Sq^4$ on the class of degree one of $A(1)$. There are possibilities for a non-trivial action of $Sq^4$ on the classes of degrees zero and two, giving rise to four different $\mathcal{A}$-module structures on $A(1)$. This implies, in particular, that 
\[ \Ext^{1,3}_{\mathcal{A}}(H^*(Y), H^*(Y)) \cong \F_2^{\oplus 3}.
\]
Computing the first three stems of $\Ext^{s,t}_{\mathcal{A}}(H^*(Y), H^*(Y))$, we see that 
\[ \Ext^{s,s+2}_{\mathcal{A}}(H^*(Y), H^*(Y)) \cong \begin{cases}
												\F_2 & \mbox{if} \ s= 2\\
												0 & \mbox{otherwise}.  
											\end{cases} 	
\]
We deduce that there are eight  homotopy classes of maps $\Sigma^2 Y \rightarrow Y$ detected in $\Ext^{1,3}_{\mathcal{A}}(H^*(Y), H^*(Y))$ and mapping non-trivially to $\Ext^{1,3}_{\mathcal{A}(1)}(H^*(Y), H^*(Y))$. These are the $v_1$ self-maps of $Y$. 

The singular cohomology of the cofiber of each of the $v_1$-self map is isomorphic to one of the four $A(1)$s as an $\mathcal{A}$-module. We denote the four choices by $A_1[ij]$, with $i,j \in \{0,1\}$. Here, $A_1[ij]$ means that the cohomology has a non-trivial $Sq^4$ on the class of degree $0$, respectively $2$ if $i=1$, respectively if $j=1$.

It is somewhat surprising that out of eight $v_1$-self-maps, there are only four homotopy types which are distinguished by their cohomology, as is shown \cite{DM}. We use the notation $A_1$, for short, when we mean any or all of the four models.


\section{$tmf_*V(0)$: The $E_2$-page}\label{secV0E2}

From now on, we will be working exclusively with $2$-local spectra. We will write $tmf$ for $tmf_{(2)}$ to simplify the notation. Furthermore, we will be considering only elliptic spectral sequences for $M = tmf \wedge F $ for $F$ a finite spectrum and so shorten our notation even more to
  \[
E_2^{s,t}(F):=  \Ext^{s,t}_{\Lambda'}(A', A'\otimes_A\mathcal{F}_*(F)).
 \]

The map $S^0\xrightarrow{\times 2} S^0$ induces multiplication by $2$ on $\mathcal{F}_*(S^0)\cong A$, which is injective. Thus the cofiber sequence 
\[S^0\xrightarrow{2}S^0\rightarrow V(0)\] gives rise to 
a short exact sequence of $\Lambda'$-comodules 
\begin{align}\label{eqsss}
0\rightarrow A' \xrightarrow{\times 2} A' \rightarrow A'\otimes_{A}\mathcal{F}_*(V(0))\rightarrow 0.
\end{align}
It follows that $A'\otimes_{A}\mathcal{F}_*(V(0))$ is isomorphic to $A'/(2)$ as a $\Lambda'$-comodule. Since $(2)\subseteq A'$ is a $\Lambda'$-invariant ideal, we have that
 \[\Ext^{s,t}_{\Lambda'}(A', A'/(2))\cong \Ext^{s,t}_{\Lambda'/(2)}(A'/(2), A'/(2)).\]
 See, for example, \cite[Proposition A1.2.16]{ravgreen}. So, we have a spectral sequence
  \begin{equation}\label{eq:ss-V0}
 E_2^{s,t}(V(0))=\Ext^{s,t}_{\Lambda'/(2)}(A'/(2), A'/(2)) \Longrightarrow   \pi_*tmf\wedge V(0).
 \end{equation}

 A computation of the cohomology of $(A'/(2), \Lambda'/(2))$ 
is originally due to Hopkins and Mahowald and can be found in \cite[Chapter 15, Section 7]{tmfbook} and  \cite[Section 7]{tbauer}. 
Let us describe the answer here and introduce some notation.

Classical computations of modular forms yield
\begin{align*}
\Ext^{0,*}_{\Lambda'}(A', A') \cong \ZZ_{(2)}[c_4, c_6, \Delta]/{(c_4^3 -c_6^2-(12)^3 \Delta )} 
\end{align*}
where
\begin{align*}
c_4 &= a_1^4-24a_1a_3   \\
c_6 &= -a_1^6+36a_1^3a_3-216a_3^2 \\
\Delta &=a_1^3a_3^3-27a_3^4
\end{align*} 
as well as
\begin{align*}
\Ext^{0,*}_{\Lambda'/(2)}(A'/(2), A'/(2))\cong \ZZ/2[a_1, \Delta].
\end{align*}
See, for example \cite{tbauer} and \cite[III.1]{Silverman}. The map on $\Ext^{0,*}$ induced by the mod 2 reduction $(A', \Lambda') \to (A'/(2), \Lambda'/(2))$ 
sends  $c_4 \mapsto a_1^4$ and
 $c_6 \mapsto a_1^6$.

There are also maps of Adams--Novikov Spectral Sequences, where $H$ and $h$ are as in \Cref{rem:mapss}:
\[
\xymatrix{
\Ext^{*,*}_{BP_*BP}(BP_*, BP_*V(0)) \ar[r] &\pi_*V(0)\\
\Ext^{*,*}_{MU_*MU}(MU_*, MU_*V(0)) \ar[u]^-{\cong}\ar[d]^-H \ar[r] &\pi_*V(0)\ar[u]^-{\cong}\ar[d]^-h \\
\Ext^{*,*}_{\Lambda'/(2)}(A'/(2), A'/(2)) \ar[r] &\pi_*tmf\wedge V(0)
 }
\]
Further,
 \[
\Ext^{0,*}_{BP_*BP}(BP_*, BP_*V(0)) \cong \mathbb{F}_2[v_1];
\]
see \cite[Thm 4.3.2]{ravgreen}. 

So, we have $a_1 \in \Ext^{0,2}_{\Lambda'/(2)}(A'/(2), A'/(2))$, $v_1 \in \Ext_{BP_*BP}^{0,2}(BP_*,BP_*V(0))$ and $x_1 \in \Ext_{MU_*MU}^{0,2}(MU_*,MU_*V(0))$, and 
\begin{align}\label{eqx1a1v1}
v_1 \mapsfrom x_1   \mapsto a_1.
\end{align}
Note that $v_1$ detects either of the two classes in $\pi_2V(0) \cong \Z/4$ which map to $\eta \in \pi_1 V(0)$ under the homomorphism $\pi_2 V(0) \to \pi_1 S^0$ in the long exact sequence in homotopy. We fix a choice and call it $v_1\in \pi_2V(0)$.
 It follows that $a_1$ survives to detect the image of $v_1 \in \pi_2V(0)$ in $\pi_2tmf\wedge V(0)$. From now on, in mod $2$ computations, we  abuse notation and denote all classes we have named $a_1$ by $v_1$.

 Now we will present the $E_2$ page of \eqref{eq:ss-V0} as computed in \cite[p. 270]{tmfbook}, \cite[Fig. 5]{Stojanoska-Descent} and \cite[p.26]{tbauer}. See \Cref{tmf-V0-E2}.
Even if the elliptic spectral sequence for $V(0)$ is not multiplicative, $E_2(V(0))$ is a ring and we can completely describe the algebraic relations (which also follow from \cite{tbauer}). The ring structure will be used in our computation of $E_2(Y)$ below.

Recall that $\delta=\delta_2$ was defined in \Cref{lem:half-the-extensions-pre}.
In the theorem below,  $\kappa \in E_2^{2,16}(S^0)$ is the unique non-zero element.
\begin{thm}[\Cref{tmf-V0-E2}]\label{thmE2V0all}
The ring $E_2(V(0))$ is isomorphic to
\[\F_2[v_1, \Delta, \kappabar, \eta, \nu, x,y]/(\sim)\]
for elements 
\[\eta \in \Ext^{1,2}, \ \ \  \nu \in \Ext^{1,3}, \ \ \  \kappabar \in \Ext^{0,24}, \ \ \ \Delta \in \Ext^{0,24}\] 
in the image of $E_2(S^0) \to E_2(V(0))$, as well as elements
\[v_1 \in \Ext^{0,2}, \ \ \  x\in \Ext^{1,8}, \ \ \  y\in \Ext^{1,15} \] 
in the image of $\delta_2 \colon  E_2(V(0))  \to E_2(S^0)$ where
\[\delta_2(v_1) =\eta, \ \ \delta_2(x) = \nu^2,  \ \ \delta_2(y)=\kappa.\]
 The relations $(\sim)$ is the ideal generated by
\begin{align*}
(s=1) & &  &v_1 \nu & & v_1^2 x  & & v_1y \\
 (s=2) & & & \nu \eta  & & \nu x-v_1\eta x & &   \eta y-v_1x^2  & & xy & & y^2-\nu^2\Delta  \\
 (s=3) &  & & \eta^2x-\nu^3  & &   x^3-\nu^2y  \\
 (s=4)&     & &\eta^4\Delta-v_1^4\kappabar \ . 
\end{align*}
Furthermore, we have $\kappa=x^2$ and $\delta_2(\nu^2 y)=4\kappabar$.
\end{thm}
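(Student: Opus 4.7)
The plan is to combine the known additive structure of $E_2(V(0))$, as computed in \cite[Ch.~15]{tmfbook}, \cite{Stojanoska-Descent} and \cite{tbauer}, with the long exact sequence arising from \eqref{eqsss} and the Geometric Boundary Theorem (\Cref{geobound}). Since $2$ acts trivially on $E_2(V(0))$, the long exact sequence degenerates into short exact sequences
\[
0 \to E_2^{s,t}(S^0)/(2) \xra{i_*} E_2^{s,t}(V(0)) \xra{\delta_2} E_2^{s+1,t}(S^0)[2] \to 0,
\]
where $E_2^{s+1,t}(S^0)[2]$ denotes the $2$-torsion subgroup. This identifies $\eta, \nu, \kappabar, \Delta$ as the images of their namesakes in $E_2(S^0)$ under $i_*$, and $v_1, x, y$ as the unique classes in their respective bidegrees satisfying $\delta_2(v_1) = \eta$, $\delta_2(x) = \nu^2$, and $\delta_2(y) = \kappa$. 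A bidegree-by-bidegree comparison with the chart of $E_2(V(0))$ confirms that these seven elements generate the $\F_2$-algebra.

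Next, I would verify each relation by applying the $E_2(S^0)$-module map $\delta_2$ and invoking the known relations in $E_2(S^0)$. For instance, $\delta_2(v_1 \nu) = \eta \nu = 0$, so $v_1 \nu \in \im i_*$; but $E_2(S^0)/(2)$ vanishes in that bidegree, forcing $v_1 \nu = 0$. The remaining $s=1$ relations $v_1^2 x = 0$ and $v_1 y = 0$ follow by the same pattern. The $s=2, 3, 4$ relations---including $\nu \eta = 0$, $\nu x = v_1 \eta x$, $\eta y = v_1 x^2$, $xy = 0$, $y^2 = \nu^2 \Delta$, $\eta^2 x = \nu^3$, $x^3 = \nu^2 y$, and $\eta^4 \Delta = v_1^4 \kappabar$---are handled similarly: one checks that $\delta_2$ applied to both sides yields the same element of $E_2(S^0)$, using standard relations like $\eta \nu = 0$ and $2 \nu = 0$, and then lifts the remaining ambiguity via the short exact sequence to a unique class in the target bidegree. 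The auxiliary statements $\kappa = x^2$ and $\delta_2(\nu^2 y) = 4 \kappabar$ follow from the same setup: $\delta_2(x^2)$ vanishes for bidegree reasons, so $x^2 \in \im i_*$, and bidegree matching identifies it with $i_*(\kappa)$; meanwhile $\delta_2(\nu^2 y) = \nu^2 \delta_2(y) = \nu^2 \kappa$, which equals $4 \kappabar$ via a known relation in $E_2(S^0)$ coming from the Hopf algebroid cohomology of $\Lambda'$.

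Finally, completeness is established by comparing Hilbert series: the quotient $\F_2[v_1, \Delta, \kappabar, \eta, \nu, x, y]/(\sim)$ has, in each bidegree, the same $\F_2$-dimension as the computed additive structure of $E_2(V(0))$, a check made tractable by $\Delta$-periodicity so that it reduces to a finite fundamental domain. The main obstacle will be this completeness verification: ensuring the listed relations exhaust all multiplicative relations requires a careful bidegree count, particularly at higher filtrations where monomials $\eta^i \nu^j v_1^k x^\ell y^m \Delta^n \kappabar^p$ may admit several rewritings. A secondary subtlety is that the $s=2$ relations equate two nonzero products rather than asserting vanishing, which forces one to confirm that the short exact sequence's lift is uniquely pinned down in the relevant bidegree by cross-referencing with the known chart of $E_2(S^0)$.
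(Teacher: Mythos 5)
The paper does not actually prove this statement: \Cref{thmE2V0all} is quoted from Hopkins--Mahowald via \cite[Ch.~15]{tmfbook} and \cite[Sec.~7]{tbauer} (with the remark that the algebra structure can also be read off from the appendix of \cite{BeaudryMoore}). Those sources compute $\Ext_{\Lambda'/(2)}(A'/(2),A'/(2))$ directly, by first computing $\Ext$ over $\Lambda'/(2,a_1)$ and then running an algebraic $a_1$-Bockstein, with the multiplicative relations coming from explicit cobar representatives and Massey products. Your proposal---reconstructing the ring structure from the integral long exact sequence of \eqref{eqsss}---is a genuinely different route, and it does correctly identify the generators and dispose of the relations that amount to vanishing in a bidegree where $E_2(V(0))$ is zero (e.g.\ $v_1\nu$, $v_1^2x$, $v_1y$, $xy$). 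But as written it has a real gap.

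The gap is twofold. First, $\delta_2$ is a map of $E_2(S^0)$-modules, not a ring map, so ``applying $\delta_2$ to both sides'' is only immediate when all but one factor of a monomial lies in the image of $i_*$. For products such as $v_1^2x$, $v_1\eta x$, $v_1x^2$, $x^3$, $xy$, $y^2$ and $v_1^4\kappabar$ you would need the derivation property of the reduced Bockstein $\rho\circ\delta_2$ together with lifting arguments (note that $v_1^2=a_1^2$ is \emph{not} in the image of $i_*$ on $\Ext^0$, since $a_1^2$ is not an integral invariant), and none of this is spelled out. Second, and more seriously, the short exact sequence determines each product only modulo $i_*\bigl(E_2^{s,t}(S^0)/2\bigr)$. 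The substantive relations---$\kappa=x^2$, $\eta^2x=\nu^3$, $\eta y=v_1x^2$, $y^2=\nu^2\Delta$, $x^3=\nu^2y$, $\eta^4\Delta=v_1^4\kappabar$---assert that a product equals a specified \emph{nonzero} element of that coset, and ``bidegree matching'' or ``cross-referencing the chart'' cannot distinguish, say, $x^2=\kappa$ from $x^2=0$: the additive chart carries no such information, and the multiplications recorded on it are precisely what is being proved. Resolving these requires genuine multiplicative input (cobar representatives, Massey-product shuffles of the kind used in \Cref{lemkv13}, or the $a_1$-Bockstein of \cite{tbauer}), at which point one is essentially redoing the cited computation. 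The concluding Hilbert-series comparison is fine for checking that no relations are missing once the additive answer is taken as known, but it does not repair the ambiguity in identifying the individual products.
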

\begin{rem}
The algebraic structure in \Cref{thmE2V0all} can also be deduced from the appendix of \cite{BeaudryMoore}.
\end{rem}

 \begin{rem}
The element $\Delta$ is detected by $v_2^4$ in the Bockstein spectral sequence computation of \cite[II.2.7]{tmfbook}.
\end{rem}

\begin{figure}[h]\label{fig:E2-V0-full}
\includegraphics[page=1, width=\textwidth]{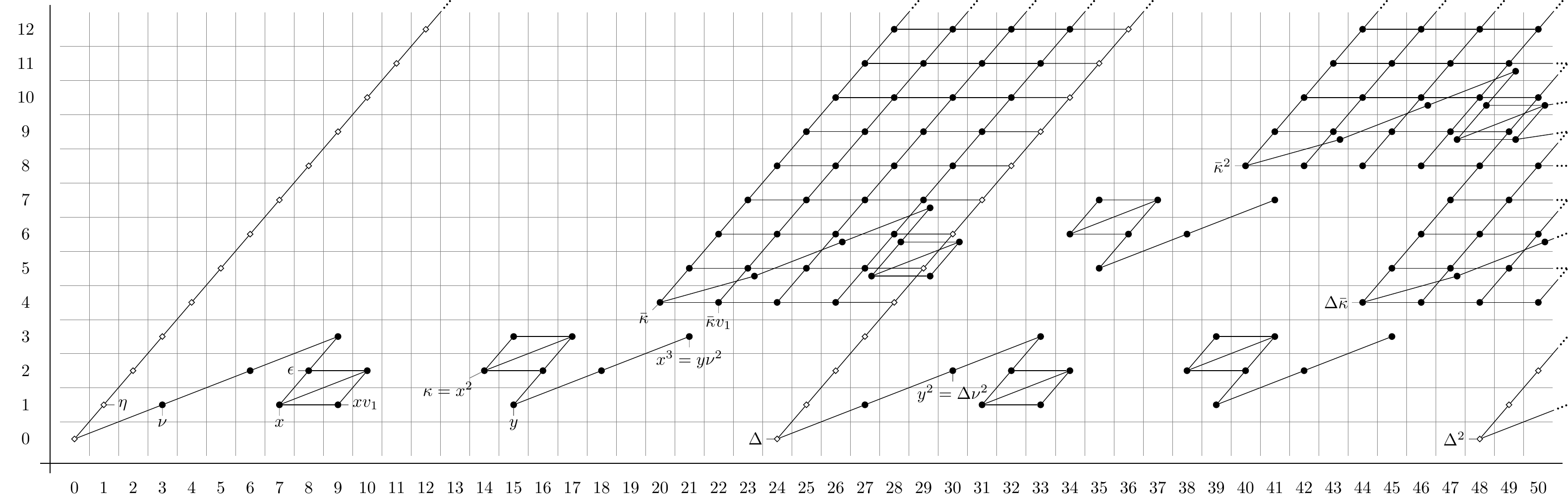}
\caption{The $E_2$-term of the elliptic spectral sequence for $tmf\wedge V(0)$. A bullet $\bullet$ denotes $\mathbb{F}_2$ and a diamond $\diamond$ denotes a copy of $\mathbb{F}_2[v_1]$. The lines of slope 1 denote multiplication by $\eta$, and the lines of slope 1/3 denote multiplication by $\nu$. Horizontal lines are $v_1$-multiplications.}
\label{tmf-V0-E2}
\end{figure}
 
 \begin{rem}
Let $P$ denote the following pattern: 
\begin{center}
  \includegraphics[page=1, width=0.8\textwidth]{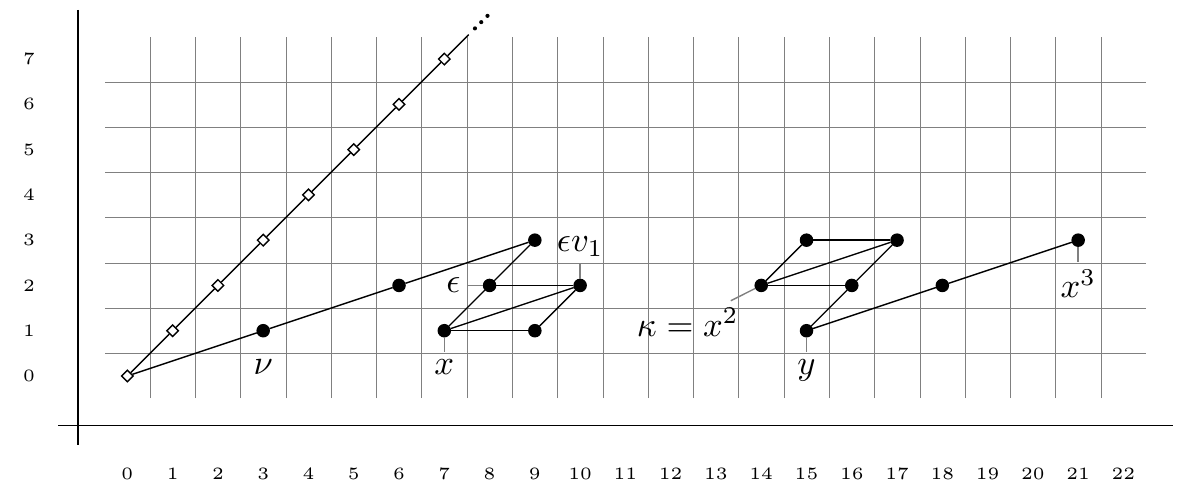}
\end{center}
Then $E_2^{*,*}(V(0))$ can be summarized additively as
\[
E_2^{*,*}(V(0))=P[\kappabar, \Delta]/(\Delta\eta^4-\kappabar v_1^4).
\]
\end{rem}


\section{$tmf_*V(0)$: The differentials and extensions}\label{secV0diffext}
We begin with an observation that $V(0)$ has a $v_1^4$ self map, hence all differentials $d_r$ for $r\geq 3$ are $v_1^4$ linear. Since $\eta$, $\nu$, $\kappabar$ and $\Delta^8$ are permanent cycles, all differentials are linear with respect to multiplication by these elements. Note that there are no even length differentials due to sparseness. 

We will use the following methods when computing differentials in this section.
\begin{enumerate}
 \item The map of spectral sequences induced by the map of spectra 
 \[tmf \to tmf \wedge V(0)\] allows us to import a differential $d_r(a)=b$ from the spectral sequence for $tmf$ if the images of $a$ and $b$ are both non-trivial on the $E_r$ page of the spectral sequence for $tmf \wedge V(0)$. Note also that the elliptic spectral for $tmf \smsh V(0)$ is a module over the elliptic spectral sequence for $tmf$.
 \item The long exact sequence in homotopy groups associated to the fiber sequence 
 \[
tmf \xrightarrow{2} tmf \to tmf \wedge V(0) 
 \]
 gives short exact sequences
 \[ 0 \to (\pi_itmf)/2 \to \pi_i (tmf \wedge V(0)) \to   \ker_2(\pi_{i-1}tmf) \to 0\]
 where $  \ker_2(\pi_{i-1}tmf) $ is the subgroup of elements of order $2$.
This allows us to compute the rank of $\pi_i (tmf \wedge V(0))$ and forces certain differentials by various dimension count arguments.
\item The Geometric Boundary Theorem, stated in \Cref{lem:half-the-extensions-pre}.
\end{enumerate}

\subsection{The $d_3$-differentials}
\begin{lem}[\Cref{V0-d3}]\label{lem:d3V0}
 The $d_3$-differentials are  $\Delta$ and $v_1^4$-linear. They are determined by this linearity, the differentials
 \[d_3(v_1^2)=\eta^3; \quad d_3(v_1^3)=v_1\eta^3,\]
 and the module structure over the elliptic spectral sequence for $tmf$.
\end{lem}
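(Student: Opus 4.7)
The plan has three parts: (1) establish the stated linearities, (2) compute $d_3$ on the classes $v_1^2$ and $v_1^3$, and (3) propagate to all of $E_2(V(0))$ using the ring description in \Cref{thmE2V0all}.

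For the linearities, the $v_1^4$-linearity is an immediate consequence of the existence of the $v_1^4$ self-map of $V(0)$ discussed in \Cref{v1selfmaps}: this self-map induces a self-endomorphism of $tmf\wedge V(0)$ realizing multiplication by $v_1^4$, and so commutes with every $d_r$. The $\Delta$-linearity follows from the module structure of the elliptic spectral sequence for $tmf\wedge V(0)$ over that of $tmf$, combined with the fact that $d_3(\Delta)=0$ in the elliptic spectral sequence for $tmf$ (the first nontrivial differential on $\Delta$ appears at a later page; see \cite{tbauer}). The same argument yields linearity over the permanent cycles $\eta$, $\nu$, $\kappabar$.

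To deduce $d_3(v_1^2) = \eta^3$, I would import the classical differential $d_3(c_6) = c_4 \eta^3$ in the elliptic spectral sequence for $tmf$ (from \cite{tbauer}) via the map of spectral sequences induced by the unit $tmf\to tmf\wedge V(0)$. Under the mod-$2$ reduction, $c_6 \mapsto v_1^6$ and $c_4 \mapsto v_1^4$, so we obtain $d_3(v_1^6) = v_1^4 \eta^3$ in $E_3(V(0))$. Combined with $v_1^4$-linearity and $d_3(v_1^4) = d_3(c_4) = 0$, this gives $v_1^4 \cdot d_3(v_1^2) = v_1^4\eta^3$; since multiplication by $v_1^4$ is injective on the relevant bidegree of $E_3(V(0))$ (visible from \Cref{tmf-V0-E2}), we conclude $d_3(v_1^2) = \eta^3$. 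For $d_3(v_1^3)$, inspection of the $E_3$-chart in the target bidegree $(s, t-s) = (3, 5)$ shows that the only nonzero class is $v_1\eta^3$, so $d_3(v_1^3)$ is either $v_1\eta^3$ or zero. To rule out the latter, I would use a dimension count on $\pi_6(tmf\wedge V(0))$: the short exact sequence
\[0\to (\pi_6 tmf)/2 \to \pi_6(tmf\wedge V(0)) \to \ker_2(\pi_5 tmf) \to 0\]
pins down $|\pi_6(tmf\wedge V(0))|$, and the survival of $v_1^3$ to $E_\infty$ would force this to be strictly larger than allowed.

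With the linearities and these two differentials in hand, all remaining $d_3$-values in $E_2(V(0))$ follow by combining them with classes imported from the $tmf$ spectral sequence via the module structure, and the ring description of \Cref{thmE2V0all} organizes the bookkeeping. The main obstacle I anticipate is the computation of $d_3(v_1^3)$: odd powers of $v_1$ have no integral analogue in the $tmf$ spectral sequence, so no direct import is available, and the $V(0)$ spectral sequence is not multiplicative (since $V(0)$ is not a ring spectrum at $p=2$), so no derivation argument applies. The fallback dimension count requires assembling known data on $\pi_5 tmf$ and $\pi_6 tmf$; an elegant alternative would be to apply the Geometric Boundary Theorem \Cref{geobound} together with an explicit cobar calculation of $\delta_2(v_1^3)$, then pull back $d_3(\delta_2(v_1^3))$ along $\delta_2$ to detect $d_3(v_1^3)$.
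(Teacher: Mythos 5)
Your proof is correct, but you establish the two generating differentials by a genuinely different route than the paper. The paper's proof of this step is a one-liner: both $d_3(v_1^2)=\eta^3$ and $d_3(v_1^3)=v_1\eta^3$ already occur in the Adams--Novikov spectral sequence computing $\pi_*V(0)$ over the sphere (a classical computation, cited to Ravenel), and they are pushed forward along the map of spectral sequences of \Cref{rem:mapss}; the $\Delta$-linearity and the propagation via the module structure and degree reasons are then exactly as in your proposal. Your alternative for $d_3(v_1^2)$ --- importing $d_3(c_6)=c_4\eta^3$ from the $tmf$ spectral sequence, reducing mod $2$ to $d_3(v_1^6)=v_1^4\eta^3$, and cancelling $v_1^4$ using injectivity of $v_1^4$-multiplication $E_3^{3,6}(V(0))\to E_3^{3,14}(V(0))$ --- is valid, and your dimension count for $d_3(v_1^3)$ goes through: $\pi_5 tmf=0$ and $\pi_6 tmf\cong\Z/2$ give $\pi_6(tmf\wedge V(0))\cong\Z/2$, which is already accounted for by $\nu^2$; since $v_1^3$ has filtration $0$ and every potential target of a $d_r$ on it for $r\geq 5$ is zero by that page, the only way for $v_1^3$ to die is $d_3(v_1^3)=v_1\eta^3$. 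What the paper's route buys is brevity (no injectivity check, no dimension count); what yours buys is that everything is deduced internally from the $tmf$-level spectral sequences rather than from the sphere-level Adams--Novikov computation. Your Geometric Boundary Theorem alternative also works and is arguably the cleanest way to get $d_3(v_1^3)$: $\delta_2(v_1^3)$ is the filtration-one class in $E_2^{1,6}(S^0)$ whose $d_3$ kills $\eta^4$, so $\delta_3(d_3(v_1^3))=\eta^4\neq 0$ forces $d_3(v_1^3)=v_1\eta^3$.
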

\begin{proof}
The two listed $d_3$-differentials occur in the Adams-Novikov spectral sequence computing $\pi_*V(0)$ so happen here also by naturality. 
See, for example, \cite[Theorem 5.13 (a)]{ravnovice}. Since $\Delta$ is a $d_3$-cycle in the elliptic spectral sequence computing $\pi_*tmf$ and the elliptic spectral sequence for $V(0)$ is a module over this spectral sequence, the $d_3$-differentials are $\Delta$-linear. For degree reasons (making use of $\Delta$ and $\kappabar$-linearity), these determine all $d_3$-differentials.
\end{proof}

\begin{figure}
\includegraphics[page=1, width=\textwidth]{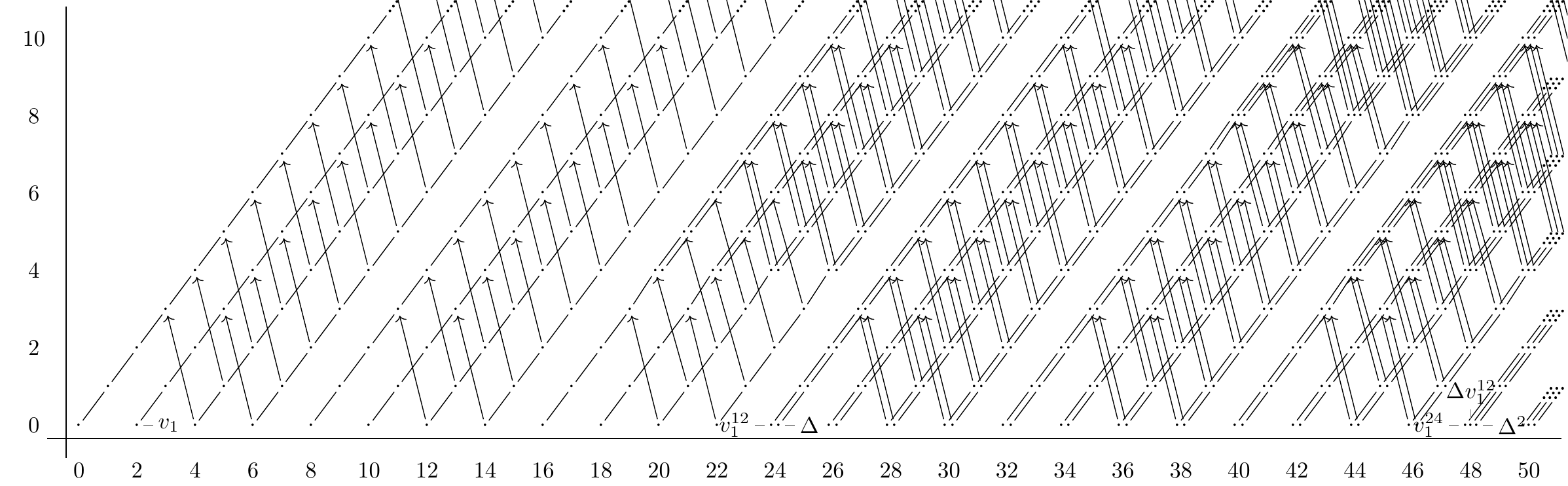}
\caption{The $d_3$-differentials}
\label{V0-d3}
\end{figure}

\begin{figure}
\includegraphics[page=1, width=\textwidth]{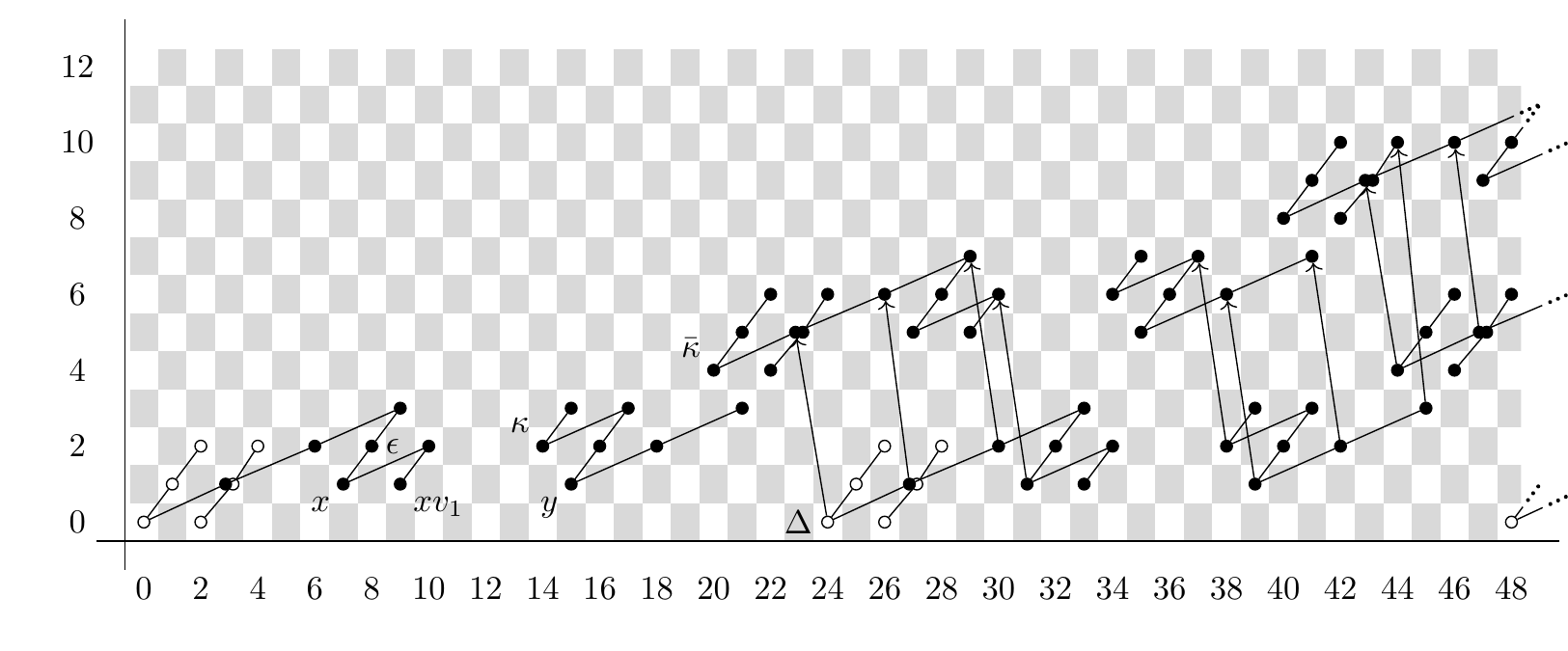}
\caption{$d_5$ and $d_7$-differentials in stems 0-48. A $\circ$ denotes $\F_2[v_1^4]$.}
\label{V0-0-50}
\end{figure}

\begin{rem}
On the $E_5$-page, all classes in filtrations $s\geq 3$ are $v_1^4$-torsion. The $v_1^4$-free classes are concentrated in stems $t-s \not\equiv 5,6,7 \mod 8$.
\end{rem}

\subsection{The $d_5$-differentials}
\begin{lem}[\Cref{V0-0-50}]
The $d_5$-differentials are $\Delta^2$-linear. They are determined by this linearity, the differential
\[
 d_5(\Delta)=\kappabar\nu ,
\]
 and the module structure over the elliptic spectral sequence for $tmf$.
\end{lem}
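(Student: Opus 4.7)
My proof plan has three stages corresponding to the three assertions of the lemma.

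\emph{Stage one: $\Delta^2$-linearity.} I will apply the Leibniz rule. Since the elliptic spectral sequence for $tmf \wedge V(0)$ is a module over that for $tmf$ and $\Delta^2$ lies in the image of $E_r(tmf) \to E_r(V(0))$, one has
\[d_5(\Delta^2 \cdot z) = d_5(\Delta^2) \cdot z + \Delta^2 \cdot d_5(z).\]
In the $tmf$ spectral sequence $d_5(\Delta^2) = 2\Delta \cdot d_5(\Delta)$, which vanishes after reduction mod $2$, giving the desired $\Delta^2$-linearity on the $V(0)$-side.

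\emph{Stage two: the key differential $d_5(\Delta) = \kappabar \nu$.} I plan to import this via naturality of the map of elliptic spectral sequences induced by the unit $\mathbb{S} \to V(0)$. The corresponding differential $d_5(\Delta) = \bar\kappa\nu$ in the elliptic spectral sequence for $tmf$ is classical; see \cite{tbauer} and \cite[Ch.\,15]{tmfbook}. The main thing to verify is that $\kappabar\nu$ remains non-zero on $E_5(V(0))$, which follows from \Cref{lem:d3V0} by a bidegree inspection: $\kappabar\nu$ is neither a source nor a target of any $d_3$-differential and therefore persists to $E_5$.

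\emph{Stage three: the remaining $d_5$-differentials.} By $\Delta^2$-linearity, it suffices to determine $d_5$ on classes in a fundamental domain (say $0 \le t-s < 48$). Classes in the image of $E_r(tmf) \to E_r(V(0))$ inherit their differentials from the $tmf$ spectral sequence by the module structure, while the $\delta_2$-generators $v_1$, $x$, $y$ and their products with image classes are controlled by combining $\eta$-, $\nu$-, $\kappabar$-, and $v_1^4$-linearity of $d_5$ (all valid, since these classes come from $tmf$) with the known $tmf$ differentials. Most such classes turn out to be $d_5$-cycles for bidegree reasons, and the finitely many exceptions can be read off \Cref{V0-0-50}.

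The most delicate step will be Stage two: one has to pin down the correct $tmf$-SS differential, confirm that no alternative $d_5$-target in bidegree $(5,28)$ competes with $\kappabar\nu$, and check that $\kappabar\nu$ is not already killed on $E_3(V(0))$. All of this ultimately reduces to a finite, stem-by-stem verification in stems $23$--$24$ of the $E_3$- and $E_5$-pages depicted in \Cref{V0-0-50}.
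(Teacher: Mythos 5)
Your proposal is correct and follows essentially the same route as the paper: the paper imports $d_5(\Delta)=\kappabar\nu$ by naturality from the elliptic spectral sequence of $tmf$, obtains $\Delta^2$-linearity from the Leibniz rule together with the fact that $E_5(V(0))$ is $2$-torsion (the same device the paper spells out for $\Delta^4$-linearity at $E_7$), and settles the remaining differentials by the module structure and degree reasons. Your extra check that $\kappabar\nu$ survives to $E_5(V(0))$ is a harmless verification the paper leaves implicit.
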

\begin{proof}
The same differential occurs in the spectral sequence for $\pi_*tmf$. The rest of the argument is as in the proof of \Cref{lem:d3V0}.
\end{proof}

   \begin{figure}
  \includegraphics[width=\textwidth]{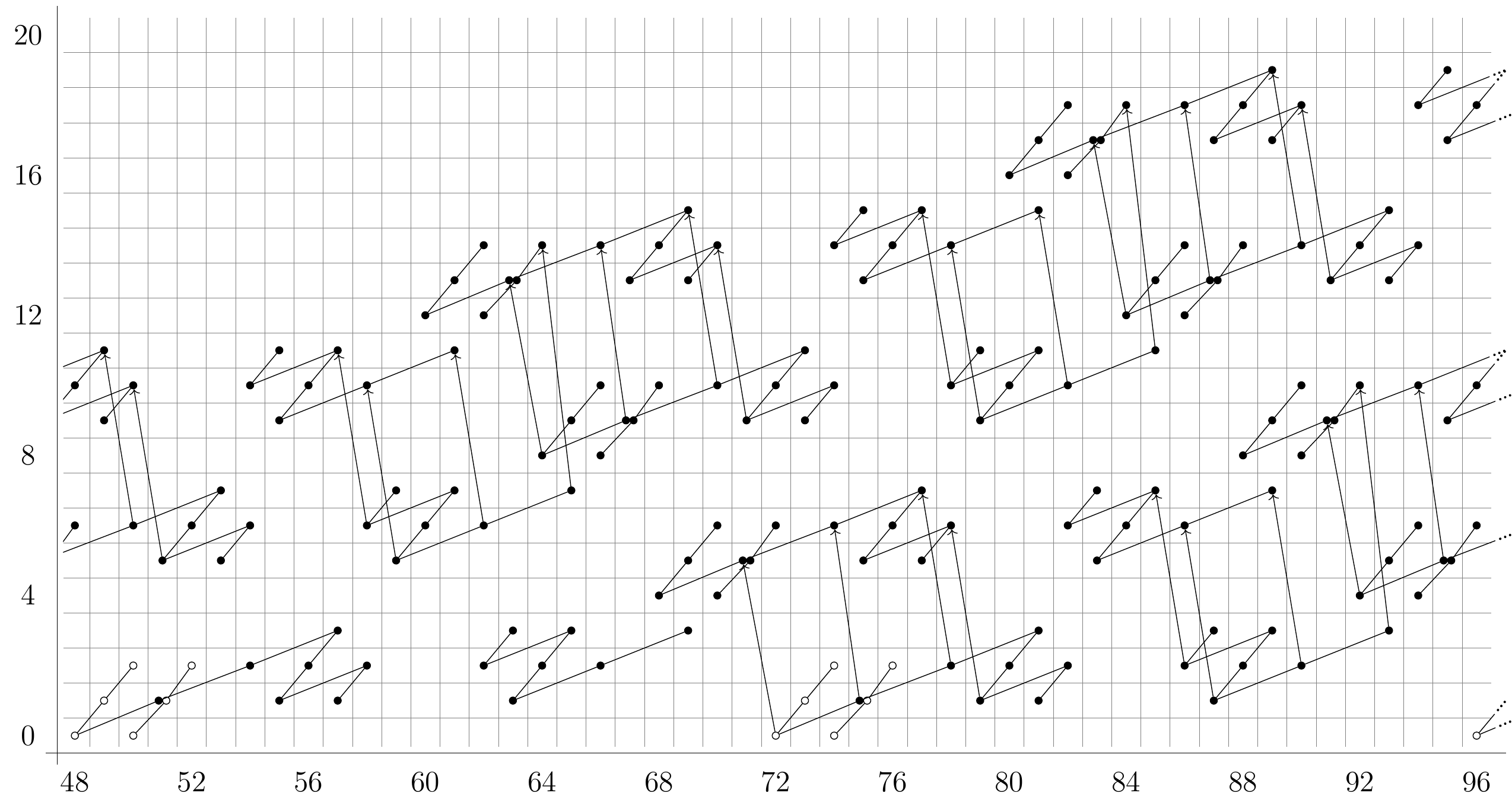}
  
  \vspace{0.1in}
  
    \includegraphics[width=\textwidth]{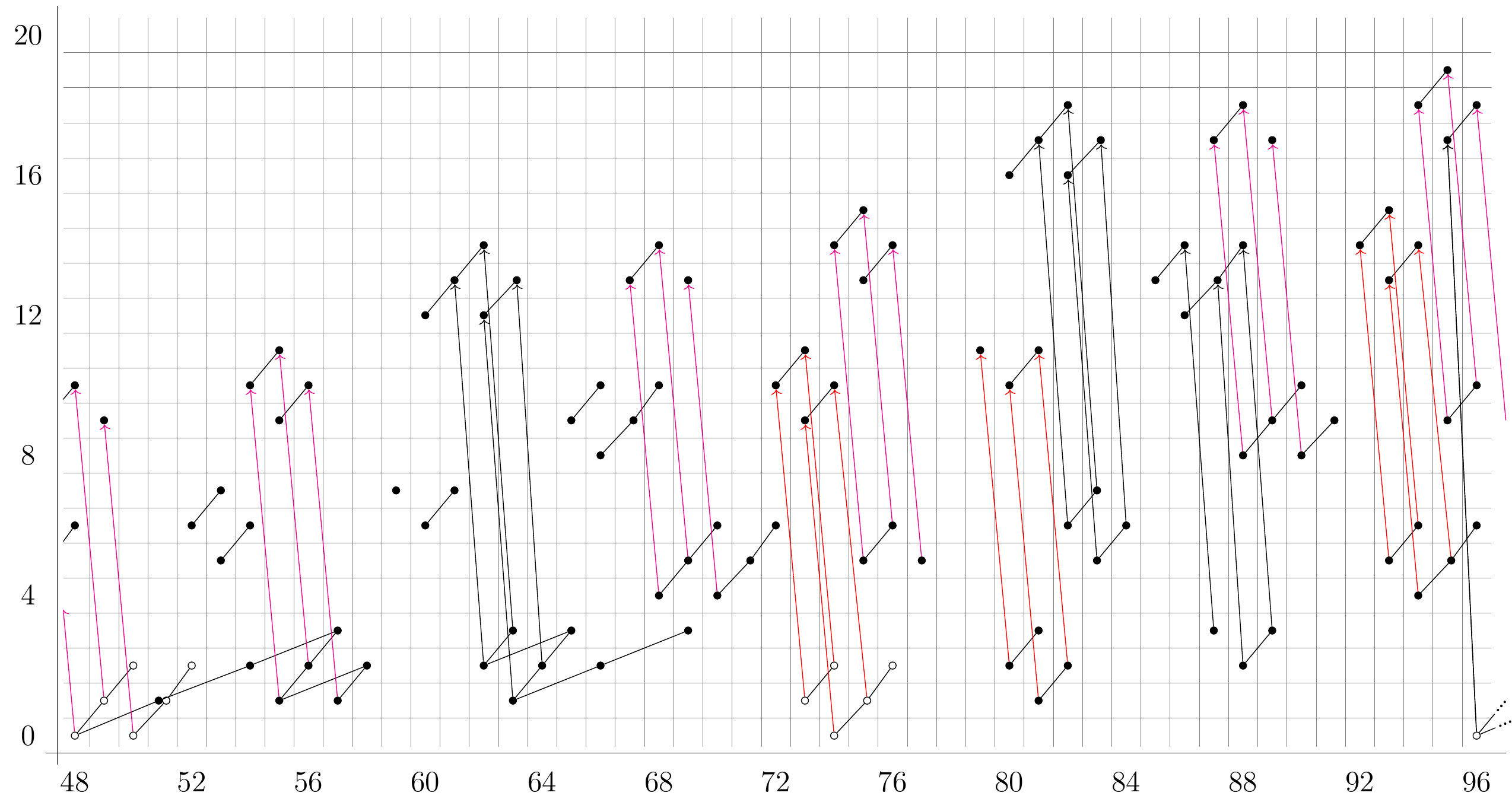}
  \caption{Differentials in stems $48$ to $96$}
  \label{figured5d7}
 \end{figure}

\subsection{Higher differentials}
Since all the classes in filtrations 4 and above are in the ideal generated by $\kappabar$, the differentials that have sources in filtrations 0-3 generate the other differentials with respect to the module structure over the elliptic spectral sequence for $tmf$ (denoted $E_r^{*,*}(S^0)$). We focus on these differentials in the narrative. See Figures~\ref{V0-0-50}, \ref{figured5d7}, \ref{figd9andthensome} and \ref{figd9andthensomemore}.
\begin{lem}
The $d_7$-differentials are $\Delta^4$-linear and determined by
\begin{align*}
d_7(\Delta \nu^2 y)&=\kappabar^2 \eta^2 v_1 , \\
d_7(\Delta^{3} \nu^2 y)&=\Delta^{2}\kappabar^2 \eta^2 v_1  \ ,
\end{align*}
 and the module structure over the elliptic spectral sequence for $tmf$.
\end{lem}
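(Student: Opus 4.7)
The plan is to combine the Geometric Boundary Theorem (\Cref{geobound}) with the relation $\delta_2(\nu^2 y) = 4\kappabar$ from \Cref{thmE2V0all}, and to use the known $d_7$-structure of the elliptic spectral sequence for $tmf$ to pin down the two differentials.

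First I would establish $\Delta^4$-linearity. The previous lemma gives $d_5(\Delta) = \kappabar\nu$, so $d_5(\Delta^4) = 4\Delta^3 \kappabar\nu \equiv 0 \pmod 2$, meaning $\Delta^4$ survives to $E_7$ in the $V(0)$ ESS. Moreover, in the $tmf$ ESS the class $\Delta^4$ is itself a $d_7$-cycle (its first nontrivial differential is a $d_9$), and by naturality along $tmf \to tmf\wedge V(0)$ the same is true in the $V(0)$ ESS. The Leibniz rule for the action of the $tmf$ ESS then gives $\Delta^4$-linearity of $d_7$.

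Second I would apply the Geometric Boundary Theorem. By linearity of $\delta$ with respect to classes coming from the $tmf$ ESS, we have
\[\delta_2(\Delta^{2k+1}\nu^2 y) = 4\Delta^{2k+1}\kappabar\]
for $k = 0, 1$. Each of these classes survives to $E_7$ in the $tmf$ ESS and supports a nontrivial $d_7$-differential in that range. By the compatibility $\delta_7 \circ d_7 = d_7 \circ \delta_7$, the targets of $d_7(\Delta^{2k+1}\nu^2 y)$ in the $V(0)$ ESS must map under $\delta_7$ to these $tmf$-targets. The filtration-raising property of $\delta$, together with $\delta_2(v_1) = \eta$, forces the unique preimages at the appropriate bidegrees to be $\kappabar^2 \eta^2 v_1$ and $\Delta^2 \kappabar^2 \eta^2 v_1$ respectively. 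Note that the two differentials cannot be deduced from each other via $\Delta^4$-linearity alone, since $\Delta \cdot \Delta^4 = \Delta^5$ rather than $\Delta^3$, so each must be verified independently.

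Finally I would check that these two differentials, together with $\Delta^4$-linearity and the $tmf$ ESS module structure, account for all $d_7$'s in the range $0\leq t-s< 192$. Every class at $E_7$ in filtration $\geq 4$ lies in the ideal generated by $\kappabar$ and reduces to the $tmf$ case by naturality; in filtrations $\leq 3$, an explicit inspection of $E_7$ shows the two listed sources are the only candidates. The main obstacle I anticipate is the second step: one must know the precise $d_7$-targets of $4\Delta\kappabar$ and $4\Delta^3\kappabar$ in the $tmf$ ESS and invert $\delta_7$ unambiguously, matching bidegrees against the $V(0)$ ESS. The appearance of the factor $\eta^2 v_1$ rather than $\eta^3$ on the $V(0)$ side, dictated by the filtration shift of $\delta$, is essentially the computational heart of this lemma.
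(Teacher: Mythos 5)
Your overall strategy is the same as the paper's: establish $\Delta^4$-linearity via the Leibniz rule, then obtain the two listed differentials by feeding $\delta_2(\Delta^{2k+1}\nu^2 y)=4\Delta^{2k+1}\kappabar$ and the corresponding $d_7$-differentials of the elliptic spectral sequence for $tmf$ into the Geometric Boundary Theorem (\Cref{geobound}). That central step is correct and is exactly how the paper argues, including your observation that the two differentials must be checked separately since $\Delta^4$-linearity relates stems $24k$ and $24k+96$ only.

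However, your justification of $\Delta^4$-linearity rests on a false premise. You assert that $\Delta^4$ is a $d_7$-cycle in the elliptic spectral sequence for $tmf$, with first nontrivial differential a $d_9$. In fact only $2\Delta^4$ survives to $\pi_{96}tmf$, and the paper's proof starts from the nontrivial differential $d_7(\Delta^4)=\Delta^3\eta^3\kappabar$ (written there as $\eta^2\kappabar=4\nu\kappabar$); so naturality does \emph{not} hand you $d_7(\Delta^4 a)=\Delta^4 d_7(a)$ for free. The correct argument is that the Leibniz error term $d_7(\Delta^4)\cdot a$ vanishes on $E_7(V(0))$ because it is a multiple of $4$ (equivalently, a multiple of $\eta^3=d_3(v_1^2)$, which already acts trivially from $E_5(V(0))$ on), the module being $2$-torsion. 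With that repair the linearity goes through. A second, smaller point: in your final completeness check, "the two listed sources are the only candidates" is not quite right; degree reasons also allow a $d_7$ on $\Delta^2\nu^2 y$, which the paper rules out separately by observing that $\Delta^2 y$ is a $d_7$-cycle by sparseness and $\nu^2$ is a permanent cycle. You should address this candidate explicitly.
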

\begin{proof}
First, note that $d_7(\Delta^4)=\eta^2 \kappabar = 4\nu \kappabar$ in the spectral sequence for $tmf$. Therefore, for any $a\in E_{7}(V(0))$
\[d_7(\Delta^4a) = 4\nu \kappabar a+ \Delta^4 d_7(a).\]
Since $4E_{7}(V(0))=0$, we get $\Delta^4$-linearity.

We give a proof for the differential $d_7(\Delta \nu^2 y)=\kappabar^2 \eta^2 v_1 $. The proof for the other differential is similar.
In the spectral sequence for $tmf$, we have
\[d_7( \Delta 4 \kappabar )= \eta^3 \kappabar^2.\]
But, for $\delta_2 \colon E_2^{s,t}(V(0)) \to E_2^{s+1,t}(S^0)$ the connecting homomorphism, we have
\[ \delta_2(\Delta  \nu^2 y)  = \Delta 4 \kappabar\] and 
\[\delta_2(\kappabar^2 \eta^2 v_1) =  \kappabar^2 \eta^3.\] The differential when $i=0$ then follows from \Cref{geobound}.

Making use of the module structure over the spectral sequence for $tmf$, the only other possible $d_7$-differential for degree reasons is on $\Delta^2 \nu^2 y$. But this class is in fact a $d_7$-cycle since $\Delta^2y$ is a $d_7$-cycle by sparseness. 
\end{proof}

\begin{lem}\label{lem:d9-first}
Using the module structure over the elliptic spectral sequence for $tmf$, the $d_9$-differentials are determined by the following differentials with $i=0,1$:
\begin{enumerate}
\item
$d_9(\Delta^{2+4i})=\Delta^{4i}\kappabar^2 \aseven$,  
\item
$d_9(\Delta^{2+4i} x) =
\Delta^{4i}\kappabar^2\kappa$,
\item $d_9(\Delta^{3+4i}\eta)=\Delta^{1+4i}\kappabar^2\epsilon$
\item $d_9(\Delta^{3+4i}\epsilon)=\Delta^{1+4i}\kappa \kappabar^2\eta$
\item
$d_9(\Delta^{2+4i} v_1)=
\Delta^{4i}\kappabar^2  v_1\aseven
$
\item
$d_9(\Delta^{2+4i} v_1\aseven)=\Delta^{4i}\kappabar^2 \eta \afifteen$
\item
$d_9(\Delta^{3+4i}v_1)=\Delta^{1+4i}\kappabar^2  v_1\aseven$
\item 
$d_9(\Delta^{3+4i}v_1\aseven)=\Delta^{1+4i} \kappabar^2 \eta \afifteen$
\end{enumerate}
\end{lem}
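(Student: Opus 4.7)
The plan is to establish the eight $d_9$-differentials by combining three tools: naturality under the spectral-sequence map $E_r^{*,*}(S^0) \to E_r^{*,*}(V(0))$ induced by inclusion of the bottom cell; the Geometric Boundary Theorem (\Cref{geobound}), which produces connecting maps $\delta_r$ that commute with $d_r$ and satisfy $\delta_2(v_1) = \eta$, $\delta_2(x) = \nu^2$, $\delta_2(y) = \kappa$ as recorded in \Cref{thmE2V0all}; and the $E_r(S^0)$-module structure on $E_r(V(0))$. By $\Delta^4$-, $\kappabar$-, and $v_1^4$-linearity of $d_9$, it suffices to treat the $i=0$ instance of each differential, with the $i=1$ case then following by multiplication by $\Delta^4$.

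First I would handle (1). The source $\Delta^2$ lies in the image of $E_r(S^0)\to E_r(V(0))$, so by naturality $d_9(\Delta^2)$ in $V(0)$ agrees with the mod-$2$ reduction of $d_9(\Delta^2)$ in the elliptic spectral sequence for $tmf$ (as computed by Bauer), once the target is identified within $E_9(V(0))$. The candidate target $\kappabar^2 \aseven$ is singled out by observing that it is the unique non-trivial class in the appropriate bidegree of $E_9(V(0))$, and that its $\delta_2$-image $\kappabar^2\nu^2$ vanishes on $E_6(S^0)$ (because $d_5(\Delta)=\kappabar\nu$ forces $\kappabar^2\nu^2=(\kappabar\nu)^2=0$), consistent with $\Delta^2$ being a permanent cycle in $E_r(S^0)$ at this filtration. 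Differential (2) then follows from (1) by multiplying by $\aseven$ (a $d_r$-cycle for $r\leq 7$ by sparseness) and invoking the relation $\aseven^2=\kappa$ from \Cref{thmE2V0all}. Differentials (3) and (4) are obtained by $E_r(S^0)$-linearly multiplying (1) and (2) by $\eta$ and $\epsilon$ respectively, with the targets rewritten using the relations of \Cref{thmE2V0all}.

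Differentials (5)--(8) are the $v_1$-translates of (1)--(4). All $v_1$-differentials originating from filtration $\leq 3$ have already been accounted for at the $d_3$-page by \Cref{lem:d3V0}, so the surviving $v_1$-multiples of the sources of (1)--(4) are $d_r$-cycles for $r<9$ on $E_9(V(0))$. Multiplying (1), (2), (3), (4) by $v_1$ and using $E_r(S^0)$-linearity yields (5), (6), (7), (8), with the relations $v_1y=0$, $\eta\afifteen = v_1 \aseven^2$, and $\afifteen^2=\nu^2\Delta$ of \Cref{thmE2V0all} used to rewrite targets into the stated form. As a cross-check, one applies $\delta_2$ to each of (5)--(8) and verifies that the image is a known $d_9$ in the elliptic spectral sequence for $tmf$.

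The main obstacle is the careful accounting of which classes have survived to $E_9$ and the verification that each claimed target is non-zero on that page — i.e., has not already been killed by an earlier $d_3$, $d_5$, or $d_7$. A secondary obstacle is ruling out lower-filtration corrections to the stated targets, which we would address by a rank count on $\pi_*(tmf\wedge V(0))$ via the long exact sequence
\[ 0 \to (\pi_i tmf)/2 \to \pi_i(tmf\wedge V(0)) \to \ker_2(\pi_{i-1}tmf)\to 0 ,\]
together with \Cref{TruncSS} where applicable, to confirm that the $E_{10}$-page has the expected ranks and that (1)--(8) exhaust the $d_9$-differentials with sources in filtrations $s\leq 3$.
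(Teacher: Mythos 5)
Your proposal has a structural flaw that runs through most of the eight cases: the elliptic spectral sequence for $tmf\wedge V(0)$ is \emph{not} multiplicative (the paper is explicit about this); it is only a module over the elliptic spectral sequence for $tmf$. The classes $v_1$, $\aseven=x$ and $\afifteen=y$ have non-trivial image under $\delta_2$ (namely $\eta$, $\nu^2$, $\kappa$), so by exactness of $E_2(S^0)\xrightarrow{2}E_2(S^0)\to E_2(V(0))\xrightarrow{\delta}E_2(S^0)$ they are \emph{not} reductions of classes from the sphere, and there is no Leibniz rule allowing you to multiply a differential by them. This invalidates your derivation of (2) from (1) by multiplying by $\aseven$, and your derivation of (5)--(8) by multiplying by $v_1$: only $v_1^4$-linearity is available (from the $v_1^4$ self-map), not $v_1$-linearity. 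The paper obtains (5)--(8) from the Geometric Boundary Theorem ($\delta_r d_r = d_r\delta_r$ together with $\delta(v_1)=\eta$, $\delta(v_1\aseven)=\epsilon$, etc.), which you list only as a ``cross-check''; that check is in fact the whole argument. Your step for (1) also fails: in $E_5(S^0)$ one has $d_5(\Delta^2)=2\Delta\nu\kappabar\neq 0$, so $\Delta^2$ does not survive to $E_9(S^0)$ (only $4\Delta^2$ does, and it reduces to zero mod $2$), and the proposed target $\kappabar^2\aseven$ has $\delta_2(\kappabar^2\aseven)=\kappabar^2\nu^2\neq 0$, hence is not the mod-$2$ reduction of anything; there is simply no differential in the $tmf$ spectral sequence whose reduction is $d_9(\Delta^2)=\kappabar^2\aseven$. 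The paper's argument imports $d_9(\Delta^2\eta)=\kappabar^2\epsilon$ from $tmf$ (both source and target \emph{do} survive mod $2$, as $\eta\cdot\Delta^2$ and $\eta\cdot\kappabar^2\aseven$) and then divides by $\eta$ using the module structure; the same trick with $d_9(\Delta^2\epsilon)=\kappabar^2\kappa\eta$ gives (2).

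Two smaller problems: multiplying (1) and (2) by $\eta$ and $\epsilon$ lands on $\Delta^2\eta$ and the wrong stem entirely, not on the sources $\Delta^{3}\eta$ and $\Delta^{3}\epsilon$ of (3) and (4) --- the degrees are off by a factor of $\Delta$; in the paper (3) and (4) are just direct images of the corresponding differentials in the spectral sequence for $tmf$. Finally, for exhaustiveness your dimension-count plan is one of the standard tools, but you do not isolate the actual remaining candidates, namely possible $d_9$'s on $\Delta^4\aseven$ and $\Delta^4 v_1\aseven$; the paper rules these out by observing that their potential targets must survive to serve as ($\kappabar$-multiples of) sources of the later $d_{11}$-differentials.
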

\begin{proof}
We prove the claim for $i=0$. To prove $i=1$, one uses exactly the same arguments in later stems.

In order to show (1), note that $\Delta^2$ cannot support any $d_r$ for $r<9$ by sparseness. Then we have the differential from the elliptic spectral sequence for $tmf$ 
\[
d_9(\Delta^2\eta)=\kappabar^2\epsilon
\]
and  this differential becomes $\eta$ divisible in the spectral sequence for $tmf\wedge V(0)$. For (2), we use the same argument with the differential $d_9(\Delta^{2} \epsilon) =
\Delta \kappabar^2\kappa\eta$ from the elliptic spectral sequence for $tmf$.

The differentials (3) and (4) are the images of the same differentials in the  elliptic spectral sequence for $tmf$.
The differentials (5)--(8) are proved using \Cref{geobound}. For example, the differential $d_9(\Delta^2\eta)=\kappabar^2\epsilon$ and the facts that $\delta(v_1)=\eta$ and $\delta(v_1 \aseven)=\epsilon$ together imply (5). The others are similar.

It remains to argue that there are no other generating $d_9$-differentials. As noted above, it suffices to determine this on classes in filtration less than four. Combining a comparison with the spectral sequence for $tmf$ and sparseness, we see that the only question is whether or not the classes $\Delta^4 \aseven$ and  $\Delta^4 v_1\aseven$ support non-trivial $d_9$s. However, the possible targets are the sources of $\kappabar$-multiples of the $d_{11}$-differentials (1) and (3) of \Cref{d11} shown below, which settles the question.
\end{proof}

    \begin{figure}
  \includegraphics[width=\textwidth]{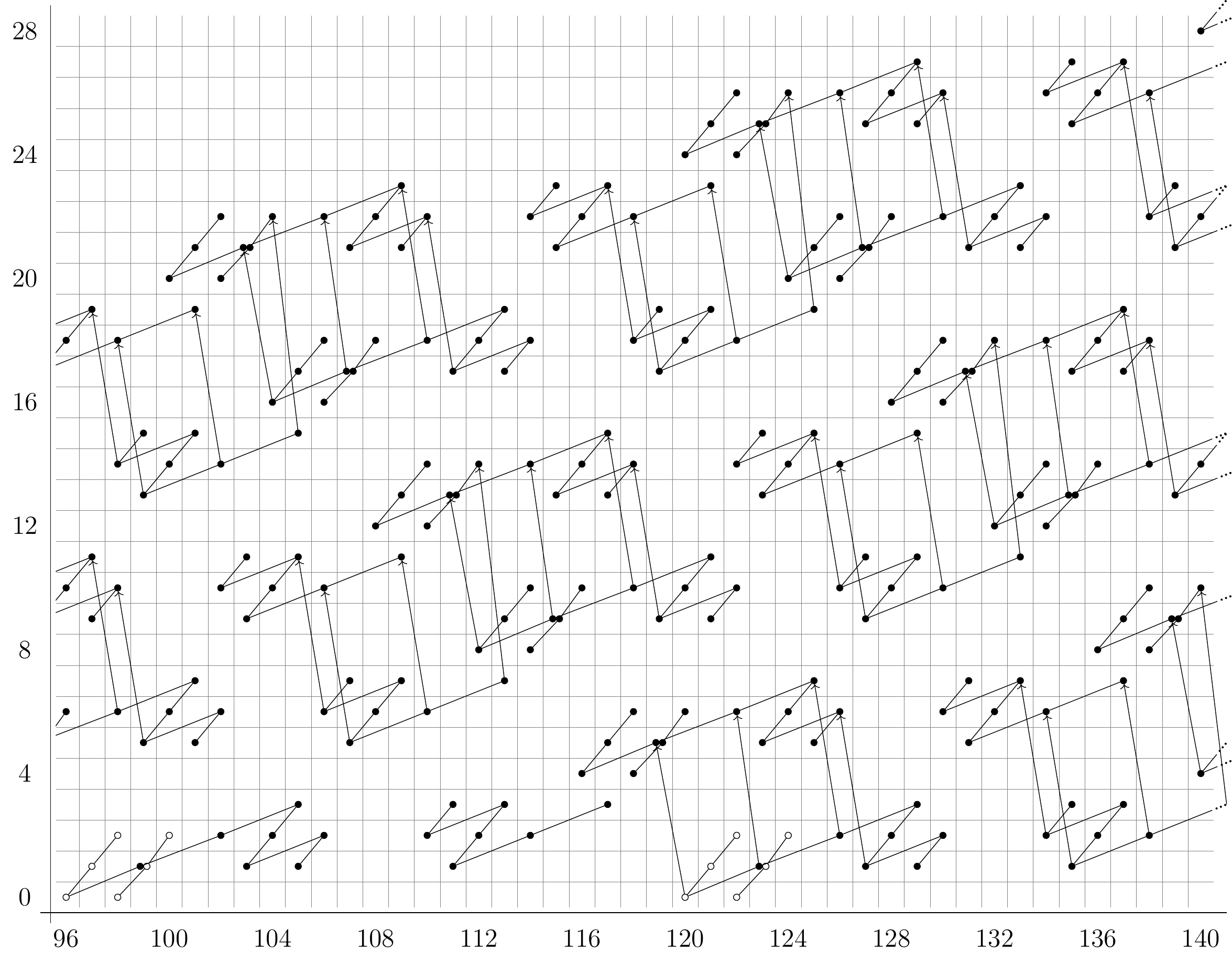}
  
  \vspace{0.1in}
  
    \includegraphics[width=\textwidth]{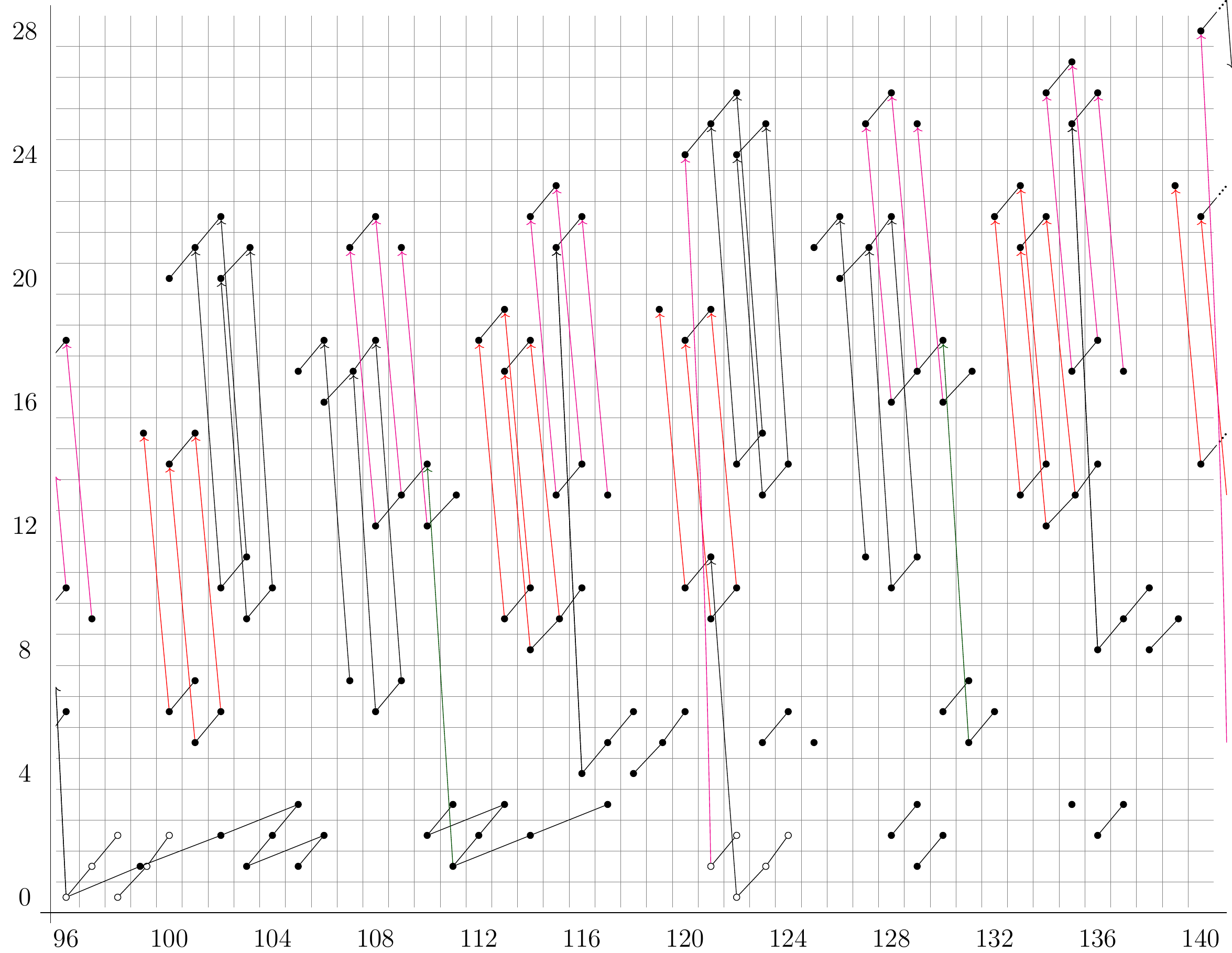}
  
  \caption{Differentials in stems $96$ to $140$}
  \label{figd9andthensome}
 \end{figure}
 
     \begin{figure}
  \includegraphics[width=\textwidth]{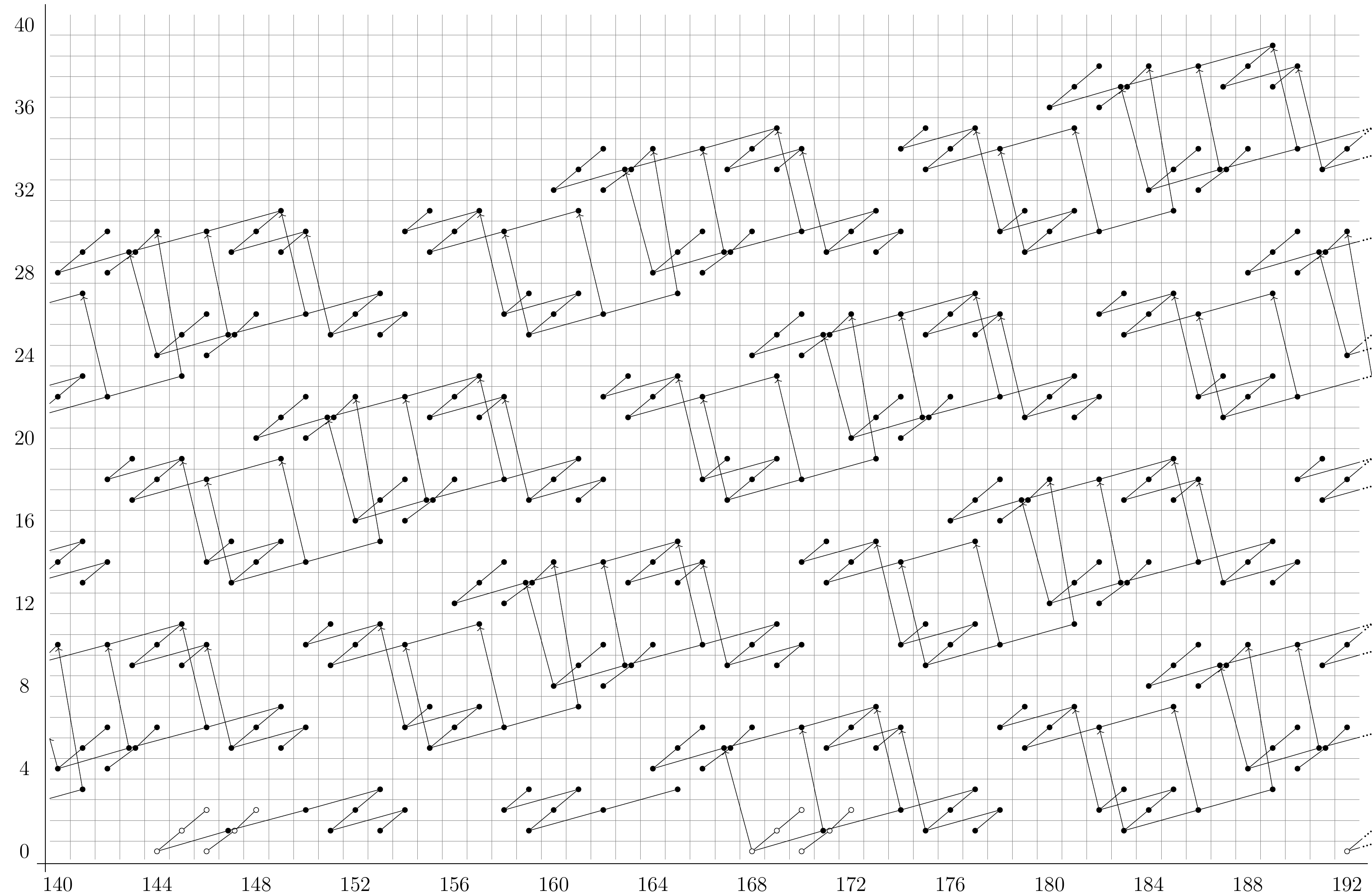}
  
    \vspace{0.3in}
  
    \includegraphics[width=\textwidth]{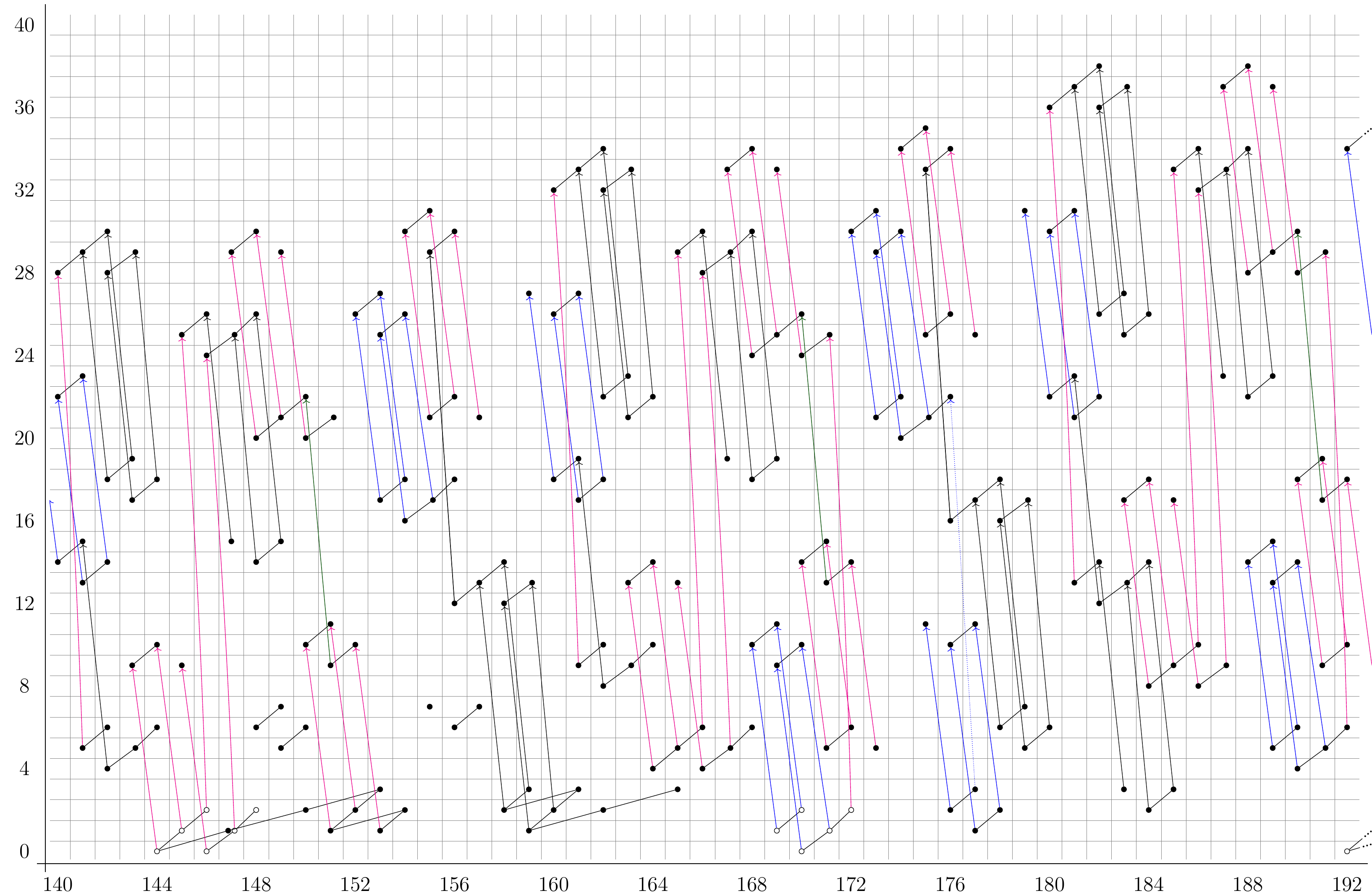}
  \caption{Differentials in stems $140$ to $192$}
  \label{figd9andthensomemore}
 \end{figure}
 
\begin{lem}\label{d11}
Using the module structure over the elliptic spectral sequence for $tmf$, the $d_{11}$-differentials are determined by the following differentials with $i=0,1$:
\begin{enumerate}
\item
$ d_{11}(\Delta^{2+4i}\kappa)=\Delta^{4i}\kappabar^3\eta$
\item 
$ d_{11}(\Delta^{3+4i}\kappa\eta)=\Delta^{1+4i}\kappabar^3\eta^2$ 
\item 
$d_{11}(\Delta^{2+4i} \afifteen)=\Delta^{4i}\kappabar^3 v_1$
\item
$
 d_{11}(\Delta^{3+4i}\kappa v_1)=\Delta^{1+4i} v_1 \kappabar^3 \eta$
\item
 $d_{11}(\Delta^5 v_1)=\Delta^{3}\kappabar^2 \nu^3 $
\end{enumerate}
\end{lem}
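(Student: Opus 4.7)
The proof will follow the same three–technique template used in the preceding lemmas of this section: (a) naturality of the map of spectral sequences induced by $tmf \to tmf\wedge V(0)$, (b) the Geometric Boundary Theorem (\Cref{geobound}), and (c) linearity of $d_{11}$ over the permanent cycles $\eta, \nu, \kappabar, \Delta^8, v_1^4$ together with the module structure over the elliptic spectral sequence for $tmf$. As in \Cref{lem:d9-first}, it suffices to exhibit differentials with source in filtration $s\leq 3$ and then verify by a finite sparseness check that no other generators support a nontrivial $d_{11}$.

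For (1) and (2), the same two differentials occur in the elliptic spectral sequence for $tmf$ (they are among the standard $d_{11}$'s in Bauer's computation). I would import them by naturality, checking in each case that both source and target are non-zero on $E_{11}(V(0))$ using the results of the preceding $d_3, d_5, d_7, d_9$ computations. For (3) and (4), I would apply the Geometric Boundary Theorem to (1) and (2) respectively. Using $\delta_2(y)=\kappa$ and $\delta_2(v_1)=\eta$, one computes
\begin{align*}
\delta_2(\Delta^{2+4i}\,y) &= \Delta^{2+4i}\kappa, &
\delta_2(\Delta^{4i}\kappabar^3 v_1) &= \Delta^{4i}\kappabar^3 \eta, \\
\delta_2(\Delta^{3+4i}\kappa v_1) &= \Delta^{3+4i}\kappa\eta, &
\delta_2(\Delta^{1+4i} v_1\kappabar^3 \eta) &= \Delta^{1+4i}\kappabar^3\eta^2,
\end{align*}
so that (1) pulls back via $\delta_2$ to (3) and (2) pulls back to (4).

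Differential (5) is the most delicate: since $\nu^3$ lies in the image from $E_2(S^0)$, we have $\delta_2(\Delta^3\kappabar^2\nu^3)=0$, and so the Geometric Boundary Theorem does not produce (5) in the forward direction. The plan is to combine several constraints. First, use the module structure and sparseness to enumerate the possible $d_{11}$-targets on $\Delta^5 v_1 \in E_{11}^{0,122}$; one checks that $\Delta^3\kappabar^2\nu^3$ is essentially the only option of the correct bidegree remaining on $E_{11}$. Second, use $v_1^4$-linearity together with $v_1\nu=0$ to rule out alternatives by showing that any other candidate differential is inconsistent with the already-established differentials on $v_1^{4k}\Delta^{5}$. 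Finally, the differential is forced by a rank count coming from the long exact sequence
\[
\cdots \to \pi_{122}tmf \xrightarrow{2} \pi_{122}tmf \to \pi_{122}(tmf\wedge V(0)) \to \ker_2(\pi_{121}tmf) \to 0,
\]
combined with the fact that, in the $tmf$ spectral sequence, the preimage $\Delta^5 \eta = \delta_2(\Delta^5 v_1)$ behaves in a prescribed way known from Bauer's computation of $tmf$.

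The main obstacle will be handling (5): unlike the others, it does not arise from any single clean instance of naturality or GBT, so I expect the proof to hinge on carefully combining the sparseness of the $E_{11}$-page, $v_1^4$-linearity of $d_{11}$, and homotopy-group dimension counts via the cofiber sequence $tmf \xrightarrow{2} tmf \to tmf\wedge V(0)$. Once (1)--(5) are in hand, completing the lemma is a finite verification that every other class in filtration $s\leq 3$ of $E_{11}$ is either already killed, in the image from the $tmf$ spectral sequence with a known fate, or else has no target of the correct bidegree—so that the module closure of (1)--(5) indeed accounts for all $d_{11}$'s.
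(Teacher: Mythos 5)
Your treatment of (1)--(4) is exactly the paper's: (1) and (2) are imported from the elliptic spectral sequence of $tmf$ by naturality, and (3) and (4) follow from them by the Geometric Boundary Theorem via $\delta_2(y)=\kappa$ and $\delta_2(v_1)=\eta$; no comment needed there. For (5) your plan is workable but aims at the wrong degree: the decisive fact is not a rank count in $\pi_{122}$ but the statement that $\pi_{121}(tmf\wedge V(0))$ contains no $v_1^4$-torsion (verified from $\pi_{120}tmf$ and $\pi_{121}tmf$ via the long exact sequence for $tmf\xrightarrow{2}tmf\to tmf\wedge V(0)$). The target $\Delta^3\kappabar^2\nu^3$ sits in filtration $\geq 3$, hence is $v_1^4$-torsion, and is a product of permanent cycles, so if it were not hit it would survive and contribute $v_1^4$-torsion to $\pi_{121}$; sparseness then leaves $\Delta^5 v_1$ as the only possible source. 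Arguing from the source side, as you propose, forces you to first control every other contribution to $\pi_{122}$ at $E_\infty$, which is more bookkeeping for the same conclusion.

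There is one concrete hole in your closing completeness check. You assert that every remaining filtration $\leq 3$ class "is either already killed, in the image from the $tmf$ spectral sequence with a known fate, or else has no target of the correct bidegree." The class $\Delta^7\eta^2 v_1$ in $(t-s,s)=(172,2)$ satisfies none of these: it is alive at $E_{11}$, it is not in the image from $tmf$, and there \emph{is} a nonzero class in $(171,13)$ (a $\kappabar$-multiple of $\Delta^4 y$) that it could hit. The paper excludes this $d_{11}$ by a forward reference: that potential target must survive to $E_{13}$ because it is the ($\kappabar$-multiple of the) source of the $d_{13}$-differential $d_{13}(\Delta^4 y)=\Delta^2\kappabar^3\eta^2$, which is itself forced independently by $\pi_{110}(tmf\wedge V(0))\cong\Z/2$. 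Without an argument of this kind your "finite verification" does not close.
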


\begin{proof}
The differentials (1) and (2) are images of the same differentials in the spectral sequence for $tmf$. The differentials (3) and (4) follow from (1) and (2) respectively using \Cref{geobound}. The differential (5) follows from the fact that $\pi_{121}(tmf \wedge V(0))$ does not contain $v_1^4$-torsion, which can be verified by comparing with $\pi_*tmf$ using the long exact sequence on $\pi_*$.

Sparseness and multiplicative structure guarantees that these are all the generating $d_{11}$-differentials, except for a possible $d_{11}$ on $\Delta^7\eta^2v_1$. However, the possible target is the source of the $\kappabar$-multiple of the $d_{13}$ below. 
\end{proof}

\begin{lem}
The $d_{13}$-differentials are determined by
\[d_{13}(\Delta^4 \afifteen)=\Delta^2 \kappabar^3\eta^2.\]
There are no $d_{15}$-differentials and the $d_{17}$-differentials are determined by
\[d_{17}(\Delta^4)=\kappabar^4 \afifteen.\]
The $d_{19}$-differentials are determined by
\[d_{19}(\Delta^7 \nu^3)=\kappabar^5 \Delta^3 v_1 \eta^2.\]
\end{lem}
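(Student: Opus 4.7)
The plan is to apply the three techniques already used throughout \Cref{secV0diffext}: (1) $\eta$, $\nu$, $\kappabar$, $\Delta^8$, $v_1^4$-linearity together with the module structure over the elliptic spectral sequence for $tmf$, which reduces the problem to a handful of generators in filtrations $s\leq 3$; (2) the Geometric Boundary Theorem \Cref{geobound}, which transfers differentials between the elliptic spectral sequences for $tmf$ and $tmf\wedge V(0)$ via the connecting map $\delta$, using $\delta_2(v_1)=\eta$, $\delta_2(x)=\nu^2$, $\delta_2(y)=\kappa$; and (3) rank counts via the long exact sequence on $\pi_*$ associated to $tmf\xrightarrow{2}tmf\to tmf\wedge V(0)$, which can force a differential purely by dimension.

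For the $d_{13}$-differential $d_{13}(\Delta^4 y)=\Delta^2\kappabar^3\eta^2$, the idea is to exhibit a $d_{13}$ in the elliptic spectral sequence for $tmf$ whose source is $\Delta^4\kappa=\delta_2(\Delta^4 y)$ and whose target is $\Delta^2\kappabar^3\eta^3=\delta_2(\Delta^2\kappabar^3\eta^2\cdot v_1)$; then \Cref{geobound} delivers the claimed differential in $V(0)$, with any higher-filtration error terms ruled out by sparseness. For $d_{17}(\Delta^4)=\kappabar^4 y$ and $d_{19}(\Delta^7\nu^3)=\kappabar^5\Delta^3 v_1\eta^2$ the same template applies: pair each with the appropriate $d_r$ in the elliptic spectral sequence for $tmf$ (using $\delta_2(y)=\kappa$ for the $d_{17}$, and $\delta_2(v_1)=\eta$ for the $d_{19}$), then invoke \Cref{geobound}.

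The absence of $d_{15}$-differentials is then a sparseness check on $E_{15}$: after running the $d_{13}$, no generator in filtration $s\leq 3$ sits in a bidegree whose candidate $d_{15}$-target contains a surviving class, and $\eta$, $\nu$, $\kappabar$, $v_1^4$, $\Delta$-multiplicativity propagates this to all higher-filtration classes.

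The main obstacle is expected to be the $d_{17}$ on $\Delta^4$. On the $tmf$ side, $\Delta^4$ is itself the source of several differentials of various lengths, so identifying the \emph{correct} tmf-companion differential that descends via $\delta$ to the $V(0)$-differential on $\Delta^4$ (as opposed to on a related class in the same filtration) is the delicate bookkeeping step. A supplementary rank check via the long exact sequence on $\pi_*$---comparing the $v_1^4$-torsion content of $\pi_*tmf$ in the relevant stem against the rank of $\pi_t(tmf\wedge V(0))$ predicted by the short exact sequence $0\to(\pi_t tmf)/2\to \pi_t(tmf\wedge V(0))\to\ker_2(\pi_{t-1}tmf)\to 0$---is the cleanest way to rule out alternative targets and confirm that $\Delta^4$ must be killed precisely by $\kappabar^4 y$.
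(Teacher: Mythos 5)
The central technique you propose --- importing each differential from a companion differential in the elliptic spectral sequence for $tmf$ via the Geometric Boundary Theorem --- does not actually apply to any of the three differentials, and this is precisely why the paper handles them differently. For $d_{13}(\Delta^4\afifteen)$: the class $\delta_2(\Delta^4\afifteen)=\Delta^4\kappa$ is a \emph{permanent cycle} in the spectral sequence for $tmf$ (it detects $\kappa_4\in\pi_{110}tmf$); the phenomenon in $tmf$ corresponding to this $d_{13}$ is a hidden $2$-extension on $\kappa_4$, not a differential, so there is no $d_{13}$ on $\Delta^4\kappa$ to pair with --- the Warning immediately following this lemma in the paper makes exactly this point. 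Your degree bookkeeping also betrays the problem: the target of the claimed $d_{13}$ is $\Delta^2\kappabar^3\eta^2$, which lies in the image of $i_*$ and hence in the kernel of $\delta_2$, whereas the class $\Delta^2\kappabar^3\eta^2 v_1$ whose boundary you compute sits two stems away and is not the target. For $d_{17}(\Delta^4)$ and $d_{19}(\Delta^7\nu^3)$ the sources themselves are in the image of $i_*$, so $\delta_2$ annihilates them and \Cref{geobound} yields only the vacuous consistency statement that $\delta$ of the target must vanish. The paper instead proves all three differentials by rank counts coming from the long exact sequence for $tmf\xrightarrow{2}tmf\to tmf\wedge V(0)$: $\pi_{110}(tmf\wedge V(0))\cong\Z/2$ forces the $d_{13}$, $\pi_{95}(tmf\wedge V(0))=0$ forces the $d_{17}$, and the absence of $v_1^4$-torsion in $\pi_{177}(tmf\wedge V(0))$ forces the $d_{19}$. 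You do reach for the rank count as a ``supplementary check'' for the $d_{17}$, and that is in fact the entire proof there; you would need the analogous counts, not the Geometric Boundary Theorem, for the other two as well.

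A second, smaller gap: ruling out \emph{other} differentials of these lengths is not purely a sparseness check. The classes $\Delta^3\nu^3$ and $\Delta^7\nu^3$ could a priori support $d_{13}$-differentials; the paper disposes of these by noting that in the spectral sequence for $tmf$ one has $d_{13}(\Delta^3\nu^3)=2\kappabar^4$, whose image is zero in the mod $2$ spectral sequence, so these classes are $d_{13}$-cycles in $E_{13}(V(0))$ (and similarly for $\Delta^7\nu^3$). Your proposal does not address these cases.
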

\begin{proof}
The first and second differentials follow from the facts that 
\begin{align*}
\pi_{110}(tmf\wedge V(0))=\mathbb{Z}/2 \ \ \text{and} \ \ 
\pi_{95}(tmf\wedge V(0))=0\end{align*}
respectively.
The  $d_{19}$-differential follows from the fact that the there is no $v_1^4$-torsion in $\pi_{177} (tmf\wedge V(0) )$.

There are no $d_{15}$ differentials and no other $d_{17}$ and $d_{19}$ for degree reasons. The only argument needed beyond sparseness and multiplicative structure to show that there are no other $d_{13}$-differentials is as follows. There are possible $d_{13}$s on $\Delta^3\nu^3$ and $\Delta^7\nu^3$. These classes are in the image of the $tmf$ spectral sequence. For $tmf$, $d_{13}(\Delta^3\nu^3)=2\kappabar^4$ and the target maps to zero in the spectral sequence for $tmf \wedge V(0)$ and similarly for $\Delta^7\nu^3$.
\end{proof}

\begin{warn}
The $d_{13}$ differential above is in fact equivalent to the $2$-extension in $\pi_{110}tmf$. For those familiar with names, this corresponds to $2\kappa_4 = \eta_1\kappabar^3$. For a recent detailed treatment of this extension, see \cite[Chapter 9]{BrunerRognesbook}.
\end{warn}

\begin{lem}
There are no $d_{21}$-differentials.  
The $d_{23}$-differentials are determined by:
\begin{enumerate}
\item $d_{23}(\Delta^5 \eta)=\kappabar^6$
\item
$d_{23}(\Delta^6\eta^2)=\kappabar^6 \Delta \eta$
\item
$d_{23}(\Delta^6  \eta v_1)= \kappabar^6 \Delta v_1$
\item
$d_{23}(\Delta^7  \eta^2 v_1)=\kappabar^6 \Delta^2  \eta v_1$ 
\end{enumerate}
\end{lem}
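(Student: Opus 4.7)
The strategy is to first establish the generating differential (1), then derive (2)--(4) from it via the module structure over the $tmf$ elliptic spectral sequence, and finally to rule out all other potential differentials on the $E_{21}$ and $E_{23}$ pages using sparseness and comparison with $tmf$.

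For differential (1), $d_{23}(\Delta^5\eta)=\kappabar^6$, I would import it from the elliptic spectral sequence for $tmf$ via the unit map $tmf\to tmf\wedge V(0)$: this is the longest known $d_{23}$ in the $tmf$ spectral sequence, so it suffices to check on the $E_{23}$-page of $tmf\wedge V(0)$ (as tabulated in \Cref{figd9andthensomemore}) that both the source $\Delta^5\eta$ and the target $\kappabar^6$ survive to $E_{23}$ as nonzero classes. This is a direct inspection of the $d_r$'s for $r<23$ computed in the preceding lemmas.

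For (2)--(4), I would multiply (1) by $\Delta\eta$, $\Delta v_1$, and $\Delta^2\eta v_1$, respectively. Since $\eta$ and $v_1$ are permanent cycles, and $\Delta^k$ survives to $E_{23}$ in the indicated positions (it has already survived all the $d_3,\dots,d_{19}$'s from the previous lemmas when multiplied by $\eta$, $v_1$, or $\eta^2$ in these stems, leaving $\Delta\eta$, $\Delta v_1$, and $\Delta^2\eta v_1$ as $d_{23}$-cycles in the relevant bidegrees), each of these multipliers is a $d_{23}$-cycle, and Leibniz gives the stated differentials. The verification that the targets remain nonzero after multiplication amounts to checking, on the tabulated $E_{23}$-page, that no prior differential has cleared $\kappabar^6\Delta\eta$, $\kappabar^6\Delta v_1$, or $\kappabar^6\Delta^2\eta v_1$.

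For the absence of $d_{21}$-differentials, I would observe that on the $E_{21}$-page every surviving class in low filtration lies in a $\kappabar^i\Delta^j$-family, and sparseness forbids a source-target pair of filtration difference $21$ among these families; any remaining candidate source is either a permanent cycle (by the duality argument of \Cref{lem:duality}) or is killed in a lower-numbered differential already accounted for. For the absence of other $d_{23}$'s, I would enumerate, modulo the module structure over $E_r^{*,*}(S^0)$, the $d_{23}$-cycles on $E_{23}$ of filtration $\leq 3$, and show each either appears in (1)--(4) or has no available target in its bidegree. The main obstacle is the careful bookkeeping of the $E_{23}$-page needed to confirm there are no other generating $d_{23}$-sources; the arguments themselves reuse only sparseness, $tmf$-comparison, and the Geometric Boundary Theorem~\ref{geobound}, all already deployed above.
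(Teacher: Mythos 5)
Your derivation of (1) matches the paper (import from the $tmf$ spectral sequence), and your route to (2) by multiplying (1) by the permanent cycle $\Delta\eta\in E_{23}(S^0)$ is a legitimate alternative to the paper's direct import of $d_{23}(\Delta^6\eta^2)=\kappabar^6\Delta\eta$ from $tmf$. However, your derivations of (3) and (4) have a genuine gap: you multiply the differential (1) by $\Delta v_1$ and $\Delta^2\eta v_1$ and invoke Leibniz, but these multipliers involve $v_1$, which is \emph{not} a class of the elliptic spectral sequence of $tmf$ (it lies in $E_2(V(0))$ and maps to $\eta$ under the connecting homomorphism $\delta$, rather than being in the image of $E_2(S^0)\to E_2(V(0))$). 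The spectral sequence for $tmf\wedge V(0)$ is not multiplicative --- $V(0)$ is not a ring spectrum at $p=2$ and the paper only records a ring structure on the $E_2$-term --- so there is no internal Leibniz rule; the only linearities available are the module structure over $E_r(S^0)$ and $v_1^4$-linearity coming from the $v_1^4$ self-map. Multiplication by a single power of $v_1$ does not commute with the differentials: already $d_3(v_1^2)=\eta^3\neq 0$ while $v_1\cdot d_3(v_1)=0$, so "$v_1$ is a permanent cycle, hence Leibniz applies'' fails concretely in this very spectral sequence.

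The paper closes these two cases differently: (3) is obtained from (2) by the Geometric Boundary Theorem (\Cref{geobound}), using $\delta(\Delta^6\eta v_1)=\Delta^6\eta^2$ and $\delta(\kappabar^6\Delta v_1)=\kappabar^6\Delta\eta$ together with $\delta_{23}d_{23}=d_{23}\delta_{23}$; and (4) is forced by the vanishing of the $v_1^4$-torsion in $\pi_{171}(tmf\wedge V(0))$ (checked against $\pi_*tmf$ via the long exact sequence), which compels the $v_1^4$-torsion class $\kappabar^6\Delta^2\eta v_1$ to be hit, with $\Delta^7\eta^2v_1$ as the only possible source. You should replace your Leibniz step for (3) and (4) by arguments of this kind. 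Your treatment of the non-existence of $d_{21}$'s and of other $d_{23}$'s is essentially the paper's "degree reasons,'' though the appeal to duality there is unnecessary.
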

\begin{proof}
The differentials (1) and (2) occur in the elliptic spectral sequence for $tmf$. The differential (3) is the geometric boundary of (2) as in \Cref{geobound}. The last differential is forced by the fact that the $v_1^4$-torsion in  $\pi_{171} (tmf \wedge V(0) )$ is trivial.
There are no $d_{21}$ or other $d_{23}$-differentials for degree reasons.
\end{proof}

The following is now immediate.
\begin{lem}
The spectral sequence collapses at $E_{24}$ with a horizontal vanishing line at $s=22$, i.e., $E_{\infty}^{s,t}(V(0))=0$ for $s\geq 22$.
\end{lem}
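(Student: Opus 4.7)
The plan is to verify two assertions: (i) there are no non-trivial differentials $d_r$ for $r \geq 25$, and (ii) every class on $E_{24}$ with filtration $s \geq 22$ has already been killed by the differentials listed in the preceding lemmas. The proof is essentially an inspection of the chart, exploiting the compact description of the $E_2$-page as $P[\kappabar,\Delta]/(\Delta \eta^4 - \kappabar v_1^4)$ together with the list of permanent cycles.

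For assertion (i), I would first note that $\eta$, $\nu$, $\kappabar$, $\Delta^8$ are permanent cycles by naturality from the elliptic SS for $tmf$, and $v_1^4$ is a permanent cycle because $V(0)$ admits a $v_1^4$-self map (see \Cref{v1selfmaps}). All differentials are therefore linear with respect to these elements, and in particular $\Delta^8$-linear, so it suffices to check the finite region $0 \leq t-s < 192$. In this range, sparseness of the $E_{24}$-page in filtrations low enough to serve as sources of $d_r$ with $r \geq 25$, combined with a comparison via the map from the elliptic SS for $tmf$ (which has no differentials past $d_{23}$) and the Geometric Boundary Theorem \Cref{geobound}, rules out any such candidate.

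For assertion (ii), I would use that every element of $P[\kappabar,\Delta]/(\Delta \eta^4 - \kappabar v_1^4)$ in filtration $\geq 4$ is divisible by $\kappabar$. Hence any class of filtration $\geq 22$ is divisible by $\kappabar^6$. The $d_{23}$-differentials of the previous lemma, namely $d_{23}(\Delta^5 \eta) = \kappabar^6$, $d_{23}(\Delta^6 \eta^2) = \kappabar^6 \Delta \eta$, $d_{23}(\Delta^6 \eta v_1) = \kappabar^6 \Delta v_1$, and $d_{23}(\Delta^7 \eta^2 v_1) = \kappabar^6 \Delta^2 \eta v_1$, together with $\kappabar$-, $\eta$-, $\nu$-, $v_1^4$-, and $\Delta^8$-linearity, kill every monomial of the form $\kappabar^n \cdot m$ with $n \geq 6$ and $m$ ranging over a set of $P[\Delta]$-generators for the filtration-$\geq 22$ piece. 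A finite check on the chart confirms that this accounts for every surviving class in filtration $\geq 22$.

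The main obstacle is the bookkeeping in (ii): one must match each surviving generator of filtration $\geq 22$ on $E_{23}$ with a preimage under a $d_{23}$. This is straightforward but tedious in prose, which is presumably why the authors declare the statement ``immediate'' — it is immediate from \Cref{figd9andthensome} and \Cref{figd9andthensomemore}, where every filtration-$\geq 22$ class visibly receives an arrow.
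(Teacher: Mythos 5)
Your overall strategy---verify that every class in filtration $\geq 22$ is killed by the differentials already established, then observe that the collapse follows---is exactly what the paper intends: its entire proof is the sentence ``The following is now immediate,'' deferring to the charts. However, the structural claim you use to organize the check is false. The class $\kappabar$ has filtration $4$, so $\kappabar^6$ has filtration $24$, and no class in filtration $22$ or $23$ can possibly be divisible by $\kappabar^6$; iterating ``filtration $\geq 4$ implies $\kappabar$-divisible'' only yields $\kappabar^5$-divisibility, with quotient in filtration $\leq 3$. Consequently your reduction to the four $d_{23}$-differentials does not cover filtrations $22$, $23$, $26$, $27,\ldots$: the targets $\kappabar^6$, $\kappabar^6\Delta\eta$, $\kappabar^6\Delta v_1$, $\kappabar^6\Delta^2\eta v_1$ sit in filtrations $24$ and $25$, and their $\kappabar$-multiples in $28$, $29$, and so on. What is actually true is that the earlier differentials have already emptied the remaining filtrations by the $E_{23}$-page --- for instance $d_{11}(\kappabar^k\Delta^2 y)=\kappabar^{3+k}v_1$, $d_{13}(\kappabar^k\Delta^4 y)=\kappabar^{3+k}\Delta^2\eta^2$, and $d_{19}(\Delta^7\nu^3)=\kappabar^5\Delta^3 v_1\eta^2$ (a filtration-$22$ target) each dispose of towers your $d_{23}$-argument cannot reach --- so that what survives to $E_{23}$ in filtration $\geq 22$ is concentrated in the four towers beginning at $\kappabar^6\cdot\{1,\Delta\eta,\Delta v_1,\Delta^2\eta v_1\}$. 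But that is a conclusion of the full chart inspection, not a consequence of $\kappabar$-divisibility, so the ``finite check on the chart'' you invoke at the end is carrying essentially all of the weight, and it must range over all the differentials from $d_{11}$ onward, not just the $d_{23}$'s.

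A smaller point: your assertion (i) requires no separate argument. Once $E_{24}^{s,t}=0$ for $s\geq 22$ is established, any $d_r$ with $r\geq 24$ has target in filtration $s+r\geq 24$ and hence vanishes; the appeal to sparseness of sources, comparison with the spectral sequence for $tmf$, and the Geometric Boundary Theorem is unnecessary.
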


\subsection{Exotic extensions}
We list the exotic extensions that do occur. All other possibilities can be ruled out using algebraic structure and duality. We bring to the attention of the reader the precise meaning of exotic extensions given in \Cref{defnexoext}. Note also that all exotic $2$-extensions are deduced from \Cref{lem2exttrick}. We do not discuss $2$-extensions further but include them in our figures.

\begin{figure}
\includegraphics[width=\textwidth]{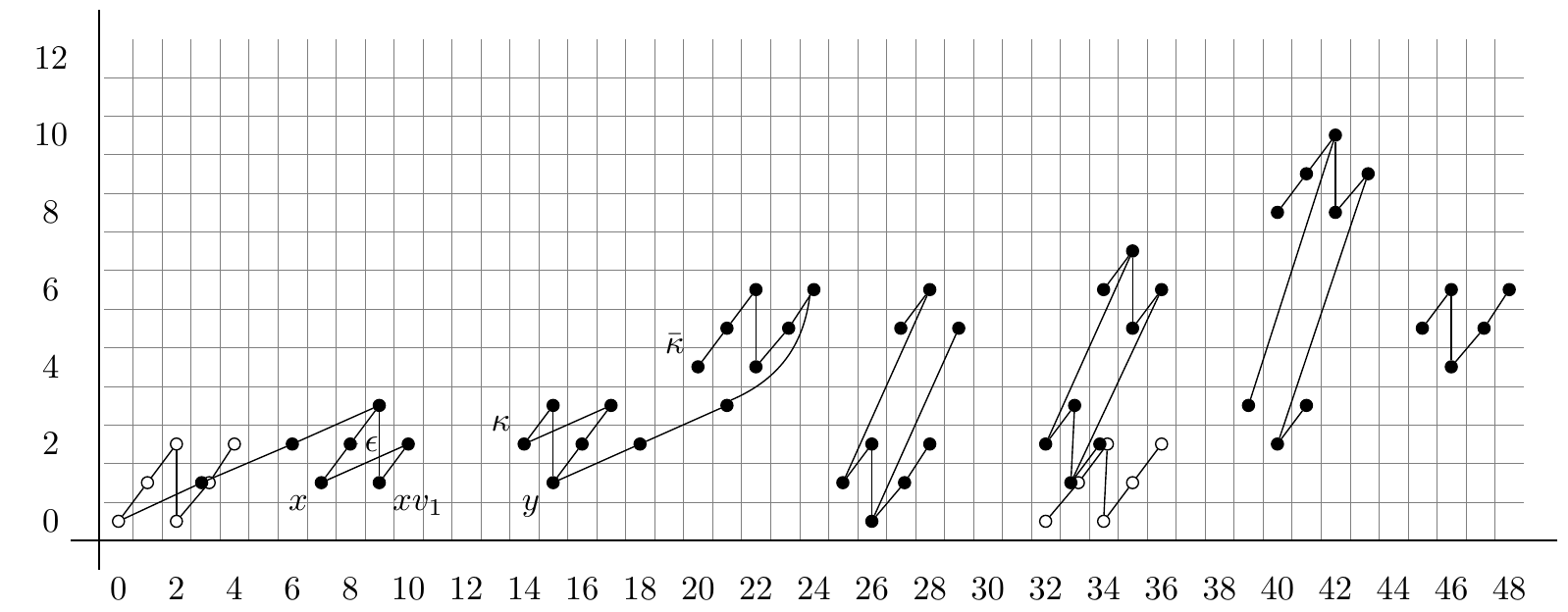}

\vspace{0.1in}

\includegraphics[width=\textwidth]{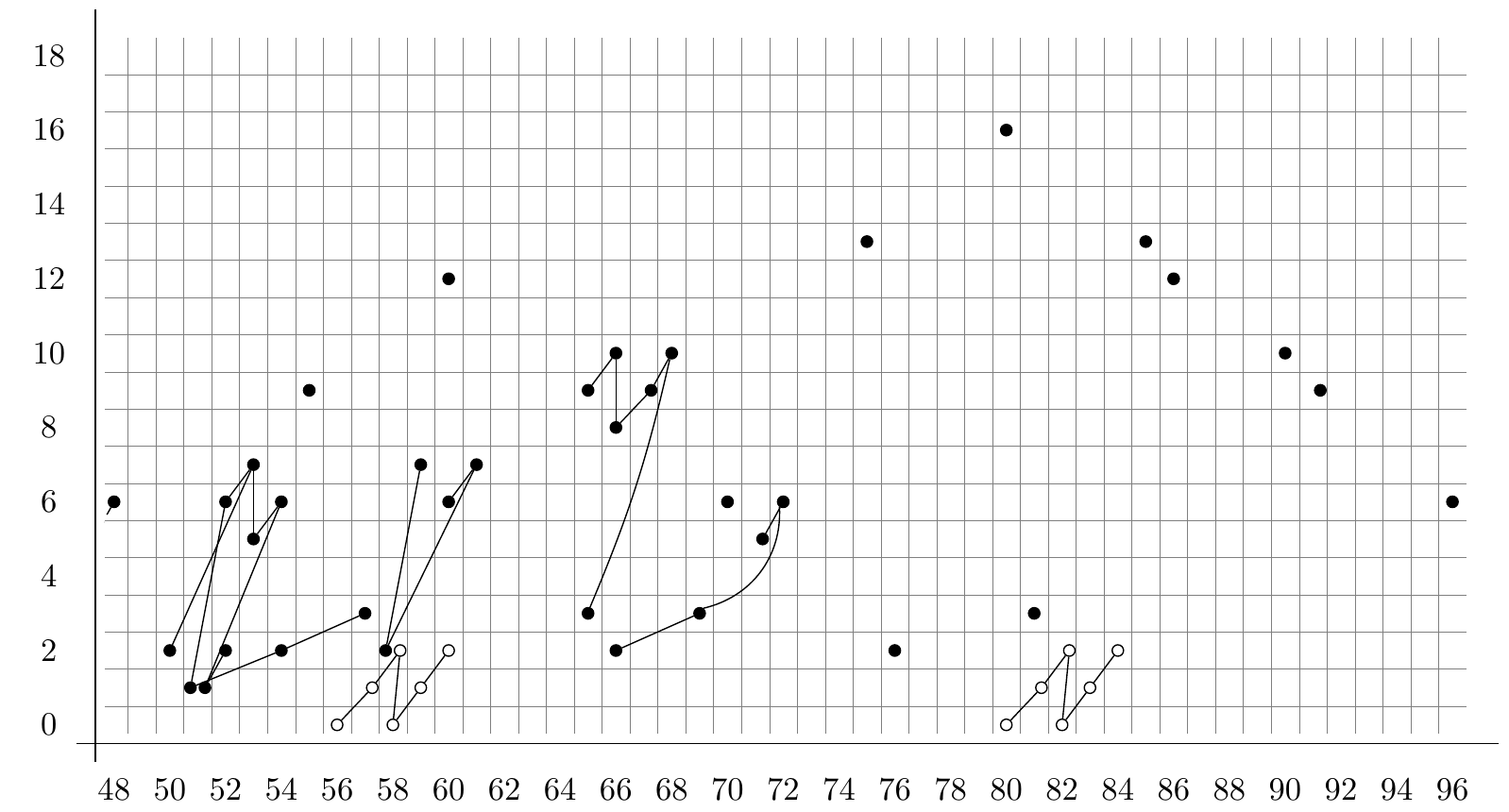}
\caption{Exotic extensions in the elliptic spectral sequence for $tmf\wedge V(0)$ in stems $0$ to $96$. This records $tmf_*V(0) \cong \widetilde{tmf}_{*+1}\R P^{2}$.}
\label{V0-48-96-ext}\label{V0-0-48-ext}
\end{figure}

\begin{lem}[\Cref{V0-0-48-ext}]
In stems $0$ to $45$, there are exotic extensions:
\begin{enumerate}
\item
 $[\Delta\eta]\nu=
 \kappabar\epsilon$
 \item
 $[\Delta\epsilon]\nu=\kappa\kappabar\eta$
 \item
 $[\Delta \kappa\eta]\nu=\kappabar^2\eta^2$
 \item
  $[\Delta v_1]\nu=
\kappabar  v_1 \aseven$
   \item
  $[\Delta v_1 \aseven ]\nu=\kappa\kappabar v_1$
  \item
 $[\Delta \kappa v_1 ]\nu=\kappabar^2 \eta v_1 $
 \item $[y \nu^2]\nu=\kappabar v_1 \eta^2$ 
 \end{enumerate}
\end{lem}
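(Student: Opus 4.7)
The plan is to establish the seven extensions using three complementary techniques: naturality under the unit map $tmf \to tmf \wedge V(0)$, the Geometric Boundary Theorem (\Cref{geobound}) applied to the cofibre sequence $tmf \xrightarrow{2} tmf \to tmf \wedge V(0)$, and the Brown--Comenetz self-duality of \Cref{lem:duality-for-extensions}.

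First, I would dispose of extensions (1), (2), and (3), whose source and target classes all lie in the image of the map of spectral sequences $E_\infty(S^0)\to E_\infty(V(0))$. The corresponding exotic $\nu$-extensions $\Delta\eta\cdot\nu=\kappabar\epsilon$, $\Delta\epsilon\cdot\nu=\kappa\kappabar\eta$, and $\Delta\kappa\eta\cdot\nu=\kappabar^2\eta^2$ are classical in $\pi_* tmf$ (see \cite{tbauer} or \cite[Chapter~9]{BrunerRognesbook}). Since the detecting classes on both sides remain non-trivial in $E_\infty(V(0))$ by the differential analysis of the previous subsections, these extensions propagate to the elliptic spectral sequence for $tmf\wedge V(0)$ by naturality, and remain exotic because the naive products still vanish in $E_\infty(V(0))$.

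Next, for extensions (4), (5), (6), which involve the $v_1$-divisible classes $[\Delta v_1]$, $[\Delta v_1 x]$, and $[\Delta\kappa v_1]$, I would apply the Geometric Boundary Theorem. Using $\delta(v_1)=\eta$, $\delta(x)=\nu^2$, and the fact that the mod-$2$ reduction of $\delta$ is a derivation on $E_2(V(0))$ (namely, the Bockstein associated to $0\to A'\xrightarrow{2}A'\to A'/2\to 0$), one computes $\delta(v_1 x)=\epsilon$ and $\delta(\kappa v_1)=\kappa\eta$, using the relation $v_1\nu=0$ of \Cref{thmE2V0all}. Hence the $p_*$-images of $[\Delta v_1]$, $[\Delta v_1 x]$, and $[\Delta\kappa v_1]$ in $\pi_*tmf$ are detected by $\Delta\eta$, $\Delta\epsilon$, and $\Delta\kappa\eta$ respectively. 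Combining with the $tmf$-extensions from (1)--(3), the products $[\Delta v_1]\nu$, $[\Delta v_1 x]\nu$, and $[\Delta\kappa v_1]\nu$ must be detected by classes whose $\delta$-images are $\kappabar\epsilon$, $\kappa\kappabar\eta$, and $\kappabar^2\eta^2$. An inspection of $E_\infty(V(0))$ at the relevant stems and filtrations identifies these uniquely as $\kappabar v_1 x$, $\kappa\kappabar v_1$, and $\kappabar^2\eta v_1$.

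For extension (7), which concerns the class $[y\nu^2]\in\pi_{20}(tmf\wedge V(0))$ characterised by $\delta_2(\nu^2 y)=4\kappabar$, I would invoke self-duality via \Cref{rem:usingduality}: a non-trivial $\nu$-multiplication originating from a given stem forces a non-trivial $\nu$-multiplication in the complementary stem $171 - k - 3$, and the complementary extension can be read off from the previously established extensions or directly from $\pi_* tmf$ in that range. The principal obstacle throughout is the computation of $\delta$ on products such as $v_1 x$ and $\kappa v_1$, since $\delta$ is $\pi_* tmf$-linear but not a ring homomorphism on $E_2(V(0))$; this is handled by lifting to the Bockstein derivation and using the explicit relations of \Cref{thmE2V0all}, in particular $v_1\nu=0$ and $\nu x = v_1\eta x$, to resolve ambiguities.
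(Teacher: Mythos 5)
Your proof follows essentially the same route as the paper's: extensions (1)--(3) are imported from $\pi_*tmf$, extensions (4)--(6) follow from the Geometric Boundary Theorem together with $\delta(v_1)=\eta$, $\delta(v_1x)=\epsilon$, $\delta(\kappa v_1)=\kappa\eta$ (which the paper asserts directly and you justify via the Bockstein derivation), and (7) follows from duality against a $\nu$-multiplication in the complementary stem. One small correction: $[y\nu^2]$ lies in stem $21$, not $20$ (since $y$ is in stem $15$), so the dual $\nu$-multiplication is the one from stem $171-21-3=147$ to $150$, which is already present on the $E_2$-page.
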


\begin{proof}
The first three extensions are between elements from $\pi_*tmf$, see \cite{tbauer}. 
The next three are forced by the fact that the connecting homomorphism in the long exact sequence on homotopy groups is a map of $\pi_*S^0$-modules, the geometric boundary theorem, and the fact that under the map
\[\delta \colon E_2^{s,t}( V(0) ) \to E_2^{s+1,t}(S^0)\]
we have $\delta(v_1)=\eta$ (and so $\delta(\aseven v_1) =\epsilon$, $\delta(\kappa v_1)=\eta \kappa$, etc.).

The last extension follows from duality and the fact that there is a $\nu$ multiplication between stems $147$ and $150$ (already present on the $E_2$-page).
\end{proof}

\begin{lem}[\Cref{V0-48-96-ext}]
In stems $46$ to $96$, there are exotic extensions:
\begin{enumerate}
\item \label{uuu1}
$[\Delta^2\eta^2]\nu=\Delta\kappabar\nu^3$
\item\label{uuu2}
 $[\Delta^2\nu]\eta=\Delta\kappabar\epsilon$
 \item \label{uuu3}
 $[\Delta^2 v_1\eta]\nu =\Delta\kappabar \aseven \nu$
  \item  \label{uuu4}
 $[\Delta^2 \aseven \nu]\eta =\Delta \kappabar \kappa \eta$
  \item\label{uuu5}
 $[\Delta^2 \aseven \nu] \nu = \Delta \kappabar \kappa \eta v_1$
  \item \label{uuu6}
 $[\Delta^2 \kappa \nu] \nu =\Delta \kappabar^2 \eta^2 v_1 $
 \item  \label{uuu7} $ [\Delta^2 y \nu^2]\nu=\Delta^2\kappabar v_1 \eta^2$ 
\end{enumerate}
\end{lem}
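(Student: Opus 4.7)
The plan is to follow the template of the proof of \Cref{V0-0-48-ext}, adapted to this stem range. For extensions (1) and (2), both source and target lie in the image of $i_* \colon \pi_* tmf \to \pi_*(tmf \wedge V(0))$, so I would import them from the known extensions $[\Delta^2\eta^2]\nu = \Delta\kappabar\nu^3$ and $[\Delta^2\nu]\eta = \Delta\kappabar\epsilon$ in $\pi_*tmf$ (see \cite{tbauer} and \cite[Chapter 9]{BrunerRognesbook}) via naturality of the elliptic spectral sequence under $i$.

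For extensions (3)--(6), I would use the Geometric Boundary Theorem (\Cref{geobound}): the connecting map $p_*$ is $\pi_*S^0$-linear and sends $v_1 \mapsto \eta$, $\aseven \mapsto \nu^2$, $v_1\aseven \mapsto \epsilon$, $\kappa v_1 \mapsto \eta\kappa$, $\aseven\nu \mapsto \nu^3$. Under $p_*$ each hypothesized $V(0)$-extension becomes a relation in $\pi_*tmf$ which is either the projection of extension (1) or (2) (as for (3) and (6)) or a trivial relation of the form $\eta\nu^3 = 0$ or $\nu^4 = 0$ (as for (4) and (5)) that serves as a consistency check rather than a proof. In the first case this determines the $V(0)$-extension up to a correction in $\im i_*$, which I would rule out by a filtration comparison on the $E_\infty$-chart of \Cref{figured5d7}. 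In the second case, additional input is needed; the natural candidate is to combine $v_1$-linearity with extension (4) to deduce (5), using that the cofiber of $2$ gives $v_1$ multiplicative structure coming from the source $[\Delta^2 \aseven \nu]$.

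For extension (7), the $p_*$-trick fails because $\nu^4 = 0$ in $\pi_*tmf$. Instead, I would appeal to the Brown--Comenetz duality of \Cref{lem:duality-for-extensions}: a non-trivial $\nu$-extension at source stem $69$ is mirrored by one at source stem $171 - 69 - 3 = 99$, which should be directly visible on the $E_\infty$-chart and so force (7). A naive attempt to $\Delta^2$-shift extension (7) of \Cref{V0-0-48-ext} does not work because $\Delta^2$ is not a permanent cycle (it supports $d_9(\Delta^2) = \kappabar^2\aseven$), so transport via $\Delta^2$-multiplication cannot be used and duality is essential.

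The main anticipated obstacle is extension (7): the duality partner must be carefully matched to the correct $E_\infty$-class, and its indeterminacy resolved using the explicit $E_\infty$-chart. Extensions (3)--(6) reduce to systematic filtration bookkeeping once the $p_*$-identities are set up, but extension (4) also requires the mild additional observation that on $E_2$ the product $\eta\aseven\nu$ is already zero (via the relation $\nu\eta = 0$), so its target in $\pi_*$ is genuinely exotic and must be detected in the same or higher filtration as $\Delta\kappabar\kappa\eta$.
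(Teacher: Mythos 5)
Your handling of (1), (2), (3) and (7) matches the paper: (1)--(2) are imported from $\pi_*tmf$ via $i_*$, (3) is deduced from (1) by the Geometric Boundary Theorem (\Cref{geobound}), and (7) is obtained from Brown--Comenetz duality against the algebraic $\nu$-multiplication out of stem $99$ (your arithmetic $171-69-3=99$ is the right dual stem, and your remark that $\Delta^2$-transport is unavailable is correct). The genuine gaps are in (4), (5) and (6). For (6), your mechanism cannot work: the source $[\Delta^2\kappa\nu]$ sits in stem $65$, so $\delta$ lands in stem $64$ of $E(S^0)$ and cannot possibly ``project onto'' extension (1) or (2), whose sources live in stems $50$ and $51$; worse, $\Delta^2\kappa\nu$ is in the image of $i_*$, so $p_*$ kills it and the Geometric Boundary Theorem yields nothing. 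The paper's argument goes the other way around: the relation already holds in $\pi_*tmf$ and is pushed forward along $i_*$, with the image of the target detected in higher filtration by $\Delta\kappabar^2\eta^2 v_1$.

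For (4) and (5) you correctly observe that the $p_*$-image is a vacuous relation, but the replacement you propose does not constitute a proof: you never establish (4) itself (observing that $\eta\,\aseven\nu=0$ on $E_2$ only shows the extension \emph{would be} exotic, not that it is nonzero), and the passage from (4) to (5) via ``$v_1$-linearity'' is not available, since $V(0)$ at $p=2$ admits only $v_1^4$-self maps and $\pi_*(tmf\wedge V(0))$ carries no $v_1$-module structure; the $E_2$-relation $\nu\aseven=v_1\eta\aseven$ does not automatically persist to an identity of exotic extensions. The paper instead settles both (4) and (5) by the duality of \Cref{lem:duality-for-extensions}: (4) is dual to the algebraic $\eta$-multiplication from stem $112$ to $113$, and (5) is dual to the corresponding algebraic $\nu$-multiplication out of stem $110$. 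Since you already invoke this duality for (7), the fix is to apply the same tool in these two cases rather than the Geometric Boundary Theorem.
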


\begin{proof}
The first two extensions \eqref{uuu1} and \eqref{uuu2} are multiplicative relations that hold in $\pi_* tmf$. 
Extension \eqref{uuu3} follows from \eqref{uuu1} and \Cref{lem:half-the-extensions}. Extension \eqref{uuu4} is dual to the algebraic $\eta$ multiplication from stem $112$ to $113$, and similarly for \eqref{uuu5}. The extension \eqref{uuu6} involves classes in the image of $i_*$ and this extension happens in $tmf_*$. Finally, \eqref{uuu7} is dual to the algebraic $\nu$ multiplication from stem $99$ to $102$.
\end{proof}

\begin{rem}
Looking at the charts in \cite{tbauer}, one might have expected extensions  $[\Delta^2\kappa \nu] \eta =\Delta \kappabar^2  \eta^2$
and, by the Geometric Boundary Theorem, $[\Delta^2 \afifteen \nu] \eta =\Delta \kappabar^2 \eta v_1 $. However, these are not exotic extensions according to \Cref{defnexoext}.

We also note that $ [\Delta^2 c_4]\nu \neq [\Delta \kappabar\kappa \eta ]$ and $ [\Delta^3c_4 v_1]\nu \neq [\Delta\kappabar^3 \eta  ] $. The first comes from the fact in $\pi_*tmf$, there is no such extension. (This can be seen, for example, from the Adams Spectral Sequence of $tmf$.) The second follows from the fact that the target has a non-trivial $\kappabar$-multiple and $\kappabar\nu=0$.
 \end{rem}

\begin{figure}
\includegraphics[width=\textwidth]{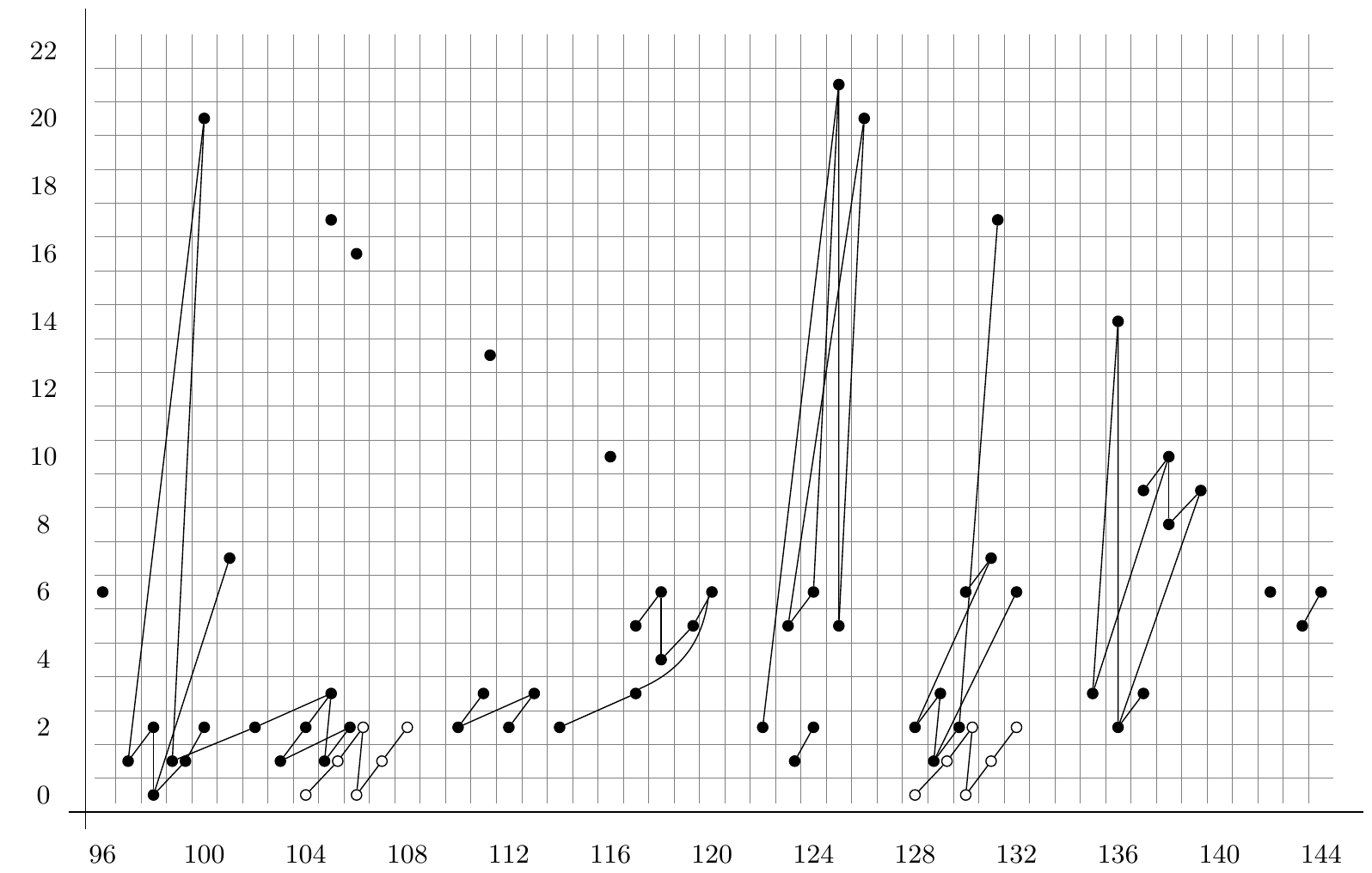}

\vspace{0.1in}

\includegraphics[width=\textwidth]{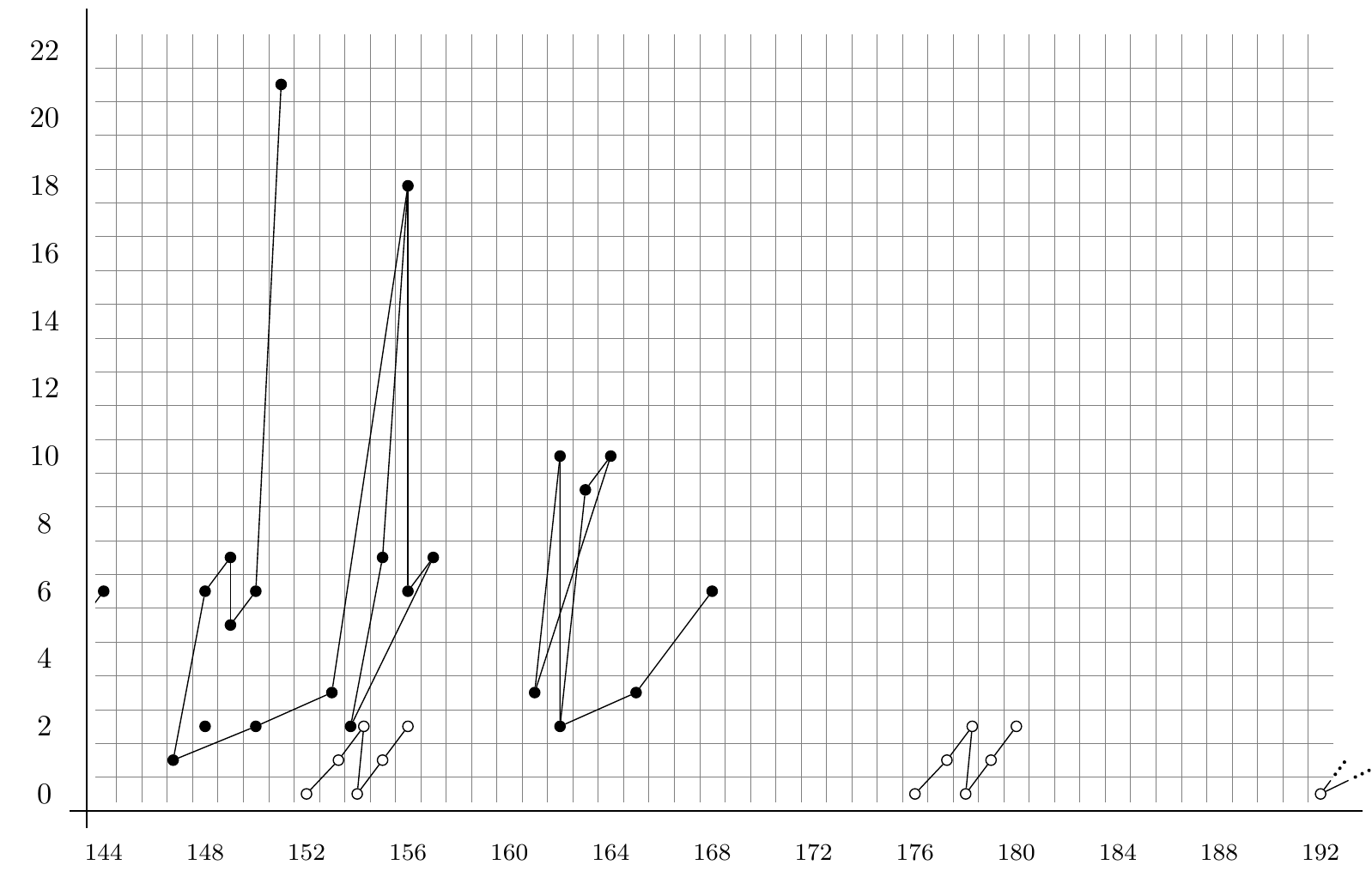}
\caption{Exotic extensions in the elliptic spectral sequence of $tmf\wedge V(0)$ in stems 96 to 192, recording $tmf_*V(0) \cong \widetilde{tmf}_{*+1}\R P^{2}$.}
\label{V0-144-192-ext}\label{V0-96-144-ext}
\end{figure}

\begin{lem}[\Cref{V0-96-144-ext}]
In stems $97$ to $144$, there are exotic extensions:
\begin{enumerate}
\item \label{uuuuu1}
$[\Delta^4\eta]\nu=\kappabar^5$
\item \label{uuuuu2}
$[\Delta^4\nu]\eta=\kappabar^5$
\item \label{uuuuu3}
$[\Delta^4\kappabar\epsilon]\eta=\Delta\kappabar^5\eta$
\item  \label{uuuuu4}
 $[\Delta^5\eta^2]\nu=\Delta \kappabar^5\eta$
\item \label{uuuuu5}
$[\Delta^5\epsilon]\nu=\Delta^4\kappa\kappabar\eta$
\item   \label{uuuuu6}
$[\Delta^5\kappa\eta]\nu=\Delta^4\kappabar^2\eta^2$
\item \label{uuuuu3p}
  $[\Delta^4\kappabar xv_1]\eta=\Delta\kappabar^5 v_1$ (from $(125,5)$ to $(126,20)$)
  \item  \label{uuuuu5p}
 $[\Delta^5 x v_1]\nu=\Delta^4 \kappa\kappabar v_1 $ (from $(129,1)$ to $(132,6)$)
   \item   \label{uuuuu6p}
   $[\Delta^5\kappa v_1]\nu=\Delta^4\kappabar^2 \eta v_1$  (from $(136,2)$ to $(139,9)$)
 \item \label{item:A}
  $[\Delta^5\epsilon v_1]\eta =\Delta^2 \kappabar^4 v_1 \eta$  (from $(130,2)$ to $(131,17)$)
   \item \label{item:B}
  $[\Delta^4 v_1]\nu=\Delta^3\kappabar\nu^3$ (from $(98,0)$ to $( 101,7)$)
 \item \label{item:C}
    $[\Delta^5\kappa \eta]\eta=\Delta^3 \kappabar^3 \eta^2 v_1$ (from $(135,3)$ to $(136,14)$)
 \item \label{item:D} $[\Delta^4 y \nu^2]\nu=[\Delta^4 \kappabar v_1 \eta^2]$ (from $(117,3)$ to $(120,6)$)
 \end{enumerate}
\end{lem}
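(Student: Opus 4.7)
The plan is to organize the thirteen extensions according to the three tools employed in the previous two lemmas: (i) importation of multiplicative relations from $\pi_*tmf$, (ii) the Geometric Boundary Theorem (\Cref{geobound}), and (iii) Brown--Comenetz duality (\Cref{lem:duality-for-extensions}). The first step is to classify each item into one of these three buckets, which mirrors exactly the strategy of the preceding extension lemma for stems $46$--$96$.

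First, I would handle extensions (1)--(6). Each of these involves only classes in the image of $i_*\colon \pi_*tmf\to \pi_*(tmf\wedge V(0))$, so they are reductions of multiplications in $\pi_*tmf$. I would verify these one by one against the Hopkins--Mahowald calculation in \cite{tbauer} (cross-checked with \cite[Chapter 9]{BrunerRognesbook}), and confirm that both endpoints survive the differentials in $\pi_*(tmf\wedge V(0))$. Since the targets in (1), (2), (4) involve $\kappabar^5$, these exotic extensions reflect the interaction of the $d_{23}$-differentials with the $\Delta^8$-periodicity near the $192$-stem boundary.

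Second, I would derive the three ``primed'' extensions (3p), (5p), (6p) from (3), (5), (6) via the Geometric Boundary Theorem. Recall that $\delta_2(v_1)=\eta$, and similarly $\delta_2(xv_1)=\epsilon$ and $\delta_2(\kappa v_1)=\kappa\eta$ by $\pi_*tmf$-linearity. Consequently each of (3p), (5p), (6p) is a pre-image under $\delta$ of its unprimed counterpart, in direct analogy with the deduction of (uuu3) from (uuu1) in the preceding lemma. Third, the remaining extensions (A), (B), (C), (D) I would establish by duality. By \Cref{lem:duality-for-extensions}, a non-trivial $r$-multiplication in stem $k$ forces one in stem $171-k-|r|$. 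The four extensions pair respectively with stems $171-130-1=40$, $171-98-3=70$, $171-135-1=35$, and $171-117-3=51$; each of these falls within the range covered by the previous two lemmas (stems $0$--$45$ and $46$--$96$), so I would simply identify the paired extension, either as an algebraic multiplication already visible on $E_\infty$ or as one of the exotic extensions previously established.

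The main obstacle is bookkeeping rather than new ideas: for each extension one must verify that the named source and target actually survive to $E_\infty$, that no higher-filtration lift of the source invalidates the extension being declared \emph{exotic} in the sense of \Cref{defnexoext} (cf.\ the cautionary remark after the preceding lemma), and that the dual stem pairings are correctly computed. A secondary subtlety is ruling out spurious candidate extensions in this dense range; for this I would rely on $\kappabar$- and $v_1^4$-linearity to propagate each established relation, and compare against the corresponding $\pi_*tmf$-chart to eliminate any remaining ambiguity.
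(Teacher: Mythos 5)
Your classification of items (1)--(6) as pushforwards of relations in $\pi_*tmf$ along $i_*$, your derivation of the primed items from (3), (5), (6) via the Geometric Boundary Theorem using $\delta(v_1)=\eta$, $\delta(xv_1)=\epsilon$, $\delta(\kappa v_1)=\kappa\eta$, and your duality pairings for (C) and (D) with stems $35$ and $51$ all agree with the paper's argument. (One small point: the paper flags that relation (4) is absent from Bauer's charts and must be imported from the classical Adams spectral sequence of $tmf$; your cross-check against Bruner--Rognes would presumably catch this, but it is worth making explicit.)

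There is, however, a genuine gap at item (B). You propose to obtain $[\Delta^4 v_1]\nu=\Delta^3\kappabar\nu^3$ by duality with stem $171-98-3=70$, but the duality mechanism of \Cref{lem:duality} and \Cref{rem:usingduality} only compares the cardinalities of the images of multiplication by $r$ on the $c_4$-power-torsion subgroups $\Gamma_{c_4}(\pi_*(tmf\wedge V(0)))$. The source $[\Delta^4 v_1]$ sits in $(98,0)$ and is detected by a $v_1^4$-free class in filtration zero (its $c_4$-multiples are detected by the nonzero classes $\Delta^4 v_1^{4k+1}$ at $E_\infty$), so it does not lie in $\Gamma_{c_4}(\pi_{98})$ and the dual statement about stem $70$ says nothing about $\nu\cdot[\Delta^4 v_1]$. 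The paper instead argues via the Geometric Boundary Theorem: $[\Delta^4 v_1]$ has geometric boundary $[\Delta^4\eta]$, the boundary map is a $\pi_*S^0$-module map, and $[\Delta^4\eta]\nu\neq 0$ by item (1), so $[\Delta^4 v_1]\nu\neq 0$ and $\Delta^3\kappabar\nu^3$ is the only class that can detect it. Your duality treatment of (A) is legitimate, since $\Delta^5\epsilon v_1$ is killed by $v_1^2$ (using $v_1^2x=0$) and hence is $c_4$-power torsion, but you should replace the duality argument for (B) by the boundary argument, or by some other method that applies to $c_4$-free sources.
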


\begin{proof}
Extensions \eqref{uuuuu1}--\eqref{uuuuu6} follow from studying $i_* \colon tmf_* \to tmf_*V(0)$. 
Note that \eqref{uuuuu4} is missing from the \cite{tbauer} charts, but can be obtained from the classical Adams Spectral Sequence for $tmf$. See \cite[Chapter 13]{tmfbook} or \cite[Chapter 9]{BrunerRognesbook}.
Extensions \eqref{uuuuu3p}, \eqref{uuuuu5p}  and \eqref{uuuuu6p} follow from \eqref{uuuuu3},  \eqref{uuuuu5} and  \eqref{uuuuu6}, respectively, using \Cref{lem:half-the-extensions}.

For \eqref{item:B}, note that by  \Cref{lem:half-the-extensions}, $[\Delta^4v_1]$ has geometric boundary $[\Delta^4\eta]$. Since $[\Delta^4\eta]\nu\neq 0$,  $[\Delta^4v_1]\nu\neq 0$ and this extension is the only choice.
For \eqref{item:C},
use \Cref{lem:duality-for-extensions} and the  algebraic $\eta$ multiplication between $\pi_{35} tmf \wedge V(0)$ and $\pi_{36}tmf \wedge V(0)$. A similar argument applies for \eqref{item:D}.
\end{proof}

\begin{rem}
There is no exotic $\nu$-extension on $[\Delta^5c_4]$ since the potential target is not annihilated by $\kappabar$.
\end{rem}

\begin{lem}[\Cref{V0-144-192-ext}]
In stems $145$ to $191$, there are exotic extensions:
\begin{enumerate}
\item
 $ [\Delta^6\nu]\eta=[\Delta^5\kappabar\epsilon]$   (from $(147,1)$ to $(148,6)$)
 \item\label{uuuuuuu2}
 $ [\Delta^6\kappa\nu]\eta=[\Delta^5\kappabar^2\eta^2]$   (from $(161,3)$ to $(162,10)$)
\item \label{uuuuuuu3}
$ [\Delta^5 \kappabar  \kappa \eta ]\eta=[\Delta^3 \kappabar^4 \eta^2v_1 ]$ (from $(155,7)$ to $(156,18)$)

\item \label{uuuuuuu2p} 
 $ [\Delta^6 y \nu] \eta=[\Delta^5 \kappabar^2 v_1 \eta]$   (from $(162,2)$ to $(163,9)$)
 \item \label{item:2F}
$ [  \Delta^5 \kappabar\epsilon v_1]\eta=[\Delta^2 \kappabar^5 \eta v_1 ]$ (from $(150,6)$ to $(151,21)$)

 \item \label{item:2A} $[\Delta^6\nu^3]\nu=\Delta^3 \kappabar^4 v_1 \eta^2$    (from $(153,3)$ to $(156,18)$)

 \item \label{item:2B} $[\Delta^6\epsilon v_1]\eta=\Delta^5  \kappabar \kappa\eta$ (from $(154,2)$ to $(155,7)$)
  \item \label{item:2C}
 $[\Delta^6 \epsilon v_1]\nu= \Delta^5 \kappabar \kappa \nu$  (from $(154,2)$ to $(157,7)$)
\item \label{item:2D}
 $ [\Delta^6\kappa\nu]\nu=\Delta^5\kappabar^2 v_1\eta^2$   (from $(161,3)$ to $(164,10)$)
\item \label{item:2E}
$ [\Delta^6 y \nu^2]\nu=[\Delta^6 \kappabar v_1 \eta^2]$ (from $(165,3)$ to $(168,6)$)
\end{enumerate}
\end{lem}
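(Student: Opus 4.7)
The proof will follow the same paradigm as the preceding lemma, combining four techniques: (i) importing extensions from $\pi_*tmf$ through $i_\ast$, (ii) applying the Geometric Boundary Theorem (\Cref{geobound}) to propagate extensions between $tmf$ and $tmf\wedge V(0)$, (iii) applying Brown--Comenetz duality in the form of \Cref{lem:duality-for-extensions}, which forces a nontrivial multiplication by $r$ on stem $k$ from a known one on stem $171-k-|r|$, and (iv) using the multiplicative module structure together with sparseness to limit the possibilities.

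First I would dispose of the ``pure $tmf$'' extensions \eqref{uuuuuuu2} and (1). Both are $\eta$-multiplications whose sources and targets lie in the image of $i_\ast \colon \pi_*tmf \to \pi_*(tmf\wedge V(0))$. The $\eta$-extension $[\Delta^6\nu]\eta = [\Delta^5\kappabar\epsilon]$ and $[\Delta^6\kappa\nu]\eta=[\Delta^5\kappabar^2\eta^2]$ are standard in $\pi_*tmf$ (see \cite{tbauer} and \cite[Chapter 9]{BrunerRognesbook}) and hence descend. From these I would immediately obtain \eqref{uuuuuuu2p} via \Cref{geobound}, using $\delta(y\nu)=\kappa\nu$ (more precisely, using that $\delta(v_1)=\eta$ together with the appropriate $y$-relation so that $\delta$ carries $[\Delta^6 y\nu]$ to $[\Delta^6\kappa\nu]$ up to known indeterminacy). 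The same geometric boundary argument produces extension \eqref{item:2F} (the $v_1$-shifted version of \eqref{uuuuuuu2}, or of an $\epsilon$-extension in $tmf$) and similarly extension (3), which is the $v_1$-shift of an $\eta$-extension in $tmf$ coming from $2\kappa_4=\eta_1\kappabar^3$; the $v_1$-part then sits in stem $155\to 156$ as claimed.

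Next I would invoke duality via \Cref{lem:duality-for-extensions} to produce the remaining extensions \eqref{item:2A}, \eqref{item:2B}, \eqref{item:2C}, \eqref{item:2D}, \eqref{item:2E}. The recipe is: given a proposed extension by $r$ from stem $k$ to stem $k+|r|$ in the range $145\leq k<192$, duality forces it to mirror a nontrivial $r$-multiplication on stem $171-k-|r|$, which lies in the low range $0\leq \ast<30$ already treated in \Cref{V0-0-48-ext}. For example, \eqref{item:2E} is a $\nu$-extension from stem $165$; duality pairs it with stem $171-165-3=3$, where the $\nu$-multiplication $[y\nu^2]\nu=\kappabar v_1\eta^2$ from the first exotic-extension lemma supplies the matching nontriviality. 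Extensions \eqref{item:2A}, \eqref{item:2C}, \eqref{item:2D} are all dual pairings with $\nu$-extensions already established in the ranges $0\text{--}48$ or $48\text{--}96$, while \eqref{item:2B} is dual to an $\eta$-multiplication in a low stem. In each case one must verify that the targets are annihilated by $\kappabar$ (so the extension is not forced to be zero by $\kappabar\nu=0$) and that the proposed target is the element of highest filtration consistent with \Cref{defnexoext}; both are visible from the $E_\infty$-chart.

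The main obstacle will be the bookkeeping: one has to verify for each of the ten extensions that no higher-filtration ambiguity is introduced (i.e., confirm condition (3) of \Cref{defnexoext}) and rule out every possible competing target for degree reasons. I would do this by drawing, for each stem pair $(k,k+|r|)$, the full list of $E_\infty$ classes available in stem $k+|r|$ and eliminating them using $\kappabar$-annihilation, $v_1^4$-linearity, and known multiplicative constraints. A secondary subtlety appears in extension (3) and in \eqref{item:2F}, where the factor of $v_1$ means the extension is really the geometric boundary of an extension one filtration higher; here one must track carefully that $\delta_\infty$ sends the relevant $v_1$-class to the desired $\eta$-class modulo the indeterminacy recorded by \Cref{TruncSS}. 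Once these two classes of checks are handled, the remaining extensions follow mechanically.
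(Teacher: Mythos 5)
Your overall architecture matches the paper's: extensions (1) and \eqref{uuuuuuu2} are imported from $\pi_*tmf$, \eqref{uuuuuuu2p} and \eqref{item:2F} come from the Geometric Boundary Theorem (the paper's source for \eqref{item:2F} is the $tmf_*$-relation $[\Delta^5\kappabar\nu^3]\eta=[\Delta^2\kappabar^5\eta^2]$, which you leave unnamed but your mechanism is right), and \eqref{item:2A}--\eqref{item:2E} come from \Cref{lem:duality-for-extensions}. However, your treatment of extension \eqref{uuuuuuu3} would fail as stated. The source $[\Delta^5\kappabar\kappa\eta]$ is a product of classes in the image of $i_*\colon tmf_*\to tmf_*V(0)$, so it lies in the image of $i_*$ and is therefore annihilated by the connecting homomorphism $\delta$; the Geometric Boundary Theorem consequently gives no information about its $\eta$-multiples, and there is no ``$v_1$-shift'' to perform since neither the source nor the relevant $tmf_*$-relation involves $v_1$. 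The paper's argument is different: the relation $[\Delta^5\kappabar\kappa\eta]\eta=[\Delta^4 2\kappabar^3]$ already holds in $\pi_*tmf$ (this is the $2\kappa_4=\eta_1\kappabar^3$ phenomenon you correctly identified), and the point is that the nonzero image $i_*([\Delta^4 2\kappabar^3])$ jumps filtration and is detected by $[\Delta^3\kappabar^4\eta^2 v_1]$ on the $E_\infty$-page. That filtration-jump observation is the actual content of item \eqref{uuuuuuu3}, and it is absent from your proposal.

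A second, smaller issue: for the duality step the paper pairs \eqref{item:2A}--\eqref{item:2E} with \emph{algebraic} multiplications (already visible on the $E_2$-page) in the range $3\leq t-s\leq 20$, not with previously established exotic extensions. Your worked example for \eqref{item:2E} cites $[y\nu^2]\nu=\kappabar v_1\eta^2$ as the partner at stem $3$, but that extension lives around stem $20$, not stem $3$; the correct partner at stem $3$ is simply the algebraic multiplication $\nu\cdot\nu=\nu^2$ in $\Gamma_{c_4}\pi_*(tmf\wedge V(0))$. (Indeed, in the paper the low-stem extension $[y\nu^2]\nu=\kappabar v_1\eta^2$ is itself \emph{deduced} from duality with a high-stem algebraic multiplication, so using it as input here would be circular in spirit even where the stems happened to match.) The duality mechanism itself is the right one, and the correct partners exist in every case, so this is a misidentification rather than a structural flaw; but as written the proposal does not supply valid inputs for the duality argument.
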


\begin{proof}
The first two extensions occur in $tmf_*$. The third is also an extension in $tmf_*$, namely $ [\Delta^5 \kappabar  \kappa \eta ]\eta = [\Delta^42\kappabar^3] $, but the image of the class $[\Delta^42\kappabar^3] $ is detected by $[\Delta^3 \kappabar^4 \eta^2v_1 ]$ in $tmf_*V(0)$.
The extension \eqref{uuuuuuu2p}  follow from (\ref{uuuuuuu2}) and \Cref{lem:half-the-extensions}. This result also implies  \eqref{item:2F} from the extensions $[\Delta^5 \kappabar \nu^3]\eta = [\Delta^2 \kappabar^5 \eta^2]$ in $tmf_*$. All the extensions \eqref{item:2A}--\eqref{item:2E} follow from \Cref{lem:duality} and \Cref{lem:duality-for-extensions} and the data for algebraic multiplications in the range $3 \leq t-s \leq 20$.  
\end{proof}


\section{$tmf_*Y$: The $E_{2}$-page}\label{secYE2}
Let $\Ceta$ be the cofiber of the Hopf map $\eta$, so that there is an exact triangle
\begin{equation}\label{eqcofeta}
S^1 \xrightarrow{\eta}S^0 \to C_{\eta} \to S^2.\end{equation} 
We define
$Y \simeq V(0) \wedge \Ceta$
and study its elliptic spectral sequence. Recall that if $F$ is a finite spectrum, then we abbreviate 
\[\mathcal{F}_*(F) :=\pi_*(tmf\wedge X(4)\wedge F) .\]

We first describe $\mathcal{F}_*(\Ceta)$. 
Since 
$\pi_*(tmf\wedge X(4))\cong A$
is concentrated in even degrees, the cofiber sequence \eqref{eqcofeta}
induces a short exact sequence on $tmf\wedge X(4)$-homology
\[0\rightarrow A \rightarrow \mathcal{F}_*( \Ceta) \rightarrow \Sigma^2 A\rightarrow 0.\]
This splits as a sequence of $A$-modules so that
\[\mathcal{F}_*( \Ceta)\cong A\oplus \Sigma^2 A.\]
Multiplication by $2$ on $\Ceta$ induces multiplication by $2$ on $tmf\wedge X(4)$-homology, which is injective because $\mathcal{F}_*( \Ceta)$ is torsion-free. Thus the cofiber sequence 
\begin{equation*}
\Ceta\xrightarrow{2}\Ceta\rightarrow Y
\end{equation*}
 induces a short exact sequence in $tmf\wedge X(4)$-homology
\[0\rightarrow \mathcal{F}_*( \Ceta)\rightarrow \mathcal{F}_*( \Ceta)\rightarrow \mathcal{F}_*(Y)\rightarrow 0,\]
and it follows that 
\begin{equation}\label{Mod-F(Y)}
\mathcal{F}_*( Y)\cong A/(2)\oplus \Sigma^2 A/(2)
\end{equation} as an $A/(2)$-module.

Likewise, since $\mathcal{F}_*( V(0))$ is concentrated in even degrees, the induced map on $tmf\wedge X(4)$-homology of the cofiber sequence 
\begin{equation*}
\Sigma V(0)\xrightarrow{\eta} V(0)\rightarrow Y
\end{equation*}
is trivial. It follows that there is a short exact sequence of $\Lambda$-comodules 
\[
0\rightarrow A/(2) \rightarrow \mathcal{F}_*( Y)\rightarrow \Sigma^2 A/(2)\rightarrow 0.
\]
This short exact sequence of $A$-modules splits because of \eqref{Mod-F(Y)}.
Tensoring it with $A'$ over $A$, we obtain a short exact sequence of $\Lambda'$-comodules, which splits as a sequence of $A'$-modules
\begin{equation}
\label{tmfY}
0\rightarrow A'/(2) \rightarrow A'\otimes_{A} \mathcal{F}_*( Y)\rightarrow \Sigma^2 A'/(2)\rightarrow 0.
\end{equation}
As $\mathcal{F}_*(Y)$ is $2$-torsion, \eqref{tmfY} is a short exact sequence of $A'/(2)$-module, and hence splits as such.
Therefore, applying $\Ext^{*,*}_{\Lambda'}(A', -)$ to \eqref{tmfY}, we get a long exact sequence of $\Ext^{*,*}_{\Lambda'}(A', A'/(2))$-modules. See, for example, \cite[p.110, (3.3)]{Brown}. Its connecting homomorphism 
\begin{equation}\label{conn_hom1}
\delta : \Ext^{s,t}_{\Lambda'}(A', A'/(2))\rightarrow \mathrm{Ext}^{s+1,t+2}_{\Lambda'}(A', A'/(2))
\end{equation}
is given by multiplication with $\eta\in \Ext^{1,2}_{\Lambda'}(A', A'/(2))$. Here, as is often the case, we denote by $\eta$ the class in $\Ext$ which detects the same-named homotopy class.

We present the effect of the connecting homomorphism separately for the $v_1$-power torsion and for the $v_1$-free classes of $E_2(V(0))$ in \Cref{d-one-Y}  and \Cref{d-one-Y-bo}, respectively.

\begin{figure}[h]
 \includegraphics[width=\textwidth]{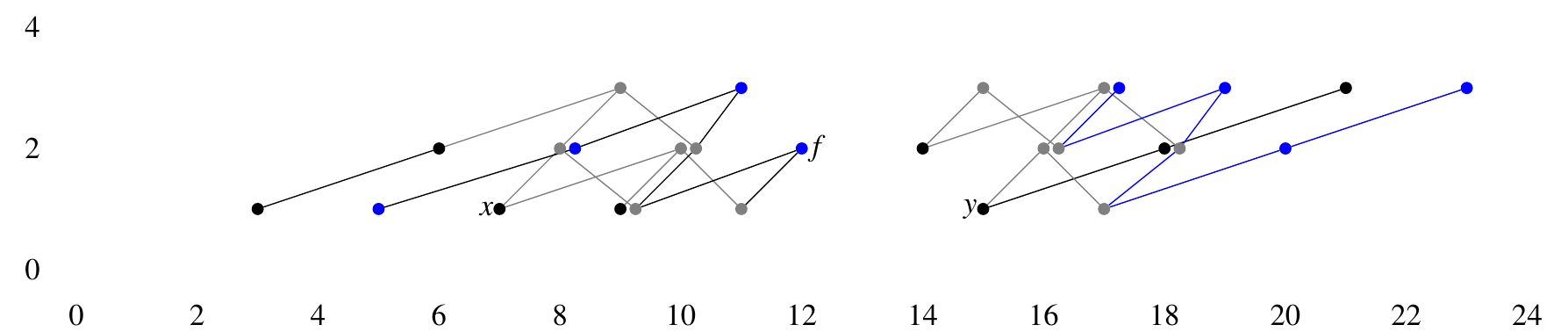}
 \caption{The connecting homomorphism \eqref{conn_hom1} for the $v_1$-power torsion classes}
 \label{d-one-Y}
\end{figure}

\begin{figure}[h]
 \includegraphics[width=\textwidth]{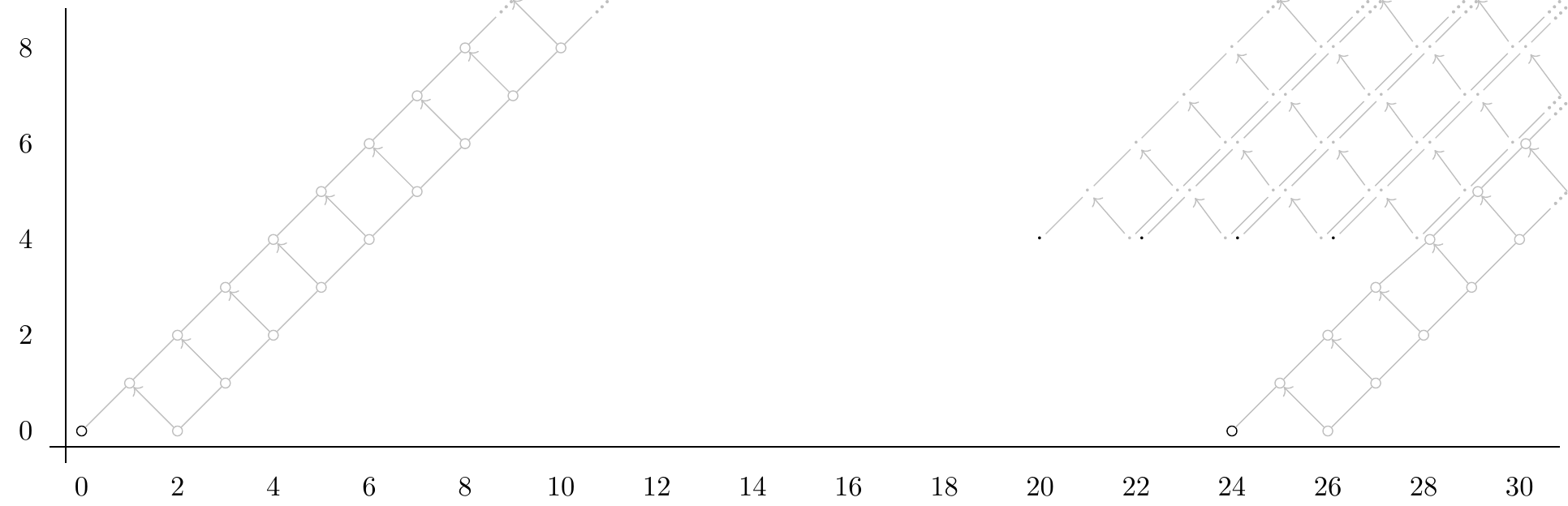}
 \caption{The connecting homomorphism \eqref{conn_hom1} for the $v_1$-free classes}
 \label{d-one-Y-bo}
\end{figure}
In  \Cref{d-one-Y-bo} a $\circ$ denotes a copy of $\FF_2[v_1]$, and a line of slope 1 denotes, as usual, multiplication by $\eta$. Note that we have $\kappabar v_1^4=\Delta \eta^4$, hence $\kappabar v_1^4=0$ in $E_2(Y)$, while $v_1$ itself is not nilpotent and $\Delta^i$ is not $v_1$ torsion.
For our purposes, we need to determine completely the action of $v_1$ on $E_2(Y)$. The class $v_1\in \Ext_{\Lambda'}^{0}(A', A'/(2))$ is detected by the primitive $a_1\in A'/(2)$ (with respect to the $\Lambda'$-comodule structure).

Since $a_1\in A'/(2)$ is a primitive, multiplication by $a_1\in A'/(2)$ induces the following diagram of $\Lambda'$-comodules 

\begin{align}
\label{diagram}\xymatrix{
& 0\ar[d] &0\ar[d]&0\ar[d] &\\
0 \ar[r] & \Sigma^2A'/(2)\ar[d]^{\times a_1} \ar[r]& \Sigma^2 A'/(2)\otimes_{A/(2)}\mathcal{F}_*( Y)\ar[d]^{\times a_1} \ar[r]& \Sigma^4 A'/(2) \ar[d]^{\times a_1}\ar[r]& 0\\
0 \ar[r] & A'/(2)\ar[d] \ar[r]&  A'/(2)\otimes_{A/(2)}\mathcal{F}_*( Y)\ar[d] \ar[r]& \Sigma^2 A'/(2)\ar[d] \ar[r]& 0\\
0 \ar[r] & A'/(2,a_1)\ar[d] \ar[r]& A'/(2,a_1)\otimes_{A/(2)} \mathcal{F}_*( Y)\ar[d] \ar[r]& \Sigma^2 A'/(2,a_1)\ar[d] \ar[r]& 0 \\
& 0 &0&0 &
}
\end{align}
We let
\[\mathcal{M}:=A'/(2,a_1)\otimes_{A/(2)}\mathcal{F}_*( Y)\]
The middle vertical short exact sequence induces a long exact sequence 
\begin{equation}\label{v1-action} 
\xymatrix@C=1.5pc{ \ldots \ar[r] & E^{s,t}_2(Y) \ar[r]^-{v_1} & E^{s,t+2}_2(Y)\ar[r]  
 &  \Ext^{s,t+2}_{\Lambda'}(A', \mathcal{M}) \ar[r]  
  & E_2^{s+1,t+2}(Y) \ar[r]^-{v_1}  & \ldots}
\end{equation}
so that we can determine the action of $v_1$ on $E_2(Y)$ by computing 
\[\Ext^{*,*}_{\Lambda'}(A', \mathcal{M})\cong \Ext^{*,*}_{\Lambda'/(2,a_1)}(A'/(2,a_1), A'/(2,a_1)\otimes_{A/(2)} \mathcal{F}_*( Y)) .\]
The cohomology ring $\Ext_{\Lambda'/(2,a_1)}^{*,*}(A'/(2,a_1), A'/(2,a_1))$ is computed in \cite[Section 7]{tbauer}. With our notation,
\[\Ext_{\Lambda'/(2,a_1)}^{*,*}(A'/(2,a_1), A'/(2,a_1))\cong \F_2[\eta,\nu,\kappabar,v_2]/(v_2\eta^3 -\nu^3,\eta\nu).\]
The bottom short exact sequence of the above diagram \eqref{diagram} splits as a sequence of $A'/(2,a_1)$-modules. However, it does not split as a one of $\Lambda'/(2,a_1)$-comodules, rather it represents the element $\eta\in \Ext_{\Lambda'/(2,a_1)}^{1,2}(A'/(2,a_1), A'/(2,a_1))$. Therefore, the connecting homomorphism 
\begin{equation}\label{connect_hom} \Ext_{\Lambda'/(2,a_1)}^{s,t}(A'/(2,a_1), A'/(2,a_1))\rightarrow \Ext_{\Lambda'/(2,a_1)}^{s+1,t+2}(A'/(2,a_1), A'/(2,a_1))
\end{equation}
of the induced long exact sequence in $\Ext_{\Lambda'/(2,a_1)}^{*,*}(A'/(2,a_1), - )$ is given by multiplication by $\eta$. We obtain:
\begin{lemma}\label{eta-extens-alg} As a module over the ring $\F_2[\eta, \nu, \kappabar, v_2]/(v_2\eta^3-\nu^3,\eta\nu)$, the cohomology group 
\[\Ext^{*,*}_{\Lambda'/(2,a_1)}(A'/(2,a_1), A'/(2,a_1)\otimes_{A/(2)}\mathcal{F}_*( Y))\] is generated by $a[0,0]\in \Ext^{0,0}$ and $a[5,1]\in \Ext^{1,6}$ with the relations
\begin{align*}
\eta a[0,0] &= 0, & \eta a[5,1] = \nu^2a[0,0].
\end{align*}
\end{lemma}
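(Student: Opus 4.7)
The plan is to apply the functor $\Ext^{*,*}_{\Lambda'/(2,a_1)}(R,-)$, with $R := A'/(2,a_1)$, to the bottom row of diagram \eqref{diagram}, i.e., the short exact sequence of $\Lambda'/(2,a_1)$-comodules
\[
0 \to R \to \mathcal{M} \to \Sigma^2 R \to 0.
\]
As the text immediately preceding the lemma observes, this sequence is classified by $\eta \in \Ext^{1,2}_{\Lambda'/(2,a_1)}(R,R)$, so the connecting homomorphism of the associated long exact sequence is multiplication by $\eta$ on the already-known ring
\[
R^{*,*} \;:=\; \Ext^{*,*}_{\Lambda'/(2,a_1)}(R,R) \;\cong\; \F_2[\eta,\nu,\kappabar,v_2]\big/(\eta\nu,\ v_2\eta^3-\nu^3).
\]
Writing $E^{*,*}$ for the $\Ext$-group to be computed, the LES yields, in every bidegree, a short exact sequence
\[
0 \to (R^{*,*}/\eta R^{*,*})^{s,t} \to E^{s,t} \to \ker\bigl(\eta\cdot\colon R^{s,t-2}\to R^{s+1,t}\bigr) \to 0.
\]

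I would then analyze $\ker(\eta\cdot)$ and $\coker(\eta\cdot)$ on $R^{*,*}$ directly from the presentation. A straightforward monomial check (say via a Gr\"obner basis $\{\eta\nu,\ \nu^3-v_2\eta^3,\ v_2\eta^4\}$) shows that $\ker(\eta\cdot)$ is the principal ideal $(\nu)$, cyclic as an $R^{*,*}$-module on $\nu$ with annihilator $(\eta)$, while $\coker(\eta\cdot)$ is cyclic on $1$ with annihilator $(\eta,\nu^3)$ (using $\nu^3 = \eta\cdot v_2\eta^2 \in \eta R^{*,*}$). Combined with the short exact sequences above, this shows that $E^{*,*}$ is generated as an $R^{*,*}$-module by $a[0,0] := i_*(1) \in E^{0,0}$, which automatically satisfies $\eta\,a[0,0] = i_*(\eta) = 0$, together with any preimage $a[5,1] \in E^{1,6}$ of the generator $\nu \in \ker(\eta\cdot)^{1,4}$; the preimage exists and is unique because $(R^{*,*}/\eta R^{*,*})^{1,6} = 0$.

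The remaining and most delicate step is to pin down the hidden multiplication $\eta\,a[5,1] \in E^{2,8}$. A direct count in the LES shows $E^{2,8} = \F_2\{\nu^2 a[0,0]\}$ (since $(R^{*,*}/\eta R^{*,*})^{2,8} = \F_2\{\nu^2\}$ while $\ker(\eta\cdot)^{2,6} = 0$), so $\eta\,a[5,1]$ is either $0$ or $\nu^2 a[0,0]$. This is the main obstacle of the proof. My plan is to distinguish the two cases via the standard Toda/Massey bracket identity for an extension classified by $\eta$: the preimage $i_*^{-1}(\eta\,a[5,1])$ coincides with the triple bracket $\langle \eta,\nu,\eta\rangle \in R^{2,8}$, which is defined since $\eta\nu = \nu\eta = 0$ and whose indeterminacy $\eta R^{1,6} + R^{1,6}\eta$ vanishes. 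One then verifies $\langle \eta,\nu,\eta\rangle = \nu^2$ by a short cobar computation in $\Lambda'/(2,a_1)$ using explicit primitive representatives of $\eta$ and $\nu$, or by a juggling argument exploiting the relation $\nu^3 = v_2\eta^3$ to exhibit a nonzero chain-level cocycle representing the bracket.

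Assembling the three ingredients yields the presentation claimed in the lemma: $E^{*,*}$ is generated over $R^{*,*}$ by $a[0,0]$ and $a[5,1]$ subject exactly to $\eta\,a[0,0] = 0$ and $\eta\,a[5,1] = \nu^2 a[0,0]$. All other apparent relations (for instance $\nu^3 a[0,0] = v_2\eta^2\cdot \eta\,a[0,0] = 0$) are forced by these together with the defining relations of $R^{*,*}$, and all remaining bidegrees match the dimensions produced by the short exact sequences extracted from the LES.
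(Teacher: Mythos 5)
Your proposal is correct and follows essentially the same route as the paper: the long exact sequence of the extension classified by $\eta$ yields the additive and $\F_2[\nu,\kappabar,v_2]/(\nu^3)$-module structure, and the hidden relation $\eta\, a[5,1]=\nu^2 a[0,0]$ is resolved via the Massey product $\nu^2=\langle\eta,\nu,\eta\rangle$. The only (cosmetic) difference is in how that bracket is deployed: the paper shuffles it onto the module side, writing $\nu^2 a[0,0]=\eta\langle\nu,\eta,a[0,0]\rangle$ and identifying $\langle\nu,\eta,a[0,0]\rangle=a[5,1]$ by sparseness, whereas you invoke the compatibility of the connecting homomorphism with the triple bracket $\langle\eta,\nu,\eta\rangle$ directly --- both arguments hinge on exactly the same identity and the same vanishing of indeterminacy.
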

\begin{proof} By the description of the connecting homomorphism \eqref{connect_hom}, we see that 
\[\Ext^{*,*}_{\Lambda'/(2,a_1)}(A'/(2,a_1), A'/(2,a_1)\otimes_{A/(2)}\mathcal{F}_*( Y))\cong \F_2[\nu, \kappabar, v_2]/(\nu^3)\{a[0,0],a[5,0]\}\]
as an $\F_2[\nu, \kappabar, v_2]/(\nu^3)$-module.
Next, we determine the action of $\eta$. We see easily that $\eta a[0,0]=0$. To calculate $\eta a[5,1]$, we remark that 
\begin{align*}
\nu^2a[0,0] &= \langle \eta,\nu,\eta\rangle  a[0,0] = \eta\langle\nu,\eta, a[0,0]\rangle,\end{align*}
where the first equality comes from the 
Massey product $\nu^2 = \langle \eta,\nu,\eta\rangle$ and the second is a shuffle.
As $\nu^2 a[0,0]\ne 0$, $\langle\nu,\eta, a[0,0]\rangle$ is not trivial and must be equal to $a[5,1]$ by sparseness. Hence,
$\nu^2 a[0,0] = \eta a[5,1]$.
\end{proof}

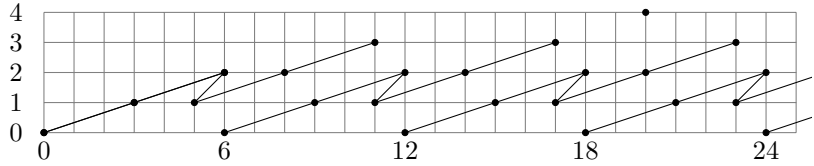
\begin{figure}[h!]
\begin{tikzpicture}[scale=0.4]
\clip(-1.5,-1.5) rectangle (25.5,4.5);
\draw[color=gray] (0,0) grid [step=1] (25,4);
\foreach \n in {0,6,...,24}
{
\def\nn{\n-0}
\node[below] at (\nn,0) {$\n$};
}
\foreach \s in {0,1,...,4}
{\def\ss{\s-0};
\node [left] at (-0.4,\ss,0){$\s$};
}
\draw [fill] ( 0.00, 0.00) circle [radius=0.1];
\draw [fill] (3,1) circle [radius=0.1];
\draw [fill] (6,2) circle [radius=0.1];
\draw [-] (0,0)--(3,1);
\draw [-] (3,1)--(6,2);
\foreach \t in {0}
\foreach \s in {0,6,...,24}
{\def\ss{\s-0};
\draw [fill]  (\s-\t,\t) circle [radius=0.1];
\draw [fill] (\s+3-\t,1+\t) circle [radius=0.1];
\draw [-] (\s-\t,\t)--(\s+3-\t,1+\t);
\draw[fill] (\s+5-\t,1+\t) circle [radius=0.1];
\draw[fill] (\s+8-\t,2+\t) circle [radius=0.1];
\draw[-] (\s+5-\t,1+\t)--(\s+8-\t,2+\t); 
\draw [fill] (\s-\t+6,2+\t) circle [radius=0.1];
\draw [-] (\s+3-\t,1+\t)--(\s+6-\t,2+\t);
\draw[fill] (\s+11-\t,3+\t) circle [radius=0.1];
\draw[-] (\s+8-\t,2+\t)--(\s+11-\t,3+\t); 
\draw [-] (\s+5-\t,1+\t)--(\s-\t+6,2+\t);
}
\draw [fill] ( 20, 4) circle [radius=0.1];
\end{tikzpicture}
\caption{ $\Ext^{s,t}_{\Lambda'/(2,a_1)}(A'/(2,a_1), A'/(2,a_1)\otimes_{A/(2)}\mathcal{F}_*( Y))$ depicted in the coordinate $(t-s,s)$.}
\label{fig:Amod2a1}
\end{figure}

\begin{proposition}[\Cref{figE2Y}]\label{propE2Y}
As a module over $E_2(V(0))$, $E_2(Y)$ is generated by classes
\begin{align*}
a[0,0],   a[5,1],  a[17,3]
\end{align*}
The submodule generated by $a[0,0]$ is isomorphic to $E_2(V(0))/\eta$. There are Massey products
\[a[5,1]=\langle \nu,\eta, a[0,0] \rangle , \ \ a[17,3] =\langle \eta x^2,\eta, a[0,0] \rangle \] 
and these classes are subject to the following relations. On the new classes, we have $v_1$ mulitiplications
\begin{align*}
  v_1 a[5,1]= xa[0,0]  \ \ \ \   v_1a[17,3]= x^2a[5,1], 
   \end{align*}
   $\eta$ and $\nu$ multiplications
   \begin{align*}
 \eta a[5,1] = \nu^2a[0,0], \ \ \  \eta a[17,3]= \nu a[17,3]= y a[17,3]= 0 
 \end{align*}
as well as
 \begin{align*}
\nu^2ya[5,1]= v_1^3 \kappabar a[0,0] \ .
\end{align*}
\end{proposition}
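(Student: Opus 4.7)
The plan is to derive the presentation from the $\eta$-Bockstein long exact sequence induced by the short exact sequence \eqref{tmfY} of $\Lambda'$-comodules, whose connecting homomorphism is multiplication by $\eta$. The additive structure of $E_2(Y)$ is already recorded in Figures \ref{d-one-Y} and \ref{d-one-Y-bo}, which display $\coker(\eta)$ and $\ker(\eta)$ on $E_2(V(0))$; the proposition reorganizes this additive data into an $E_2(V(0))$-module presentation.

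First I set $a[0,0]:=i_*(1)$, so by exactness the $E_2(V(0))$-submodule generated by $a[0,0]$ equals $\im(i_*)\cong E_2(V(0))/\eta$. Next I choose $a[5,1]$ and $a[17,3]$ as preimages under $p_*$ of the two generators of $\ker(\eta)\subset E_2(V(0))$ given by $\nu$ (since $\eta\nu=0$) and $\eta x^2$ (since $\eta^2 x^2=\nu^3 x=0$ using $\eta^2 x=\nu^3$, $\nu x=v_1\eta x$ and $v_1\nu=0$ from \Cref{thmE2V0all}). Inspection of the $\eta$-action shows every class of $\ker(\eta)$ is an $E_2(V(0))$-multiple of $\nu$ or $\eta x^2$, so $a[0,0]$, $a[5,1]$, $a[17,3]$ generate $E_2(Y)$ over $E_2(V(0))$. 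The Massey-product identifications $a[5,1]=\langle\nu,\eta,a[0,0]\rangle$ and $a[17,3]=\langle\eta x^2,\eta,a[0,0]\rangle$ are the standard reading of a lift through an $\eta$-Bockstein connecting map, and the relation $\eta a[5,1]=\nu^2 a[0,0]$ is the Massey shuffle $\nu^2=\langle\eta,\nu,\eta\rangle$ used in \Cref{eta-extens-alg}.

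The $v_1$-multiplications are extracted from the $v_1$-Bockstein long exact sequence \eqref{v1-action} together with \Cref{eta-extens-alg}. The key observation is that $xa[0,0]=i_*(x)$ maps to zero under $j\colon E_2(Y)\to\Ext(A',\mathcal{M})$ because the relevant bidegree in $\Ext(A',\mathcal{M})$ is trivial by \Cref{eta-extens-alg}; hence $xa[0,0]\in\im(v_1)$, and the only $v_1$-preimage at the relevant bidegree is $a[5,1]$, yielding $v_1 a[5,1]=xa[0,0]$. The relation $v_1 a[17,3]=x^2 a[5,1]$ follows by the same logic applied one stage higher. For the relations $\eta a[17,3]=\nu a[17,3]=ya[17,3]=0$, each product has $p_*$-image zero in $E_2(V(0))$ (using $\eta^2 x^2=0$, $\eta\nu=0$, and $xy=0$), so lies in $\im(i_*)$; bidegree considerations in $E_2(V(0))/\eta$ then rule out any nonzero target.

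The main obstacle is the hidden extension $\nu^2 y\,a[5,1]=v_1^3\kappabar\,a[0,0]$. We have $p_*(\nu^2 y\,a[5,1])=\nu^3 y=\eta^2 xy=0$, so this product lies in $\im(i_*)$, and in bidegree $(t-s,s)=(26,4)$ the class $v_1^3\kappabar\,a[0,0]$ is the only nonzero candidate in $E_2(V(0))/\eta$. To verify that the extension is genuinely non-trivial, I would use the Massey shuffle
\[\nu^2 y\,a[5,1]=\nu^2 y\,\langle\nu,\eta,a[0,0]\rangle=\langle\nu,\eta,\nu^2 y\,a[0,0]\rangle,\]
which is valid because $\eta\nu=0$ and $\eta\cdot\nu^2 y=0$ in $E_2(V(0))$. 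Evaluating the right-hand Massey product inside $E_2(V(0))$ and matching it to $v_1^3\kappabar$ via the connecting-map formula for \eqref{eqsss} together with the relation $\eta^4\Delta=v_1^4\kappabar$ of \Cref{thmE2V0all}, while controlling the indeterminacy via the $v_1$- and $\kappabar$-structure established above, is the delicate final step.
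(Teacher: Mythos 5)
Your argument follows the paper's proof almost step for step: the generators and the identification of the $a[0,0]$-submodule with $E_2(V(0))/\eta$ come from the $\eta$-Bockstein sequence attached to \eqref{tmfY}; the relation $\eta a[5,1]=\nu^2 a[0,0]$ comes from the shuffle with $\nu^2=\langle\eta,\nu,\eta\rangle$ exactly as in \Cref{eta-extens-alg}; and the $v_1$-multiplications come from the long exact sequence \eqref{v1-action} together with the vanishing of $\Ext_{\Lambda'}(A',\mathcal{M})$ in the relevant bidegrees (your argument for $v_1a[5,1]=xa[0,0]$ is precisely the paper's parenthetical remark). All of that is sound and is essentially the paper's proof.

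The gap is in the last relation, $\nu^2 y\,a[5,1]=v_1^3\kappabar\,a[0,0]$, which is the one genuinely hard point of the proposition and which you explicitly leave as a ``delicate final step.'' Two problems. First, your shuffle lands on $\langle\nu,\eta,\nu^2 y\,a[0,0]\rangle$, whose third entry lives in the module $E_2(Y)$, so this bracket is not one you can evaluate ``inside $E_2(V(0))$''; to reduce to a ring-level computation you should shuffle the other way, to $\langle \nu^2 y,\nu,\eta\rangle\, a[0,0]$, and then use $\eta a[0,0]=0$ to kill the indeterminacy. Second, and more seriously, the remaining task is to prove $\kappabar v_1^3\in\langle \nu^2 y,\nu,\eta\rangle$ in $\Ext_{\Lambda'}(A',A'/(2))$, and the route you gesture at --- the connecting-map formula for \eqref{eqsss} together with $\eta^4\Delta=v_1^4\kappabar$ --- does not obviously produce this. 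The paper devotes a separate lemma (\Cref{lemkv13}) to exactly this bracket; its proof is a nontrivial chain of shuffles starting from Bauer's Massey product $\kappabar v_1^2=\langle\eta,\kappa\eta,x\rangle$ and ending with a $v_1$-injectivity argument in the target bidegree. Without that external input (or an equivalent computation) the relation is only made plausible by your observation that $v_1^3\kappabar\,a[0,0]$ is the unique nonzero candidate in the target bidegree of $\im(i_*)$; it is not established.
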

\begin{proof}
Using the description of $E_2(V(0))$, the effect of the connecting homomorphism $\delta$ of \eqref{conn_hom1} is straightforward to compute. The cokernel is simply $E_2V(0)/\eta$ as an $E_2(V(0))$-module. Using the multiplication on $E_2(V(0))$, the kernel is generated by classes $a[5,1]$ and $a[17,3]$ defined as
\begin{align*}
a[5,1] = p_*(\nu) \ \ \ a[17,3] = p_*(\eta x^2),
\end{align*}
where $p_*$ is induced by the map $A'\otimes_{A} \mathcal{F}_*( Y)\rightarrow \Sigma^2 A'/(2)$ of (\ref{tmfY}).

 Inspecting the long exact sequence \eqref{v1-action} and the structure of 
\[\Ext^{*,*}_{\Lambda'/(2,a_1)}(A'/(2,a_1), A'/(2,a_1)\otimes_{A/(2)}\mathcal{F}_*( Y)),\]
we see that $v_1 a[5,1] = xa[0,0]$ (else the latter  $\Ext^{1,8}$-term would be nonzero and contain the image of $xa[0,0]$). That $v_1 a[17,3]=x^2a[5,1]$ follows from the fact that $v_1\eta x^2 = x^2 \nu$ in $E_2(V(0))$ and the definition these classes as images of $\delta$.

By the same argument used in \Cref{eta-extens-alg}, we deduce the $\eta$-multiplication on $a[5,1]$. The relations $\eta a[17,3]= \nu a[17,3]= y a[17,3]= 0$ follows for degree reasons.

It remains to verify that $\nu^2ya[5,1]= v_1^3 \kappabar a[0,0]$.
A juggling of Massey products gives
 \[\afifteen\nu^2 \langle \nu, \eta, a[0,0]\rangle = \langle \afifteen\nu^2, \nu,\eta\rangle a[0,0].\] 
The relation $\nu^2ya[5,1]= v_1^3 \kappabar a[0,0]$  then follows by \Cref{lemkv13} below and the fact that $\eta a[0,0]=0$.
\end{proof}

\begin{figure}[h]
\includegraphics[page=1, width=\textwidth]{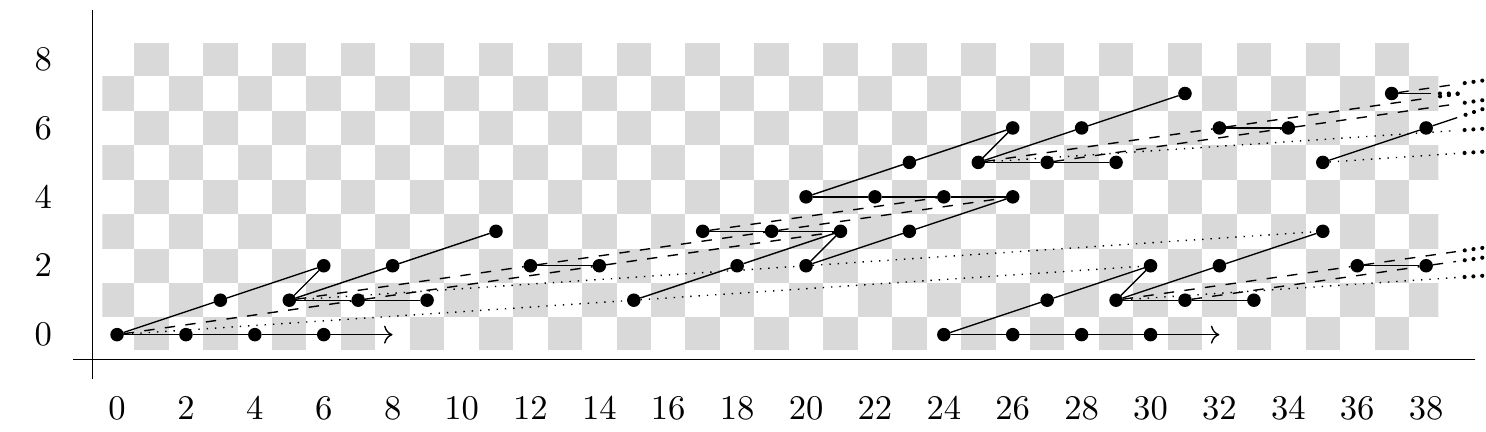}
\caption{$E_2(Y)$ as a module over $E_2(V(0))$. The dashed lines are $x$-multiplications and dotted lines $y$-multiplications. Other structure lines are as in \Cref{tmf-V0-E2}.}
\label{figE2Y}
\end{figure}

\begin{lemma}\label{lemkv13} In $\Ext_{\Lambda'}^{*,*}(A',A'/(2))$, there is the following Massey product
\[\kappabar v_1^3\in \langle \afifteen\nu^2, \nu,\eta\rangle \]
with indeterminacy 
\[ \eta \Ext^{3,28}_{\Lambda'}(A',A'/(2)) + y\nu^2\Ext^{1,6}_{\Lambda'}(A',A'/(2)).\]
\end{lemma}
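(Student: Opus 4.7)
The plan is to first verify that the bracket is defined, then to simplify it using the relation $x^3 = \nu^2 y$, and finally to evaluate it by a direct cobar-complex computation.

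First, I would check the two vanishings required for $\langle y\nu^2, \nu, \eta\rangle$ to be defined. The relation $\nu\eta = 0$ is one of the $s=2$ relations listed in \Cref{thmE2V0all}. For $y\nu^2 \cdot \nu = 0$, combining the $s=3$ relation $\nu^3 = \eta^2 x$ with the $s=2$ relation $xy = 0$ gives
\[y\nu^2\cdot\nu = y\nu^3 = y\eta^2 x = \eta^2(xy) = 0.\]

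Next, the $s=3$ relation $x^3 = \nu^2 y$ allows the bracket to be rewritten as $\langle x^3, \nu, \eta\rangle$, which removes the $y$ from the first slot and makes the cobar-level computation more manageable. To evaluate it, I would work directly in the cobar complex $C^{*,*}_{\Lambda'/(2)}(A'/(2))$, choosing cocycles $X, N, H, V$ representing $x, \nu, \eta, v_1$, along with cochains $\alpha$ with $d\alpha = NH$ and $E$ with $dE = XN - VHX$ realizing $\nu\eta = 0$ and $x\nu = v_1\eta x$ on the cocycle level. Starting from
\[X^3 N = X^2(XN) = X^2(VHX + dE) = VHX^3 + d(X^2 E),\]
and then reducing $VHX^3$ by replacing $X^3$ with $N^2 Y$ modulo a coboundary (using $x^3 = \nu^2 y$) and applying $NH = d\alpha$, one produces an explicit cochain $\beta$ with $d\beta = X^3 N$. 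The Massey product is represented by $\beta H \pm X^3\alpha$. Tracking its cohomology class, and at the appropriate stage invoking the $s=4$ relation $\eta^4\Delta = v_1^4\kappabar$ from \Cref{thmE2V0all}, identifies the result with $\kappabar v_1^3$. The stated indeterminacy follows from the standard formula for a triple Massey product, namely
\[y\nu^2\cdot\Ext^{1,6}_{\Lambda'}(A', A'/(2)) + \eta\cdot\Ext^{3,28}_{\Lambda'}(A', A'/(2)).\]

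The main obstacle will be the bookkeeping in the cocycle-level reduction of $\beta H \pm X^3\alpha$ to $\kappabar v_1^3$ plus indeterminacy. In particular, the appearance of $\kappabar$ is not visible in any of the input relations, and must emerge through the relation $\eta^4\Delta = v_1^4\kappabar$ being applied at exactly the right stage. A more conceptual alternative would be a shuffle argument built on the auxiliary Massey product $\langle v_1, \nu, \eta\rangle \ni x$ (which is defined since $v_1\nu = 0$ and $\nu\eta = 0$, and whose value is pinned down by $\delta_2 \langle v_1,\nu,\eta\rangle \ni \langle \eta,\nu,\eta\rangle = \nu^2 = \delta_2(x)$); however, the relevant shuffle lands in a $4$-fold Massey product framework whose indeterminacy requires further care to control.
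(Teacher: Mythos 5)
Your setup is fine: the definedness checks ($\nu\eta=0$ and $y\nu^3=y\eta^2x=\eta^2(xy)=0$) are correct, the substitution $y\nu^2=x^3$ is legitimate, and the indeterminacy formula is the standard one. But the actual content of the lemma --- that the bracket contains $\kappabar v_1^3$ --- is never established. Your plan defers it to a cobar-complex computation that you do not carry out, and you yourself flag the critical difficulty: $\kappabar$ does not appear in any of the input data, and your proposed mechanism for its appearance (invoking $\eta^4\Delta=v_1^4\kappabar$ ``at exactly the right stage'') is speculative. Nothing in the plan guarantees that the explicit cochain $\beta H\pm X^3\alpha$ reduces to a representative of $\kappabar v_1^3$ rather than, say, zero modulo the indeterminacy; without an anchor identifying $\kappabar$ in bracket form, the computation has no target to steer toward. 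So this is a genuine gap, not just omitted bookkeeping.

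The paper closes exactly this gap by importing Bauer's Formula 7.9, $\kappabar v_1^2=\langle\eta,\kappa\eta,\aseven\rangle$, which is where $\kappabar$ enters the argument. From there everything is shuffling: multiplying by $v_1$ and using $v_1\kappa\eta=\eta^2\afifteen$ gives $\kappabar v_1^3=\langle\eta,\eta^2\afifteen,\aseven\rangle$ (after checking the indeterminacy $\eta\Ext^{3,28}$ misses $\kappabar v_1^3$); multiplying by $v_1$ again and using $\eta^2\aseven=\nu^3$ gives $\kappabar v_1^4=\langle v_1\eta,\afifteen\nu,\nu^2\rangle=v_1\langle\eta,\nu,\afifteen\nu^2\rangle$; and finally injectivity of $v_1$ on $\Ext^{4,30}_{\Lambda'}(A',A'/(2))$ lets one cancel the $v_1$ to conclude $\kappabar v_1^3=\langle\afifteen\nu^2,\nu,\eta\rangle$. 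Your ``conceptual alternative'' is closer in spirit to this, but it starts from the wrong auxiliary bracket ($\langle v_1,\nu,\eta\rangle\ni x$ carries no $\kappabar$ either) and you do not resolve the four-fold-bracket indeterminacy issue you raise. To repair your argument, replace the cobar computation with the known bracket expression for $\kappabar v_1^2$ and a shuffle chain of the above type.
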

\begin{proof}
By \cite[Formula 7.9]{tbauer}, 
$\kappabar v_1^2 = \langle \eta, \kappa \eta, \aseven\rangle$
and
\[\kappabar v_1^3 = v_1 \langle \eta, \kappa \eta, \aseven\rangle \subset \langle v_1 \eta, \kappa \eta, \aseven\rangle \subset \langle \eta, v_1 \kappa \eta, \aseven\rangle = \langle \eta, \eta^2 \afifteen, \aseven \rangle\]
and the indeterminacy 
$\eta \Ext^{3,28}_{\Lambda'}(A',A'/(2))$ 
does not contain $\kappabar v_1^3$. 
Here, we used the relation $v_1 \kappa \eta = \eta^2 \afifteen$.
So
$\kappabar v_1^3 = \langle \eta, \eta^2 \afifteen,\aseven \rangle $
and it follows that 
\begin{align*}
\kappabar v_1^4 & = v_1 \langle\eta, \eta^2\afifteen, \aseven \rangle  
= \langle v_1 \eta, \afifteen, \eta^2 \aseven \rangle  
= \langle v_1 \eta, \afifteen, \nu^3\rangle 
 \subset \langle v_1\eta, \afifteen\nu,\nu^2\rangle\end{align*}
and  the indeterminacy $\eta v_1 \Ext^{3,28}$ does not contain $\kappabar v_1^4$. So 
\[\kappabar v_1^4 = \langle v_1\eta, \afifteen\nu,\nu^2\rangle
 = \langle v_1\eta, \nu,\nu^2\afifteen\rangle  = v_1 \langle \eta, \nu,\afifteen\nu^2\rangle .\]
As $v_1$ acts injectively on $\Ext_{\Lambda'}^{4, 30}(A', A'/(2))$, so
$\kappabar v_1^3 = \langle \eta, \nu,\afifteen\nu^2\rangle = \langle \afifteen\nu^2,\nu,\eta\rangle$.
\end{proof}

\begin{rem}
In $E_2^{s,t}(Y)$, there is at most one non-zero element in any bi-degree $(s,t)$ with filtration $s>0$. There is also a unique non-zero element in bi-degree $(0,0)$. So, for $s>0$ or $(s,t)=(0,0)$, we often denote by $a[t-s,s] \in E_r^{s,t}$ the non-zero element, if it exists. Furthermore, when $s=0$ and $t>0$, we let $a[t,0]$ denote the element of $E_2^{0,t}(Y)$ which is divisible by the largest power of $\Delta$. For example, $E_2^{0,52}(Y) \cong \F_2\{v_1^{26}a[0,0],v_1^{14}\Delta a[0,0], v_1^2\Delta^2 a[0,0] \}$ and $a[52,0]=v_1^2\Delta^2 a[0,0]$.
\end{rem}

Although \Cref{propE2Y} gives us a very compact description of $E_2(Y)$, the elliptic spectral sequence of $tmf \wedge Y$ is not a module over the elliptic spectral sequence of $tmf \wedge V(0)$ as the latter is not even a multiplicative spectral sequence. However, the elliptic spectral sequence of $tmf \wedge Y$ is a module over the elliptic spectral sequence of $tmf$. 
In fact, we get even more structure than that from the fact that $Y$ has $v_1$-self maps. As explained in \Cref{v1selfmaps}, we have:
\begin{lem}[$v_1$-linearity]\label{lem:e2-e4}
The differentials in the elliptic spectral sequence for $tmf \wedge Y$ are $v_1$-linear. 
\end{lem}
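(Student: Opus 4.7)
The plan is to exploit the existence of a $v_1$-self map $v \colon \Sigma^2 Y \to Y$ discussed in \Cref{v1selfmaps} and Davis--Mahowald \cite{DM}. First, I would smash this self-map with $tmf$ to obtain a map of $tmf$-modules
\[ \mathrm{id}_{tmf} \wedge v \colon \Sigma^2 (tmf \wedge Y) \to tmf \wedge Y, \]
and further smash with $X(4)$ to obtain a compatible self-map of the tower of spectra defining the elliptic spectral sequence. Naturality of the Adams-type construction then yields an endomorphism of the elliptic spectral sequence for $tmf \wedge Y$, raising the internal degree $t$ by $2$ and preserving filtration $s$, which commutes with all differentials.

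Next, I would identify this endomorphism on the $E_2$-page. By the discussion in \Cref{v1selfmaps}, any $v_1$-self map of $Y$ is detected in filtration zero by the class $v_1 \in E_2^{0,2}(Y)$. Concretely, the $X(4)$-homology of $v$ induces a map of $\Lambda'$-comodules on $A' \otimes_A \mathcal{F}_*(Y)$ which is multiplication by $a_1$ modulo higher filtration, and (since the map has filtration zero) the induced map on $\Ext$ is exactly multiplication by $v_1$. Combining this with the fact that the induced map of spectral sequences commutes with each $d_r$ gives, for every $r \geq 2$ and every class $a \in E_r^{s,t}(Y)$, the relation
\[ d_r(v_1 \cdot a) = v_1 \cdot d_r(a), \]
which is the $v_1$-linearity claimed.

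The only subtle point is that on later pages $v_1$ is only represented up to higher filtration, so one has to argue that the map on $E_r$ induced by $v$ really is multiplication by the surviving representative of $v_1$ rather than by some correction term. This is settled by observing that the induced self-map has filtration zero throughout the tower, so on each page it is given by the class of the $X(4)$-homology map, which is $a_1$ on the $E_1$-level chain complex and hence $v_1$ on each $E_r$. I expect this filtration bookkeeping to be the only mild obstacle; everything else is formal naturality together with the input from \cite{DM} identifying the detecting class of a $v_1$-self map of $Y$.
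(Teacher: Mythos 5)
Your proposal is correct and is essentially the paper's own argument: the paper offers no separate proof of this lemma, simply pointing back to the discussion of $v_1$-self maps in \Cref{v1selfmaps}, where the claim is that a $v_1$-self map $\Sigma^2 Y\to Y$ induces a self-map of the $X(4)$-based Adams tower for $tmf\wedge Y$, is detected by $v_1$ on the $E_2$-term, and hence forces $v_1$-linearity of all differentials. Your filtration-zero bookkeeping at the end is exactly the (implicit) content of the paper's remark, so the two arguments coincide.
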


We state the following ``intermediate'' result for convenience of reference in the computations below. The module structure of the elliptic spectral sequence spectral sequence of $tmf\wedge Y$ over that of $tmf$ is richer than what is stated here but that information can be read off of \Cref{propE2Y}. 
\begin{cor}\label{cormodstructureE2Y}
As a module over
\[\F_2[v_1, \nu, \kappabar, \Delta]/( v_1\nu, \nu^3, v_1^4 \kappabar)\]
$E_2(Y)$ is generated by
\[a[0,0], \ a[5,1],  \ a[12,2], \ a[15,1], \ a[17,3],  \ a[20,2] \]
subject to the relations generated by
\begin{align*}
v_1^3a[5,1]=v_1^2a[12,2]=v_1a[15,1]=\nu a[12,2] =\nu a[17,3]  =0
\end{align*}
and
\begin{align*}
\nu^2a[15,1]=v_1^2a[17,3] ,  \ \  \ \ \nu^2 a[20,2]=v_1^3 \kappabar a[0,0] \ .
\end{align*}
Furthermore, the differentials are $\F_2[v_1, \nu, \kappabar, \Delta^8]/( v_1\nu, \nu^3, v_1^4 \kappabar)$-linear.
\end{cor}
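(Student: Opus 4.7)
The plan is to translate the description of $E_2(Y)$ as an $E_2(V(0))$-module given in \Cref{propE2Y} into a description as a module over the smaller ring $R := \F_2[v_1, \nu, \kappabar, \Delta]/(v_1\nu, \nu^3, v_1^4\kappabar)$. The difference between the two rings is accounted for by the ``extra'' generators $\eta, x, y$ of $E_2(V(0))$, so the strategy is to determine how multiplication by $\eta$, $x$, $y$ on the three $E_2(V(0))$-module generators $a[0,0], a[5,1], a[17,3]$ produces classes that cannot be reached via $R$-linear combinations of those three alone.

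First, I would note from \Cref{propE2Y} that $\eta \cdot a[0,0] = 0$, $\eta \cdot a[17,3] = 0$, and $\eta \cdot a[5,1] = \nu^2 a[0,0]$, so no new generators arise from $\eta$. For $x$, the relation $x \cdot a[0,0] = v_1 \cdot a[5,1]$ keeps us in the $R$-span, but $x \cdot a[5,1]$ is a genuinely new class, which I name $a[12,2]$. For $y$, the relation $y \cdot a[17,3] = 0$ is benign, but $y \cdot a[0,0]$ and $y \cdot a[5,1]$ are new; I name them $a[15,1]$ and $a[20,2]$. I would then verify closure: using $x^2 a[5,1] = v_1 a[17,3]$, any further $x$-multiplication on $a[12,2]$ or $a[17,3]$ reduces back to the $R$-span, and any $y$-multiplication or iterated $\eta$-multiplication on the new generators likewise reduces.

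Next, I would verify the stated relations by direct computation in $E_2(V(0))$ using \Cref{thmE2V0all} and \Cref{propE2Y}. For example, $v_1^3 a[5,1] = v_1^2 \cdot x a[0,0] = 0$ by the relation $v_1^2 x = 0$; we have $\nu \cdot a[12,2] = \nu x a[5,1] = v_1 \eta x a[5,1] = v_1 x \cdot (\eta a[5,1]) = v_1 x \cdot \nu^2 a[0,0] = v_1^2 \nu^2 a[5,1] = 0$ using $v_1\nu = 0$; and $\nu^2 a[15,1] = \nu^2 y a[0,0] = x^3 a[0,0] = v_1 \cdot x^2 a[5,1] = v_1^2 a[17,3]$ using the relation $x^3 = \nu^2 y$. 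The remaining relations are analogous, and the last one $\nu^2 a[20,2] = v_1^3 \kappabar a[0,0]$ is already contained in \Cref{propE2Y}.

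Finally, for the linearity of the differentials: $v_1$-linearity is exactly \Cref{lem:e2-e4}, stemming from the $v_1$-self maps of $Y$; $\nu$- and $\kappabar$-linearity follow because these detect permanent cycles in the elliptic spectral sequence for $tmf$, and the elliptic spectral sequence for $tmf \wedge Y$ is a module over that of $tmf$; and $\Delta^8$-linearity holds because $\Delta^8$ (unlike $\Delta$ itself) is a permanent cycle in the elliptic spectral sequence for $tmf$, detecting a class in $\pi_{192} tmf$. The main obstacle is the bookkeeping required to confirm that these six classes generate $E_2(Y)$ over $R$ with exactly the stated relations, but this is a finite check thanks to the explicit presentation of $E_2(V(0))$ in \Cref{thmE2V0all}.
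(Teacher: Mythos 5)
Your proposal is correct and follows essentially the same route as the paper, whose proof simply cites ``the results of this section'' (i.e.\ the presentation of $E_2(V(0))$ in \Cref{thmE2V0all}, the $E_2(V(0))$-module description in \Cref{propE2Y}, and the $v_1$-linearity of \Cref{lem:e2-e4}) together with the permanence of $\Delta^8$; you have merely made the implicit bookkeeping explicit. Your identifications $a[12,2]=xa[5,1]$, $a[15,1]=ya[0,0]$, $a[20,2]=ya[5,1]$ and the relation checks via $v_1^2x=v_1y=0$, $\nu x=v_1\eta x$, and $x^3=\nu^2 y$ are all consistent with the stated presentation.
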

\begin{proof}
This follows from the results of this section and the fact that $\Delta^8$ is a permanent cycle in the elliptic spectral sequence spectral sequence of $tmf$. 
\end{proof}

\section{$tmf_*Y$: The differentials and extensions}\label{secYdiffext}

Our approach to computing the differentials of the elliptic spectral sequence for $\pi_* (tmf\wedge Y)$ is based largely on the analysis of the action of $\kappabar$. Since $\kappabar$ is a permanent cycle in the elliptic spectral sequence for $tmf$, $\kappabar$ acts on the spectral sequence for $tmf\wedge Y$ and differentials are linear with respect to this action. 

\begin{Lemma}\label{StructureSS} The $E_r$-term of the elliptic spectral sequence for $Y$ has the following properties:
\begin{enumerate}
\item All classes in filtration greater than $(r-1)$ are $\kappabar$-free.
\item All classes in filtration greater than or equal to 4 are divisible by $\kappabar$.
\end{enumerate}
\end{Lemma}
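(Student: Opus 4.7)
The plan is to argue by induction on $r$, using the $\kappabar$-linearity of the differentials, which holds because $\kappabar$ is a permanent cycle in the elliptic spectral sequence for $tmf$. The base case $r=2$ will be read off from the explicit description of $E_2(Y)$ in \Cref{propE2Y}, and the inductive step is a clean bootstrap between (1) and (2).

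For the base case, claim (2) at $E_2$ rests on the following observation: by the pattern decomposition $E_2(V(0))=P[\kappabar,\Delta]/(\Delta\eta^4-\kappabar v_1^4)$ noted after \Cref{thmE2V0all}, every element of $E_2(V(0))$ in filtration $\geq 4$ is either divisible by $\kappabar$ or a multiple of $\eta^4$, hence of $\eta^2$. Each generator $a[0,0]$, $a[5,1]$, $a[17,3]$ of $E_2(Y)$ over $E_2(V(0))$ is annihilated by $\eta^2$: directly for $a[0,0]$ and $a[17,3]$, and via $\eta^2 a[5,1]=\eta\cdot \nu^2 a[0,0]=\nu^2\cdot\eta\,a[0,0]=0$ for $a[5,1]$. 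Since $E_2(Y)$ is a module over the commutative ring $E_2(V(0))$, $\eta^2$ annihilates all of $E_2(Y)$, so the $\eta^4$-contributions die and only $\kappabar$-divisible classes survive in filtration $\geq 4$. For claim (1) at $E_2$, I would enumerate the $\kappabar$-torsion in $E_2(Y)$ directly from the presentation of \Cref{propE2Y} and verify that it lies entirely in filtrations $0$ and $1$; the torsion is generated by $v_1^k a[0,0]$ and $v_1^k a[5,1]$ for $k\geq 4$, arising from $\kappabar v_1^4 a[0,0]=\Delta\eta^4 a[0,0]=0$, the analogous identity for $a[5,1]$ using $\eta^2 a[5,1]=0$, and the Massey product relation $\nu^2 y\,a[5,1]=v_1^3\kappabar\,a[0,0]$ which links the two submodules.

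For the inductive step, assume both claims at $E_r$. For (2) at $E_{r+1}$, take a $d_r$-cycle $x\in E_r$ of filtration $s\geq 4$ representing $[x]\in E_{r+1}$; by (2) write $x=\kappabar y$, so that $\kappabar d_r(y)=d_r(x)=0$ by $\kappabar$-linearity, and observe that $d_r(y)$ lies in filtration $s-4+r\geq r$, which by (1) is $\kappabar$-free; hence $d_r(y)=0$ and $[y]\in E_{r+1}$ satisfies $\kappabar[y]=[x]$. For (1) at $E_{r+1}$, suppose $[x]\in E_{r+1}$ is nonzero of filtration $s>r$ with $\kappabar[x]=0$; then $\kappabar x=d_r(z)$ for some $z\in E_r$ of filtration $s+4-r\geq 5$, so by (2) we can write $z=\kappabar z'$, giving $\kappabar(x-d_r(z'))=0$; since $x-d_r(z')$ has filtration $s>r-1$, (1) at $E_r$ forces $x-d_r(z')=0$, and hence $[x]=0$, a contradiction. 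An analogous iteration handles torsion by higher powers of $\kappabar$.

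The main obstacle will be the careful case-by-case verification of (1) at $E_2$: one must track the full list of $\kappabar$-torsion classes arising from the module relations in \Cref{propE2Y} and confirm none of them survive into filtration $\geq 2$. Once this base case is secured, the propagation to all higher $r$ is purely formal from the $\kappabar$-linearity of the differentials.
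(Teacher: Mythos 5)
Your proposal is correct and follows essentially the same route as the paper: an induction on $r$ with exactly the same bootstrap between (1) and (2) using $\kappabar$-linearity of the differentials and the same filtration counts, with the base case read off from \Cref{propE2Y} (which is all the paper does for $r=2$ as well). One small slip in your base-case bookkeeping: $v_1^3 a[5,1]=v_1^2xa[0,0]=0$, so the filtration-one $\kappabar$-torsion is not generated by $v_1^k a[5,1]$ for $k\geq 4$; this does not affect the conclusion that the $\kappabar$-torsion of $E_2(Y)$ sits in filtrations $0$ and $1$.
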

\begin{proof} We prove these two properties by induction on $r\geq 2$. For $r=2$, this follows from \Cref{propE2Y}. Suppose now that $r>2$. Let $a$ be a $d_{r-1}$-cycle and $[a] \in E_r^{s,t}$ the corresponding class. Suppose that $a$ lives in filtration $s$ with $s > (r-1)$. We have that $\kappabar[a] = 0$ if and only if there exists $b\in E_{r-1}$ such that $d_{r-1}(b)  = \kappabar a$. Then, $b$ must live in filtration $(4+s) - (r-1) > 4$. By the second property, $b$ is divisible by $\kappabar$, i.e., there exists $c\in E_{r-1}$ such that $\kappabar c = b$. As a consequence of the $\kappabar$-linearity, $\kappabar d_{r-1}(c) = d_{r-1}(b) = \kappabar a$, and so $\kappabar(d_{r-1}(c) -a) = 0$. Since $(d_{r-1}(c) -a)\in E_{r-1}$ lives in filtration $s$ greater than $r-2$, it is $\kappabar$-free by the second property. It follows that $d_{r-1}(c) = a$, and so $[a]=0$. Therefore, the $E_r$-term has the first property. 

For the second property, suppose that $a$ lives in filtration greater than or equal to 4. By the second property for $E_{r-1}$, there exists $b\in E_{r-1}$ such that $\kappabar b =a$. It suffices to prove that $b$ is a $d_{r-1}$-cycle. Suppose that $d_{r-1}(b) = c$. The latter implies that $c$ lives in filtration greater that $(r-2)$, hence is $\kappabar$-free by the first property. Since $a$ is a $d_{r-1}$-cycle by assumption, we have, by $\kappabar$-linearity, that 
\[0 = d_{r-1}(a) = d_{r-1}(\kappabar b) = \kappabar c.\]
This means that $c=0$ and so $b$ is a $d_{r-1}$-cycle, as required. 
\end{proof}

\begin{term}For convenience, we will call all $\kappabar$-multiples of a class which has filtration less than four the $\kappabar$-family of that class. By part $(2)$ of the above lemma, at any term of the spectral sequence, every class belongs to some $\kappabar$-family. The following corollary tells us how these $\kappabar$-families are organized.
\end{term}

\begin{Corollary} \label{StructureKappa}
\begin{enumerate}
\item
At any term of the spectral sequence, all non-zero $\kappabar$-power torsion classes survive to the $E_{\infty}$-term.
\item
 Every $\kappabar$-free family consisting of permanent cycles is truncated by one and only one other $\kappabar$-free family. 
\end{enumerate}
\end{Corollary}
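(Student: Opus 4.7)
Both parts are essentially formal consequences of \Cref{StructureSS}, with part (2) using part (1) as an input. My approach is to squeeze all the information out of the two filtration bounds in \Cref{StructureSS}.

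For part (1), I would observe that the contrapositive of \Cref{StructureSS}(1) forces any non-zero $\kappabar$-power torsion class $a \in E_r^{s,t}$ to lie in filtration $s \leq r-1$. Then I rule out both possibilities for $a$ to die at a later page. First, $a$ supports no nonzero differential on any page $E_{s'}$ with $s' \geq r$: the target of $d_{s'}(a)$ would lie in filtration $s + s' \geq s'$, hence would be $\kappabar$-free by \Cref{StructureSS}(1) applied at page $s'$; but $\kappabar$-linearity gives $\kappabar^N d_{s'}(a) = d_{s'}(\kappabar^N a) = 0$ for $N$ large, forcing $d_{s'}(a)=0$. Second, $a$ cannot be a differential target at a later page, since the source of such $d_{s'}$ would live in filtration $s - s' \leq (r-1) - r < 0$, which is impossible. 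So $a$ survives to $E_{\infty}$.

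For part (2), let $\{a, \kappabar a, \kappabar^2 a, \ldots\}$ be a $\kappabar$-free family of permanent cycles that is truncated in $E_{\infty}$. Let $n_0 \geq 1$ be minimal with $\kappabar^{n_0}a$ a differential target, and fix a differential $d_r(b) = \kappabar^{n_0}a$. The $\kappabar$-freeness of $\{b,\kappabar b,\ldots\}$ is immediate from part (1), since $b$ supports the nontrivial differential $d_r$ and therefore cannot be $\kappabar$-power torsion. By $\kappabar$-linearity of $d_r$, we get $d_r(\kappabar^j b) = \kappabar^{n_0+j} a$ for all $j \geq 0$, so the $\kappabar$-family of $b$ truncates the $\kappabar$-family of $a$ from $\kappabar^{n_0}a$ onward. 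This establishes existence of a truncating $\kappabar$-free family.

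The main obstacle is uniqueness, which I expect to require a careful bookkeeping argument. Suppose $\{b', \kappabar b', \ldots\}$ is another $\kappabar$-free family with some $d_{r'}(\kappabar^k b') = \kappabar^m a$ nonzero. By the minimality of $n_0$, we have $m \geq n_0$. Moreover $r' \leq r$, since after page $r$ the entire truncated tail $\{\kappabar^{n_0} a, \kappabar^{n_0+1} a, \ldots\}$ has already been killed by the $\kappabar$-family of $b$. Combined with minimality this forces $r' = r$, and then
\[
d_r\bigl(\kappabar^k b' - \kappabar^{m - n_0} b\bigr) = 0 \quad \text{in } E_r.
\]
Thus $\kappabar^k b'$ equals $\kappabar^{m-n_0} b$ modulo a $d_r$-cycle of the same bidegree. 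Since cycles project to classes in $E_{r+1}$ that are permanent (they lie above $\{\kappabar^{n_0}a,\ldots\}$, which has already been killed), part (1) combined with the $\kappabar$-freeness forced by \Cref{StructureSS}(1) in the relevant filtration range shows that the ambiguity is absorbed into the $\kappabar$-family of $b$. Hence $\{b',\kappabar b',\ldots\}$ and $\{b,\kappabar b,\ldots\}$ coincide as truncating families, proving uniqueness.
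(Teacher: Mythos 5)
Your part (1) is correct and is essentially the paper's own argument. For part (2) the skeleton is right --- minimality of $n_0$, $\kappabar$-linearity propagating the $d_r$ down the tail, and part (1) guaranteeing that the head $\{a,\ldots,\kappabar^{n_0-1}a\}$ survives --- but two steps that the paper proves explicitly are missing, and they are not cosmetic.

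First, you assume at the outset that the family of $a$ ``is truncated in $E_\infty$.'' That is part of what must be proved: the paper derives it from the nilpotence of $\kappabar$ at the $E_\infty$-term of the elliptic spectral sequence for $tmf$ (ultimately the $d_{23}$ on $\kappabar^6$), which forces some $\kappabar$-multiple of $a$ to be hit. Without this you have only proved a conditional statement. Second, and more consequentially, you never show that the truncating class $b$ has filtration less than four. By the paper's Terminology a $\kappabar$-family is by definition the set of $\kappabar$-multiples of a class of filtration \emph{less than four}, so ``the $\kappabar$-family of $b$'' is not even well-formed until this is known; moreover the later bookkeeping (the pairing of the $24\times 4$ generators and the remark that truncating differentials must originate in filtration less than four) relies on exactly this point. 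The paper's argument: if $b$ had filtration $\geq 4$, then $b=\kappabar c$ by \Cref{StructureSS}(2), so $\kappabar\bigl(d_r(c)-\kappabar^{n_0-1}a\bigr)=0$; that difference lies in filtration at least $r$, hence is $\kappabar$-free by \Cref{StructureSS}(1), forcing $d_r(c)=\kappabar^{n_0-1}a$ and contradicting the minimality of $n_0$. You need to supply this.

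Two smaller points on your uniqueness paragraph. The inference ``$r'=r$ by minimality'' requires ruling out a higher multiple $\kappabar^m a$ ($m>n_0$) being hit at an earlier page; the argument is that this would make $\kappabar^{n_0}a$ $\kappabar$-power torsion on the next page and hence, by part (1), a permanent survivor, contradicting that it is hit at page $r$. And the final sentence about the $d_r$-cycle ambiguity being ``absorbed'' is too vague to count as a proof as written; in this particular spectral sequence it is rescued by the fact that each bidegree of positive filtration contains at most one nonzero class, but that should be said. The paper avoids this entirely: it shows the whole tail from $\kappabar^{n_0}a$ onward dies at page $r$ under the family of $b$ and everything below survives by part (1), leaving no room for a second truncating family.
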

\begin{proof}
For part $(1)$, let $a\in E_r$ be a non-zero $\kappabar$-power torsion class. By part (1) of \Cref{StructureSS}, $a$ is in filtration less than or equal to $r-1$. It follows that $a$ cannot be hit by any differential from the $E_r$-term onwards. Moreover, by part (1) of \Cref{StructureSS}  again, the possible targets of $d_{r'}(a)$, $r'\geq r$ are $\kappabar$-free classes. Since $a\in E_{r}$ is $\kappabar$-power torsion, it is a permanent cycle, by $\kappabar$-linearity. Therefore, $a$ persists to the $E_\infty$-term.
 
For part $(2)$, let $a$ be a permanent cycle of filtration striclty less than four which is $\kappabar$-free at the $E_2$-term. Then the $\kappabar$-family of $a$ consists of permanent cycles.
 Since $\kappabar$ is nilpotent at the $E_{\infty}$-term of the elliptic spectral sequence for $tmf$, some $\kappabar$-multiple of $a$ must be hit by a differential. Suppose that $a$ is $\kappabar$-free at the $E_r$-term and that $\kappabar^la$ is the smallest $\kappabar$-multiple of $a$ that is hit by a differential, say $d_r(b) = \kappabar^l a$. Since $a$ is $\kappabar$-free at the $E_r$-term, so is  $b$. It follows that the $\kappabar$-multiples of $b$ truncate those of $\kappabar^{l}a$ by differentials $d_r$, i.e., $d_r(\kappabar^n b) = \kappabar^{l+n}a$. So, all the classes $\kappabar^ka$ for $k\leq l-1$ are non-zero $\kappabar$-power torsion classes on the $E_{r+1}$-term, hence are essential by part $(1)$.

Finally, we claim that $b$ has filtration less than four so that the $\kappabar$-family of $b$ truncates the $\kappabar$-family of $a$.
If $b$ had filtration greater than or equal to $4$, then $b$ would be divisible by $\kappabar$, i.e., there would exist $c\in E_r$ such that $\kappabar c = b$, by \Cref{StructureSS} part $(2)$. By $\kappabar$-linearity, we have that $\kappabar^l a = d_r(b) = \kappabar d_r(c)$, and so $\kappabar (\kappabar^{l-1}a-d_r(c)) = 0$. This means that $d_r(c) = \kappa^{l-1}a$ because $d_r(c)-\kappabar^{l-1}a$ has filtration at least $r$ so that it is $\kappabar$-free, by \Cref{StructureSS} part $(1)$. This contradicts the minimality of $\ell$, so $b$ has filtration less than four.  
\end{proof}

\begin{slogan}\label{slogankappabar}
The $\kappabar$-free families at the $E_r$-page come in pairs. The first member of the pair is a family consisting of permanent cycles. The second member is a family which eventually supports differentials (i.e., possibly at a later page) to truncate the first family.
\end{slogan}

\begin{Corollary}\label{StructureDelta} At the $E_r$-term, we have:
\begin{enumerate} 
\item The homomorphism $E_r^{s,t}\rightarrow E_r^{s,t+192}$ induced by multiplication by $\Delta^8$ is an injection for all $s$ and $t$,
\item If $a$ is a class of the $E_2$-term such that $\Delta^8a$ is a $d_r$-cycle, then $a$ is also a $d_r$-cycle. 
\end{enumerate} 
\end{Corollary}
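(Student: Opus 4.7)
The plan is to prove both parts by a common induction on $r \ge 2$, with part (2) deduced from part (1) at each stage. For the reduction of (2) to (1), observe that since $\Delta^8$ is a permanent cycle in the elliptic spectral sequence for $tmf$, multiplication by $\Delta^8$ commutes with every differential (this is recorded in \Cref{cormodstructureE2Y}). If $a \in E_2$ is such that $\Delta^8 a$ is a $d_r$-cycle, then $a$ cannot be a $d_{r'}$-boundary for any $r' < r$ (otherwise $\Delta^8 a$ would be one as well), so $a$ survives to $E_r$. The identity $\Delta^8 d_r(a) = d_r(\Delta^8 a) = 0$ together with injectivity of $\Delta^8$ on $E_r$ then forces $d_r(a) = 0$.

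The base case $r=2$ of part (1) is immediate from \Cref{propE2Y} and \Cref{cormodstructureE2Y}: the displayed presentation exhibits $E_2(Y)$ as a free $\F_2[\Delta]$-module on finitely many generators, since no listed relation involves $\Delta$. For the inductive step, I would assume $\Delta^8$ is injective on $E_r$ and take $[a] \in E_{r+1}$ with $\Delta^8[a] = 0$, so $\Delta^8 a = d_r(b)$ for some $b \in E_r$. The goal is to produce $c \in E_r$ with $d_r(c) = a$, which gives $[a] = 0$.

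If $a$ is $\kappabar$-power torsion on $E_r$, then so is $\Delta^8 a$; by \Cref{StructureKappa}(1) every nonzero $\kappabar$-torsion class survives to $E_\infty$, contradicting $\Delta^8 a = d_r(b)$. So $\Delta^8 a = 0$ on $E_r$, and the inductive hypothesis yields $a = 0$. If instead $a$ is $\kappabar$-free, the strategy is to replace $b$ by a $\Delta^8$-divisible representative $\Delta^8 c$; then $\Delta^8$-linearity of $d_r$ combined with the inductive hypothesis forces $d_r(c) = a$, as needed. To arrange the replacement, I would use \Cref{StructureSS}(2) to reduce $b$ modulo a $\kappabar$-divisible correction to filtration at most $3$, and then invoke the pairing of $\kappabar$-free families from \Cref{slogankappabar}: the permanent-cycle family of $a$ is truncated at $E_r$ by the family of $b$, and uniqueness of this pairing together with $\Delta^8$-linearity shows that the family of $b$ must be the $\Delta^8$-translate of another family, hence $b \in \Delta^8 E_r$ up to $d_r$-cycles that may be absorbed into the choice of $c$.

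The main obstacle will be the last assertion, namely verifying that the truncating family of $b$ is itself a $\Delta^8$-multiple. This is ultimately a bookkeeping task, checking that the $\F_2[\Delta^8]$-freeness of $E_2(Y)$ (which allows one to cleanly separate $\Delta^8$-divisible from non-divisible low-filtration classes) propagates inductively to the generators of permanent-cycle families at each $E_r$. This can be verified by comparing with the explicit differentials in the elliptic spectral sequence for $tmf$ and transporting them via $v_1$-linearity (\Cref{lem:e2-e4}) and the generator-by-generator relations recorded in \Cref{propE2Y}.
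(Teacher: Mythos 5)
Your skeleton is the same as the paper's: induct on $r$, deduce part (2) from part (1) via $\Delta^8$-linearity of the differentials ($\Delta^8 d_r(a)=d_r(\Delta^8 a)=0$ plus injectivity), and reduce the inductive step for part (1) to showing that a class $b$ with $d_r(b)=\Delta^8 a$ can be written as $b=\Delta^8 c$, after which $\Delta^8(d_r(c)-a)=0$ and the inductive hypothesis force $d_r(c)=a$. The base case, the reduction of (2) to (1), and your $\kappabar$-power-torsion case (where \Cref{StructureKappa}(1) shows $\Delta^8 a$ cannot be a nonzero boundary, so $\Delta^8 a=0$ at $E_r$ and induction finishes) are all fine, and the last of these is a clean addition that the paper does not separate out.

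The gap is in the $\kappabar$-free case, which is exactly the crux you identify. You propose to get $b\in\Delta^8 E_r$ from the uniqueness statement of \Cref{StructureKappa}(2): the family of $b$ truncates the family of $\Delta^8 a$, the family of $a$ is truncated by some family of $b_0$, and $\Delta^8$-linearity should identify the family of $b$ with that of $\Delta^8 b_0$. As stated this is circular. To apply uniqueness you must know that $\Delta^8 b_0$ truncates the family of $\Delta^8 a$ at the \emph{same} page at which $b_0$ truncates the family of $a$; the alternative --- that $\Delta^8 a$ dies strictly earlier than $a$ --- is precisely the failure of injectivity you are trying to exclude, so the $\Delta^8$-equivariance of the pairing cannot be assumed. (It is also not automatic at this point that $a$ generates a permanent-cycle family, which is the hypothesis of \Cref{StructureKappa}(2).) The paper closes this step differently, by a direct structural observation: $b$ lies in stem at least $193$, and one checks from the explicit description of the page that every class there which can support a nonzero differential is a $\Delta^8$-multiple --- the $\kappabar$-free family generators in filtration less than four are the $\Delta^2$-translates of the $24$ classes of \Cref{E7gen}, so those in stems $\geq 192$ are exactly $\Delta^8$ times those in stems $\geq 0$, while the remaining low-filtration classes in high stems are large $v_1$-powers on filtration-zero towers, which kill all differentials since $d_r$ is $v_1$-linear and its targets lie in filtration $\geq 4$, hence are $\kappabar$-divisible and annihilated by $v_1^4$. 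Your closing sentence gestures at this verification; it is the actual content of the step, and the detour through the pairing of $\kappabar$-families should be replaced by it rather than layered on top of it.
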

\begin{proof}
We prove part $(1)$ by induction on $r\geq 2$. For $r=2$, this can be seen from the explicit structure of the $E_2$-term. Suppose the $E_{r'}$-term has these properties for $r'< r$. Let us prove part (1) for $E_r$. Let $a\in E_{r-1}$ represent a class of $E_{r}$. If $\Delta^8 [a]=0 \in E_{r}.$ This means that there exists $b\in E_{r-1}$ such that $d_{r-1}(b) = \Delta^8a$. 
It is obvious that $b$ lives in stem at least $192$, hence there exists $c\in E_{r-1}$ such that $b= \Delta^8 c$, by the induction hypothesis. It follows that $\Delta^8(d_{r-1}(c)-a) = 0$, and so $d_{r-1}(c) = a$ because of part (1) of the induction hypothesis. Thus $[a] = 0\in E_{r}$, as needed.

For part (2), by induction, suppose that $a$ is a $d_{r-1}$-cycle. We need to prove that $a$ is a $d_{r}$-cycle.  In effect, if $d_{r}(a) = b$, then 
\[0= d_{r}(\Delta^8a) = \Delta^8d_r(a) = \Delta^8 b.\] 
By part (1), $b=0$, and so $d_{r}(a) = 0$, as needed.
\end{proof}

Finally, we will also use the following result to establish the differentials.

\begin{lem}[Vanishing line]\label{VanishingLemma}
The spectral sequence for $\pi_* tmf \wedge Y$ degenerates at the $E_{24}$-term and has a  horizontal vanishing line at $s=24$, i.e., $E_{24}^{s,t} = E_{\infty}^{s,t}=0$ for $s\geq 24$.
\end{lem}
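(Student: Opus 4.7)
The plan is to establish the horizontal vanishing line on $E_{\infty}$ first, from which the degeneration at $E_{24}$ follows for free: any $d_r$-differential with $r \geq 24$ lands in filtration $s + r \geq 24$, which would have to vanish. So the real work is to show $E_{24}^{s,t}(Y) = 0$ for $s \geq 24$.

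To obtain the vanishing line, I would combine the three structural results just proved. By \Cref{StructureDelta}, multiplication by $\Delta^8$ acts injectively on every $E_r^{s,t}(Y)$, so it suffices to check the vanishing on a fundamental $\Delta^8$-window, say $0 \leq t - s < 192$. By iterating part (2) of \Cref{StructureSS}, every class at filtration $s \geq 4k$ is divisible by $\kappabar^k$. Hence a non-zero class in $E_{24}^{s,t}(Y)$ with $s \geq 24$ would produce a non-zero $\kappabar^6$-multiple of some $\kappabar$-free permanent cycle of filtration at most $3$. By \Cref{StructureKappa}, any such $\kappabar$-free family of permanent cycles must be truncated by another $\kappabar$-free family through a differential of page $\leq 23$. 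So the task reduces to bounding, for each of the finitely many $\kappabar$-free generators in a fundamental window, the $\kappabar$-length of its permanent-cycle family to be at most $5$.

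Using \Cref{propE2Y}, the list of $\kappabar$-free generators (modulo $v_1$ and $\Delta^8$) is spanned by the images of $a[0,0]$, $a[5,1]$, $a[17,3]$ and their multiples by elements of $E_2(V(0))$ of filtration $\leq 3$. For the classes propagated from $V(0)$, I would exploit naturality along $tmf \wedge V(0) \to tmf \wedge Y$: the $s=22$ vanishing line of the $V(0)$-spectral sequence (established in the previous section) combined with the differential $d_{23}(\Delta^5 \eta) = \kappabar^6$ pulled back from the elliptic spectral sequence for $tmf$ via the unit $tmf \to tmf \wedge Y$ limits the $\kappabar$-length of each such family to at most $5$.

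The main obstacle is the new generator $a[17,3]$ at filtration $3$, which has no analog in the $V(0)$ computation and so is not controlled by naturality from $V(0)$. To bound its $\kappabar$-family, I would use the relations from \Cref{propE2Y}, in particular $v_1 a[17,3] = x^2 a[5,1]$ and $\eta a[17,3] = 0$, together with the $v_1$-linearity of the differentials (\Cref{lem:e2-e4} and \Cref{cormodstructureE2Y}), to translate truncating differentials on the $a[5,1]$-family (already controlled via $a[0,0]$ and naturality) into truncating differentials on the $a[17,3]$-family. Once this last family is bounded in $\kappabar$-length by $5$, the vanishing line at $s = 24$ follows, and hence so does degeneration at $E_{24}$.
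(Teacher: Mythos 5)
There is a genuine gap, and it sits exactly where you locate the ``main obstacle.'' Your reduction works only if you can show that every $\kappabar$-free family of permanent cycles has $\kappabar$-length at most $5$, and for the families generated by $a[17,3]$ (and its $\Delta$-power translates) the mechanism you propose does not function. You want to transfer truncating differentials from the $a[5,1]$-family via $v_1 a[17,3]=x^2a[5,1]$ and $v_1$-linearity, but knowing that $v_1\kappabar^{\ell}a[17,3]$ is hit by a differential does not imply that $\kappabar^{\ell}a[17,3]$ is hit: for that you would need the source of the differential to be $v_1$-divisible and $v_1$ to act injectively in the relevant bidegree, and neither holds here --- indeed $a[17,3]$ is $v_1$-power torsion ($v_1^2a[17,3]=\nu^2a[15,1]$ and $v_1a[15,1]=0$, so $v_1^3a[17,3]=0$). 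In the paper, truncating the $\kappabar$-family of $\Delta^4a[17,3]$ is in fact the single hardest differential in the whole computation: it requires the exotic extension of \Cref{keyExt}, Toda-bracket and Moss-theorem arguments, and the (co)truncated towers, all of which come long after \Cref{VanishingLemma} and partly rely on it (the vanishing line is used, e.g., to rule out a length-$25$ differential on $\Delta^4a[30,0]$). So your route is both circular as organized and, at the key step, resting on an inference that fails. Two smaller points: \Cref{StructureKappa} does not by itself tell you the truncating differential has page $\leq 23$ (that is equivalent to the length bound you are trying to prove), and the reduction to a fundamental $\Delta^8$-window needs $\Delta^8$-\emph{linearity} ($\kappabar^6c=0\Rightarrow\kappabar^6\Delta^8c=0$), not the injectivity statement of \Cref{StructureDelta}.

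The paper's proof avoids all of this with one observation you do not exploit: $\kappabar^6$ is hit by a $d_{23}$ in the elliptic spectral sequence for $tmf$ itself, so the class $\kappabar^6$ is zero in $E_{24}(tmf)$, and since $E_r(Y)$ is a module over $E_r(tmf)$, multiplication by $\kappabar^6$ annihilates \emph{all} of $E_{24}(Y)$ at once --- including the $a[17,3]$-families, with no case analysis and no knowledge of which specific class truncates which family. Combined with \Cref{StructureSS}(1) (at $E_{24}$, every class in filtration $\geq 24$ is $\kappabar$-free) this gives $E_{24}^{s,t}=0$ for $s\geq 24$ immediately, and degeneration follows as you say. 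You cite this $d_{23}$ but only as an auxiliary input for the classes ``propagated from $V(0)$''; applied through the module structure it is the whole proof.
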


\begin{proof}
We know that $\kappabar^6$ is hit by a differential $d_{23}$ in the elliptic spectral sequence for $tmf$, see \cite{tbauer}.
 This means that at the $E_{24}$-term of the elliptic spectral sequence for $tmf\wedge Y$, all the classes are annihilated by $\kappabar^6$, hence are $\kappabar$-power torsion. Therefore, by \Cref{StructureSS}, all the classes in the $E_{24}$-term are in filtrations less than $24$, meaning that the spectral sequence has the horizontal vanishing line at $s= 24$, i.e., $E^{s,t}_{r} = 0$ for $s \geq 24$ and $r\geq 24$.
\end{proof}

\begin{rem}
The cofiber sequence
\[V(0) \xrightarrow{i} Y \xrightarrow{p} \Sigma^2 V(0) \xrightarrow{\eta} \Sigma V(0)\]
gives rise to maps of spectral sequences
\[i_* \colon E_2^{s,t}(V(0)) \to E_2^{s,t}(Y), \ \ \ \ p_* \colon E_2^{s,t}(Y) \to E_2^{s,t-2}(V(0))\]
as well as a long exact sequence 
\begin{align}
\label{LES} \ldots \to tmf_{*+1}V(0) \xrightarrow{\eta} tmf_*V(0)\xrightarrow{i_*} tmf_*Y\xrightarrow{p_*} tmf_{*-1}V(0) \to \ldots  \end{align}
\end{rem}


\subsection{The $d_3$, $d_5$ and $d_7$-differentials}
Note that for $r$ even, $E_r(Y) \cong E_{r+1}(Y)$ since the spectral sequence is concentrated in bi-degrees $(s,t)$ with $t$ even.
The differentials in this section are depicted in Figures~\ref{gensone}, \ref{d5d9one}, \ref{d5d9three} and \ref{d5d9four}.

\begin{Proposition} 
 There is no non-trivial $d_3$-differential, and so $E_3(Y) \cong E_5(Y)$.
\end{Proposition}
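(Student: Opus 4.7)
By \Cref{lem:e2-e4}, the $d_3$-differentials are $v_1$-linear, and they are also linear over the elliptic spectral sequence for $tmf$. Since $\Delta$ is a $d_3$-cycle in the $tmf$ spectral sequence for degree reasons, it follows that $d_3$ on $E_2(Y)$ is linear over $\F_2[v_1,\eta,\nu,\kappabar,\Delta]$. By \Cref{propE2Y}, $E_2(Y)$ is generated over $E_2(V(0))$ by $a[0,0]$, $a[5,1]$ and $a[17,3]$, so the plan is to verify $d_3 = 0$ on each of these three classes (with the residual $x,y$-structure from $E_2(V(0))$ handled by the identifications $v_1 a[5,1] = x\,a[0,0]$ and $v_1 a[17,3] = x^2 a[5,1]$ together with the same naturality arguments below).

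For $a[0,0]$ the vanishing is immediate, since the target bidegree of $d_3$ has negative stem. For any class in the image of $i_*\colon E_r(V(0))\to E_r(Y)$, naturality combined with \Cref{lem:d3V0} shows that $d_3$ lands in $\eta\cdot E_2(V(0))$, which maps to zero in $E_2(Y)$ because $\eta\cdot a[0,0]=0$ by \Cref{propE2Y}. This handles the $E_2(V(0))$-submodule generated by $a[0,0]$.

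For $a[5,1]$ and $a[17,3]$, my plan is to use naturality in the opposite direction, along $p_*\colon E_r(Y)\to E_r(V(0))$. From the proof of \Cref{propE2Y}, $p_*(a[5,1])=\nu$ and $p_*(a[17,3])=\eta x^2$, and both of these classes are $d_3$-cycles in the spectral sequence for $V(0)$ (since $\nu$, $\eta$, and $x$ are $d_3$-cycles there, the latter because $x$ is a permanent cycle). Hence $d_3(a[5,1])$ and $d_3(a[17,3])$ lie in $\ker p_* = \operatorname{im} i_*$, and the problem reduces to enumerating the classes in $E_2(V(0))$ at the target bidegrees $(s,t)=(4,8)$ and $(s,t)=(6,22)$. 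Using the relations of \Cref{thmE2V0all}, in particular $v_1\nu=0$, $\nu\eta=0$, $v_1^2 x=0$, $v_1 y=0$, and $\eta^4\Delta=v_1^4\kappabar$, one checks that every class at these two bidegrees is $\eta$-divisible (the only possibilities being multiples of $\eta^4$ and $v_1^5\eta^6$ respectively), and hence vanishes under $i_*$. I expect the main obstacle to be the enumeration at $(6,22)$, where the relation $\eta^4\Delta=v_1^4\kappabar$ must be invoked to rule out alternative representations of potential targets.

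Combining these cases gives $d_3=0$ on $E_3(Y)$, and since $d_4=0$ by parity (as $E_2(Y)$ is concentrated in bidegrees with $t$ even, already noted before the proposition), we conclude $E_3(Y)\cong E_4(Y)\cong E_5(Y)$.
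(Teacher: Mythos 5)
Your argument has a genuine gap in the reduction step. You reduce to the three $E_2(V(0))$-module generators $a[0,0]$, $a[5,1]$, $a[17,3]$ of \Cref{propE2Y}, but the $d_3$-differential is \emph{not} $E_2(V(0))$-linear: as the paper notes just after \Cref{propE2Y}, the elliptic spectral sequence for $tmf\wedge Y$ is not a module over the one for $tmf\wedge V(0)$ (the latter is not even multiplicative), and the only linearity available is over $\F_2[v_1,\eta,\nu,\kappabar,\Delta]$ plus $v_1$-linearity from the self-maps. Consequently the $x$- and $y$-multiples of $a[5,1]$ that are not in the image of $i_*$ --- concretely $a[12,2]=x\,a[5,1]$ and $a[20,2]=y\,a[5,1]$, which appear as separate generators precisely for this reason in \Cref{cormodstructureE2Y} --- are not covered by checking $d_3$ on your three generators. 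Your parenthetical about the ``residual $x,y$-structure'' only invokes $v_1a[5,1]=x\,a[0,0]$ and $v_1a[17,3]=x^2a[5,1]$, which are $v_1$-multiples of classes you have already handled and say nothing about $x\,a[5,1]$ or $y\,a[5,1]$. The parts you do prove are correct: the image of $i_*$ consists of $d_3$-cycles because every $d_3$-boundary in $E_2(V(0))$ is an $\eta^3$-multiple and hence dies under $i_*$ (this also disposes of $a[15,1]=y\,a[0,0]$), and your $p_*$-plus-enumeration argument for $a[5,1]$ and $a[17,3]$ is fine (indeed $E_2^{4,8}(Y)$ and $E_2^{6,22}(Y)$ vanish outright).

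The gap is fillable, and the paper's proof shows the efficient way: use the generating set of \Cref{cormodstructureE2Y} over $\F_2[v_1,\nu,\kappabar,\Delta]/(v_1\nu,\nu^3,v_1^4\kappabar)$, a ring over which $d_3$ genuinely is linear (using that $\Delta$ is a $d_3$-cycle for $tmf$), and observe that all six generators are $d_3$-cycles because the target bidegrees vanish. Alternatively, your own method extends: $p_*$ is $E_2(V(0))$-linear on $E_2$-terms, $p_*(a[12,2])=x\nu=v_1\eta x$ and $p_*(a[20,2])=y\nu$ are $d_3$-cycles in $E_2(V(0))$, so both differentials land in $\operatorname{im} i_*$ in bidegrees (stem $11$, filtration $5$) and (stem $19$, filtration $5$), where every class of $E_2(V(0))$ is $\eta$-divisible and hence killed by $i_*$. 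Either way, the two missing generators must be addressed explicitly.
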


\begin{proof}
Since $\Delta$ is a $d_3$-cycle in the elliptic spectral sequence of $tmf$, the $d_3$-differentials are $\F_2[v_1, \nu, \kappabar, \Delta]/( v_1\nu, \nu^3, v_1^4\kappabar)$-linear. All the generators
listed in \Cref{cormodstructureE2Y} are $d_3$-cycles for degree reasons. 
\end{proof}

We then get the following result for degree reasons.
\begin{cor}
The classes in stems $t-s<24$ are permanent cycles.
\end{cor}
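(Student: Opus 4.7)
My plan is to argue on degree grounds, using the filtration constraint established in \Cref{StructureSS}. Since $E_r^{s,t}(Y) = 0$ whenever $t$ is odd, every even-page differential vanishes automatically, so it suffices to rule out $d_r$ for odd $r \geq 5$. (The case $r=3$ has just been handled in the preceding proposition.)

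First, I would invoke \Cref{StructureSS}(2): at every page $E_r$ with $r \geq 2$, any class of filtration $s \geq 4$ is divisible by $\kappabar$. Since $|\kappabar| = (0,24)$ and the spectral sequence lives in non-negative stems (as $tmf \wedge Y$ is connective), such a class must itself sit in a stem $\geq 24$. Equivalently, at every page $E_r$, the entire region $\{t-s < 24\}$ is concentrated in filtrations $s \leq 3$.

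Now for any odd $r \geq 5$, the differential $d_r$ sends a class in bidegree $(s,t)$ to bidegree $(s+r, t+r-1)$, lowering the stem by $1$ and raising the filtration by $r \geq 5$. Thus a hypothetical nonzero $d_r$ with source in stem $< 24$ would have target in stem $\leq 22$ and filtration $\geq s + 5 \geq 5$; but the previous paragraph shows this region is empty on every page. Hence every class of stem $< 24$ is a $d_r$-cycle for all $r \geq 2$, and is therefore a permanent cycle.

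There is no serious obstacle here: all the real content is packaged into \Cref{StructureSS}, which in turn rests on the structure of $E_2(Y)$ given in \Cref{propE2Y}. Once that $\kappabar$-divisibility statement is in hand, the remainder of the proof is a purely positional check, exactly what the phrase ``for degree reasons'' alludes to in the excerpt.
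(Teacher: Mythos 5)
Your overall strategy---sparseness disposes of even-length differentials, $d_3$ is handled by the preceding proposition, and longer differentials are excluded by showing their targets land in an empty region---is exactly the ``degree reasons'' the paper has in mind. But your key positional claim is wrong as stated: $\kappabar$ lies in bidegree $(s,t)=(4,24)$, i.e.\ stem $t-s=20$ and filtration $4$ (the relation $\eta^4\Delta=v_1^4\kappabar$, which sits in filtration $4$, forces this; the ``$\Ext^{0,24}$'' in the description of $E_2(V(0))$ is a typo). Consequently \Cref{StructureSS}(2) only tells you that a class of filtration $\geq 4$ has stem $\geq 20$, not $\geq 24$, and the region $\{t-s<24,\ s\geq 4\}$ is \emph{not} empty on any page: it contains, for instance, $\kappabar a[0,0]$ in $(t-s,s)=(20,4)$ and $\kappabar v_1 a[0,0]$ in $(22,4)$. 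So the second paragraph of your argument fails as written.

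The conclusion is still within reach with a corrected count, and the repair is short. A $d_r$ with $r\geq 5$ and source in stem $<24$ has target in stem $\leq 22$ and filtration $s+r\geq 5$. By \Cref{StructureSS}(2) such a target would be $\kappabar y$ with $y$ in stem $\leq 2$ and filtration $\geq 1$; but by \Cref{propE2Y} the $E_2$-term for $tmf\wedge Y$ is concentrated in filtration $0$ in stems $0\leq t-s\leq 2$ (the only classes there are $a[0,0]$ and $v_1a[0,0]$, since the submodule generated by $a[0,0]$ is $E_2(V(0))/\eta$ and the other generators start in stem $5$), so no such $y$ exists on $E_2$, hence on any later page. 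Therefore the target region is genuinely empty and every class in stems $<24$ is a permanent cycle. You need to fold this extra step---filtration $\geq 5$ rather than $\geq 4$, plus the vanishing of $E_2(Y)$ above filtration $0$ in stems $0$--$2$---into the argument explicitly; without it the proof does not go through.
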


\begin{lem}
	The $d_5$-differentials are linear with respect to $\kappabar, \nu, v_1, \Delta^2$ and are determined by
	\begin{align*}
	d_5(\Delta)& = \nu\kappabar,  & d_5(\Delta a[5,1]) &=\nu \kappabar a[5,1] \\
	d_5(\Delta a[15,1])& = \nu\kappabar a[15,1],  & d_5(\Delta a[20,2]) &=\nu \kappabar a[20,2]
	\end{align*}
	under multiplication by elements of $\F_2[\Delta^2, \kappabar, \nu, v_1]/(v_1\nu, \nu^3, \kappabar v_1^4)$.
\end{lem}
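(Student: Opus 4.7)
The plan is to establish the four claimed linearity properties first, then derive the four generating differentials, and finally verify that no other $d_5$-differentials occur. Linearity of $d_5$ under $\nu$ and $\kappabar$ follows from the observation that the elliptic spectral sequence for $tmf \wedge Y$ is a module over that of $tmf$ and that both $\nu$ and $\kappabar$ are permanent cycles in the latter, hence in particular $d_5$-cycles; the Leibniz rule then yields the desired linearity. The $v_1$-linearity is a direct consequence of \Cref{lem:e2-e4}. For $\Delta^2$-linearity, the module Leibniz rule gives $d_5(\Delta^2) = 2\Delta \cdot d_5(\Delta) = 2\Delta\nu\kappabar$, which vanishes at the prime $2$.

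Next, the differential $d_5(\Delta) = \nu\kappabar$ in $E_5(Y)$ will be imported from the corresponding differential in the elliptic spectral sequence for $tmf$ via the unit map $tmf \to tmf \wedge Y$. For this import to be legitimate one needs $\nu\kappabar \cdot a[0,0]$ to be non-zero on $E_5(Y)$, which is immediate from the description of $E_2(Y)$ given in \Cref{propE2Y} together with the identification $E_3(Y) \cong E_5(Y)$ already in hand. The remaining three differentials then follow from the Leibniz rule once one has shown that $a[5,1]$, $a[15,1]$ and $a[20,2]$ are $d_5$-cycles. Each of these can be checked to be a $d_5$-cycle by inspecting the explicit structure of $E_5(Y) \cong E_2(Y)$ in bi-degree $(s+5, t+4)$: the potential targets simply do not exist in the relevant positions.

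To finish, I would verify that these four differentials, together with the proven linearity, exhaust all $d_5$-differentials. By the linearity over $\F_2[\Delta^2, \kappabar, \nu, v_1]/(v_1\nu, \nu^3, \kappabar v_1^4)$, it suffices to examine $d_5$ on each $\Delta^\epsilon a[i,j]$ with $\epsilon \in \{0,1\}$ and $a[i,j]$ one of the six generators from \Cref{cormodstructureE2Y}. The undifferentiated generators are $d_5$-cycles by the same degree/sparseness considerations used above, and a similar check shows that $\Delta a[12,2]$ and $\Delta a[17,3]$ are also $d_5$-cycles, leaving only the four asserted differentials. The main obstacle I expect is this final case analysis, which requires a careful reading of the structure of $E_5(Y)$ in several bi-degrees and attention to the more intricate relations listed in \Cref{propE2Y}, notably $\nu^2 a[20,2] = v_1^3\kappabar a[0,0]$, to ensure that no hidden $d_5$-differentials are produced by these cross-identifications.
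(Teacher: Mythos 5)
Your proposal is correct and follows essentially the same route as the paper: $\Delta^2$-linearity via the Leibniz rule plus the fact that everything is $2$-torsion, importing $d_5(\Delta)=\nu\kappabar$ from the elliptic spectral sequence of $tmf$, and then deducing the remaining differentials from the module structure once the low-stem generators are known to be $d_5$-cycles (which the paper gets from its corollary that all classes in stems $t-s<24$ are permanent cycles). Your explicit case analysis of the twelve generators $\Delta^{\epsilon}a[i,j]$ for the exhaustiveness claim is slightly more detailed than the paper's ``the other arguments are similar,'' but it is the same argument in substance.
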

\begin{proof}
For linearity, we only need to prove the $\Delta^2$-linearity. 
Note that $d_5(\Delta) = \nu\kappabar$ in the elliptic spectral sequence of $tmf$. 
By Leibniz rule and the fact that $E_2(Y)$ is $2$-torsion, 
	\[ d_5(\Delta^2x) = 2\Delta d_5(\Delta)x + \Delta^2 d_5(c) = \Delta^2 d_5(x). \]
	Using the module structure over the elliptic spectral sequence of $tmf$, we get 
\[d_5(\Delta a[5,1]) = d_5(\Delta)a[5,1] + \Delta d_5(a[5,1]) =  \nu\kappabar a[5,1].\]
The other arguments are similar.
\end{proof}

\begin{lem}
There are no non-trivial $d_7$-differentials.
\end{lem}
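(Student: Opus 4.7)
The plan mirrors the arguments used for the $d_3$ and $d_5$ differentials. First, I would establish that the $d_7$-differentials are linear over $\F_2[v_1, \nu, \kappabar, \Delta^4]$. The $v_1$-linearity is \Cref{lem:e2-e4}, while the $\nu$- and $\kappabar$-linearity follow from the module structure over the elliptic spectral sequence for $tmf$ together with the fact that $\nu$ and $\kappabar$ are permanent cycles there. For $\Delta^4$-linearity, recall that in the $tmf$ spectral sequence $d_7(\Delta^4) = \eta^2\kappabar = 4\nu\kappabar$; since $E_r(Y)$ is annihilated by $2$, the Leibniz cross-term vanishes and $d_7(\Delta^4 a) = \Delta^4 d_7(a)$.

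Second, using this linearity together with \Cref{cormodstructureE2Y}, I would reduce to checking finitely many generators of $E_7(Y)$ as a module over $\F_2[v_1, \nu, \kappabar, \Delta^4]$. These consist of the six $E_2$-module generators $a[0,0], a[5,1], a[12,2], a[15,1], a[17,3], a[20,2]$ together with the $\Delta^i$-multiples for $i\leq 3$ that survive the $d_5$-differentials (for instance $\Delta a[12,2]$ and $\Delta a[17,3]$ survive because $\nu$ annihilates them, while $\Delta a[0,0]$ does not). By \Cref{StructureKappa}(1), any new $\kappabar$-power torsion class at $E_7$ is automatically a permanent cycle, so I may further restrict to checking $\kappabar$-free generators.

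For each such generator, I would verify it is a $d_7$-cycle either by a sparseness/degree argument (the potential target is empty or has been hit by a $d_5$) or by naturality with respect to the maps $p_* \colon E_r(Y) \to E_r(V(0))$ and $i_* \colon E_r(V(0)) \to E_r(Y)$ arising from the cofiber sequence $V(0) \to Y \to \Sigma^2 V(0)$. The closest analogues of a potential non-trivial differential come from the known $V(0)$ differentials $d_7(\Delta \nu^2 y) = \kappabar^2 \eta^2 v_1$ and $d_7(\Delta^3 \nu^2 y) = \Delta^2 \kappabar^2 \eta^2 v_1$. In $Y$, the corresponding target expressions of the form $\kappabar^2 \eta^2 v_1 \cdot a[{-},{-}]$ vanish because $\eta a[0,0] = 0$ and its consequences (e.g.\ $\eta^2 a[5,1] = \eta\nu^2 a[0,0] = 0$), so there is simply no room in $E_7(Y)$ for such a differential.

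The main obstacle will be the case-by-case verification against the charts in Figures~\ref{gensone}, \ref{d5d9one}, \ref{d5d9three} and \ref{d5d9four}: the set of $\kappabar$-free generators surviving to $E_7(Y)$ is moderately large, and for each one must carefully confirm that the candidate target bidegree $(s+7, t+6)$ is either empty on $E_7$ or already truncated by a $d_5$.
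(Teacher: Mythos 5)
Your plan is correct and ultimately rests on the same argument as the paper, whose entire proof reads ``This is an immediate consequence of sparseness'': the case-by-case degree check on $\kappabar$-free generators in filtration $<4$ that you describe at the end \emph{is} that sparseness argument. The extra scaffolding you assemble ($\Delta^4$-linearity via the Leibniz rule and $2$-torsion, reduction to module generators via \Cref{cormodstructureE2Y} and \Cref{StructureKappa}, and the observation that the targets of the $V(0)$-differentials $d_7(\Delta\nu^2 y)=\kappabar^2\eta^2 v_1$ die in $E_7(Y)$ because $\eta a[0,0]=0$) is all consistent with the paper's earlier lemmas, but the authors evidently found it unnecessary to invoke any of it here.
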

\begin{proof}
This is an immediate consequence of sparseness.
\end{proof}

The following observation will be crucial for our computation and is motivated by \Cref{slogankappabar}.

\begin{cor}[\Cref{gensone}]\label{E7gen}
The $\kappabar$-free families on the $E_9$-term of the elliptic spectral sequence of $tmf \wedge Y$ in stems $0\leq t-s <48$ are given by the following $24$ classes
\begin{align*}
&a[0,0] & &  a[2,0]= v_1a[0,0] &  & a[4,0]= v_1^2a[0,0] \\
 & a[5,1]  & & a[7,1]= v_1a[5,1] & & a[9,1]= v_1^2a[5,1]  \\
 &  a[12,2]  & &a[14,2]=v_1 a[12,2] && a[15,1]    \\
& a[17,3] & & a[19,3]=v_1a[17,3]  & &  a[20,2] \\
& a[26,0] = \Delta v_1 a[0,0] & & a[28,0]=\Delta v_1^2 a[0,0] && a[30,0]= \Delta v_1^3 a[0,0] \\ 
 & a[30,2]= \Delta \nu^2 a[0,0]  & & a[31,1] =\Delta v_1a[5,1] && a[33,1] =  \Delta v_1^2a[5,1] \\ 
 & a[35,3] = \Delta \nu^2a[5,1] & & a[36,2] = \Delta a[12,2]   & & a[38,2] = \Delta v_1 a[12,2]  \\
 & a[41,3] = \Delta a[17,3]  & & a[43,3] = \Delta v_1 a[17,3]  &  & a[45,3] =  \Delta v_1^2a[17,3]
\end{align*}
All $\kappabar$-free families at $E_9$ are given by these classes and their $\Delta^2$-multiples. All the elements in filtrations four and above are $\kappabar$-multiples of these generators. 
\end{cor}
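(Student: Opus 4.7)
The plan is to argue that $E_9(Y) = E_5(Y)$ (since there are no $d_3$ or $d_7$ differentials in the relevant bi-degrees), and then to read off the $\kappabar$-free generators of $E_5(Y)$ from the explicit module description of $E_2(Y)$ given in \Cref{propE2Y} and \Cref{cormodstructureE2Y}, after killing the classes hit by $d_5$. By \Cref{StructureSS}(2), every $\kappabar$-free family has a (unique) generator living in filtration $s<4$, so the whole problem reduces to a finite enumeration in filtrations $s=0,1,2,3$ with $0\leq t-s<48$.

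First I would list, filtration by filtration, all classes of $E_2(Y)$ in this range using \Cref{cormodstructureE2Y}: the generators are $\Delta^i v_1^j a[0,0]$, $\Delta^i v_1^j a[5,1]$, $\Delta^i a[12,2]$, $\Delta^i v_1 a[12,2]$, $\Delta^i a[15,1]$, $\Delta^i v_1^j a[17,3]$, $\Delta^i a[20,2]$, and their $\nu$- and $\nu^2$-multiples (subject to the stated relations such as $v_1\nu=0$, $v_1^3 a[5,1]=0$, $v_1^2 a[12,2]=0$, $v_1 a[15,1]=0$, $\nu^2 a[15,1]=v_1^2 a[17,3]$, $\nu^2 a[20,2]=v_1^3\kappabar a[0,0]$). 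Second, I would apply the four stated $d_5$ differentials ($d_5(\Delta)=\nu\kappabar$, and the corresponding $d_5$ on $\Delta a[5,1]$, $\Delta a[15,1]$, $\Delta a[20,2]$), together with $\F_2[v_1,\nu,\kappabar,\Delta^2]/(v_1\nu,\nu^3,v_1^4\kappabar)$-linearity, to eliminate the classes that either support or receive a $d_5$. For example $\Delta a[0,0]$ is killed by $d_5(\Delta)a[0,0]=\nu\kappabar a[0,0]$, while $\Delta v_1 a[0,0]$ survives because $v_1\nu=0$; similarly $\Delta a[5,1]$ dies but $\Delta v_1 a[5,1]$ survives, and the same pattern occurs for $a[17,3]$ and $a[20,2]$.

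Next I would check, class by class, that each of the $24$ surviving classes is actually $\kappabar$-free on $E_5$ (i.e.\ the $\kappabar$-family it generates does not collapse): this uses the multiplicative description of $E_2(Y)$, where one sees that the listed classes map to non-zero $\kappabar$-power-periodic elements inside the $E_2(V(0))$-module structure of \Cref{propE2Y}, and that no $d_5$ hits any $\kappabar$-multiple of them (the only $d_5$'s in the range target the $\nu\kappabar$-tower of $a[0,0]$, $a[5,1]$, $a[15,1]$, $a[20,2]$, which we have already accounted for). Finally, I would show nothing else in filtration $<4$ and stem $<48$ survives as a $\kappabar$-free generator, by going through the remaining classes (those involving $\nu$, $\nu^2$, $y$, $x$, $\kappa$ multiples) and either recognizing them as $\kappabar$-multiples of listed generators via the relations, as $\kappabar$-torsion (hence handled by \Cref{StructureKappa}(1)), or as images/sources of the $d_5$.

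The main obstacle is purely bookkeeping: there is no conceptual novelty beyond \Cref{StructureSS}, \Cref{StructureKappa}, the computed $d_5$'s, and the module description of $E_2(Y)$. The delicate part is ensuring completeness of the enumeration — in particular checking that classes like $\Delta v_1 a[12,2]$ and $\Delta v_1^2 a[17,3]$ are distinct generators and not already expressed via $\kappabar$-multiplication from something of lower filtration, and verifying that the $d_5$-boundaries (which involve $\nu\kappabar$-multiples) genuinely truncate the $\kappabar$-towers they hit rather than landing on classes already killed elsewhere. This is best done by consulting the chart (\Cref{gensone}) and reading off the residual generators directly, which is the approach suggested by the figure reference.
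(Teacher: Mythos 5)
Your proposal is correct and follows essentially the same route as the paper, which presents this corollary as a direct read-off from the $E_2$-term of \Cref{propE2Y} and \Cref{cormodstructureE2Y} combined with the computed $d_5$-differentials, the vanishing of $d_3$ and $d_7$, and \Cref{StructureSS}. The only slip is the opening assertion that $E_9(Y)=E_5(Y)$ --- since the $d_5$-differentials are nontrivial one has $E_9=E_7=H(E_5,d_5)$ --- but your subsequent bookkeeping (``after killing the classes hit by $d_5$'') makes clear this is what you intended.
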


\begin{figure}[h]
\includegraphics[page=1, width=\textwidth]{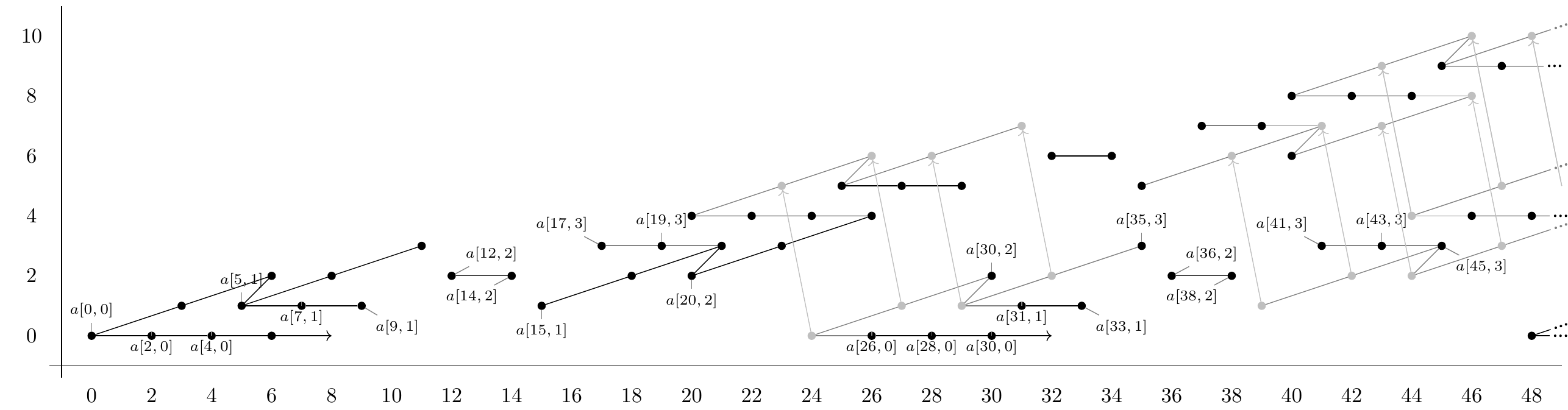}
\caption{$d_5$-differentials in stems 0 to 48 and $\kappabar$-free generators at $E_9$}
\label{gensone}
\end{figure}

The generators of the $\kappabar$-free families in stems $0\leq t-s <48$ are presented in \Cref{gensone}. 
The $\kappabar$-free generators in the  range $0\leq t-s <192$ are given by multiples of these with $\Delta^2, \Delta^4$ and $\Delta^6$ and all other $\kappabar$-free generators are multiples of these with the powers of $\Delta^8$. By  \Cref{StructureKappa}, each $\kappabar$-free family consisting of permanent cycles is truncated by exactly one other $\kappabar$-free family. Thus, using the $\Delta^8$-linearity and \Cref{StructureDelta}, we see that the $24\times 4$ $\kappabar$-free generators in the range $0\leq t-s <192$ organize themselves as follows. Exactly half of them are permanent cycles and the other half are not. The $\kappabar$-family of each non-permanent $\kappabar$-free generator supports a differential that hits the $\kappabar$-family of exactly one of the other permanent generators. Note that the truncation must begin in stems less than four by \Cref{StructureKappa}. This allows us to determine longer differentials before settling shorter ones.

All $24$ $\kappabar$-free generators in the range $0\leq t-s<48$ are permanent cycles due to sparseness and in the next section we will find their ``partners".

\subsection{The $d_9$-differentials}

To analyze the $d_9$-differentials, we make the following observation, which, in some sense, is a very small part of the geometric boundary theorem as in \cite[Appendix 4]{behrens_EHP}. 
\begin{lem}\label{lemtrickY}
Let $a \in E_r^{s,t}(Y)$ so that $p_*(a)\in E_r^{s,t-2}(V(0))$. Suppose $p_*(a)$ persists to the $E_{r'}$ term  for some $r'\geq r$ and that there is a non-trivial differential, $d_{r'}(p_*a) \neq 0$. Then $d_{r''}(a)\neq 0$ for $r'' \leq r'$.
\end{lem}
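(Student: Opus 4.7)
The plan is to deduce this from the naturality of the map of spectral sequences
\[p_* \colon E_r^{s,t}(Y) \to E_r^{s,t-2}(V(0))\]
induced by the map of spectra $Y \xrightarrow{p} \Sigma^2 V(0)$, together with the fact that, as with any map of spectral sequences arising from a map of filtered spectra, $p_*$ commutes with differentials, i.e., $d_r \circ p_* = p_* \circ d_r$ on every page.

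I will argue by contrapositive. Assume that $a$ is a $d_{r''}$-cycle for every $r \leq r'' \leq r'$, so that $a$ represents a well-defined class on every page from $E_r$ through $E_{r'}$. The hypothesis that $p_*(a)$ persists to the $E_{r'}$-term means exactly that $p_*(a)$ is not only a $d_{r''}$-cycle for $r \leq r'' < r'$ but also is not hit by any $d_{r''}$ on those pages, and so likewise represents a nonzero class in $E_{r'}^{s,t-2}(V(0))$. Under $p_*$ the class of $a$ in $E_{r'}^{s,t}(Y)$ maps to this class of $p_*(a)$ in $E_{r'}^{s,t-2}(V(0))$.

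Applying naturality on the $E_{r'}$-page gives
\[d_{r'}(p_*(a)) \;=\; p_*\bigl(d_{r'}(a)\bigr) \;=\; p_*(0) \;=\; 0,\]
which contradicts the assumption $d_{r'}(p_*(a)) \neq 0$. Hence there must exist some $r'' \leq r'$ for which $d_{r''}(a) \neq 0$, as claimed.

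There is no substantive obstacle: the only thing to be careful about is the bookkeeping that $p_*(a)$ persisting to $E_{r'}$ gives a well-defined nonzero target class at that page, so that the equation $d_{r'}(p_*(a)) \neq 0$ and the naturality identity take place in the same group $E_{r'}^{s+r',t-2+r'-1}(V(0))$.
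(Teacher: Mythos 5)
Your proof is correct and uses essentially the same idea as the paper: naturality of the map of spectral sequences $p_*$ with respect to differentials, noting that the persistence of $p_*(a)$ to $E_{r'}$ forces $a$ to survive (neither be hit nor die) up to that page, so that $d_{r'}(p_*a)=p_*(d_{r'}a)$ yields the conclusion. The paper phrases this directly rather than by contrapositive, but the content is identical.
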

\begin{proof}
This is a straight forward application of naturality. Our assumptions imply that $a$ cannot be hit by a differential $d_{r''}$ for $r''\leq r'$ and, furthermore, that if it persists to the $E_{r'}$ term, that $d_{r'}(a)=b'$ for $b'$ such that $p_*b'=b$. 
\end{proof}

The differentials below are depicted in Figures~\ref{gensone}, \ref{d5d9one}, \ref{d5d9three} and \ref{d5d9four}.

\begin{lem}[Figures~\ref{d5d9one}, \ref{d5d9three} and \ref{d5d9four}]
There are $d_9$-differentials, for $i=0,1$,
\begin{enumerate}[(1)]
\item $d_9(\Delta^{4i+2}a[0,0])=\kappabar^2\Delta^{4i}  v_1a[5,1] $
\item $d_9(\Delta^{4i+2}a[5,1])=\kappabar^2\Delta^{4i}  a[12,2]$
\item $d_9(\Delta^{4i+3}v_1a[0,0])=\kappabar^2 \Delta^{4i+1} v_1^2a[5,1] $
\item $d_9(\Delta^{4i+2}a[17,3])=\kappabar^3 \Delta^{4i} v_1^2 a[0,0]$
\item $d_9(\Delta^{4i+3}a[17,3])=\kappabar^3\Delta^{4i+1} v_1^2 a[0,0]$
\item $d_9(\Delta^{4i+3}a[12,2])=\kappabar^2 \Delta^{4i+1} v_1a[17,3]$
\item $d_9(\Delta^{4i+2}a[12,2]) =\kappabar^2 \Delta^{4i} v_1 a[17,3]$
\item $d_9(\Delta^{4i+3}v_1a[5,1]) =\kappabar^2 \Delta^{4i+1}\ v_1 a[12,2]$
\item $d_9(\Delta^{4i+2}v_1a[0,0])=\kappabar^2\Delta^{4i}  v_1^2a[5,1] $
\item $d_9(\Delta^{4i+2}v_1a[5,1])=\kappabar^2\Delta^{4i}  v_1a[12,2]$
\item $d_9(\Delta^{4i+3}v_1a[17,3])=\kappabar^3\Delta^{4i+1} v_1^3 a[0,0].$
\item $d_9(\Delta^{4i+3}v_1a[12,2])=\kappabar^2 \Delta^{4i+1} v_1^2a[17,3]$
\end{enumerate}
\end{lem}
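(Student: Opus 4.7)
The proof combines four tools: (i) naturality under the inclusion $i_*\colon E_r(V(0))\to E_r(Y)$, which lets us import the $d_9$-differentials of \Cref{lem:d9-first}; (ii) naturality under the projection $p_*\colon E_r(Y)\to E_r(V(0))$ via \Cref{lemtrickY}; (iii) $v_1$-linearity from \Cref{lem:e2-e4} together with $\Delta^8$-linearity from \Cref{StructureDelta}; and (iv) the pairing principle \Cref{slogankappabar}, which forces every $\kappabar$-free family of permanent cycles on $E_9$ to be truncated by a unique other $\kappabar$-free family. Throughout it suffices to treat $i=0$; the case $i=1$ follows by $\Delta^8$-linearity.

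First, I would establish items (1) and (3) by applying $i_*$ directly to parts (1) and (7) of \Cref{lem:d9-first}, using the identity $i_*(x)=v_1 a[5,1]$ that follows from the relation $v_1 a[5,1]=x\cdot a[0,0]$ in \Cref{propE2Y}. Next, I would establish items (4) and (5) via \Cref{lemtrickY}: since $p_*(a[17,3])$ detects $\eta x^2\in E_r(V(0))$, and a short Leibniz computation using parts (1) and (2) of \Cref{lem:d9-first} shows that $\Delta^{4i+2+j}\eta x^2$ eventually supports a $d_9$-differential for $j\in\{0,1\}$, the source $\Delta^{4i+2+j}a[17,3]$ must support some $d_r$-differential with $r\leq 9$; filtration and stem constraints, combined with $\Delta^8$-periodicity and sparseness at earlier pages, determine $r=9$ and pin down the target. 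Items (9) and (11) then follow from (1) and (5) by $v_1$-linearity.

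For the remaining items (2), (6), (7), (8), I would invoke the pairing principle. By \Cref{E7gen}, the 24 $\kappabar$-free families in stems $0\le t-s<48$ are permanent cycles, so each must be truncated by a unique $\kappabar$-free family in a higher stem. Once (1), (3), (4), (5) and their $v_1$-multiples are established, the families of $a[12,2]$, $a[19,3]$, $a[38,2]$, and $a[43,3]$ are precisely those still in need of a $d_9$-truncator. On the source side, I would check that $\Delta^{2}a[5,1]$, $\Delta^{2}a[12,2]$, $\Delta^{3}a[12,2]$, and $\Delta^{3}v_1 a[5,1]$ are $d_5$- and $d_7$-cycles (using $d_5(\Delta)=\nu\kappabar$ together with the relations $v_1\nu=0$ and $\nu a[12,2]=0$ of \Cref{cormodstructureE2Y}, and sparseness), and are the unique $\kappabar$-free classes of the correct bidegree at $E_9$. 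This forces the stated $d_9$-differentials, and items (10), (12) then follow by $v_1$-linearity from (2), (6).

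The main obstacle will be the combinatorial bookkeeping in step (iv): systematically verifying, using \Cref{E7gen} and the module relations of \Cref{cormodstructureE2Y}, that each remaining untruncated family admits a unique $\kappabar$-free source in the correct bidegree and that no shorter differential can explain the truncation. A secondary subtlety is justifying that classes like $\Delta^{4i+3}v_1 a[0,0]$ actually survive to $E_9$: although $d_5(\Delta^{4i+3}a[0,0])\neq 0$, its $v_1$-multiple is a $d_5$-cycle because $v_1\nu=0$ in $E_2(Y)$, after which $v_1$-linearity together with the imported $d_9$ from $V(0)$ concludes the argument.
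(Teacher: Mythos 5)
Your handling of items (1), (3), (9), (11) --- importing $d_9$'s from $E_9(V(0))$ along $i_*$ and using $v_1$-linearity --- is the paper's argument. Two other steps, however, do not work as written. For (4) and (5), the ``short Leibniz computation'' you invoke actually gives zero: writing $\Delta^{2}\eta x^2=(\Delta^{2}x)(\eta x)$ and using $d_9(\Delta^{2}x)=\kappabar^2\kappa$ from \Cref{lem:d9-first}, the product rule would produce $\kappabar^2\kappa\,\eta x=\kappabar^2\eta x^3=\kappabar^2\eta\nu^2 y=\kappabar^2\nu^2 v_1x^2=0$ by the relations $x^3=\nu^2y$, $\eta y=v_1x^2$, $v_1\nu=0$ of \Cref{thmE2V0all} (and in any case the elliptic spectral sequence for $V(0)$ is not multiplicative, so Leibniz is not available there). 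In fact $\Delta^{2}\eta x^2=\Delta^2\kappa\eta$ survives to $E_{11}(V(0))$ and supports the $d_{11}$-differential $d_{11}(\Delta^{2}\kappa\eta)=\kappabar^3\eta^2$ of \Cref{d11}; that $d_{11}$ is the correct input to \Cref{lemtrickY}, giving only that $\Delta^{2}a[17,3]$ supports a differential of length at most $11$, after which one must check separately that the $d_9$ is the only option. As proposed, your argument for (4) and (5) produces no nontrivial differential to feed into \Cref{lemtrickY}.

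The second, more serious, gap is that the pairing principle of \Cref{slogankappabar} is not strong enough to force (2), (6), (7), (8). It guarantees that each permanent $\kappabar$-free family is eventually truncated by exactly one other family, but it determines neither the page on which the truncation occurs nor the power of $\kappabar$ at which it begins. Concretely, a priori the family of $a[19,3]$ could be truncated at $\kappabar^4a[19,3]$ by a $d_{19}$ with source in stem $100$ rather than at $\kappabar^2a[19,3]$ by the claimed $d_9$ on $\Delta^{2}a[12,2]$; differentials of exactly this shape, e.g.\ $d_{19}(\Delta^{6}a[4,0])=\kappabar^4\Delta^{2}a[19,3]$, do occur one $\Delta^4$-period later, so such alternatives cannot be dismissed by bookkeeping with \Cref{E7gen} alone. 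The paper settles these cases with inputs absent from your proposal: (2) follows from $i_*$ applied to $d_9(\Delta^{2}x)=\kappabar^2\kappa$ together with $v_1$-linearity (using $i_*(x)=v_1a[5,1]$ and $i_*(\kappa)=v_1a[12,2]$); (6) follows from \Cref{lemtrickY} applied to $p_*(\Delta^{3}a[12,2])=\Delta^{3}v_1\eta x$; and (7), (8) are forced by the computations $tmf_{59}Y\cong\Z/2$, $tmf_{79}Y=0$, $tmf_{155}Y\cong\Z/2$ and $tmf_{175}Y=0$, read off from the long exact sequence relating $tmf_*Y$ to $tmf_*V(0)$. You should either supply those homotopy-group counts or replace the pairing argument by the direct $i_*$/$p_*$ arguments.
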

\begin{proof}
Let $i=0$. The differentials (1) and (3) are the image of a differential in $E_2(V(0))$ under $i_*$. The second differential (2) follows $v_1$-linearity and from the fact that $d_9(\Delta^{4i+2}\aseven)=\kappabar^2\Delta^{4i} \kappa$ in $E_2(V(0))$, $i_*(\aseven)=v_1a[5,1]$ and $i_*(\kappa)=v_1a[12,2]$. 

For (4), we use \Cref{lemtrickY}. In $E_*(V(0))$, we have $d_{11}( \Delta^2\eta \kappa) = \eta^2\kappabar^3$. Since $p_*( \Delta^2 a[17,3])= \Delta^2 \eta \kappa$, $\Delta^2 a[17,3]$ supports a differential of length at most $11$. This $d_9$ is the only choice. The argument for (5) is the same, with one more power of $\Delta$.

For (6), note that $p_*(\Delta^3 a[12,2]) = \Delta^3 v_1\eta x$. Since $d_{9}(\eta v_1  x) = \nu \kappa \kappabar^2 \Delta$, the class $\Delta^3 a[12,2]$ supports a differential of length at most $9$. This is the only choice. 
 
The arguments (1)--(6) when $i=1$ are the same as those for $i=0$.

For (7)--(8), note that 
from our computation above, $tmf_{59}Y \cong \Z/2$. This forces (7) when $i=0$. Arguing in a similar way, $tmf_{79}Y =0$, $tmf_{155}Y\cong \Z/2$ and $tmf_{175}Y=0$ imply the other $d_9$s.

The $d_9$-differentials $(9)$-$(12)$ follow from those of $(1)$, $(2)$, $(5)$, $(6)$, respectively, by $v_1$-linearity.
\end{proof}

\begin{rem}
It turns out these are all the $d_9$-differentials. For degree reasons, there can be very few other $d_9$s.
The class $\Delta^5v_1a[0,0]$ is the image of a $d_{9}$-cycle in $E_9(V(0))$ so does not support a $d_9$.

The only other possible $d_9$ differentials for degree reasons are
\begin{itemize}
\item A non-trivial $d_9$ on $\Delta^{5} a[17,3]$. This does not happen since it implies a non-trivial $d_9$ on $v_1\Delta^{5} a[17,3]= \Delta^4 a[43,3]$, but this family has already been paired: it is truncated by $\Delta^6a[36,2]$.
\item A nontrivial $d_9$ on $\Delta^{4} a[17,3]$, truncating the $\kappabar$-family of $\Delta^2a[4,0]$. We will see below that this does not happen, but at this point, we leave this undecided.
\end{itemize}

\end{rem}

\begin{figure}[h]
\includegraphics[page=1, width=\textwidth]{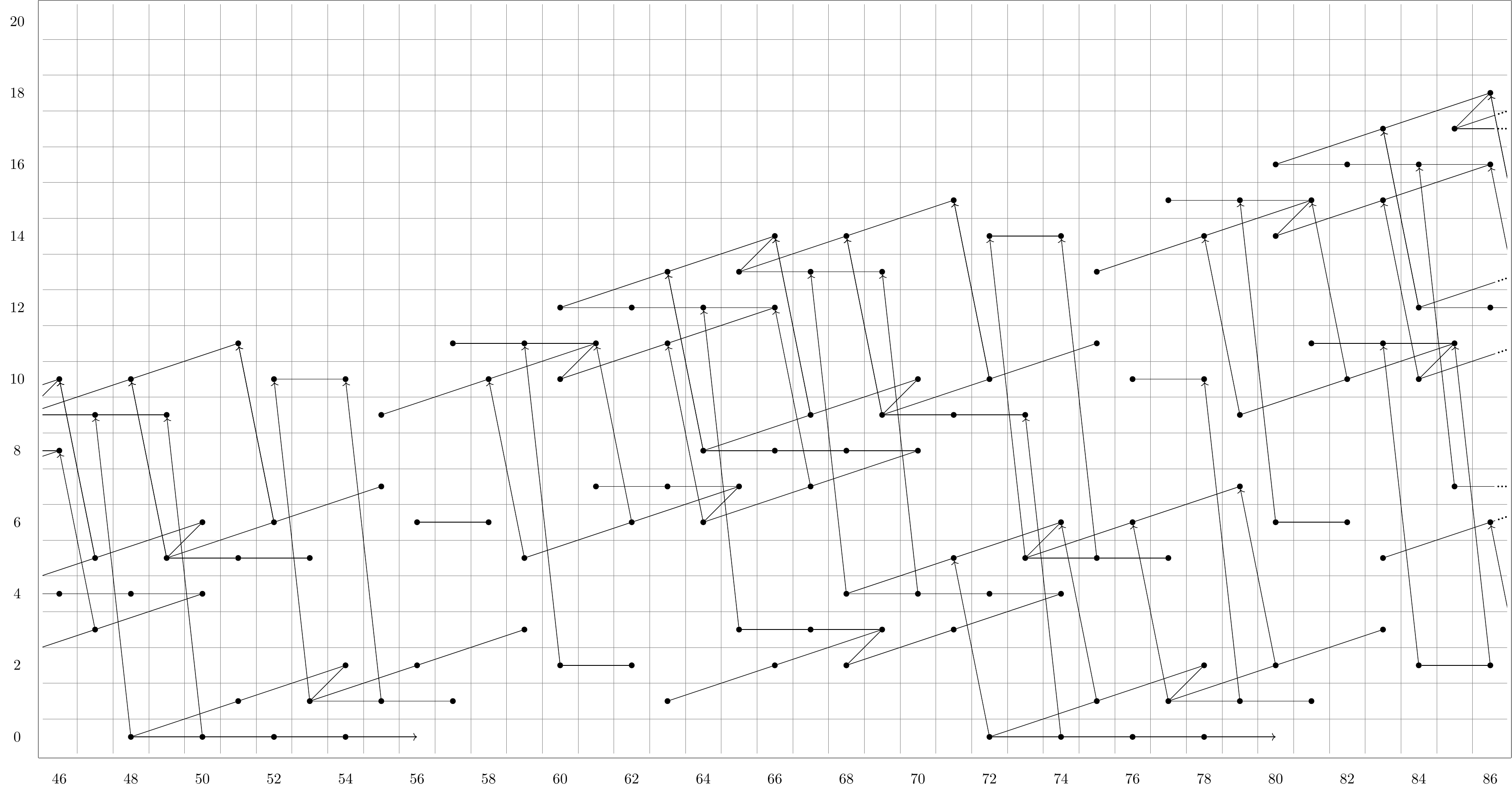}
\caption{$d_5$ and $d_9$ differentials in stems $46$ to $86$}
\label{d5d9one}
\end{figure}

\subsection{Higher differentials}
We begin our analysis using \Cref{slogankappabar}. The reader should remember that we only need to analyze the generators of the $\kappabar$-free families, which are in filtration less than four. All differentials discussed in this section are depicted in Figures~\ref{d11d23one} and \ref{d11d23four}.

\begin{lem}\label{lemhigherfirstd19}
There are differentials
\begin{enumerate}[(1)]
\item $d_{19}(\Delta^{4}a[5,1])=\kappabar^{5}a[0,0]$
\item $d_{19}(\Delta^{5}v_1a[5,1])=\kappabar^{5}\Delta v_1 a[0,0]$
\item \label{16} $d_{19}(\Delta^{4}a[36,2]=\kappabar^{5}a[31,1]$
\item $d_{19}(\Delta^{4}a[41,3])=\kappabar^{5}a[36,2]$
\item $d_{19}(\Delta^{4}a[26,0])=\kappabar^{4}a[41,3]$
\end{enumerate}
\end{lem}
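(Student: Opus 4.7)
The strategy is to apply \Cref{slogankappabar}: at the $E_{19}$-page, every $\kappabar$-free family of permanent cycles not yet truncated by the $d_r$ with $r\leq 17$ must be truncated by a unique other $\kappabar$-free family. The five stated differentials perform precisely these truncations. The tools are bidegree bookkeeping at $E_{19}$, leveraging $v_1$-linearity (\Cref{lem:e2-e4}) and $\Delta^8$-linearity (\Cref{StructureDelta}); the vanishing line at $s=24$ (\Cref{VanishingLemma}); naturality via $p_*$ with \Cref{lemtrickY}; dimension counts via the long exact sequence \eqref{LES}; and duality (\Cref{lem:duality-for-extensions}) as a cross-check.

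For (1), the $\kappabar$-family of $a[0,0]$ is permanent, and the vanishing line forces its truncation at some $\kappabar^j a[0,0]$ with $j\leq 5$ (since $\kappabar^6 a[0,0]$ sits at $(120,24)$). Enumerating candidate sources in bidegree $(20j+1,4j-r)$ for odd $r$ against the classes surviving at $E_{19}$ rules out $j\leq 4$, while $j=5$ admits the unique candidate $\Delta^4 a[5,1]$ in bidegree $(101,1)$, yielding the $d_{19}$. For (2), the same argument applied to the permanent $\kappabar$-family of $\Delta v_1 a[0,0]$ identifies $\Delta^5 v_1 a[5,1]$ in bidegree $(127,1)$ as the unique source hitting $\kappabar^5\Delta v_1 a[0,0]$.

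For (3) and (4), the same $\kappabar$-family truncation analysis applies to the permanent classes $a[31,1]=\Delta v_1 a[5,1]$ and $a[36,2]=\Delta a[12,2]$, pinning down unique sources $\Delta^4 a[36,2]$ in $(132,2)$ and $\Delta^4 a[41,3]$ in $(137,3)$. As a cross-check, one may invoke \Cref{lemtrickY}, examining the $p_*$-images in $E_r(V(0))$. For (5), a dimension count is the cleanest route: using \eqref{LES} together with the computed $tmf_*V(0)$ in stems $120$--$121$, the rank of $\pi_{121}(tmf\wedge Y)$ is pinned down, and this rank would be exceeded if $\kappabar^4 a[41,3]$ survived. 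Hence the family of $a[41,3]$ is truncated at $j=4$; the only source in bidegree $(122,0)$ is $\Delta^4 a[26,0]$, giving the $d_{19}$.

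The main obstacle lies in the bookkeeping for (1)--(4): many classes inhabit $E_{19}$ in bidegrees neighboring the intended sources, and uniqueness of each source requires verifying that every other candidate has already been killed by an earlier $d_r$ or supports a differential landing elsewhere. Here duality (\Cref{lem:duality-for-extensions}) provides valuable independent confirmation: a nontrivial multiplication by $\kappabar$ at stem $k$ in $tmf\wedge Y$ forces one at stem $153-k$, transporting constraints from the already-analyzed low-stem region into the target bidegrees of these $d_{19}$-differentials.
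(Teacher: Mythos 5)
Your overall strategy for items (2)--(5) -- pair up the $\kappabar$-free families using \Cref{slogankappabar} and bidegree bookkeeping -- is the same as the paper's, which disposes of those items ``by inspection'' once (1) is in hand. The problem is item (1) itself, which is the anchor of the whole pairing and where your argument has a genuine gap. Your plan is to bound the truncation point of the $\kappabar$-family of $a[0,0]$ by the vanishing line and then ``enumerate candidate sources'' to rule out truncation at $\kappabar^j a[0,0]$ for $j\leq 4$. But that enumeration does not come up empty at $j=4$: the class $\Delta^2 a[33,1]=\Delta^3 v_1^2 a[5,1]$ sits in bidegree $(81,1)$, survives to $E_{15}$, and is a perfectly good candidate for a $d_{15}$ hitting $\kappabar^4 a[0,0]$ in $(80,16)$. (It is only in a \emph{later} lemma that this class is shown to support $d_{13}(\Delta^2a[33,1])=\kappabar^3a[20,2]$, and that later argument itself relies on $\Delta^4a[5,1]$ already being paired with $a[0,0]$ by the present lemma -- so you cannot appeal to it here without circularity.) None of the tools you list resolves this: $v_1$-linearity gives nothing because $v_1\cdot\Delta^2a[33,1]=\Delta^3v_1^3a[5,1]=0$ already at $E_2$, and duality is only offered as a ``cross-check'' rather than an argument. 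A smaller quibble: the vanishing line by itself only forces truncation at $j\leq 6$; getting $j\leq 5$ requires additionally checking that no $\kappabar$-free generator lives in stem $121$ and filtration less than four.

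The paper closes exactly this gap with an input you do not use: the comparison with $tmf_*V(0)$ along $i_*$. Since $\kappabar^4\in\pi_{80}(tmf\wedge V(0))$ is \emph{not} divisible by $\eta$, its image in $\pi_{80}(tmf\wedge Y)$ is nonzero and detected by $\kappabar^4a[0,0]$, so that class must survive; since $\kappabar^5\in\pi_{100}(tmf\wedge V(0))$ \emph{is} divisible by $\eta$ (namely $\kappabar^5=\eta[\Delta^4\nu]$), its image vanishes and $\kappabar^5a[0,0]$ must be hit. This pins the truncation at exactly $\kappabar^5a[0,0]$, after which filtration considerations leave $\Delta^4a[5,1]$ as the only possible source, and the remaining four differentials follow by the pairing argument you describe (your LES dimension count at stem $121$ for (5) is a reasonable alternative there). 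To repair your proof, you need to import this $\eta$-divisibility information from the completed $V(0)$ computation, or find some substitute that eliminates $\Delta^2a[33,1]$ as a source for a $d_{15}$ into $\kappabar^4a[0,0]$.
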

\begin{proof}
For (1), since the element $\kappabar^4\in \pi_{80}(tmf\wedge V(0))$ is not divisible by $\eta$ and $\kappabar^5\in \pi_{100}(tmf\wedge V(0))$ is divisible by $\eta$, the $\kappabar$-family of $a[0,0]$ in the elliptic spectral sequence for $tmf \wedge Y$ must be truncated at $\kappabar^5a[0,0]$.
Remembering that the source has to have filtration less than four, the only possibility is this differential.

Inspection then show that the differentials (2)-(4) are the only possibilities to satisfy \Cref{slogankappabar}.
\end{proof}

\begin{lem}\label{lemhighersecondd17}
There are differentials
\begin{enumerate}[(1)]

\item \label{8} $d_{17}(\Delta^{4}a[0,0])=\kappabar^{4}a[15,1]$
\item \label{8'} $d_{17}(\Delta^{4}a[15,1])=\kappabar^{4}a[30,2]$
\end{enumerate}
\end{lem}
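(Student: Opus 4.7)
My plan is to prove (1) by direct naturality from the elliptic spectral sequence of $tmf\wedge V(0)$ via $i_*$, and to deduce (2) either by a Leibniz argument using (1) or, in the style of the surrounding results, by invoking \Cref{slogankappabar} together with a bidegree enumeration.

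For (1): since $a[0,0] = i_*(1)$, we have $\Delta^4 a[0,0] = i_*(\Delta^4)$. The $V(0)$-computation in \Cref{secV0diffext} shows that $\Delta^4$ is a $d_r$-cycle in $E_r(V(0))$ for every $r<17$, so by naturality $\Delta^4 a[0,0]$ is a $d_r$-cycle in $E_r(Y)$ for those same $r$; it also cannot be hit by any differential since it lies in filtration zero. The $V(0)$-differential $d_{17}(\Delta^4)=\kappabar^4 \afifteen$ together with $i_*(\afifteen) = a[15,1]$ then yields
\[d_{17}(\Delta^4 a[0,0]) = i_*(d_{17}(\Delta^4)) = \kappabar^4 a[15,1].\]

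For (2), naturality alone does not close the argument, because $\Delta^4 \afifteen$ supports a non-trivial $d_{13}(\Delta^4 \afifteen) = \Delta^2 \kappabar^3 \eta^2$ in $V(0)$ and so does not survive to $E_{17}(V(0))$. Fortunately this target maps to $0$ under $i_*$ because $\eta a[0,0]=0$, so $\Delta^4 a[15,1]$ is a $d_{13}$-cycle in $Y$; a bidegree check handles $d_{15}$, so the class survives to $E_{17}(Y)$. The cleanest finish uses the Leibniz rule for the action of $E_r(V(0))$ on $E_r(Y)$: since $\afifteen$ is a permanent cycle in $V(0)$ and $\afifteen^2 = \nu^2 \Delta$ by \Cref{thmE2V0all}, writing $\Delta^4 a[15,1] = \afifteen \cdot \Delta^4 a[0,0]$ and applying Leibniz with (1) yields
\[d_{17}(\Delta^4 a[15,1]) = \afifteen \cdot d_{17}(\Delta^4 a[0,0]) = \kappabar^4 \afifteen^2 a[0,0] = \kappabar^4 \nu^2 \Delta\, a[0,0] = \kappabar^4 a[30,2].\]
Alternatively, in the spirit of the surrounding proofs, \Cref{slogankappabar} forces the permanent-cycle $\kappabar$-family of $a[30,2]$ to be truncated by exactly one other $\kappabar$-free family with source in filtration less than four, and after cross-referencing the pairings already recorded in the $d_9$-lemma, \Cref{lemhigherfirstd19}, and intermediate differentials, the unique remaining candidate source is $\Delta^4 a[15,1]$, forcing truncation at $\kappabar^4$ via $d_{17}$.

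The main obstacle is the bidegree bookkeeping in the Slogan route, since competing candidates such as $\Delta^4 a[35,3]$ also lie in an eligible bidegree for a potential $d_{19}$-truncation of the $a[30,2]$ family at $\kappabar^5$ and must be shown to pair with a different family. The Leibniz route avoids this enumeration but relies on the $E_r(V(0))$-module structure of $E_r(Y)$ and on $\afifteen$ being a permanent cycle in $V(0)$, both of which are available from the data in \Cref{secV0diffext}.
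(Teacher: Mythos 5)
Your proof of part (2) has a genuine gap. The Leibniz computation
$d_{17}(\afifteen\cdot\Delta^4 a[0,0])=\afifteen\cdot d_{17}(\Delta^4 a[0,0])$ presupposes that $E_r(Y)$ is a module over $E_r(V(0))$ with a Leibniz rule for multiplication by $\afifteen$, but this structure does not exist for $r>2$: as noted immediately after \Cref{propE2Y}, the elliptic spectral sequence of $tmf\wedge V(0)$ is not multiplicative ($V(0)$ is not a ring spectrum at $p=2$), and the only linearity available is over the elliptic spectral sequence of $tmf$ together with $v_1$, $\kappabar$, $\nu$, $\Delta^8$. The class $\afifteen$ is not in the image of $E_2(S^0)\to E_2(V(0))$ (it satisfies $\delta_2(\afifteen)=\kappa\neq 0$), so no such linearity applies to it, and there is likewise no action of $[\afifteen]\in\pi_{15}(tmf\wedge V(0))$ on $\pi_*(tmf\wedge Y)$ that could substitute at the level of homotopy. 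Your fallback via \Cref{slogankappabar} is the right kind of argument but, as you concede, the enumeration ruling out the competing truncation of the $a[30,2]$ family (e.g.\ at $\kappabar^5$ by a $d_{19}$) is not carried out, so part (2) is not established. The paper closes this differently: the long exact sequence for $V(0)\to Y\to \Sigma^2 V(0)$ gives $\pi_{111}(tmf\wedge Y)\cong\Z/2$, and by \Cref{lemhigherfirstd19}(3) the class $\kappabar^4 a[31,1]$ survives and already detects that group, so $\Delta^4 a[15,1]$ (stem $111$, filtration $1$) must support a differential; the $d_9$-pattern then pins down the stated $d_{17}$.

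For part (1) your naturality argument is a genuinely different route from the paper's (which shows that $\kappabar^3\afifteen\in\pi_*(tmf\wedge V(0))$ is not $\eta$-divisible while $\kappabar^4\afifteen=0$, forcing the $\kappabar$-family of $a[15,1]$ to truncate exactly at $\kappabar^4$, whence the source is forced). Your version is essentially sound but is missing one verification: naturality only gives $d_{17}(\Delta^4 a[0,0])=i_{17}(\kappabar^4\afifteen)$, and you must still check that $\kappabar^4 a[15,1]$ is nonzero on $E_{17}(Y)$, i.e.\ that no shorter differential (necessarily a $d_{15}$ from filtration $2$ in stem $96$, or an earlier truncation of the family, by \Cref{StructureKappa}) has already killed it; otherwise the conclusion would be that $\Delta^4 a[0,0]$ is a $d_{17}$-cycle. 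That check is routine from the earlier differential lemmas but it is part of the proof and should be supplied.
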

\begin{proof}
For (1), note that in $\pi_*(tmf\wedge V(0))$, $\kappabar^3 y$ is not divisible by $\eta$ and $\kappabar^4y= 0$. The class $y$ maps to $a[15,1]$ under $i_*$ so
it follows that the $\kappabar$-family of $a[15,1]$ is truncated at $\kappabar^4a[15,1]$. The only possibility is this differential. 

For (2), using the long exact sequence, we obtain that
$\pi_{111}(tmf \wedge Y) =\mathbb{Z}/2$.
 By part \Cref{lemhigherfirstd19} (\ref{16}), the class $\kappabar^4a[31,1]\in E_{5}^{17,128}$ survives the spectral sequence and so detects the unique non-trivial class of $\pi_{111}(tmf\wedge Y)$. This implies that the class $\Delta^4a[15,1]\in E_5^{1,112}$ must support a differential. Taking into account the $d_9$ differentials proves (2).
\end{proof}

\begin{lem}
There is a differential $d_{23}(\Delta^{4}a[30,2])=\kappabar^{6}a[5,1]$.
\end{lem}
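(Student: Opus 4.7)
The plan is to invoke the vanishing line from \Cref{VanishingLemma}: $E_\infty^{s,t} = 0$ for $s \geq 24$. Since the proposed target $\kappabar^6 a[5,1]$ lies in bidegree $(t-s, s) = (125, 25)$, it cannot survive to $E_\infty$ and must die on $E_{23}$ at the latest. Its potential outgoing $d_{23}$ would land in bidegree $(124, 48)$, which is already zero on $E_2$ (no $\kappabar$-free generator from \Cref{E7gen}, together with its $\Delta^{2i}$-shifts, contributes to that position). Hence $\kappabar^6 a[5,1]$ must be killed by an incoming $d_{23}$ whose source lies in bidegree $(126, 2)$.

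First I would verify that $\kappabar^6 a[5,1]$ is still alive on $E_{23}$. By \Cref{E7gen}, the class $a[5,1]$ is a $\kappabar$-free permanent cycle generator at $E_9$, so \Cref{slogankappabar} together with \Cref{StructureKappa} requires that its $\kappabar$-family be truncated by a unique other $\kappabar$-free family. A case-check against the $d_9$, $d_{17}$, and $d_{19}$ differentials established earlier shows that none of them has any $\kappabar^i a[5,1]$ as its target, so the family $\{\kappabar^i a[5,1]\}_{i\geq 0}$ remains intact through $E_{22}$, and in particular $\kappabar^6 a[5,1]$ is nonzero on $E_{23}$.

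Next I would identify the source of the incoming $d_{23}$ as $\Delta^4 a[30,2] = \Delta^5 \nu^2 a[0,0]$. This class sits in bidegree $(126, 2)$ and survives to $E_{23}$ because $a[30,2]$ does: by \Cref{cormodstructureE2Y} the differentials are $\Delta^8$-linear (and in particular \Cref{StructureDelta}(2) lets us transport cycle conditions from $a[30,2]$ to $\Delta^4 a[30,2]$), and using the relations $\nu^3 = 0$ and $v_1\nu = 0$ in $E_2(Y)$ together with $d_5(\Delta) = \nu\kappabar$, a short computation shows that $a[30,2] = \Delta \nu^2 a[0,0]$ is a $d_r$-cycle for every $r \leq 21$. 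Ruling out other potential sources at $(126, 2)$ on $E_{23}$ reduces to the same kind of finite check using the module structure of \Cref{cormodstructureE2Y} and the list of prior differentials. This last step is the main obstacle: the inspection is routine but requires careful bookkeeping to rule out alternative candidates arising from the $v_1$-towers and products of the low-filtration generators. Once uniqueness of the source is established, the differential $d_{23}(\Delta^4 a[30,2]) = \kappabar^6 a[5,1]$ follows.
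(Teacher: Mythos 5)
Your argument is correct and is essentially the paper's own: both proofs combine the $\kappabar$-family pairing structure of \Cref{StructureKappa} with the vanishing line of \Cref{VanishingLemma} to force a $d_{23}$ with source in filtration $2$ of stem $126$, and then identify $\Delta^4a[30,2]$ as the only candidate (the paper phrases the exclusion of $\Delta^4a[30,0]$ as ``such a differential would have length $25$,'' which is the same filtration count you use to place the source in bidegree $(126,2)$).
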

\begin{proof}
By inspection, taking into account the $d_9$s, the only generators that can be paired with $a[5,1]$ are $\Delta^{4}a[30,2]$ and $\Delta^{4}a[30,0]$. However, it cannot be $\Delta^{4}a[30,0]$ because such a differential would have length $25$, contradicting \Cref{VanishingLemma}.
\end{proof}

\begin{lem}
For $i=0,1$, there are differentials:
\begin{enumerate}[(1)]
\item 
$d_{11}(\Delta^{4i+2}a[15,1])=\kappabar^{3} \Delta^{4i}a[2,0]$
\item $
d_{11}(\Delta^{4i+2}a[28,0])=
\kappabar^{2}\Delta^{4i}a[35,3]$ 
\end{enumerate}
\end{lem}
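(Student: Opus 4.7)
The plan is to establish (1) by naturality along $i_*\colon E_r(V(0)) \to E_r(Y)$ applied to a known $d_{11}$ in $V(0)$, and to establish (2) via the $\kappabar$-pairing principle of \Cref{slogankappabar} together with a dimension count from the long exact sequence \eqref{LES}.

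For (1), I would invoke the differential $d_{11}(\Delta^{4i+2}\afifteen) = \Delta^{4i}\kappabar^{3} v_1$ in the elliptic spectral sequence for $tmf\wedge V(0)$, already established in \Cref{d11}(3). The map $i_*$ sends $\afifteen$ to $a[15,1]$ and $v_1$ to $v_1\,a[0,0] = a[2,0]$. Before concluding, I would check that both $\Delta^{4i+2}a[15,1]$ and $\Delta^{4i}\kappabar^{3}a[2,0]$ survive to $E_{11}(Y)$: the source is a $d_5$-cycle by the Leibniz rule, since $d_5(\Delta) = \nu\kappabar$ appears with the even coefficient $4i+2$, and none of the $d_9$-differentials in \Cref{lem:d9-first} touch it; the target survives for the same sorts of reasons. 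Naturality then yields (1).

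For (2), neither $p_*$- nor $i_*$-naturality applies directly: $p_*(\Delta^{4i+2}a[28,0])=0$ because $a[28,0] = v_1^2 \Delta\,a[0,0]$ lies in the image of $i_*$, while in $V(0)$ the natural preimage $\Delta^{4i+3}v_1^2$ is already killed by $d_3(v_1^2)=\eta^3$ (an obstruction that disappears in $Y$ because $\eta\,a[0,0]=0$, which is precisely what allows $a[28,0]$ to exist at $E_9(Y)$). My plan is therefore to argue by elimination. The class $\Delta^{4i+1}a[35,3]$ is a $\kappabar$-free generator (a $\Delta^{2k}$-translate of a class from \Cref{E7gen}), so \Cref{slogankappabar} forces its $\kappabar$-family to be paired with the $\kappabar$-family of exactly one other $\kappabar$-free generator. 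After bookkeeping the pairings already established by the $d_9$'s, by part~(1) of this lemma, and by the $d_{17}$, $d_{19}$, $d_{23}$ differentials proved earlier (together with all their $\Delta^{2k}$-multiples), the only remaining unpaired generator of the correct total degree is $\Delta^{4i+2}a[28,0]$. Its filtration~$0$ then fixes the differential length at $11$.

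The main obstacle will be to determine the precise exponent of $\kappabar$ in the target, i.e., to rule out truncation at $\Delta^{4i}a[35,3]$ or at $\kappabar\,\Delta^{4i}a[35,3]$ rather than at $\kappabar^{2}\Delta^{4i}a[35,3]$. To settle this, I would run a dimension count using the long exact sequence \eqref{LES} and the already-determined groups $tmf_*V(0)$ to compute the orders of $\pi_{74}(tmf\wedge Y)$ (for $i=0$) and $\pi_{170}(tmf\wedge Y)$ (for $i=1$). Matching these against the number of $E_\infty$-survivors in the $\kappabar$-family of $\Delta^{4i+1}a[35,3]$ predicted by each candidate truncation exponent pins it down to $2$.
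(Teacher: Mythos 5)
Part (1) is correct and is the paper's argument: you push the $d_{11}$ on $\Delta^{4i+2}y$ forward from $E_*(V(0))$ along $i_*$ and check survival of source and target to $E_{11}(Y)$. Your observation that naturality fails for (2) because $\Delta^{4i+3}v_1^2$ already dies at $E_3(V(0))$ is also correct.

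Part (2) has a genuine gap. First, two slips: the $\kappabar$-free generator being truncated is $\Delta^{4i}a[35,3]$, not $\Delta^{4i+1}a[35,3]$ (only even powers of $\Delta$ times the generators of \Cref{E7gen} are $\kappabar$-free generators), and the target $\kappabar^{2}\Delta^{4i}a[35,3]$ lies in stem $75+96i$, so the relevant groups are $\pi_{75}$ and $\pi_{171}$, not $\pi_{74}$ and $\pi_{170}$. More seriously: once the partner of $\Delta^{4i}a[35,3]$ is identified, the $\kappabar$-exponent and the length of the differential are forced by degree reasons, so the issue you flag as the ``main obstacle'' is vacuous; the real work is in showing the partner is $\Delta^{4i+2}a[28,0]$ and nothing else, and for $i=0$ your elimination does not close. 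At this stage of the computation the generator $\Delta^{4}a[20,2]$ in $(t-s,s)=(116,2)$ is still unpaired and could a priori support a $d_{17}$ hitting $\kappabar^{4}a[35,3]$ in $(115,19)$; it is only later (the counting argument for the lists (A) and (B), together with the $d_{13}$ on $\Delta^{6}a[33,1]$) that $\Delta^{4}a[20,2]$ is shown to be a permanent cycle. The paper closes the case $i=0$ differently, with a dimension count in stem $75$: $\pi_{75}(tmf\wedge Y)\cong\Z/2$ is detected by $\kappabar^{3}a[15,1]$, which survives by \Cref{lemhighersecondd17}, so the permanent cycle $\kappabar^{2}a[35,3]$ must be hit, and the $d_{11}$ from $\Delta^{2}a[28,0]$ is the only degree-wise possibility. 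Your dimension count, redone in stem $75$ and used to show that $\kappabar^{2}a[35,3]$ dies rather than to pin down an exponent, would repair the argument. For $i=1$ the elimination does work (the stem-$212$ candidate is $\Delta^{8}a[20,2]$, a product of permanent cycles), but you should also justify, as the paper does using the $d_{9}$-differentials and \Cref{lemhighersecondd17}, that $\Delta^{4}a[35,3]$ is itself a permanent cycle before invoking \Cref{slogankappabar}.
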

\begin{proof}
In (1), for both $i=0,1$, these are the image of differentials in the spectral sequence $E_*(V(0))$. Both source and targets survive to $E_{11}(Y)$ and so these two differentials occur.

For (2), the long exact sequence shows that $\pi_{75}(tmf \wedge Y)=\Z/2$. \Cref{lemhighersecondd17} (\ref{8}) implies that the class $\kappabar^3 a[15,1]\in E_{7}^{13, 88}$ survives the spectral sequence and detects the unique non-trivial element of  the $\pi_{75}(tmf \wedge Y)$. On the other hand, the class $\kappabar^2\Delta \nu^2 a[5,1]\in E_7^{11,86}$ is a permanent cycle. Thus, it must be hit by a differential and this is the  possibility. 

For $i=1$, by taking into account the $d_9$-differentials and the $d_{17}$-differential \Cref{lemhighersecondd17} (\ref{8'}), we see that $\Delta^4 a[35,3]$ is a permanent cycle, which is $\kappabar$-free at the $E_{11}$-term.  By inspection, the only class which can truncate its $\kappa$-family is $\Delta^6a[28,0]$ by the indicated $d_{11}$-differential.
\end{proof}

\begin{lem}
There are differentials:
\begin{enumerate}[(1)]
\item $d_{13}(\Delta^{2}a[30,2])=\kappabar^{3}a[17,3]$
\item $d_{13}(\Delta^{2}a[33,1])=\kappabar^{3}a[20,2]$
\end{enumerate}
\end{lem}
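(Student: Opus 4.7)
The plan is to apply \Cref{slogankappabar} together with \Cref{StructureKappa}(2): at the $E_{13}$-page, every $\kappabar$-free family consisting of permanent cycles must be truncated by exactly one other $\kappabar$-free family. First I would verify that $a[17,3]$ and $a[20,2]$ are permanent cycles. Both sit in low-stem bi-degrees where the potential $d_r$-targets are empty for $r=5,9,11,13,\ldots$, as can be seen directly by inspecting the list of generators in \Cref{E7gen}, so no differential can leave these classes. Consequently their $\kappabar$-families consist of permanent cycles and each must be truncated by a unique partner $\kappabar$-free family.

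The target $\kappabar^3 a[17,3]$ lies in bi-degree $(t-s,s)=(77,15)$, so a $d_r$ hitting it has source in $(78, 15-r)$, which must be a $\kappabar$-free generator in stem $78$ of filtration $15-r$. Enumerating the generators from \Cref{E7gen} and their $\Delta^{2i}$-multiples, the candidates surviving to $E_{13}$ in stem $78$ are $\Delta^2 a[30,0]$ in filtration $0$ and $\Delta^2 a[30,2]$ in filtration $2$, corresponding to hypothetical differentials of length $15$ and $13$ respectively. An analogous enumeration in stem $81$ identifies $\Delta^2 a[33,1]$ in filtration $1$ as the low-filtration candidate for truncating $a[20,2]$, yielding a $d_{13}$ onto $\kappabar^3 a[20,2]$ in bi-degree $(80,14)$.

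To single out the announced sources, I would rule out the competing $d_{15}$ scenario from $\Delta^2 a[30,0]$ by showing that this class is the partner of a different $\kappabar$-free family. This is pinned down by combining an inspection of the surviving generators at $E_{13}$ with the $\Delta^8$-periodicity of \Cref{StructureDelta}, and, if needed, by independently computing $\pi_{77}(tmf\wedge Y)$ and $\pi_{80}(tmf\wedge Y)$ via the long exact sequence \eqref{LES} and the computation of $tmf_*V(0)$ from \Cref{secV0diffext}. The resulting group orders force the truncations to occur at precisely $\kappabar^3$, excluding the scenarios in which some $\kappabar^{\ell}a[17,3]$ or $\kappabar^{\ell}a[20,2]$ with $\ell\neq 3$ is the class actually hit.

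The main obstacle is the bookkeeping: tracking the pairings, under the Slogan, of \emph{all} the $\kappabar$-free generators in the surrounding stems to confirm that the only remaining partners for $a[17,3]$ and $a[20,2]$ are $\Delta^2 a[30,2]$ and $\Delta^2 a[33,1]$, respectively. This is routine but lengthy, and is substantially reduced by the $\Delta^8$-periodicity from \Cref{StructureDelta}, which lets us concentrate on one fundamental domain of the spectral sequence rather than the full range $0\leq t-s<192$.
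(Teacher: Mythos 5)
Your overall framework (pair the $\kappabar$-free families using \Cref{slogankappabar} and \Cref{StructureKappa}, then locate candidates by degree count) matches the paper's, and your treatment of part (2) is essentially the paper's argument: the only competitors for truncating the family of $a[20,2]$ are $\Delta^2 a[33,1]$ and $\Delta^4 a[5,1]$, and the latter is already committed to the $d_{19}$ truncating the family of $a[0,0]$. But for part (1) there is a genuine gap at exactly the delicate point. You correctly isolate the two candidates $\Delta^2 a[30,2]$ (a $d_{13}$) and $\Delta^2 a[30,0]$ (a $d_{15}$), both in stem $78$, but the tools you propose cannot tell them apart. Computing $\pi_{77}(tmf\wedge Y)$ and $\pi_{78}(tmf\wedge Y)\cong\Z/2$ from \eqref{LES} only shows that $\kappabar^3 a[17,3]$ dies and that exactly one of the two stem-$78$ classes survives; both scenarios are consistent with this and with the $\kappabar$-family bookkeeping (in the second scenario the surviving class $\Delta^2 a[30,2]$ would simply be truncated by a $d_{19}$ rather than a $d_{17}$ from $\Delta^6 a[35,3]$, which is possible degree-wise). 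Nor can you argue that $\Delta^2 a[30,0]$ is already the partner of another family: the paper establishes that it is a permanent cycle (it sits in list (A) of the subsequent Proposition) only as a consequence of this very lemma, so that route is circular.

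The missing idea is a comparison along $p_*\colon tmf\wedge Y\to \Sigma^2\, tmf\wedge V(0)$. Since $\pi_{78}(tmf\wedge V(0))=0$, exactness of \eqref{LES} forces the nonzero class $\alpha\in\pi_{78}(tmf\wedge Y)$ to satisfy $p_*\alpha\neq 0$ in $\pi_{76}(tmf\wedge V(0))$. If $\alpha$ were detected by $\Delta^2 a[30,2]=\Delta^3\nu^2 a[0,0]$, which at $E_2$ lies in the image of $i_*$ and hence is annihilated by $p_*$ on the $E_2$-term, then $p_*\alpha$ would have to be detected in filtration $s>2$ of stem $76$ of $E_\infty(V(0))$, where there is nothing. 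So the survivor is $\Delta^2 a[30,0]$ and $\Delta^2 a[30,2]$ must support the $d_{13}$. Without some such input your argument stalls at the two-candidate stage; compare the analogous ambiguity between $\Delta^6 a[30,0]$ and $\Delta^6 a[30,2]$ in \Cref{rem2left}, which the paper can only resolve with the Toda-bracket analysis surrounding \Cref{keyExt}.
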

\begin{proof}
For (1), it follows from \eqref{LES} that
$\pi_{78}(tmf\wedge Y)\cong \Z/2$.
By sparseness, either $\Delta^{2}a[30,2]$ or $\Delta^{2}a[30,0]$ is a permanent cycle detecting the non-zero element of $\pi_{78}(tmf\wedge Y)$. Suppose that 
\[\Delta^{2}a[30,2] = \Delta^3 \nu^2a[0,0]\]
is a permanent cycle detecting a class $\alpha \in  \pi_{78}(tmf \wedge Y)$. At $E_2$, $\Delta^3 \nu^2a[0,0]$ is in the image of $i_* \colon E_2(V(0)) \to E_{2}(Y)$ and so $p_*(\Delta^3 \nu^2a[0,0])= 0$. However, since $\pi_{78}(tmf \wedge V(0))=0$, $p_*\alpha \neq 0$ in $\pi_{76}(tmf \wedge V(0))$ and so is detected by a non-zero class in filtration $s>2$, but such a class does not exist.
We conclude that $\Delta^2a[30,0]$ is a permanent cycle and that $\Delta^2a[30,2]$ supports the stated differential.
For (2), by inspection, only $\Delta^{2}a[33,1]$ and $\Delta^{4}a[5,1]$ can support differentials truncating the $\kappabar$-family of $a[20,2]$. But $\Delta^{4}a[5,1]$ is already paired with $a[0,0]$. 
\end{proof}

\begin{figure}[H]
\includegraphics[page=1, width=\textwidth]{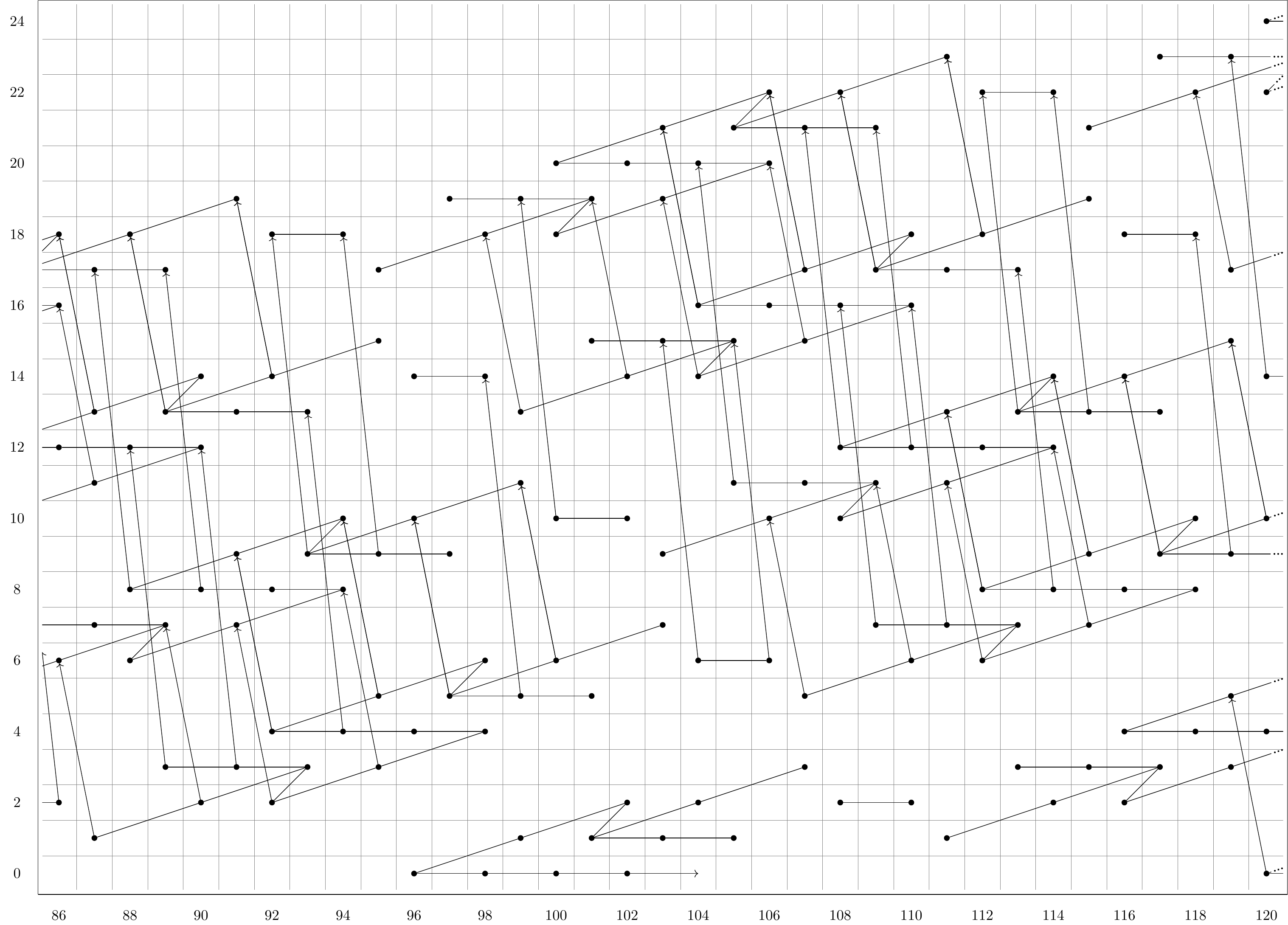}

\vspace{0.1in}

\includegraphics[page=1, width=\textwidth]{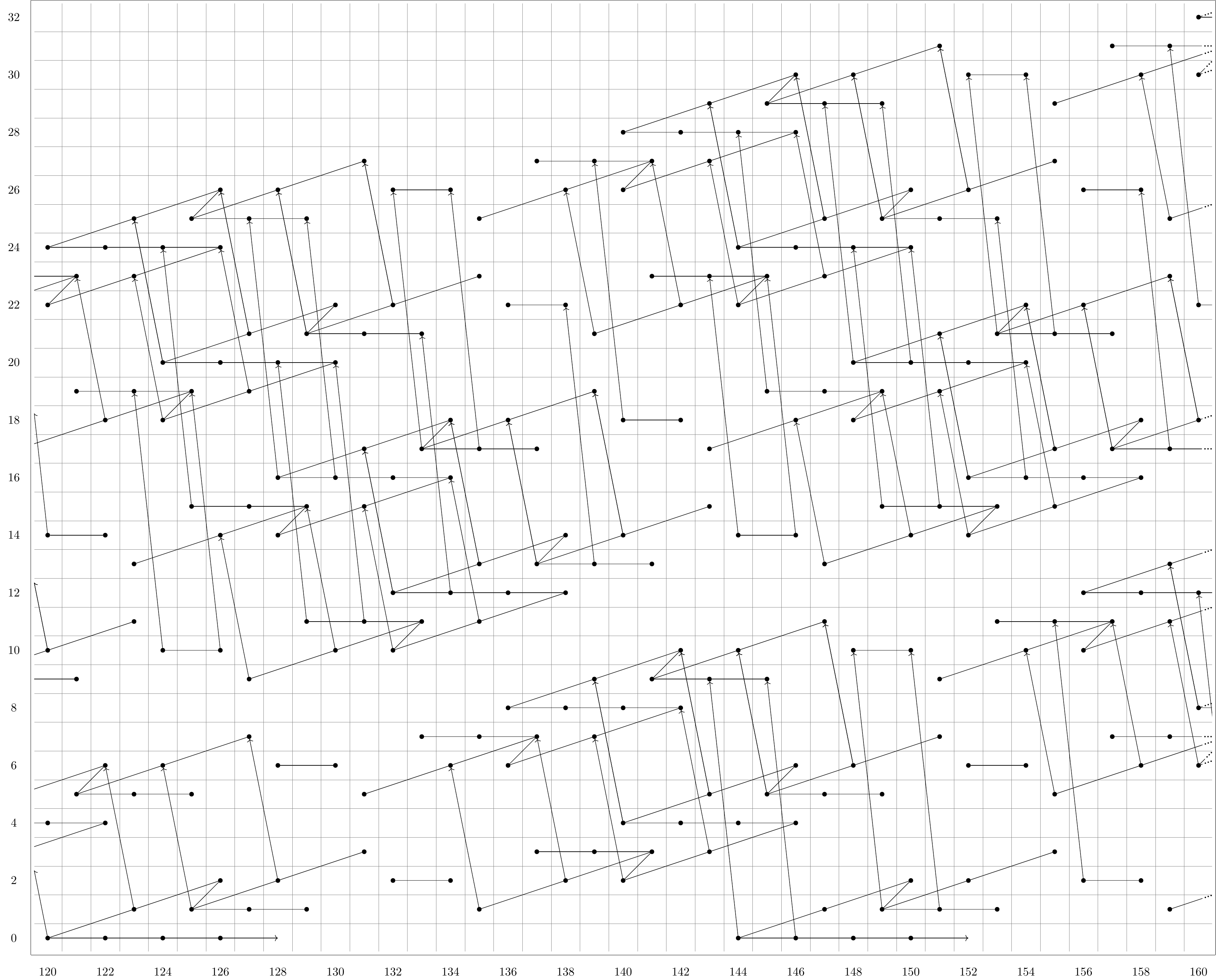}
\caption{$d_5$ and $d_9$ differentials in stems $86$ to $160$}
\label{d5d9three}
\end{figure}


\begin{figure}[H]
\includegraphics[page=1, width=\textwidth]{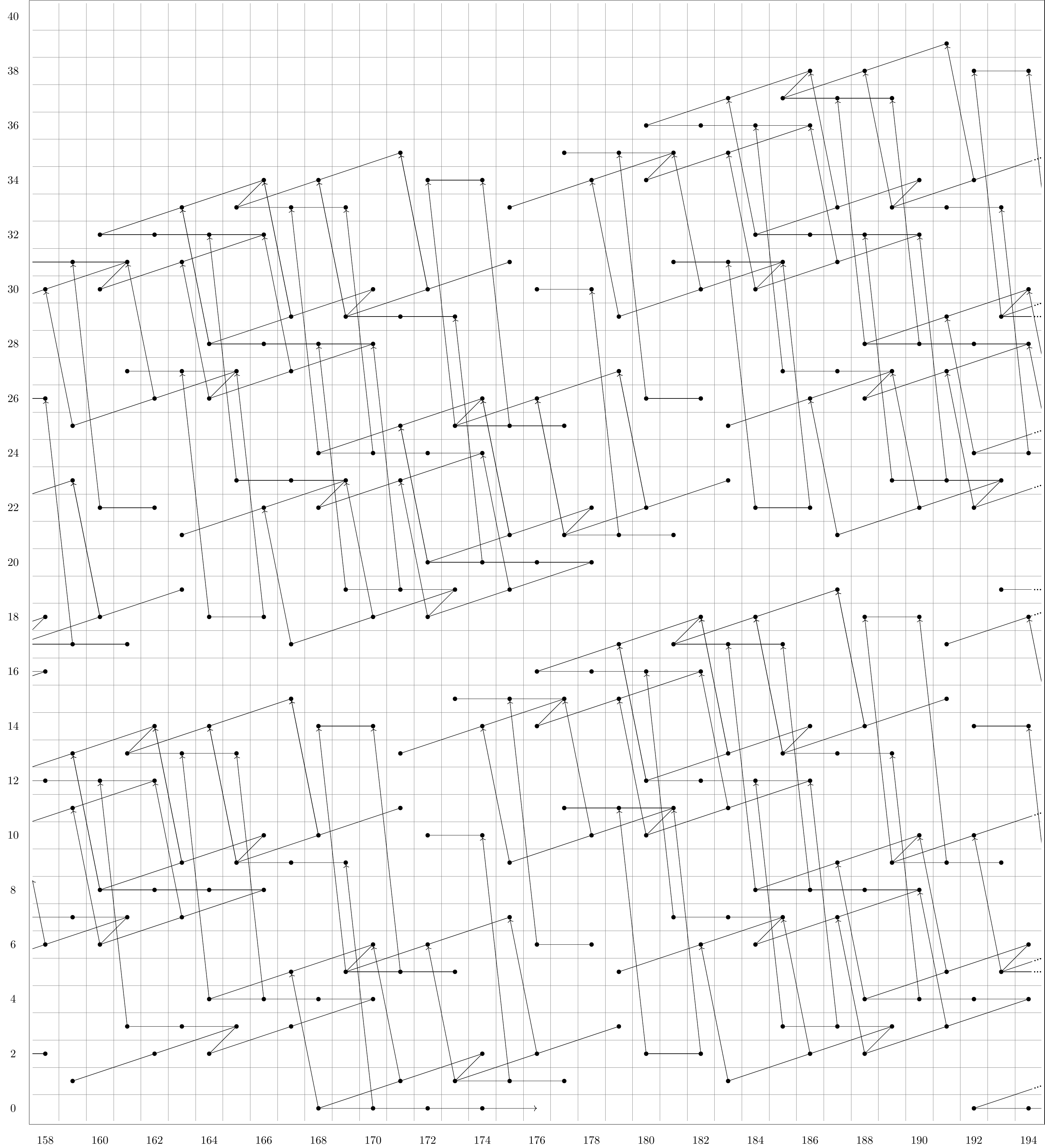}
\caption{$d_5$ and $d_9$ differentials in stems $160$ to $194$}
\label{d5d9four}
\end{figure}

\begin{Proposition}
The following classes are $\kappabar$-free permanent cycles:
\begin{align*}
(A) : & & \begin{array}{ccccc}
\Delta^{2}a[4,0]&\Delta^{2}a[9,1]&\Delta^{2}a[14,2]&\Delta^{2}a[19,3]&\Delta^{2}a[20,2]
\\
\Delta^{2}a[30,0] & \Delta^{2}a[35,3]&\Delta^{2}a[45,3] &\Delta^{4}a[17,3] &\Delta^{4}a[20,2] 
\end{array}
\end{align*}
and the following classes are not permanent cycles: 
\begin{align*}
(B) : & &  \begin{array}{ccccc}
\Delta^{6}a[4,0]
&\Delta^{6}a[9,1] 
 &\Delta^{6}a[14,2]
 &\Delta^{6}a[19,3] 
  &\Delta^{6}a[20,2]\\
\Delta^{6}a[30,0]
&\Delta^{6}a[30,2]
&\Delta^{6}a[33,1]
& \Delta^{6}a[35,3]
&\Delta^{6}a[45,3]
\end{array}
\end{align*}
Consequently, in the elliptic spectral sequence for $tmf \wedge Y$, each generator in (B) truncates some $\kappabar$-multiple of one and only one generator in (A).
\end{Proposition}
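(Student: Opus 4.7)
The plan is to argue by a pigeonhole count on $\kappabar$-free generators, using \Cref{StructureKappa} together with the differentials computed earlier in this section. In stems $0\le t-s<192$ there are exactly $96$ $\kappabar$-free generators: the $24$ listed in \Cref{E7gen} together with their multiples by $\Delta^{2}$, $\Delta^{4}$, and $\Delta^{6}$. By \Cref{StructureKappa}, these partition into $48$ pairs, each consisting of one permanent cycle and one $\kappabar$-free family whose eventual differential truncates the $\kappabar$-family of its partner.

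The first step I would carry out is to tally all pairings induced by the differentials already at hand. The twelve $d_9$-differentials, applied at both $i=0$ and $i=1$, yield $24$ pairings, with sources at the $\Delta^{2}$- and $\Delta^{6}$-levels and targets at the $\Delta^{0}$- and $\Delta^{4}$-levels; the $d_{11}$-, $d_{13}$-, $d_{17}$-, $d_{19}$-, and $d_{23}$-differentials contribute $4$, $2$, $2$, $5$, and $1$ additional pairings, respectively. Altogether this accounts for $38$ of the $48$ pairs, involving $76$ of the $96$ generators.

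The second step is an enumeration of the remaining $20$ generators against the list in \Cref{E7gen}. Breaking down the tally from Step~1: at the $\Delta^{0}$-level all $24$ generators are targets of previous differentials (hence permanent cycles); at the $\Delta^{2}$-level $16$ are sources; at the $\Delta^{4}$-level $8$ are sources and $14$ are targets; at the $\Delta^{6}$-level $14$ are sources. The $20$ leftover generators are then exactly the $10$ classes in (A) and the $10$ classes in (B). Since \Cref{StructureKappa} forces equal numbers of permanent and non-permanent $\kappabar$-free generators, and hence exactly $48$ of each, the $10$ members of (A) must be permanent cycles and the $10$ of (B) must not. The concluding bijection then follows immediately from \Cref{StructureKappa}: each non-permanent family in (B) truncates exactly one permanent family, and because every permanent family outside (A) has already been paired by the tally, the partner of each (B) generator is forced to lie in (A).

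The only real labor in the argument is the bookkeeping in the enumeration, confirming that the $76$ accounted generators are distinct and that the $20$ leftover ones coincide with $(A)\cup (B)$. This is a direct case-check against the $24$ generators of \Cref{E7gen}, and I do not anticipate any conceptual obstacle; the proposition is essentially a recapitulation of the differentials established in the previous subsections together with one application of the pigeonhole principle.
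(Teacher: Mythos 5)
Your bookkeeping is correct and reproduces the first (implicit) step of the paper's proof: the $38$ pairings produced by the $d_9$ through $d_{23}$ differentials established so far account for $76$ of the $96$ $\kappabar$-free generators in stems $0\le t-s<192$, and the $20$ leftover generators are exactly $(A)\cup(B)$. The gap is in your final inference. Knowing that exactly $48$ of the $96$ generators are permanent cycles tells you only that \emph{ten} of the twenty remaining generators are permanent and ten are not; it does not tell you \emph{which} ten. A priori the ten pairs formed by the leftover generators could include a $(B)$--$(B)$ pair (making one class of $(B)$ permanent) balanced by an $(A)$--$(A)$ pair (making one class of $(A)$ non-permanent), and the count alone cannot exclude this. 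For instance, nothing in your argument rules out $\Delta^{6}a[30,2]$ truncating the $\kappabar$-family of $\Delta^{6}a[9,1]$ by a differential hitting $\kappabar\Delta^{6}a[9,1]$; to exclude it one must observe that such a differential would be a $d_3$ (stem $174\to 173$, filtration $2\to 5$), which is impossible because the $d_3$-page has already been completely determined.

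What is missing is precisely the content of the paper's short proof: no class in $(B)$ can be a permanent cycle, because by \Cref{StructureKappa} a permanent $\kappabar$-free generator must have its $\kappabar$-family truncated by another $\kappabar$-free generator of filtration less than four and of stem congruent to one plus its own stem modulo $20$, and for each class of $(B)$ an inspection shows every candidate is either already paired, would require a differential on a page ($d_3$, $d_5$, $d_7$) already known to vanish there, or lies in stems $\ge 192$ where $\Delta^8$-linearity forbids it. Once all ten classes of $(B)$ are known to be non-permanent, your counting does finish the argument: the ten classes of $(A)$ are forced to be the permanent partners. So the proposal needs this degree-by-degree check inserted before its final sentence; with that addition it agrees with the paper's argument.
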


\begin{proof}
These are the remaining generators of $\kappabar$-free families.
No class in (B) can be a permanent cycle because the $\kappabar$-family of a class of (B) cannot be truncated. 
This means that all the $10$ classes of (B) are non-permanent cycles, and so all the $10$ classes of (A) are permanent cycles.
\end{proof}

\begin{lem}
We have the following differentials:
\begin{enumerate}[(1)]
\item $d_{19}(\Delta^{6}a[4,0])=\kappabar^{4}\Delta^{2}a[19,3]$ 
\item $d_{19}(\Delta^{6}a[9,1])=\kappabar^{5}\Delta^{2}a[4,0]$
\item $d_{19}(\Delta^{6}a[14,2])=\kappabar^{5}\Delta^{2}a[9,1]$
\item $d_{19}(\Delta^{6}a[19,3])=\kappabar^{5}\Delta^{2}a[14,2]$
\item $d_{17}(\Delta^{6}a[20,2])=\kappabar^{4}\Delta^{2}a[35,3]$ 
\item $d_{13}(\Delta^{6}a[33,1])=\kappabar^{3}\Delta^{4}a[20,2]$
\item $d_{17}(\Delta^{6}a[35,3])=\kappabar^{5}\Delta^{2}a[30,0]$ 
\item $d_{23}(\Delta^{6}a[45,3])=\kappabar^{6}\Delta^{2}a[20,2]$ 
\end{enumerate}
\end{lem}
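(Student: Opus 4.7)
My plan is to exploit the pairing principle of Slogan~\ref{slogankappabar} and Corollary~\ref{StructureKappa}(2): each of the ten non-permanent classes in $(B)$ supports a differential truncating the $\kappabar$-family of a unique permanent class in $(A)$, and this sets up a bijection between the two lists. For a fixed source $b$, the target $\kappabar^{l} a$ of its truncating differential must lie in stem $\mathrm{stem}(b)-1$, and since $\kappabar$ shifts the bigrading by $(20,4)$ in $(t-s,s)$, both the target $a$ and the exponent $l$ are determined by matching stems modulo $20$. The length of the differential is then read off as
\[ r \;=\; \mathrm{filt}(a) + 4l - \mathrm{filt}(b). \]

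The first step is to tabulate the stems modulo $20$ of the ten $(A)$-generators
\[ \{\Delta^2 a[4,0],\,\Delta^2 a[9,1],\,\Delta^2 a[14,2],\,\Delta^2 a[19,3],\,\Delta^2 a[20,2],\,\Delta^2 a[30,0],\,\Delta^2 a[35,3],\,\Delta^2 a[45,3],\,\Delta^4 a[17,3],\,\Delta^4 a[20,2]\}, \]
which are $\{12,17,2,7,8,18,3,13,13,16\}$, and to match them against the residues $\mathrm{stem}(b)-1\pmod{20}$ for the eight sources in the lemma. These residues come out to $\{7,12,17,2,3,16,18,8\}$, and each picks out a unique $(A)$-generator (since the only repeated residue, $13$, corresponds to the two $(A)$-classes reserved for $\Delta^6 a[30,0]$ and $\Delta^6 a[30,2]$, which are handled in the next lemma). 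The computation of $l$ and $r$ for each pair reproduces exactly the eight differentials (1)--(8); for instance, $\Delta^6 a[4,0]$ in stem $148$ must hit stem $147$, the unique matching family is $\kappabar^{l}\Delta^2 a[19,3]$ with $l=4$, giving $r=3+16-0=19$, which is (1), and similar arithmetic produces the other seven.

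The remaining task is to check that each source is still alive at the designated $E_r$ and that no target has been killed earlier. Since every class in $(B)$ lies in filtration $\le 3$, the only differentials that could hit it are $d_r$ for $r\le 3$; but $d_2=0$ by sparseness (only even $t$ is populated) and no non-trivial $d_3$ exists by our earlier analysis, so all $(B)$-classes persist to every relevant page. Conversely, each $(A)$-family is truncated by exactly one partner by \Cref{StructureKappa}(2); since pairings (1)--(8) together with the two pairings involving $\Delta^6 a[30,0]$ and $\Delta^6 a[30,2]$ exhaust all ten $(A)$-families without duplication, no target can have been truncated by an earlier differential. The one subtlety I anticipate as the main obstacle is case (8): the target $\kappabar^6\Delta^2 a[20,2]$ sits at filtration $26$, which is above the vanishing line $s=24$. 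This is compatible with \Cref{VanishingLemma} because the vanishing applies only from $E_{24}$ onward; at $E_{23}$ the class still exists and must be killed by exactly this $d_{23}$. One can corroborate the source of this $d_{23}$ by propagating the $tmf$-differential $d_{23}(\Delta^5\eta)=\kappabar^6$ through the module structure of the elliptic spectral sequence for $tmf\wedge Y$ over that for $tmf$, which identifies $\Delta^6 a[45,3]$ as the unique possible origin in the relevant stem.
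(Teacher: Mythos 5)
Your proof is correct and is essentially the paper's argument made explicit: the paper disposes of all eight differentials in one line by observing that, after the previously established differentials, these are the only possible pairings remaining between the sources in (B) and the families of (A), which is exactly the bijection your mod-$20$ stem-matching and filtration arithmetic carries out (and your computed lengths $r=\mathrm{filt}(a)+4l-\mathrm{filt}(b)$ all check out, including the $d_{23}$ in case (8) landing just under the $E_{24}$ vanishing line). The closing corroboration of (8) via $d_{23}(\Delta^5\eta)=\kappabar^6$ and the module structure is a nice independent check not in the paper, but it is not needed.
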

\begin{proof}
Taking into account the differentials shown above,  these are only possible pairings remaining between the classes in (B) which are the sources in (1)--(8) and classes of (A).\end{proof}

\begin{rem}\label{rem2left}
There are only two generators in (B) left living in the same topological degree, namely $\Delta^6a[30,0]$ and $\Delta^6 a[30,2]$. Each of these supports a differential truncating the $\kappabar$-families of either $\Delta^4a[17,3]$ or $\Delta^2a[45,3]$ and one differential determines the other. 
\end{rem}

Determining the last differential pattern turns out to be unfortunately tricky (as far as we know). A crucial step towards settling the last differentials is to establish the following extension in the $E_{\infty}$-term of the elliptic spectral sequence for $tmf \wedge Y$.
\begin{proposition}\label{keyExt} There is an exotic extension $$\nu^2 (\nu\Delta^6a[0,0]) = \kappabar^2 \Delta^4 a[17,3].$$
 \end{proposition}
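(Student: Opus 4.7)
The plan is to deduce this extension from the Brown--Comenetz self-duality of $tmf\wedge Y$, specifically the $Y$-analogue stated at the end of \Cref{rem:usingduality}: a non-trivial multiplication by $r\in\pi_l tmf$ on stem $k$ of $\Gamma_{c_4}(\pi_*(tmf\wedge Y))$ forces a non-trivial multiplication by $r$ on stem $173-k-l$. Applied to $r=\nu^2$ (so $l=6$) and $k=20$, this pairs stems $20\to 26$ with $147\to 153$. My first step is therefore to exhibit a nontrivial $\nu^2$-multiplication on stem $20$ in $\Gamma_{c_4}$. The algebraic relation $\nu^2 a[20,2]=v_1^3\kappabar\, a[0,0]$ from \Cref{propE2Y} provides exactly this, and both classes lie in $\Gamma_{c_4}$ since $v_1\nu=0$ and $v_1^4\kappabar=0$ in $E_2(Y)$; checking against the differentials determined in the previous subsections, both $a[20,2]$ and $v_1^3\kappabar\, a[0,0]$ are permanent cycles, so this multiplication descends non-trivially to $\Gamma_{c_4}(\pi_{20})\to \Gamma_{c_4}(\pi_{26})$.

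By the duality discussed above, there must then exist a non-trivial multiplication $\nu^2\colon \Gamma_{c_4}(\pi_{147})\to \Gamma_{c_4}(\pi_{153})$. The next step is to identify the unique source. Using the $\kappabar$-free generators of \Cref{E7gen} and their $\Delta^{2i}$-multiples, together with the differentials already established, I would enumerate the $E_\infty$-classes in stem $147$ that lie in $\Gamma_{c_4}$. The only surviving candidate on which $\nu^2$ can act non-trivially in homotopy is $\nu\Delta^6 a[0,0]$; every other class in this stem is either $v_1$-free or lies in the submodule generated by $a[5,1], a[12,2], a[17,3], a[20,2]$ where the algebraic $\nu^2$-action from \Cref{cormodstructureE2Y} already accounts for (and in most cases kills) the product. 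Moreover, because $\nu^3=\eta^2 x$ in $E_2(V(0))$ and $\eta a[0,0]=0$ in $E_2(Y)$, we have $\nu^2\cdot\nu\Delta^6 a[0,0]=0$ on $E_\infty$, so the multiplication forced by duality must indeed be an exotic extension originating at $\nu\Delta^6 a[0,0]$.

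Finally I would pin down the target in stem $153$. Applying \Cref{StructureKappa} and \Cref{StructureDelta} to the $\kappabar$-free generators in \Cref{E7gen} and their $\Delta^{2i}$-multiples, the surviving $c_4$-torsion classes of $E_\infty$ in stem $153$ with filtration strictly greater than that of any algebraic $\nu^2$-target coming from \Cref{cormodstructureE2Y} reduce, after the differentials of Sections~8.2--8.4, to $\kappabar^2\Delta^4 a[17,3]$. This class is annihilated by $\nu$ for degree reasons, as required of an exotic $\nu^2$-target whose source has filtration $1$, and it is the unique class in this stem whose filtration and $\kappabar$-structure are compatible with the duality-forced extension. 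Hence $\nu^2(\nu\Delta^6 a[0,0])=\kappabar^2\Delta^4 a[17,3]$. The main obstacle is the last enumeration: ruling out other potential $\kappabar$-multiples of surviving generators in stem $153$ requires a careful, case-by-case inspection of the $E_\infty$-chart in that stem, which is where the argument has the most moving parts even though each individual check is routine.
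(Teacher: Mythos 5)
Your strategy is genuinely different from the paper's: you want to force the extension by the Brown--Comenetz duality of \Cref{rem:usingduality}, starting from the algebraic relation $\nu^2 a[20,2]=v_1^3\kappabar\, a[0,0]$ of \Cref{propE2Y}, whereas the paper proves $\nu^2[\nu\Delta^6a[0,0]]\neq 0$ directly by lifting to $tmf\wedge V(0)$ (\Cref{SomeExo}) and then identifies the detecting class via the shuffle $\nu^2\beta=\eta\langle\nu,\eta,\beta\rangle$ combined with the separately established extension $\eta\, a[152,2]=\kappabar^2\Delta^4a[17,3]$ of \Cref{propetaext} (which itself rests on a Massey product computation and the Moss convergence theorem in a truncated spectral sequence). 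Your first step is sound: the relation $\nu^2 a[20,2]=v_1^3\kappabar\, a[0,0]$ holds between permanent cycles lying in $\Gamma_{c_4}$, so duality does force a non-trivial map $\nu^2\colon\Gamma_{c_4}\pi_{147}(tmf\wedge Y)\to\Gamma_{c_4}\pi_{153}(tmf\wedge Y)$. That is a genuine piece of information, roughly parallel to \Cref{SomeExo}(1).

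The gap is in the identification of the source and the target, where the duality gives you strictly less than you use. \Cref{rem:usingduality} only controls the \emph{cardinality of the image} of $\nu^2$ in the complementary stem; it does not say which class of $\Gamma_{c_4}\pi_{147}$ maps non-trivially, nor onto which class of $\Gamma_{c_4}\pi_{153}$. To pin down the source you must exclude an exotic $\nu^2$-extension on every other $c_4$-power-torsion class in stem $147$, and your stated criterion --- that the algebraic $\nu^2$-action of \Cref{cormodstructureE2Y} ``already accounts for (and in most cases kills) the product'' --- is exactly the wrong test: an exotic extension is by definition a non-zero homotopy product whose algebraic counterpart vanishes, so algebraic vanishing on a competing class does not disqualify it. For the target, uniqueness fails: stem $153$ contains more than one surviving $c_4$-power-torsion class at $E_\infty$ (for instance $\kappabar^3\Delta^2a[45,3]$ in filtration $15$ survives alongside $\kappabar^2\Delta^4a[17,3]$ in filtration $11$, since the $\kappabar$-family of $\Delta^2a[45,3]$ is only truncated at $\kappabar^4$), both are annihilated by $\nu$ for the same reason (they are $\kappabar$-multiples and $\kappabar\nu=0$), and the cardinality statement cannot distinguish between them. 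Some additional input --- e.g.\ the $\eta$-divisibility of $\nu^2\beta$ coming from $\nu^2=\langle\eta,\nu,\eta\rangle$, which is precisely what the paper's Toda-bracket argument exploits --- is needed to close both ends of the extension.
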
 
To prove this extension, we need some intermediate results.

\begin{lemma}\label{Massey-prod} In $\Ext_{\Lambda'}^{*,*}(A', A'/(2,a_1)\otimes \mathcal{F}_*(Y))$, there is a Massey product 
\[\langle\eta, \nu, \Delta^4a[12,2] \rangle = \Delta^4a[17,3].\]
\end{lemma}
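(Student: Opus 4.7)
The plan is to reduce the statement to the classical Massey product relation $\langle \eta, \nu, \eta\rangle = \nu^2$ by using the identity $\nu^2 a[0,0] = \eta a[5,1]$ from \Cref{eta-extens-alg} to rewrite $\Delta^4 a[12,2]$ as an $\eta$-multiple, then applying the standard Massey product shuffle identity.

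First I would verify that the triple Massey product is defined. The relation $\eta\nu = 0$ holds in the coefficient ring of \Cref{eta-extens-alg}, and the relation $\nu a[12,2] = 0$ in $E_2(Y)$ recorded in \Cref{cormodstructureE2Y} passes to $\nu \cdot \Delta^4 a[12,2] = 0$ in the quotient via the map $E_2(Y) \to \Ext_{\Lambda'}^{*,*}(A', A'/(2,a_1) \otimes \mathcal{F}_*(Y))$ coming from the long exact sequence \eqref{v1-action}.

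Next I would identify the image of $\Delta^4 a[12,2]$ in the quotient. Using the identification $\Delta \equiv v_2^4$ in $A'/(2, a_1)$ (coming from $\Delta \equiv a_3^4 \pmod{2, a_1}$) and the module structure in \Cref{eta-extens-alg} (depicted in \Cref{fig:Amod2a1}), a bidegree inspection at $(t-s, s) = (12, 2)$ identifies the unique nonzero class there as $v_2 \nu^2 a[0,0]$. Combined with a direct enumeration at bidegree $(s, t) = (2, 12)$ in $E_2(Y)$ confirming that $a[12,2]$ is not in the image of $v_1$-multiplication, we conclude that $a[12,2]$ maps nontrivially onto this unique class. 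Using $\nu^2 a[0,0] = \eta a[5,1]$, this yields
\[ \Delta^4 a[12,2] = \Delta^4 v_2 \eta a[5,1] \]
in the quotient. Then I would apply the Massey product shuffle identity: since $\eta\nu = \nu\eta = 0$, for any class $d$ for which both brackets are defined,
\[ \langle \eta, \nu, \eta \cdot d\rangle \ni \langle \eta, \nu, \eta\rangle \cdot d = \nu^2 \cdot d \]
modulo indeterminacy. Taking $d = \Delta^4 v_2 a[5,1]$, the right-hand side becomes $\Delta^4 v_2 \nu^2 a[5,1]$, which by another bidegree check at $(t-s, s) = (17, 3)$ together with a non-$v_1$-divisibility argument for $a[17,3]$ is identified with the image of $\Delta^4 a[17,3]$.

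Finally I would check that the indeterminacy does not contain $\Delta^4 a[17,3]$. The indeterminacy from the third slot, $\Delta^4 a[12,2] \cdot \Ext^{1,6}_{\Lambda'/(2,a_1)}(A'/(2,a_1), A'/(2,a_1))$, vanishes since no element of $\F_2[\eta, \nu, \kappabar, v_2]/(v_2\eta^3 - \nu^3, \eta\nu)$ has bidegree $(1,6)$. The $\eta$-indeterminacy $\eta \cdot M^{2, 114}$ lies entirely in the submodule generated by $a[0,0]$, because $\eta a[0,0] = 0$ and $\eta a[5,1] = \nu^2 a[0,0]$ both land in the $a[0,0]$-summand; on the other hand, $\Delta^4 a[17,3] = \Delta^4 v_2 \nu^2 a[5,1]$ lies in the $a[5,1]$-summand. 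Thus no $\eta$-multiple can equal $\Delta^4 a[17,3]$, and the Massey product is exactly $\Delta^4 a[17,3]$. The main obstacle will be the careful bidegree identifications of $a[12,2]$ and $a[17,3]$ in the quotient module via non-$v_1$-divisibility, together with the bookkeeping of indeterminacies through the shuffle.
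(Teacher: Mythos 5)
Your proof is correct and follows essentially the same route as the paper: both arguments rewrite the third entry as an $\eta$-multiple (you via $\Delta^4a[12,2]=\Delta^4 v_2\eta a[5,1]$, the paper via $\Delta^4a[12,2]=\eta\Delta^4a[11,1]$ read off \Cref{fig:Amod2a1}, which is the same identity), shuffle to reduce to $\langle\eta,\nu,\eta\rangle=\nu^2$, and then check the indeterminacy vanishes. The extra step you include about $a[12,2]$ not being $v_1$-divisible, hence mapping nontrivially to the quotient, is really the content of the subsequent \Cref{Massey-Y} rather than of this lemma, but it does no harm.
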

\begin{proof} 
Since $\Delta^4a[12,2] = \eta \Delta^4 a[11,1]$ (see \Cref{fig:Amod2a1}), we have that
\[\langle\eta, \nu, \Delta^4a[12,2] \rangle = \langle \eta, \nu, \eta \Delta^4a[11,1]\rangle \supseteq \langle \eta, \nu, \eta\rangle \Delta^4a[11,1]  = \nu^2 a[11,1] = a[17,1].\]
The indeterminacy is zero since
\[\eta\Ext_{\Lambda'}^{2,114}(A', A'/(2,a_1)\otimes \mathcal{F}_*(Y)) +  \Ext_{\Lambda'}^{1,6}(A', A'/(2))\Delta^4a[12,2] = 0. \qedhere\]
\end{proof}

\begin{proposition}\label{Massey-Y} In $\Ext_{\Lambda'}^{*,*}(A', \mathcal{F}_*(Y))$, there is a Massey product 
\[\langle\eta, \nu, \Delta^4a[12,2] \rangle = \Delta^4a[17,3].\]
\end{proposition}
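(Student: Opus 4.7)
The plan is to lift Lemma \ref{Massey-prod} from $\Ext_{\Lambda'}^{*,*}(A', A'/(2,a_1)\otimes \mathcal{F}_*(Y))$ to $\Ext_{\Lambda'}^{*,*}(A', \mathcal{F}_*(Y))$ using naturality of Massey products together with an indeterminacy analysis. First, I would verify that $\langle \eta, \nu, \Delta^4 a[12,2]\rangle$ is defined in $\Ext_{\Lambda'}^{*,*}(A', \mathcal{F}_*(Y))$: the relation $\eta\nu = 0$ holds already in $\Ext_{\Lambda'}^{*,*}(A', A'/(2))$ by \Cref{thmE2V0all}, and $\nu \cdot \Delta^4 a[12,2] = 0$ follows from the relation $\nu a[12,2] = 0$ in \Cref{propE2Y}.

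Second, by naturality of Massey products with respect to the surjection
\[ A' \otimes_A \mathcal{F}_*(Y) \twoheadrightarrow A'/(2,a_1) \otimes_{A/(2)} \mathcal{F}_*(Y) \]
from the middle column of diagram \eqref{diagram}, the image of any representative of $\langle \eta, \nu, \Delta^4 a[12,2]\rangle$ in $\Ext_{\Lambda'}^{*,*}(A', \mathcal{F}_*(Y))$ lands inside the corresponding Massey product on the quotient, which equals $\{\Delta^4 a[17,3]\}$ by \Cref{Massey-prod}. So any such representative $z$ in bidegree $(s,t)=(3,116)$ satisfies $\bar z = \Delta^4 a[17,3]$ after projection. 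Via the long exact sequence \eqref{v1-action}, the kernel of this projection is the image of multiplication by $v_1$, so $z = \Delta^4 a[17,3] + v_1 w$ for some $w \in \Ext^{3,114}_{\Lambda'}(A', \mathcal{F}_*(Y))$.

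Third, I would analyze the indeterminacy of $\langle \eta, \nu, \Delta^4 a[12,2]\rangle$ in $\Ext_{\Lambda'}^{*,*}(A', \mathcal{F}_*(Y))$, namely
\[ \eta \cdot \Ext_{\Lambda'}^{2,113}(A', \mathcal{F}_*(Y)) + \Ext_{\Lambda'}^{2,5}(A', A'/(2)) \cdot \Delta^4 a[12,2]. \]
The second summand is generated by $\nu^2$ and vanishes because $\nu a[12,2] = 0$ in \Cref{propE2Y}. For the first summand, I would read off the generators of $E_2(Y)$ in bidegree $(s, t-s) = (2, 111)$ from \Cref{propE2Y} and verify that either $\eta$ annihilates each such generator, or that the image $\eta \cdot (\text{generator})$ accounts for the possible correction term $v_1 w$, so that $v_1 w$ can be absorbed into the indeterminacy. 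Combining these steps, the Massey product contains the single class $\Delta^4 a[17,3]$, yielding the claim.

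The main obstacle will be the third step: a careful bookkeeping of the classes in the bidegrees $(s,t-s)=(2,111)$ and $(3,111)$ of $E_2(Y)$, and verifying that the $v_1$-correction $v_1 w$ necessarily lies in $\eta \cdot \Ext^{2,113}_{\Lambda'}(A', \mathcal{F}_*(Y))$. I expect this to reduce to a short unwinding of the $E_2(V(0))$-module description in \Cref{propE2Y}, combined with the relations $v_1\nu=0$ and $\nu a[17,3] = 0$; once these bidegrees are identified the conclusion should follow quickly.
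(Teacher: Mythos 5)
Your core strategy---pushing the bracket forward along the reduction $\mathcal{F}_*(Y)\to A'/(2,a_1)\otimes\mathcal{F}_*(Y)$ and invoking \Cref{Massey-prod} via naturality of Massey products---is exactly the paper's argument, and your identification of the kernel of that map with the image of multiplication by $v_1$ via the long exact sequence \eqref{v1-action} is correct. Where you diverge is in closing the argument: the paper does not need your third step at all. Since $E_2^{s,t}(Y)$ contains at most one non-zero class in every bidegree with $s>0$ (this is recorded in the remark following \Cref{propE2Y}), the bidegree $(s,t)=(3,116)$ contains only $0$ and $\Delta^4a[17,3]$; every element $z$ of the bracket satisfies $f_*(z)=\Delta^4a[17,3]\neq 0$ by \Cref{Massey-prod}, hence $z\neq 0$, hence $z=\Delta^4a[17,3]$, and the indeterminacy is forced to vanish. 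This short-circuits both the correction term $v_1w$ (which is either $0$ or all of $\Delta^4a[17,3]$, and the latter is excluded since it would give $f_*(z)=0$) and the bidegree bookkeeping you flag as the main obstacle. Two minor slips to fix if you insist on your route: the indeterminacy is $\eta\cdot\Ext^{2,114}_{\Lambda'}(A',\mathcal{F}_*(Y))+\Ext^{1,6}_{\Lambda'}(A',A'/(2))\cdot\Delta^4a[12,2]$, not $\eta\cdot\Ext^{2,113}$ plus $\Ext^{2,5}\cdot\Delta^4a[12,2]$, and the second summand vanishes because $\Ext^{1,6}_{\Lambda'}(A',A'/(2))=0$, not because it is generated by $\nu^2$ (which lives in $\Ext^{2,8}$).
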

\begin{proof}  
Let $f_* \colon \Ext_{\Lambda'}^{*,*}(A', \mathcal{F}_*(Y)) \rightarrow \Ext_{\Lambda'}^{*,*}(A', A'/(2,a_1)\otimes \mathcal{F}_*(Y))$ be the map
induced by the $\Lambda$-comodule homomorphism $\mathcal{F}_*(Y) \rightarrow A'/(2,a_1)\otimes \mathcal{F}_*(Y)$. 
By naturality of Massey products, we have that 
\begin{equation*} 
f_*(\langle \eta, \nu, \Delta^4a[12,2]\rangle) \subseteq \langle \eta, \nu, f_*(\Delta^4a[12,2])\rangle.
\end{equation*} 
Further, $f_{*}(\Delta^4a[12,2]) = \Delta^4a[12,2]$. 
By \cref{Massey-prod}, the above equation gives
\[f_*(\langle \eta, \nu, \Delta^4a[12,2]\rangle) = \Delta^4 a[17,3].\]
The pre-image of $\Delta^4 a[17,3]$ is the same-named class. The indeterminacy is zero.
\end{proof}

\begin{Lemma}
There is an element of $\pi_{108}(tmf\wedge Y)$ detected by $\Delta^4a[12,2]$ and annihilated by $\kappabar^2$. 
\end{Lemma}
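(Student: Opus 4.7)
The plan is to first verify that $\Delta^{4}a[12,2]$ is a permanent cycle of the elliptic spectral sequence for $tmf\wedge Y$, and then to pick a specific lift $\alpha \in \pi_{108}(tmf\wedge Y)$ for which the $\kappabar^{2}$-multiple vanishes. The class $\Delta^{4}a[12,2]$ lies in bidegree $(s,t)=(2,110)$. The $d_{9}$-differential formulae above record a non-trivial $d_{9}$ on $\Delta^{j}a[12,2]$ only for $j\equiv 2,3 \pmod 4$, so $\Delta^{4}a[12,2]$ is a $d_{9}$-cycle. For $r\geq 11$, the target of any potential $d_{r}$ lies in bidegree $(2+r,109+r)$ at stem $107$; inspection of the $\kappabar$-free generators from \Cref{E7gen} together with their $\Delta^{2},\Delta^{4},\Delta^{6}$-shifts and with the $\kappabar$-multiples produced by the higher differentials already established rules out every candidate target. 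Hence $\Delta^{4}a[12,2]$ survives to $E_{\infty}$ and detects some $\alpha\in\pi_{108}(tmf\wedge Y)$.

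For the $\kappabar^{2}$-annihilation, the class $\kappabar^{2}\cdot\Delta^{4}a[12,2]$ sits in bidegree $(10,158)$, stem $148$. The cleanest route is to invoke the Brown--Comenetz self-duality of \Cref{lem:duality} together with the mechanism described in \Cref{rem:usingduality}: the question of whether $\kappabar^{2}$ can act non-trivially from stem $108$ to stem $148$ in $\pi_{*}(tmf\wedge Y)$ on the class detected by $\Delta^{4}a[12,2]$ is equivalent to the dual question of whether $\kappabar^{2}$ acts non-trivially from stem $173-108-40 = 25$ to stem $173-108=65$, since $|\kappabar^{2}|=40$. This low-stem region has already been analyzed using the methods of \Cref{secbackground} together with the elliptic spectral sequence of $tmf\wedge Y$ computed in \Cref{secYE2}, and a direct check on the $E_{\infty}$-chart in this range shows that the corresponding $\kappabar^{2}$-action is trivial. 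Consequently, some lift $\alpha$ of $\Delta^{4}a[12,2]$ satisfies $\kappabar^{2}\alpha=0$; if an initial lift does not have this property, the ambiguity by higher-filtration classes in stem $108$ (each contributing a $\kappabar^{2}$-multiple of a known $E_{\infty}$-class to $\kappabar^{2}\alpha$) is sufficient to correct it.

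The main obstacle will be identifying the correct dual partner of $\Delta^{4}a[12,2]$ under the Pontryagin pairing of \Cref{lem:duality}: should several candidate classes in stem $25$ pair non-trivially with it, we must verify that $\kappabar^{2}$ is zero on each. A backup strategy, used if duality alone proves insufficient, is to pass through the long exact sequence \eqref{LES}. Since $i_{*}(\kappa)=v_{1}a[12,2]$, the class $v_{1}\alpha$ is $i_{*}\beta$ for some $\beta\in\pi_{110}(tmf\wedge V(0))$ detected by $\Delta^{4}\kappa$; the $\kappabar^{2}$-multiplication on $\beta$ can be read directly from the extensions computed in \Cref{V0-144-192-ext}, and one then uses $v_{1}$-linearity together with a $v_{1}$-self map of $Y$ (see \Cref{v1selfmaps}) to transport the conclusion from $v_{1}\alpha$ back to $\alpha$, modulo a controlled $v_{1}$-kernel that can be enumerated from the $E_{\infty}$-chart in stem $108$.
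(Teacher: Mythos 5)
Your primary route through Brown--Comenetz duality has a genuine gap, and the ``direct check'' you propose would in fact come out against you. First, the mechanism of \Cref{rem:usingduality} only equates the \emph{cardinality of the image} of $\kappabar^2\colon \Gamma_{c_4}(\pi_{108})\to\Gamma_{c_4}(\pi_{148})$ with that of $\kappabar^2\colon\Gamma_{c_4}(\pi_{25})\to\Gamma_{c_4}(\pi_{65})$; it says nothing about the value of $\kappabar^2$ on the particular coset of elements detected by $\Delta^4a[12,2]$ unless the whole map vanishes. Second, the whole map does \emph{not} vanish: in stem $25$ the class $\kappabar a[5,1]$ detects a nonzero element whose $\kappabar^2$-multiple is detected by $\kappabar^3a[5,1]$, a nonzero permanent cycle, since the $\kappabar$-family of $a[5,1]$ is truncated only at $\kappabar^6a[5,1]$ by $d_{23}(\Delta^4a[30,2])$ (see \Cref{StructureKappa}). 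Dually, $\kappabar^2\colon\pi_{108}\to\pi_{148}$ is genuinely nonzero --- it carries the class detected by $\kappabar^2\Delta^2a[20,2]$ in filtration $10$ to one detected by $\kappabar^4\Delta^2a[20,2]$. So duality cannot deliver the annihilation statement, and your parenthetical fallback has to carry the entire proof.

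That fallback --- adjusting the lift by higher-filtration classes --- is exactly the paper's argument, but it requires two $E_\infty$-page facts you leave unverified: (i) $\kappabar^2\Delta^4a[12,2]=0$ at $E_\infty$, because it is the target of $d_9(\Delta^6a[5,1])=\kappabar^2\Delta^4a[12,2]$, so for any lift $\alpha$ the product $\kappabar^2\alpha$ is detected, if nonzero, in filtration above $10$; and (ii) the only nonzero positive-filtration class of $E_\infty$ in stem $148$ is $\kappabar^4\Delta^2a[20,2]=\kappabar^2\cdot\bigl(\kappabar^2\Delta^2a[20,2]\bigr)$, where $\kappabar^2\Delta^2a[20,2]$ detects precisely the indeterminacy of the lift in stem $108$. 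Granting (i) and (ii), either $\kappabar^2\alpha=0$ already, or one replaces $\alpha$ by $\alpha-\beta$ with $\beta$ detected by $\kappabar^2\Delta^2a[20,2]$. Your second backup through the long exact sequence and a $v_1$-self map is not workable as stated: since $v_1^2a[12,2]=0$, recovering information about $\alpha$ from $v_1\alpha$ requires controlling a $v_1$-kernel that contains exactly the classes at issue. I recommend discarding the duality framing and writing out (i) and (ii) directly.
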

\begin{proof} We have already determined $E_{\infty}(Y)$ in stems $t-s=108,148$. We see that there is a short exact sequence
\[0\rightarrow \Z/2\{\kappabar^2\Delta^2 a[20,2]\}\rightarrow G \rightarrow \Z/2\{\Delta^4a[12,2]\}\rightarrow 0\]
where $G\subseteq \pi_{108}(tmf\wedge Y)$ is the subgroup of elements detected in positive filtration. 
At the $E_{\infty}$-term in stem  $t-s=148$, the only non-zero class in positive filtration is $\kappabar^4\Delta^2a[20,2]$. In particular, $\kappabar^2 \Delta^4 a[12,2] =0$. So, one of the classes detected by  $\Delta^4a[12,2]$ satisfies the claim.
\end{proof}
We will denote also by $\Delta^4 a[12,2]$ the element in $\pi_{108}(tmf\wedge Y)$, which is detected by $\Delta^4a[12,2]$ and is annihilated by $\kappabar^2$.

\begin{proposition}\label{SomeExo} There are the following relations in $\pi_*(tmf\wedge Y)$:
	\begin{enumerate}
	\item
	$\nu^2 [\nu\Delta^6a[0,0]] \ne 0$
	\item
	$\eta [\nu\Delta^6a[0,0]] = 0$
	\end{enumerate}
\end{proposition}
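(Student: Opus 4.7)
The plan is to treat the two claims separately, combining the image of $i_*\colon\pi_*(tmf\wedge V(0))\to\pi_*(tmf\wedge Y)$ with Brown--Comenetz duality.

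For claim (2), since $a[0,0]=i_*(1)$, we have $[\nu\Delta^6 a[0,0]]=i_*([\Delta^6\nu])$. By the exotic extension $[\Delta^6\nu]\cdot\eta=[\Delta^5\kappabar\epsilon]$ tabulated in stems $145$--$191$ for $tmf\wedge V(0)$, this gives $\eta[\nu\Delta^6 a[0,0]]=i_*([\Delta^5\kappabar\epsilon])$. The identity $\epsilon=\eta x$ in $E_2(V(0))$---which follows from $\eta\epsilon=\nu^3=\eta^2 x$ (\Cref{thmE2V0all}) together with an inspection of $\Ext^{2,10}_{\Lambda'/(2)}(A'/(2),A'/(2))$---exhibits $\Delta^5\kappabar\epsilon$ as $\eta$-divisible in $E_2(V(0))$. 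Hence the $\pi_*$-class lifts as $\eta\beta$ for some $\beta$ detected by $\Delta^5\kappabar x$, and $i_*(\Delta^5\kappabar\epsilon)=\eta\cdot i_*(\beta)$. Using $i_*(x)=v_1 a[5,1]$ from \Cref{propE2Y}, together with $\eta a[5,1]=\nu^2 a[0,0]$ and $v_1\nu=0$, one finds $\eta\cdot i_*(\beta)=0$ on $E_2(Y)$. A filtration-by-filtration check of the $E_\infty$-chart for $tmf\wedge Y$ in stem $148$ rules out a hidden detection in higher filtration.

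For claim (1), I would invoke \Cref{lem:duality-for-extensions}: the image cardinality of $\nu^2\colon\Gamma_{c_4}\pi_{147}(tmf\wedge Y)\to\Gamma_{c_4}\pi_{153}(tmf\wedge Y)$ equals that of $\nu^2\colon\Gamma_{c_4}\pi_{20}(tmf\wedge Y)\to\Gamma_{c_4}\pi_{26}(tmf\wedge Y)$, as $173-147-6=20$. The $E_2$-relation $\nu^2 a[20,2]=v_1^3\kappabar a[0,0]$ from \Cref{propE2Y} witnesses a nontrivial $\nu^2$-multiplication in stem $20$: the source is a permanent cycle by \Cref{E7gen}, and the filtration-zero target cannot be hit by any differential. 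Hence $\nu^2$ acts nontrivially on $\Gamma_{c_4}\pi_{147}(tmf\wedge Y)$. To pin the nontriviality on the specific element $[\nu\Delta^6 a[0,0]]$, observe that on $E_2(Y)$ the relation $\nu^3=\eta^2 x$ together with $\eta a[0,0]=0$ forces $\nu^3\Delta^6 a[0,0]=0$, so any nonzero $\nu^2$-multiple is exotic and lands in a strictly higher filtration of stem $153$; a careful inspection of the $E_\infty$-chart identifies the available detecting stratum, and uniqueness in the duality pairing singles out $[\nu\Delta^6 a[0,0]]$ as the class carrying the nontrivial extension.

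The main obstacle is the filtration-jumping step closing claim (2): ruling out that $i_*([\Delta^5\kappabar\epsilon])$, which vanishes on $E_2(Y)$, is nevertheless detected by a class of strictly higher filtration in $\pi_{148}(tmf\wedge Y)$. I would handle this by inspecting the $E_\infty$-stratification in stem $148$ and, if needed, by a secondary application of duality translating potential hidden $\eta$-extensions in stem $147$ into statements about the low-stem region near stem $25$, where they can be excluded directly from the generator list in \Cref{E7gen}.
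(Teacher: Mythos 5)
Both halves of your argument reach for heavier machinery than the statement needs, and each one leaves its decisive step asserted rather than proven. The paper's proof is essentially two lines: $[\nu\Delta^6 a[0,0]]$ is $i_*\alpha$ for a class $\alpha\in\pi_{147}(tmf\wedge V(0))$ detected by $\nu\Delta^6$; the long exact sequence \eqref{LES} gives $\ker i_*=\im(\eta)$, so $\eta\, i_*\alpha=i_*(\eta\alpha)=0$ (this is exactly \Cref{remlestool}(1)), which is claim (2); and $\nu^2 i_*\alpha=i_*(\nu^2\alpha)\neq 0$ because $\nu^2[\nu\Delta^6]$ is not $\eta$-divisible in the already-computed $\pi_{153}(tmf\wedge V(0))$, which is claim (1). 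For your claim (2): once you have written $\eta[\nu\Delta^6 a[0,0]]=i_*([\Delta^5\kappabar\epsilon])$ with $[\Delta^5\kappabar\epsilon]=\eta[\Delta^6\nu]$, you are done by exactness, since this class manifestly lies in $\im(\eta)=\ker i_*$. Instead you detour through $\epsilon=\eta x$ and a computation of $\eta\, i_*(\beta)$ on $E_2(Y)$, and then defer the real issue to "a filtration-by-filtration check" that a hidden higher-filtration detection in stem $148$ does not occur. That check is not vacuous: stem $148$ of $E_\infty(Y)$ contains the positive-filtration class $\kappabar^4\Delta^2 a[20,2]$ (filtration $18$), so as written this is a genuine gap, even though it is trivially filled by the exactness argument you never invoke.

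For claim (1) the gap is more serious. The duality statement of \Cref{lem:duality-for-extensions} compares the \emph{cardinality of the image} of $\nu^2$ on $\Gamma_{c_4}\pi_{20}$ and on $\Gamma_{c_4}\pi_{147}$; granting your stem-$20$ witness, it yields only that \emph{some} $c_4$-power-torsion class in stem $147$ has nonzero $\nu^2$-multiple. The entire content of the proposition is that this class is the specific element $[\nu\Delta^6 a[0,0]]$, and your "uniqueness in the duality pairing singles out" this element is an assertion, not an argument: you would need to show that $\Gamma_{c_4}\pi_{147}(tmf\wedge Y)$ is generated by that single class, and also that the stem-$20$ source detected by $a[20,2]$ actually lies in $\Gamma_{c_4}$ (the duality only sees $c_4$-power torsion). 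A smaller slip: $v_1^3\kappabar a[0,0]$ sits in filtration $4$, not filtration zero, though it is still not a target of any differential. The paper's route avoids all of these identification problems by lifting to $tmf\wedge V(0)$, where $\pi_{153}$ and its $\eta$-divisibility have already been determined, and that is the approach I would recommend you adopt.
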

\begin{proof} The class detected by $\nu\Delta^6a[0,0]$ lifts to $\pi_*(tmf\wedge V(0))$ and there is a lift detected by $\nu\Delta^6$. But in $\pi_*(tmf \wedge V(0))$, $\nu^2 [\nu\Delta^6]$ is not divisible by $\eta$.
\end{proof}

 Now, we use the truncated spectral sequences of \Cref{sectrunss}, applied to the elliptic spectral sequence of $tmf\wedge Y$. As in  \Cref{sectrunss}, let 
 \[\mathrm{sk}_{16}(tmf\wedge Y) = X_0/X_{17}\]
 for $X_n$ the $n$th term of the $X(4)$-Adams tower of $tmf \wedge Y$. Then $E_{r,<17}^{*,*}(Y)$ as in \Cref{sectrunss} is a spectral sequence computing $\pi_*\mathrm{sk}_{16}(tmf\wedge Y) $, and it satisfies $E_{r,<17}^{s,*}(Y)=0$ for $s\geq 17$. Furthermore, we have a map of spectral sequences 
 \[T_{r}^{s,t} \colon E_{r}^{s,t}(Y) \to E_{r,<17}^{s,t}(Y).\]
 
\begin{proposition}
In $\pi_*(\mathrm{sk}_{16}(tmf\wedge Y))$, we have
\[\langle \eta, \nu, \Delta^4a[12,2]\rangle =  \Delta^4 a[17,3]. \]
\end{proposition}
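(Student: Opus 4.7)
The plan is to apply Moss's convergence theorem for Toda brackets to the truncated spectral sequence $E^{*,*}_{r,<17}(Y)$ of \Cref{sectrunss}, which converges to $\pi_*(\mathrm{sk}_{16}(tmf\wedge Y))$ and enjoys a horizontal vanishing line at filtration $s=17$. The algebraic input has already been supplied by \Cref{Massey-Y}: $\langle \eta, \nu, \Delta^4a[12,2]\rangle = \Delta^4 a[17,3]$ in $\Ext$, with zero indeterminacy.

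First, I would confirm that the Toda bracket is defined in $\pi_{113}(\mathrm{sk}_{16}(tmf\wedge Y))$: the vanishing $\eta\nu=0$ holds in $\pi_*(S^0)$, while $\nu\cdot \Delta^4 a[12,2]=0$ follows from the $E_2$-level relation $\nu\,a[12,2]=0$ of \Cref{cormodstructureE2Y} together with a quick check that no higher-filtration class in the corresponding stem obstructs the lift to a null-homotopy.

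Second, I would verify that $\Delta^4 a[17,3]\in E^{3,116}_{2,<17}(Y)$ survives as a permanent cycle in $E^{*,*}_{\infty,<17}(Y)$. Since it lies in filtration $3$, any non-trivial $d_r$-differential has target in filtration $3+r$; for $r\geq 14$ the target vanishes by the vanishing line, while for $r\in\{9,11,13\}$ one must rule out the targets case by case using the already-established differentials, $v_1$- and $\Delta^8$-linearity, and the $\kappabar$-divisibility principles of \Cref{StructureSS}. With this in hand, Moss's theorem produces a class in the Toda bracket detected by $\Delta^4 a[17,3]$; the indeterminacy collapses to $\eta\cdot \pi_{112}(\mathrm{sk}_{16}(tmf\wedge Y))$ since $\pi_5(tmf)=0$, and direct inspection of the $E_\infty$-chart in that stem shows this summand either vanishes or lies in filtration strictly above $3$, yielding the sharp equality.

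The principal obstacle is the permanent-cycle verification, and specifically the exclusion of the candidate $d_9(\Delta^4 a[17,3])=\kappabar^3\Delta^2 a[4,0]$ flagged as undecided in the remarks following \Cref{lem:d9-first}: because its target lies in filtration $12<17$, it is genuinely visible in the truncated spectral sequence and cannot be eliminated by the vanishing line alone. I expect this case to yield to a combination of naturality under $p_*\colon E_r(Y)\to E_r(V(0))$ (under which $\Delta^4 a[17,3]$ maps to a multiple of $\Delta^4\eta\kappa$, whose behavior in the $V(0)$-spectral sequence is already understood) together with the Geometric Boundary Theorem \Cref{geobound}.
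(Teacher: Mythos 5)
Your overall strategy coincides with the paper's: pass to $\mathrm{sk}_{16}(tmf\wedge Y)$ so that the product $\nu\cdot\Delta^4a[12,2]$ (which in $\pi_*(tmf\wedge Y)$ could only be detected in filtration $17$) becomes zero, feed the algebraic Massey product of \Cref{Massey-Y} into the Moss convergence theorem for the truncated spectral sequence of \Cref{sectrunns}, and observe that the indeterminacy vanishes. Up to that point your argument matches the paper's proof.

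The divergence is in what you call the principal obstacle, and there your proposal has a genuine gap. You propose to rule out the candidate differential $d_9(\Delta^4a[17,3])=\kappabar^3\Delta^2a[4,0]$ by naturality under $p_*\colon E_r(Y)\to E_r(V(0))$ together with \Cref{geobound}. This cannot work as stated: the candidate target $\kappabar^3\Delta^2a[4,0]=\kappabar^3\Delta^2v_1^2\,a[0,0]$ is in the image of $i_*$, hence in the kernel of $p_*$, so the identity $p_*d_9(x)=d_9(p_*x)$ places no constraint on whether this particular differential vanishes; and both $p_*$-naturality (\Cref{lemtrickY}) and the Geometric Boundary Theorem are tools for \emph{producing} differentials, not for excluding them. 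The good news is that this step does not need a new argument: by the time the paper reaches this proposition, the fact that $\Delta^4a[17,3]$ is a permanent cycle has already been established by the $\kappabar$-family pairing argument --- \Cref{StructureKappa} forces each $\kappabar$-free family of permanent cycles to be truncated by exactly one other family, and the counting of the remaining generators (the lists $(A)$ and $(B)$ in the proposition preceding the Massey-product discussion) leaves no class available to truncate the family of $\Delta^4a[17,3]$ except via the two undecided sources in stem $174$, whence $\Delta^4a[17,3]$ itself must be a permanent cycle. Since it sits in filtration $3$ and $T_r^{s,t}$ is injective there, it remains a nonzero permanent cycle in $E_{r,<17}(Y)$. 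Replacing your $p_*$-argument with a citation of that earlier counting result closes the gap and reduces your proof to the paper's.
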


\begin{proof} 
In $\pi_*(tmf\wedge Y)$, the product $\nu \Delta^4a[12,2]$, if not trivial, is detected in filtration $17$. It follows that $\nu \Delta^4a[12,2]$ is equal to zero in $\pi_*(\mathrm{sk}_{16}(tmf\wedge Y))$. Thus, the Toda bracket $\langle \eta,\nu, \Delta^4a[12,2]\rangle$ can be formed. \Cref{Massey-Y} means that in $E_{2,<17}^{s,t}(Y)$, there is Massey product 
\[\langle\eta, \nu, \Delta^4a[12,2] \rangle = \Delta^4a[17,3].\] 
The conditions of the Moss Convergence Theorem \cite{Mos70} are satisfied, so the Toda bracket  
$\langle \eta,\nu, \Delta^4 a[12,2] \rangle$ contains $\Delta^4 a[17,3]$ and the indeterminacy is zero.  
\end{proof}

\begin{proposition} \label{propetaext}
In the elliptic spectral sequence for $tmf \wedge Y$, there is an exotic extension
\[\eta a[152,2] = \kappabar^2\Delta^4 a[17,3].\]
\end{proposition}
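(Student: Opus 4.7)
The strategy is to combine \Cref{keyExt} (the exotic $\nu^{2}$-extension $\nu^{2}[\nu\Delta^{6}a[0,0]] = \kappabar^{2}\Delta^{4}a[17,3]$) with the Toda-bracket identity $\nu^{2} = \langle \eta, \nu, \eta\rangle$ in $\pi_{*}S^{0}$ via a bracket shuffle, and then to identify the resulting Toda-bracket representative on $E_{\infty}$ through Moss's convergence theorem.

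Let $\beta \in \pi_{147}(tmf\wedge Y)$ be an element detected by $\nu\Delta^{6}a[0,0]$. Since $\nu\eta = 0$ in $\pi_{*}S^{0}$ and $\eta\beta = 0$ by \Cref{SomeExo}, the Toda bracket $\langle \nu, \eta, \beta\rangle \subseteq \pi_{152}(tmf\wedge Y)$ is defined. The shuffle
\[
\eta\,\langle \nu, \eta, \beta\rangle \;\ni\; \langle \eta, \nu, \eta\rangle\,\beta \;=\; \nu^{2}\beta
\]
combined with \Cref{keyExt} shows that some $\gamma_{0} \in \langle \nu, \eta, \beta\rangle$ satisfies $\eta\gamma_{0} = \nu^{2}\beta$, which on $E_{\infty}$ is detected by $\kappabar^{2}\Delta^{4}a[17,3]$.

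To identify such a $\gamma_{0}$ on $E_{\infty}$, I would invoke Moss's theorem. On the $E_{2}$-page, using the Massey product $a[5,1] = \langle \nu, \eta, a[0,0]\rangle$ from \Cref{propE2Y}, the juggling formula yields
\[
\langle \nu, \eta, \nu\Delta^{6}a[0,0]\rangle \;\supseteq\; \nu\Delta^{6}\langle \nu, \eta, a[0,0]\rangle \;=\; \nu\Delta^{6}a[5,1],
\]
a class in bidegree $(s,t) = (2, 154)$, which I would then verify is a permanent cycle equal to $a[152,2]$. Moss's theorem then produces $\gamma_{0} \in \langle \nu, \eta, \beta\rangle$ detected by $a[152,2]$. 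The indeterminacy of $\langle \nu, \eta, \beta\rangle$, namely $\nu\pi_{149}(tmf\wedge Y) + \pi_{5}(S^{0})\cdot\beta$, is annihilated by $\eta$ since $\eta\nu = 0$ and $\eta\beta = 0$, so $\eta\gamma_{0} = \nu^{2}\beta$ holds as an equality in $\pi_{153}(tmf\wedge Y)$. The detection of $\eta\gamma_{0}$ by $\kappabar^{2}\Delta^{4}a[17,3]$ is the desired exotic extension.

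The main obstacle is the cycle analysis verifying that $\nu\Delta^{6}a[5,1]$ represents $a[152,2]$ on $E_{\infty}$. The critical step is $d_{9}$: from $d_{9}(\Delta^{6}a[5,1]) = \kappabar^{2}\Delta^{4}a[12,2]$ and $\nu a[12,2] = 0$ (\Cref{cormodstructureE2Y}), the Leibniz rule gives $d_{9}(\nu\Delta^{6}a[5,1]) = 0$. Higher-page survival is then either automatic by sparseness in the relevant stems or forced by the pairing of $\kappabar$-free families described in \Cref{StructureKappa}, as already applied throughout this section.
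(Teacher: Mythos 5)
Your argument is circular as written. You take \Cref{keyExt} as your starting input, but in the paper \Cref{keyExt} is a \emph{consequence} of \Cref{propetaext}: the paper's proof of \Cref{keyExt} forms the very same Toda bracket $\langle \nu,\eta,\beta\rangle$ with $\beta=[\nu\Delta^6a[0,0]]$, shuffles to get $\eta\langle\nu,\eta,\beta\rangle=\nu^2\beta\neq 0$, identifies $\langle\nu,\eta,\beta\rangle$ as detected by $a[152,2]$, and then \emph{invokes \Cref{propetaext}} to conclude that $\nu^2\beta$ is detected by $\kappabar^2\Delta^4a[17,3]$. In other words, your proposal is the paper's proof of \Cref{keyExt} run backwards. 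The only independent input in this circle is \Cref{SomeExo}, which gives $\nu^2\beta\neq 0$ and $\eta\beta=0$ but says nothing about which $E_\infty$-class detects $\nu^2\beta$; that detection statement is exactly what is at stake in both \Cref{keyExt} and \Cref{propetaext}, so you cannot assume one to get the other.

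The paper breaks the circle by a genuinely different route: it establishes the Massey product $\langle\eta,\nu,\Delta^4a[12,2]\rangle=\Delta^4a[17,3]$ algebraically, transfers it to a Toda bracket in $\pi_*(\mathrm{sk}_{16}(tmf\wedge Y))$ via Moss's theorem (the truncation is needed because $\nu\cdot\Delta^4a[12,2]$ need not vanish in $\pi_*(tmf\wedge Y)$ itself), and then shuffles $\kappabar^2\langle\eta,\nu,\Delta^4a[12,2]\rangle=\eta\langle\nu,\Delta^4a[12,2],\kappabar^2\rangle$ to force the $\eta$-extension on $a[152,2]$. None of this uses \Cref{keyExt}. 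The secondary parts of your write-up are fine — the Leibniz computation $d_9(\nu\Delta^6a[5,1])=\nu\kappabar^2\Delta^4a[12,2]=0$ is correct, and survival of $a[152,2]$ follows even more directly than you indicate, since $\kappabar\nu=0$ from $E_7$ on makes it a nonzero $\kappabar$-power-torsion class, hence a permanent cycle by \Cref{StructureKappa}(1) — but the logical core of the proof is missing: you need an independent derivation of either \Cref{keyExt} or of the bracket $\langle\eta,\nu,\Delta^4a[12,2]\rangle$ before either extension can be deduced from the other.
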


\begin{proof} Since $\kappabar^2 \Delta^4 a[17,3]$ lives in filtration $s=11$, it suffices to prove that extension in the $E_{\infty}$-term of the spectral sequence for $\mathrm{sk}_{16}(tmf\wedge Y)$.
The above proposition and the choice of $\Delta^4a[12,2]$ imply that
\[\kappabar^2\Delta^4 a[17,3] = \langle \eta, \nu, \Delta^4a[12,2]\rangle \kappabar^2 = \eta\langle \nu, \Delta^4 a[12,2],  \kappabar^2\rangle. \]
Since $\kappabar^2\Delta^4 a[17,3]\ne 0$ at $E_{\infty}$, $\langle\nu, \Delta^4 a[12,2] ,  \kappabar^2\rangle$
must be non-trivial, and it must be detected by a class which is not in the kernel of $\eta$.
This forces $\langle \nu, \Delta^4 a[12,2],    \kappabar^2 \rangle$  to be detected by $a[152,2]$, and so $\eta a[152,2]$ is detected by $\kappabar^2 \Delta^4a[17,3]$.
\end{proof}

 \begin{proof}[Proof of \Cref{keyExt}] 
 Let $\beta = [\nu \Delta^6a[0,0]]$.
 By \Cref{SomeExo},  $\eta \beta= 0$ and we can form the Toda bracket $\langle \nu, \eta, \beta\rangle$. Then 
 \[\eta \langle \nu, \eta, \beta\rangle =\langle \eta , \nu, \eta\rangle \beta= \nu^2 \beta\]
 On the other hand, $\nu^2 \beta\ne 0$ by \Cref{SomeExo}. It follows that $\langle \nu, \eta,\beta\rangle \ne 0$. We see that it must be detected by $a[152,2]$. So 
 $\eta a[152,2] = \nu^2 \beta$ and
  \Cref{propetaext} implies that $\nu^2 \beta$ is detected by $\kappabar^2\Delta^4a[17,3]$.
 \end{proof}

\begin{lem}
There are differentials:
\begin{enumerate}[(1)]
\item $d_{13}(\Delta^{6}a[30,2])=\kappabar^{3}\Delta^{4}a[17,3]$
\item $d_{19}(\Delta^{6}a[30,0])=\kappabar^{4}\Delta^{2}a[45,3]$
\end{enumerate}
\end{lem}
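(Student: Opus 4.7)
The plan is to combine \Cref{keyExt} with \Cref{rem2left}: part (1) follows from the exotic extension, and part (2) is then forced by the pairing discussion. By \Cref{keyExt}, the extension $\nu^2[\nu\Delta^6 a[0,0]] = \kappabar^2\Delta^4 a[17,3]$ holds in $\pi_*(tmf\wedge Y)$, so $\kappabar^2\Delta^4 a[17,3]$ is a nonzero permanent cycle in $E_{\infty}$. Hence the $\kappabar$-family of the permanent cycle $\Delta^4 a[17,3]$ survives at least through its $\kappabar^2$-multiple. By \Cref{StructureKappa}, this family must nonetheless be truncated, and the only $\kappabar$-multiple remaining in the $192$-stem window is $\kappabar^3\Delta^4 a[17,3]$ in bidegree $(s,t-s)=(15,173)$. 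So the truncating differential has this class as target.

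By \Cref{rem2left}, the source of this truncating differential is either $\Delta^6 a[30,0]$ at $(0,174)$ (giving a $d_{15}$) or $\Delta^6 a[30,2]$ at $(2,174)$ (giving a $d_{13}$). To select the correct case, the plan is to use the $v_1$-linearity of the differentials (\Cref{lem:e2-e4}). Writing $\Delta^6 a[30,0] = v_1^{3}\Delta^{7} a[0,0]$, any differential it supports must land in the image of $v_1^3$ on the corresponding $E_r$-page. Using the $E_2$-relations $v_1 a[17,3] = x^2 a[5,1]$, $v_1 a[5,1] = x a[0,0]$, and $v_1^2 x = 0$ recorded in \Cref{thmE2V0all} and \Cref{propE2Y}, one verifies that $\kappabar^3\Delta^4 a[17,3]$ is not $v_1^3$-divisible on $E_{15}$; this rules out the $d_{15}$ from $\Delta^6 a[30,0]$. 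Hence the $d_{13}$ from $\Delta^6 a[30,2]$ is forced, proving (1).

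For (2), \Cref{rem2left} now implies that $\Delta^6 a[30,0]$ must support the differential truncating the $\kappabar$-family of $\Delta^2 a[45,3]$. Within the $192$-stem window, the only $\kappabar$-multiple of $\Delta^2 a[45,3]$ occupying stem $173$ is $\kappabar^4\Delta^2 a[45,3]$ at $(19,173)$, and this matches exactly a $d_{19}$ from $(0,174)$. This yields $d_{19}(\Delta^6 a[30,0]) = \kappabar^4\Delta^2 a[45,3]$.

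The main obstacle is the $v_1^3$-non-divisibility check on the $E_{15}$-page, since in principle a class which is not $v_1^3$-divisible at $E_2$ might acquire a $v_1^3$-preimage at a later page after intervening differentials identify classes; to handle this carefully one must track which candidate preimages survive the earlier $d_5, d_9, d_{11}, d_{13}$ analysis carried out above. An alternative, and possibly more transparent, route is a direct dimension count of $\pi_{174}(tmf\wedge Y)$ and $\pi_{173}(tmf\wedge Y)$ using the long exact sequence \eqref{LES} together with the known computation of $tmf_*V(0)$, which pins down exactly how many classes in the relevant bidegrees survive to $E_\infty$ and thereby forces the asserted pairing.
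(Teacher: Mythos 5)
Your setup is fine and matches the paper's: \Cref{keyExt} shows $\kappabar^2\Delta^4a[17,3]$ survives to $E_\infty$, so the $\kappabar$-family of $\Delta^4a[17,3]$ is truncated at $\kappabar^3\Delta^4a[17,3]$ in bidegree $(15,173)$, and by \Cref{rem2left} the source is one of $\Delta^6a[30,0]$ (a $d_{15}$) or $\Delta^6a[30,2]$ (a $d_{13}$). The problem is your disambiguation step, which is exactly the hard part, and it does not work as written. You want to say that $\Delta^6a[30,0]=v_1^3\Delta^7a[0,0]$ forces any differential on it to be $v_1^3$-divisible. But $v_1$-linearity only gives $d_r(v_1^3x)=v_1^3d_r(x)$ when $x$ is still present on $E_r$, and $\Delta^7a[0,0]$ is not: by $\Delta^2$-linearity of the $d_5$'s one has $d_5(\Delta^7a[0,0])=\nu\kappabar\Delta^6a[0,0]\neq 0$, so $\Delta^7a[0,0]$ dies at $E_5$ while its $v_1^3$-multiple survives (precisely because $v_1\nu=0$). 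Hence on $E_{15}$ the class $\Delta^6a[30,0]$ has no surviving $v_1^3$-preimage and the divisibility constraint is vacuous; no contradiction is obtained and the $d_{15}$ is not ruled out.

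Your proposed fallback also fails: a dimension count of $\pi_{173}$ and $\pi_{174}$ via \eqref{LES} cannot distinguish the two scenarios, because in either pairing the same four classes die — both sources sit in stem $174$ (filtrations $0$ and $2$) and both candidate targets, $\kappabar^3\Delta^4a[17,3]$ and $\kappabar^4\Delta^2a[45,3]$, sit in stem $173$ (filtrations $15$ and $19$) — so the $E_\infty$ ranks are identical either way. The paper resolves the ambiguity differently: it passes to the $1$-co-truncated spectral sequence of \Cref{sectrunss}, lifts the class $\nu^2\Delta^6a[0,0]$ and its $\nu$-multiple $\kappabar^2\Delta^4a[17,3]$ there, and uses $\kappabar\nu=0$ in $\pi_*tmf$ to force $\kappabar^3\Delta^4a[17,3]$ to be hit already in the co-truncated tower, where filtration $0$ is absent; this shows the source has positive filtration and must therefore be $\Delta^6a[30,2]$. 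You would need an argument of this kind (or a comparison with $tmf\wedge A_1$) to close the gap; part (2) then follows from \Cref{rem2left} exactly as you say.
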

\begin{proof}
Let
\[tmf\wedge Y \leftarrow (tmf\wedge Y)_1 \leftarrow (tmf\wedge Y)_2 \leftarrow \ldots \] 
be the Adams tower associated to the $X(4)$-based resolution of $tmf\wedge Y$. 
We consider its $1$-co-truncated tower and the induced map of spectral sequences
\[ cT_{r}^{s,t} \colon E_{r,\geq 1}^{s,t} \to E_{r}^{s,t}.
\]
By \Cref{cTruncSS}, $cT_r^{s,t}$ is surjective for $s\geq 1$.

Let $a=\nu^2 \Delta^6 a[0,0]\in E_{2}^{2,150+2}$. This  is a permanent cycle representing a unique non-zero element of $ \pi_{150}(tmf\wedge Y)$, which in this proof we denote by $\alpha$. Since $a$ has positive filtration, there is a class $\bar a \in E_{2,\geq 1}^{2,150+2}$ such that $cT_2(\bar a)=a$ and the surjectivity of $cT_{\infty}$ guarantees that we can choose $\bar a$ to be a permanent cycle. It then detects classes $\bar \alpha \in \pi_{150}((tmf\wedge Y)_1)$ that map to $\alpha$.

Since $\nu \alpha$ is detected by $b=\kappabar^2 \Delta^4 a[17,3]\in E_{\infty}^{11,153+11}$ (\Cref{keyExt}), $\nu \bar \alpha $ must be detected in $E_{\infty}^{s,153+s}(cT_1)$ for $3 \leq s\leq 11$.  Since $E_{2}^{s,153+s}(cT_1) = 0$ for $3\leq s\leq 10$ (this is true for $E_2^{*,*}$), $\nu \bar \alpha $ must be detected by a lift $\bar b$ of $b$.

The relation $\kappabar \nu = 0\in \pi_{*}tmf$ implies that $\kappabar\nu\bar \alpha = 0 \in \pi_*((tmf\wedge Y)_1)$. This implies that 
$d_r(\bar c) = \kappabar \bar b$ for some non-trivial element $\bar c \in E_{r,\geq 1}^{15-r , 174+(15-r)}$. As $E_{2, \geq 1}^{0,*} = 0$, $\bar c$ must live in filtration $1\leq s \leq 13$, and hence so does $cT_r( \bar c)$. In particular, $cT_r(\bar c) \neq \Delta^6a[30,0]$. 
However, we find that
\begin{align*}
d_r(cT_r( \bar c)) &= cT_r(\kappabar \bar b) 
= \kappabar \cdot cT_r(\bar b) 
= \kappabar^3\Delta^4a[17,3]. 
\end{align*}
The only way for this to make sense is if $cT_r(\bar c)$ is equal to $\Delta^6a[30,2]$ and this is the desired differential (1).

This differential then determines (2) as noted in \Cref{rem2left}.
\end{proof}

\begin{rem}\label{rem153decideblueblack}
From, this discussion, we also learn that there is a non-trivial class in $i_*\pi_{150}V(0)$ which is detected by $a[153,11]$.
\end{rem}

\subsection{Exotic extensions}

In this section we resolve the exotic $2$, $\eta$, $\nu$ and $v_1$ extensions in the elliptic spectral sequence of $tmf \wedge Y$. The extensions are depicted in Figures~\ref{exoextY096} and \ref{exoextY96144}.

We begin with the exotic $\eta$-extensions, which are few.  To determine them, we use the following strategies.
First, the long exact sequence
\begin{align*}
 \ldots \to tmf_{*+1}V(0) \xrightarrow{\eta} tmf_*V(0)\xrightarrow{i_*} tmf_*Y\xrightarrow{p_*} tmf_{*-1}V(0) \to \ldots  \end{align*}

We use the following basic, but useful facts. 
\begin{lem}\label{remlestool}
For $a\in tmf_*Y$ and $b\in tmf_*V(0)$, 
 \begin{enumerate}
 \item if $a = i_*b$, then $\eta a= i_*\eta b =0$, 
 \item $p_*\eta a = \eta p_*a=0$, and
 \item $v_1 \eta a= \eta v_1 a$.
\end{enumerate}
\end{lem}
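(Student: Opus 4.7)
The plan is to derive all three statements as formal consequences of the long exact sequence \eqref{LES} together with the fact that $i_*$, $p_*$, and the chosen $v_1$-self map are all maps of $tmf$-module spectra. None of the three parts should require a genuinely homotopical argument; each reduces to either naturality of an action or exactness of \eqref{LES}.

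For (1), the plan is to exploit the $tmf$-linearity of $i_*$ to rewrite
\[
\eta a \;=\; \eta\, i_* b \;=\; i_*(\eta b).
\]
Then I would observe that $\eta b$ sits tautologically in the image of the map labelled $\eta$ in \eqref{LES} (namely $\eta \colon tmf_* V(0)\to tmf_{*+1}V(0)$, which is exactly multiplication by $\eta$ as this is how connecting maps of $\eta$-cofiber sequences are realized). Exactness of \eqref{LES} at $tmf_{*+1}V(0)$ then forces $i_*(\eta b)=0$, which gives the claim.

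For (2), the identity $p_*(\eta a) = \eta\, p_*a$ is again just the $tmf$-linearity of $p_*$. The class $p_*a$ lies in $tmf_{*-1}V(0)$, and the next map in \eqref{LES} after $p_*$ is multiplication by $\eta$; exactness at $tmf_{*-1}V(0)$ shows that any class in the image of $p_*$ is annihilated by $\eta$, so $\eta\, p_*a = 0$. For (3), the $v_1$-action on $tmf_*Y$ is defined via a self-map $\Sigma^2 Y \to Y$ of $tmf$-module spectra; the induced map on homotopy therefore commutes with multiplication by any element of $\pi_* tmf$, and in particular with $\eta$.

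The hardest point is essentially bookkeeping rather than mathematics: one has to convince oneself that the map labelled ``$\eta$'' appearing in \eqref{LES} really is the same operation as multiplication by $\eta \in \pi_1 S^0$, so that the claim $i_*(\eta b)=0$ in (1) is a direct consequence of exactness and not an abstract triviality. Once this identification is in hand, the lemma is immediate.
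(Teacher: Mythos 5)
Your argument is correct and is exactly the paper's proof, which likewise deduces all three parts from exactness of \eqref{LES} together with the fact that the maps involved are $\pi_*S^0$-linear (the only nitpick is that in (1) the relevant exactness is at the term $tmf_*V(0)$ where $i_*(\eta b)$ is formed, not at $tmf_{*+1}V(0)$). The identification of the boundary map in \eqref{LES} with multiplication by $\eta$ that you flag is indeed the standard fact for the cofiber sequence of $\eta$ and poses no difficulty.
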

\begin{proof}
These are easy consequences of the long exact sequence on homotopy groups combined with the fact that composition as well as the smash product induces the $\pi_*S^0$-module structure in the stable homotopy category.
\end{proof}

Note further that \Cref{lem:duality} as described in \Cref{rem:usingduality} gives a way to relate extensions in different stems between the $v_1$-power torsion classes. We also use \Cref{lem2exttrick} and \Cref{propextnutrick}

A stem-by-stem analysis using the above techniques then allows us to determine that the only non-trivial exotic $\eta$-extensions are as follows:
\begin{lem}
In the elliptic spectral sequence of $Y$, there are exotic extensions
\begin{enumerate}
\item $\eta [\Delta^2\nu a[5,1]] = \kappabar^2 a[17,3]$
\item $\eta [\Delta^4\nu a[5,1]]  = \kappabar^5 a[5,1]$
\item $\eta [\Delta^6 \nu a[5,1]] = \kappabar^2[\Delta^4a[17,3]]$
\item $\eta [\Delta^6\nu a[20,2]] = \kappabar^5 [\Delta^2 a[20,2]]$
\end{enumerate}
There are no other exotic $\eta$-extensions.
\end{lem}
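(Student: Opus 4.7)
The strategy combines three tools. First, \Cref{propextnutrick} converts $\nu$-extensions in $tmf_*V(0)$ to $\eta$-extensions in $tmf_*Y$: if $b\in tmf_*V(0)$ has $\eta b=0$ and $b'\in tmf_*Y$ satisfies $p_*b'=b$, then $\eta b'=i_*(\nu b)$. Second, \Cref{remlestool}(1) implies that any class in the image of $i_*\colon tmf_*V(0)\to tmf_*Y$ is $\eta$-annihilated, so such a class cannot be the source of an exotic $\eta$-extension. Third, the duality principle of \Cref{rem:usingduality} with $r=\eta$ and $l=1$ relates exotic $\eta$-extensions out of stem $k$ to those out of stem $172-k$. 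Combined with the $\Delta^8$-periodicity discussed in \Cref{StructureDelta}, these reduce the verification to a finite and largely mechanical check.

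To establish the four listed positive extensions, I would identify each source as a $p_*$-lift of a specific class in $tmf_*V(0)$. Under $p_*$, the class $a[5,1]\in E_\infty(Y)$ hits $\nu\in E_\infty(V(0))$ (compare \Cref{propE2Y}), so $[\Delta^{2k}\nu a[5,1]]$ lifts a class of $tmf_*V(0)$ whose leading-order detection involves $\Delta^{2k}\nu^2$. The exotic $\nu$-multiplication on such a lift is one of the $y\nu^2$-family extensions catalogued in \Cref{V0-48-96-ext}, \Cref{V0-96-144-ext} and \Cref{V0-144-192-ext}, and \Cref{propextnutrick} forces $\eta$ on the lift to equal $i_*$ of that $\nu$-multiple. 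Matching bi-degrees and filtrations, $i_*$ of the respective targets is detected in $E_\infty(Y)$ by $\kappabar^2 a[17,3]$, $\kappabar^5 a[5,1]$, $\kappabar^2 [\Delta^4 a[17,3]]$ and $\kappabar^5[\Delta^2 a[20,2]]$, exactly as claimed. The case of $[\Delta^6\nu a[20,2]]$ is analogous, with $a[20,2]$ playing the role of $a[5,1]$ for its family (its $p_*$-image involves the $\Delta^6 y\nu^2$-class).

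For the negative part of the statement, I would proceed stem-by-stem through $0\le t-s<96$ and transport the result to $96\le t-s<192$ via duality. In each stem the candidate sources are $E_\infty$-classes $a$ with $\eta a=0$ algebraically on the $E_\infty$-page. Any candidate $a$ lying in the image of $i_*$ is immediately eliminated by \Cref{remlestool}(1); this disposes of the majority of candidates, including all $\Delta^{2k}a[0,0]$-family generators and their $i_*(\kappabar^j\text{-descent})$. For the remaining candidates with non-trivial $p_*$-image, I would use \Cref{propextnutrick} to compute $\eta a=i_*(\nu p_*a)$ directly from the previously determined $\nu$-action on $tmf_*V(0)$; when $i_*(\nu p_*a)$ is zero in $E_\infty(Y)$ (for instance because $\nu p_*a$ itself is zero, or is detected by a class pulled back from a differential in the $V(0)$-spectral sequence), no exotic $\eta$-extension occurs. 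Finally, the duality comparison between stem $k$ and stem $172-k$ serves as a consistency check, and combined with the four positive extensions already established, it forces all other $\eta$-action to be algebraic.

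The main obstacle is bookkeeping: in stems near $56$, $96$, $116$ and $152$, several generators of different filtrations coexist and one must carefully disentangle which lifts lie in $i_*(tmf_*V(0))$ and which do not, and which targets are allowed by filtration jump. The combination of \Cref{propextnutrick} with the duality check mitigates this by pairing each independently verified extension with its dual, so that a bookkeeping error is very likely to produce an inconsistency rather than a plausible-looking but incorrect extension.
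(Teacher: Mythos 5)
Your toolkit---\Cref{propextnutrick}, \Cref{remlestool}, and the duality of \Cref{rem:usingduality}---is exactly the paper's, and your plan for the negative part (and for extension (1), which the paper also deduces from \Cref{propextnutrick}) matches the paper's stem-by-stem elimination. However, there is a genuine gap in how you establish the positive extensions, together with a misidentification. The class $p_*[\Delta^{2k}\nu a[5,1]]$ is detected by $\Delta^{2k}\nu^2$ (stem $6+48k$), and $\nu\cdot[\Delta^{2k}\nu^2]$ is detected by the \emph{algebraic} product $\Delta^{2k}\nu^3=\Delta^{2k}\eta^2x$; these are not the $y\nu^2$-family exotic extensions, whose sources live in stems $21+48k$ (only case (4), whose $p_*$-image is $\Delta^6y\nu^2$, involves that family). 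More seriously, in all four cases the class $\nu\,p_*(b')$ is detected at $E_\infty(V(0))$ by an element that is $\eta$-divisible on the $E_2$-page and hence lies in the kernel of $i_*\colon E_2(V(0))\to E_2(Y)$. So ``matching bi-degrees and filtrations'' does not compute $i_*(\nu\,p_*b')$: one must separately prove that the homotopy class $\nu\,p_*(b')$ is not $\eta$-divisible in $tmf_*V(0)$ (so that $i_*$ of it is nonzero, cf.\ \Cref{SomeExo}) and then identify its detecting class across a large filtration jump (from filtration $3$, resp.\ $6$, up to filtration $11$, $21$, $11$, $22$ in the four cases). Neither step is addressed in your proposal, and for (3) this is precisely the content of \Cref{keyExt} and \Cref{propetaext}, which the paper proves via the Massey product $\langle\eta,\nu,\Delta^4a[12,2]\rangle=\Delta^4a[17,3]$ and the Moss convergence theorem applied in the truncated spectral sequence for $\mathrm{sk}_{16}(tmf\wedge Y)$. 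That argument cannot be replaced by bookkeeping.

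The paper also avoids the filtration-jump problem entirely for (2) and (4) by running duality in the opposite direction from the one you propose: it dualizes the \emph{algebraic} $\eta$-multiplications $\eta[\Delta^2 a[20,2]]=[\Delta^2v_1^2a[17,3]]$ (stem $68$) and $\eta a[5,1]=\nu^2a[0,0]$ (stem $5$) to obtain nontrivial $\eta$-multiplications on stems $104$ and $167$, which forces the stated exotic extensions. I would recommend adopting that route for (2) and (4) and citing \Cref{propetaext} for (3); as written, your derivation of (2)--(4) does not go through.
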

\begin{proof}
The first extension (1) follows from \Cref{propextnutrick}. The extension (2) and (4) follow from duality: (2) from $\eta [ \Delta^2a[20,2]]= [\Delta^2v_1^2a[17,3]]$ and (4) from $\eta a[5,1]= \nu^2 a[0,0]$.  Finally, (3) is \Cref{propetaext}. 

All possible exotic $\eta$-extensions are shown not to occur using \Cref{remlestool}, duality and \Cref{propextnutrick}. In particular, the possible $\eta$-extensions with source in stems $52\leq t-s \leq 57$ are shown not to occur using \Cref{propextnutrick} and $v_1$-linearity. 
\end{proof}

Now, we turn to the exotic $2$-extensions.

\begin{thm}
There are no exotic $2$-extensions in the elliptic spectral sequence for $Y$ and, consequently,
\[2(\pi_*tmf\wedge Y)=0.\]
\end{thm}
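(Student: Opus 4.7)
The plan is to invoke \Cref{lem2exttrick} with $X = tmf \wedge C_\eta$. Since $Y \simeq V(0) \wedge C_\eta$, the lemma asserts that for any $a' \in \pi_n(tmf \wedge Y)$,
\begin{equation*}
2a' = i_*(\eta a) \in \pi_n(tmf\wedge Y),
\end{equation*}
where $a := p_*(a') \in \pi_{n-1}(tmf \wedge C_\eta)$ is automatically of order two, from the cofiber sequence $tmf\wedge C_\eta \xrightarrow{2} tmf\wedge C_\eta \xrightarrow{i} tmf \wedge Y \xrightarrow{p} \Sigma tmf\wedge C_\eta$. Thus it suffices to show that $i_*(\eta a) = 0$ for every such $a$.

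The key computational input is the long exact sequence attached to the cofiber sequence $\Sigma tmf \xrightarrow{\eta} tmf \xrightarrow{\iota} tmf \wedge C_\eta \xrightarrow{q} \Sigma^2 tmf$. Setting $c := q_*(a) \in \pi_{n-3}tmf$, exactness at $\pi_{n-3}tmf$ forces $\eta c = 0$. Applying \Cref{propextnutrick} with $X = tmf$ then yields
\begin{equation*}
\eta a = \iota_*(\nu c) \in \pi_n(tmf \wedge C_\eta);
\end{equation*}
note that any potential ``bottom-cell'' contribution automatically vanishes, because $\eta \cdot \mathrm{im}(\iota_*) = 0$ in $\pi_*(tmf \wedge C_\eta)$ (the image of $\iota_*$ kills $\eta$ by the same long exact sequence, since $\eta x \in \eta\pi_*tmf = \ker \iota_*$ for any $x \in \pi_*tmf$).

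The heart of the argument is the observation that $\nu c \in \eta\cdot \pi_n tmf$. This follows from the standard identity $\nu \in \langle \eta,2,\eta\rangle$ combined with the Toda bracket shuffle: since $2\eta = 0$ and $\eta c = 0$, the bracket $\langle 2,\eta, c\rangle$ is defined, and the shuffle gives $\nu c \in \eta \langle 2, \eta, c\rangle$, whose indeterminacy $\eta\cdot(2\pi_{n-1}tmf + \pi_{n-2}tmf\cdot c)$ vanishes because $2\eta = 0$ and $\eta c = 0$. Hence $\nu c = \eta w$ for some $w \in \pi_{n-1}tmf$.

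Finally, the composite $tmf \xrightarrow{\iota} tmf \wedge C_\eta \xrightarrow{i} tmf \wedge Y$ factors through $tmf \wedge V(0)$ as $tmf \xrightarrow{u} tmf \wedge V(0) \xrightarrow{\ell} tmf \wedge Y$ (the unit followed by smashing with the bottom cell of $C_\eta$). By the cofiber sequence $\Sigma(tmf\wedge V(0)) \xrightarrow{\eta} tmf\wedge V(0) \xrightarrow{\ell} tmf\wedge Y$, the kernel of $\ell_*$ is precisely $\eta\cdot\pi_*(tmf \wedge V(0))$. Therefore
\begin{equation*}
i_*(\eta a) = \ell_*(u_*(\nu c)) = \ell_*(u_*(\eta w)) = \ell_*(\eta\cdot u_*(w)) = 0,
\end{equation*}
so $2a' = 0$ for every $a'$, and $2\pi_*(tmf\wedge Y) = 0$ follows. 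The only mildly delicate point is the bookkeeping of Toda bracket indeterminacies, which is straightforward once the hypotheses $2\eta = 0$ and $\eta c = 0$ are in hand.
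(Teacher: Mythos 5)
Your reduction to showing $i_*(\eta a)=0$, via \Cref{lem2exttrick} applied to $X=tmf\wedge C_\eta$ and then \Cref{propextnutrick} applied to $c=q_*(a)$, is sound and is exactly how the paper begins: every exotic $2$-extension target is of the form $j_*(\eta a)$, hence $\eta$-divisible. The gap is in what you call the heart of the argument. The identity you invoke is false: one has $2\nu\in\langle\eta,2,\eta\rangle$, not $\nu\in\langle\eta,2,\eta\rangle$ (the bracket lands in $\pi_3S^0\cong\Z/24$ as the coset $\{2\nu,14\nu\}$; the paper itself records the correct form in the proof of \Cref{propextnutrick}). With the correct identity the shuffle only yields $2\nu c\in\eta\langle 2,\eta,c\rangle$, which is vacuous here because $c=q_*p_*(a')$ already has order dividing $2$, so $2\nu c=\nu(2c)=0$. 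There is no formal reason why $\nu c$ itself should lie in $\eta\,\pi_*tmf$ when $\eta c=0$: for instance $c=2\in\pi_0tmf$ satisfies $\eta c=0$ but $\nu c=2\nu$ is not in $\eta\,\pi_2tmf=\{0,12\nu\}$. So the chain $\nu c=\eta w\Rightarrow i_*(\eta a)=\ell_*(\eta\,u_*(w))=0$ collapses at its first link, and the conclusion $2\pi_*(tmf\wedge Y)=0$ does not follow by pure nonsense.

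That no purely formal argument can close the proof is corroborated by what the paper actually does after the same first step: it inspects the $E_\infty$-charts and finds that $\eta$-divisibility of $2a'$ rules out all potential exotic $2$-extensions except one in stem $57$, and that last case is eliminated by Brown--Comenetz duality (\Cref{lem:duality-for-extensions}): a $2$-extension in stem $57$ would force one in stem $116$, where there are no $\eta$-divisible classes. To repair your write-up you should keep your reduction (which recovers the paper's ``$2a'$ is $\eta$-divisible'' statement), drop the Toda bracket step, and supply the chart inspection together with the duality argument for stem $57$.
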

\begin{proof}
Since we have a cofiber sequence
\[  tmf \wedge C_{\eta} \xrightarrow{2}  tmf \wedge C_{\eta} \xrightarrow{j} tmf \wedge Y \xrightarrow{q} \Sigma  tmf \wedge C_{\eta},\]
we can apply \Cref{lem2exttrick} with $X=tmf \wedge C_{\eta}$, $i=j$ and $p=q$. From this, we deduce that if $a' \in \pi_*tmf\wedge Y$ is in the image of $j_*$, then it has order $2$ and that if $q_*(a')=a$, then $2a' = j_*(\eta a)$. It follows that if $2a'\neq 0$, then $2a'$ is divisible by $\eta$.

This leaves one possible extension in stem 57. But such a $2$-extension would lead, by duality, to a $2$-extension in stem 116. However, there are no $\eta$-divisible classes in that stem. Since the $E_2$-term was $2$-torsion and there are no exotic $2$-extensions, $\pi_*tmf\wedge Y$ is annihilated by $2$.
\end{proof}

Next, we turn to the $\nu$ extensions. 

\begin{rem}We will use without mention that $\kappabar \nu=0$ in $tmf_*$-modules. This allows us to eliminate many possible exotic $\nu$-extensions.
\end{rem}

\begin{lem}\label{lemnufirstbatch}
In the elliptic spectral sequence of $Y$, there are exotic extensions
\begin{enumerate}
\item \label{nu26} $\nu a[26,0] =  a[29,5]$ 
\item  \label{nu41} $\nu a[41,3]  = a[44,8]$ 
\item  \label{nu52} $\nu a[52,0] = a[55,7]$ 
\item  \label{nu54} $\nu a[54,2] = \kappabar^2 a[17,3]$ 
\item  \label{nu67}  $\nu a[67,3] = \kappabar^2 a[30,0]$
\item  \label{nu98} $\nu a[98,0] =  a[101,15]$ 
\item  \label{nu102} $\nu a[102,2] = \kappabar^5 a[5,1]$
\item  \label{nu103} $\nu a[103,1] = a[106, 16]$ 
\item   \label{nu124} $\nu a[124,0]=a[127,15]$
\item  \label{nu129} $\nu a[129,1] =a[132,16]$  
\item  \label{nu150} $\nu a[150,2] = a[153,11]$ 
\item  \label{nu155} $\nu a[155,3] = a[158,16]$ 
\item  \label{nu165} $\nu a[165,3] = a[168,22]$ 
\end{enumerate}
\end{lem}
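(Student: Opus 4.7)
The plan is to verify the thirteen extensions stem-by-stem, combining four techniques: naturality under $i_*\colon tmf\wedge V(0) \to tmf\wedge Y$ and $p_*\colon tmf\wedge Y \to \Sigma^2 tmf\wedge V(0)$ via the long exact sequence and Lemma~\ref{remlestool}; the $\nu$-trick of Lemma~\ref{propextnutrick} applied along the cofibre sequence $\Sigma tmf\wedge V(0)\xrightarrow{\eta} tmf\wedge V(0) \to tmf\wedge Y$; Brown--Comenetz duality as in Remark~\ref{rem:usingduality}, which for $tmf\wedge Y$ pairs $\nu$-multiplications on stem $k$ with those on stem $170-k$; and the Toda bracket / Moss convergence argument already carried out in Proposition~\ref{keyExt}. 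In each bidegree, potential alternative targets are first ruled out using $\kappabar\nu=0$ together with the explicit form of the $E_\infty$-chart computed in this section.

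For a first group of extensions, one pushes the source forward along $p_*$: the image $p_*(\text{source})\in tmf_*V(0)$ has a $\nu$-extension recorded in Section~\ref{secV0diffext}, and naturality together with the long exact sequence identifies $\nu\cdot\text{source}$ up to higher filtration, where the remaining ambiguity is killed by $\kappabar\nu=0$. This handles extensions such as \eqref{nu41}, \eqref{nu103}, \eqref{nu155}, and \eqref{nu165}, whose sources have non-trivial images under $p_*$. A second group, including \eqref{nu54} and \eqref{nu67}, is settled by the $\nu$-trick of Lemma~\ref{propextnutrick}: the sources lift classes in $tmf_*V(0)$ annihilated by $\eta$, and the lemma converts that $\eta$-annihilation into a $\nu$-extension whose target is $i_*(\nu\cdot p_*(\text{source}))$, which one then identifies with the named class by inspection. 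A third group, including \eqref{nu26}, \eqref{nu52}, \eqref{nu98}, \eqref{nu102}, \eqref{nu124}, and \eqref{nu129}, is fixed by Brown--Comenetz duality: each stem pairs with another stem in which a $\nu$-multiplication has already been established, forcing non-triviality, after which the explicit target is pinned down by the short list of candidates compatible with $\kappabar\nu=0$.

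The main obstacle is the central extension \eqref{nu150}, $\nu a[150,2] = a[153,11]$. Here the filtration jump from $2$ to $11$ is too large to be controlled by naturality or duality alone, and the identification of the target requires the full apparatus of Proposition~\ref{keyExt}: writing $a[150,2]=\nu\cdot[\nu\Delta^6 a[0,0]]$, one has $\nu a[150,2] = \nu^2\cdot[\nu\Delta^6 a[0,0]]$, and Proposition~\ref{keyExt} identifies this with $\kappabar^2\Delta^4 a[17,3]=a[153,11]$ via the Massey product $\langle\eta,\nu,\Delta^4 a[12,2]\rangle = \Delta^4 a[17,3]$ (Proposition~\ref{Massey-Y}), the Moss convergence theorem on the $16$-skeleton truncation $\mathrm{sk}_{16}(tmf\wedge Y)$, and an analysis in the $1$-co-truncated spectral sequence pinpointing the filtration of $\langle \nu,\Delta^4 a[12,2],\kappabar^2\rangle$ as $a[152,2]$. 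Once \eqref{nu150} is in hand, it can also be used as additional input into duality to cross-check the extensions in the vicinity of stem $17$ referenced above.
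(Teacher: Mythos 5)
Your toolbox (pushforward along $i_*$ and $p_*$, the cofibre-sequence tricks, Brown--Comenetz duality, and \Cref{keyExt} for stem $150$) is the right one, and your treatment of \eqref{nu41}, \eqref{nu102}, \eqref{nu129} and especially \eqref{nu150} matches the paper's. But two of your technique-to-extension assignments rest on misapplications that would genuinely fail. First, \Cref{propextnutrick} produces \emph{$\eta$-extensions}: its conclusion is $\eta b' = i_*(\nu b)$, not $\nu b' = i_*(\nu b)$, so it cannot establish the $\nu$-extensions \eqref{nu54} and \eqref{nu67}; in the paper it is used only in the $\eta$-extension lemma. The paper instead gets \eqref{nu54} by duality and \eqref{nu67} from $p_*$ together with the extension $\nu a[65,3]=a[68,10]$ in $tmf_*V(0)$. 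Second, the duality mechanism of \Cref{lem:duality} and \Cref{rem:usingduality} is an isomorphism of the $c_4$-power-torsion submodules $\Gamma_{c_4}$ only, so it gives no information about a $\nu$-multiplication whose source is $c_4$-free. The sources of \eqref{nu26}, \eqref{nu52}, \eqref{nu98} and \eqref{nu124} are the filtration-zero classes $\Delta v_1 a[0,0]$, $v_1^2\Delta^2 a[0,0]$, etc., which are $v_1$-free at $E_\infty$ and hence not in $\Gamma_{c_4}$; duality cannot be invoked for them. The paper handles \eqref{nu26} and \eqref{nu98} by applying $i_*$ to the corresponding extensions in $tmf_*V(0)$, and \eqref{nu52}, \eqref{nu124} via $p_*$.

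There is also a problem in your first group. For \eqref{nu103} the source $a[103,1]=\Delta^4 v_1 a[5,1]=i_*(\Delta^4 x)$ lies in the image of $i_*$ at $E_2$, so its image under $p_*$ is detected, if at all, only in higher filtration of $tmf_{101}V(0)$, where the available class $\Delta^3\kappabar\nu^3$ is annihilated by $\nu$ (since $\kappabar\nu=0$); the $p_*$ method detects nothing. Similarly for \eqref{nu165}: $a[165,3]=\Delta^6 v_1^2 a[17,3]$ and $p_*(a[165,3])=\Delta^6 v_1^2\eta x^2=0$ because $v_1^2x=0$ in $E_2(V(0))$. The paper obtains \eqref{nu103} by duality from \eqref{nu67}, and \eqref{nu155}, \eqref{nu165} by duality from the algebraic $\nu$-multiplications in the dual stems $15$ and $5$. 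In short, roughly half of your thirteen cases must be re-routed; the matching that works is $i_*$ for \eqref{nu26}, \eqref{nu98}; $p_*$ for \eqref{nu41}, \eqref{nu52}, \eqref{nu67}, \eqref{nu124}; duality for \eqref{nu54}, \eqref{nu102}, \eqref{nu103}, \eqref{nu129}, \eqref{nu155}, \eqref{nu165}; and \Cref{keyExt} for \eqref{nu150}.
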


\begin{proof}
The extensions
\eqref{nu26} and \eqref{nu98} follow from the extensions $\nu a[26,0] = a[29,5]$ and $a[98,0] = a[101,7]$,  respectively, in $\pi_*tmf \wedge V(0)$ by applying $i_*$. 
The extensions \eqref{nu41}, \eqref{nu52}, \eqref{nu67} and \eqref{nu124} follow from examining the effect of $p_*$ and the extensions $\nu a[39,3]=a[42,10]$, $\nu a[50,2] = a[53,7]$, $\nu a[65,3] =a[68,10]$ and $\nu a[122,2]= a[125,21]$ in $\pi_*tmf \wedge V(0)$, respectively.

Extensions \eqref{nu54}, \eqref{nu102}, \eqref{nu155} and \eqref{nu165} are obtained by duality from algebraic extensions. The extensions \eqref{nu129} and \eqref{nu103} follow by duality from \eqref{nu41} and \eqref{nu67}.

The extension \eqref{nu150}  is proved in \Cref{keyExt}.
\end{proof}

\begin{lem}\label{lemnuelse}
In the elliptic spectral sequence of $Y$, there are exotic extensions
\begin{enumerate} 
\item  \label{nu57}$\nu a[57,1] = \kappabar^2 a[20,2]$ 

\item  \label{nu62}  $\nu a[62,2] = \kappabar a[45,3]$

Dually, we have

\item  \label{nu108} $\nu a[108,2] = a[111,17]$ 

\item  \label{nu113} $\nu a[113,3] = a[116,18]$

\end{enumerate}
Together with \Cref{lemnufirstbatch}, there are no other non-trivial exotic $\nu$-extensions.
\end{lem}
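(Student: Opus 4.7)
The plan is to first establish the two ``basic'' extensions (1) and (2), then derive (3) and (4) from them by Brown--Comenetz duality, and finally rule out every remaining candidate extension by a stem-by-stem analysis. Throughout I will lean on the same toolkit used in \Cref{lemnufirstbatch}: the long exact sequence \eqref{LES} comparing $tmf_*Y$ with $tmf_*V(0)$, the $v_1$-linearity of the spectral sequence from \Cref{lem:e2-e4}, the duality statement of \Cref{lem:duality-for-extensions}, the relation $\kappabar\nu=0$ in every $tmf_*$-module, and Moss's theorem for translating Massey products to Toda brackets.

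For extension (1), $\nu a[57,1]=\kappabar^2 a[20,2]$, I would first lift $a[57,1]$ to $\pi_*(tmf\wedge V(0))$ via the connecting map $p_*$ in \eqref{LES}, check whether its image is a class whose $\nu$-multiple is already known in $tmf_*V(0)$, and transport the resulting relation back across the cofiber sequence. If the filtration jump from $1$ to $10$ is too large to pick up directly from $p_*$, the fallback is a Toda bracket argument modeled on the proof of \Cref{keyExt}: write $\kappabar^2 a[20,2]$ as $\langle\eta,\nu,-\rangle$ applied to a class in lower filtration, so that the identity $\nu^2=\langle\eta,\nu,\eta\rangle$ together with a suitable $\eta$-extension produced by \Cref{propextnutrick} realizes the desired $\nu$-multiple. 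For extension (2), $\nu a[62,2]=\kappabar a[45,3]$, the target has filtration $7$, so the direct $p_*$-comparison is more likely to succeed; specifically, $p_*a[62,2]$ and $p_*a[45,3]$ land on classes of $tmf_*V(0)$ among which the lemma \Cref{lemnufirstbatch}(2) already records an algebraic $\nu$-link, and I would use this to pin down the extension in $Y$.

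Extensions (3) and (4) should then be immediate from \Cref{lem:duality-for-extensions}. For $tmf\wedge Y$, the duality gives that a non-trivial $r$-multiplication on stem $k$ forces a non-trivial $r$-multiplication on stem $173-k-|r|$, so with $|\nu|=3$ one finds the pairings $57\leftrightarrow 113$ and $62\leftrightarrow 108$. Thus (4) is forced by (1) and (3) by (2), with the targets in the dual stems identified by filtration count.

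Finally, to show that no other exotic $\nu$-extensions occur, I would enumerate the possible $(b,c)$ pairs in \Cref{defnexoext} stem by stem in $0\leq t-s<192$ and eliminate each using one of the following: (a) $\kappabar\nu=0$ in $tmf_*$-modules, which kills every candidate target lying in the $\kappabar$-image of a $\nu$-divisible class; (b) the duality of \Cref{lem:duality-for-extensions}, which reduces each pair to its companion pair and lets us rule out one using the other; (c) the $v_1$-linearity of multiplication together with the known $v_1$-action on both sides; and (d) direct comparison with $tmf_*V(0)$ via $i_*$ and $p_*$. The main obstacle I anticipate is (1): the filtration jump of $9$ puts it beyond what can be read off from a single comparison map, so the Massey product/Moss argument appears necessary and must be threaded carefully to avoid indeterminacy, very much in the spirit of the proof of \Cref{keyExt}. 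The elimination step is largely bookkeeping once the duality is in hand.
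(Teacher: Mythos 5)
Your overall architecture (establish (1) and (2), dualize to get (3) and (4), then eliminate the rest) matches the paper, and your use of \Cref{lem:duality-for-extensions} for (3) and (4) is exactly right. But there are two genuine gaps. First, your proposed mechanisms for (1) and (2) would not go through as described. For (2), the target $\kappabar a[45,3]$ is carried by the bottom cell of $Y$ (it is $\Delta^2\nu\kappa[0]$ in Atiyah--Hirzebruch notation), so it dies under $p_*\colon Y\to\Sigma^2V(0)$ and the comparison with $tmf_*V(0)$ cannot detect the extension; and your fallback bracket $\langle\eta,\nu,-\rangle$ with $\nu^2=\langle\eta,\nu,\eta\rangle$ is the pattern of \Cref{keyExt}, which produces $\nu^2$-extensions out of $\eta$-extensions --- not the single $\nu$-extension needed here. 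The paper instead works in the $tmf$-based Atiyah--Hirzebruch spectral sequence using the \emph{other} cell decomposition of $Y$ (through $C_\eta$ and $\Sigma C_\eta$): by Lemma~5.3 of \cite{WX}, multiplication by $\nu$ on a class carried by the top cell of $C_\eta$ equals the Toda bracket $\langle\nu,-,\eta\rangle$ on the bottom cell, where $\eta$ is the attaching map. The resulting brackets $\langle\nu,\kappabar^3,\eta\rangle$ and $\langle\kappabar^2\kappa,\nu,\eta\rangle$ (after a shuffle using $\nu$-divisibility of $\kappabar^2\kappa$) are then evaluated by Moss's theorem against the differentials $d_{11}(\Delta^2\kappa)=\eta\kappabar^3$ and $d_5(\Delta\kappa\kappabar)=\nu\kappabar^2\kappa$ in the elliptic spectral sequence of $tmf$ itself. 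Identifying this bracket, and the specific $tmf$-differentials that feed Moss's theorem, is the essential content you are missing.

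Second, your claim that the elimination of all other candidates is ``largely bookkeeping'' is not accurate. The tools you list ($\kappabar\nu=0$, duality, $v_1$-linearity, comparison via $i_*$ and $p_*$) do dispose of most candidates, but the potential extensions $\nu a[31,1]=a[34,6]$ and $\nu a[36,2]=a[39,7]$ (and their duals in stems $134$ and $139$) cannot be ruled out this way. The paper handles these in \Cref{lemnuelsenot} by descending to the classical Adams spectral sequence: one pins down the exact Adams filtrations of source and target (using vanishing of nearby homotopy groups to force Adams differentials) and then checks, with Bruner's $\Ext$-program, that the relevant $h_2$-multiplication on the Adams $E_2$-page is zero. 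Without some such additional input, your elimination step is incomplete.
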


To prove \Cref{lemnuelse}, we use the $tmf$-based Atiyah--Hirzebruch spectral sequence for $Y$, whose filtration comes from the cellular filtration of $Y$. To set up notation, we have the $E_1$-page of this spectral sequence
$$E_1 = \oplus_{n=0}^3 \pi_{*}tmf \Longrightarrow \pi_{*+n} tmf \wedge Y.$$
For a homotopy class $\beta$ in $\pi_* tmf \wedge Y$, we denote by $\alpha[n]$ the element that detects it in the $E_1$-page of the $tmf$-based Atiyah--Hirzebruch spectral sequence, where $n$ is the Atiyah--Hirzebruch filtration of $\beta$, and $\alpha$ is a class in $\pi_* tmf$. The stem of $\beta$ is then the stem of $\alpha$ plus $n$.

\begin{proof}[Proof of \Cref{lemnuelse}.]
In our Atiyah--Hirzebruch notation, we can rewrite the two $\nu$-extensions of \Cref{lemnuelse} as
\begin{enumerate}
\item  \label{nu57-2} $\nu \cdot \kappabar^2 \kappa [3] = \Delta \eta \kappa \kappabar [1]$,
\item \label{nu62-2} $\nu \cdot \kappabar^3 [2] = \Delta^2 \nu \kappa [0]$.
\end{enumerate}

We first prove \eqref{nu62-2}, namely, that $\nu \cdot \kappabar^3 [2] = \Delta^2 \nu \kappa [0]$.
In $\pi_*tmf \wedge C_\eta$, we have
$$\nu \cdot \kappabar^3 [2] = \langle \nu, \kappabar^3, \eta \rangle [0]$$
by Lemma~5.3 of \cite{WX}.
By Moss's Theorem and the differential $d_{11}(\Delta^2 \kappa) = \eta \kappabar^3$ in the elliptic spectral sequence of $tmf$, we have
$$\langle \nu, \kappabar^3, \eta \rangle = \Delta^2 \nu \kappa.$$
Mapping this relation along the inclusion $C_\eta \rightarrow Y$ gives us \eqref{nu62-2}.

For \eqref{nu57-2}, note that in $\pi_*tmf \wedge \Sigma C_\eta$, we have
$$\nu \cdot \kappabar^2 \kappa [3] = \langle \nu,  \kappabar^2 \kappa, \eta \rangle [1]$$
by Lemma~5.3 of \cite{WX}.
Since $\kappabar^2 \kappa$ is $\nu$-divisible in $\pi_* tmf$, we may shuffle
$$\langle \nu,  \kappabar^2 \kappa, \eta \rangle = \langle \kappabar^2 \kappa, \nu,  \eta \rangle.$$
By Moss's theorem and the differential $d_{5}(\Delta \kappa \kappabar) = \nu \kappabar^2 \kappa$ in $tmf$, we have
$$\langle \kappabar^2 \kappa, \nu,  \eta \rangle= \Delta \eta \kappa \kappabar.$$
Pulling back this relation along the quotient map $Y \rightarrow \Sigma C_\eta$ gives \eqref{nu57-2}.

Extensions (3) and (4) follow by duality. The fact that there are no other exotic $\nu$-extensions is discussed below.
\end{proof}

Most possibilities for other exotic $\nu$-extensions are ruled out in a straightforward way by analyzing $i_*$ and $p_*$, duality, the fact that $\kappabar\nu=0$. However, the following two extensions require us to analyze the classical Adams Spectral Sequence. The following proof depends on checking algebraic extensions in 
\[\Ext_{\mathcal{A}}((H\mathbb{F}_2)^*(tmf\wedge Y),(H\mathbb{F}_2)^*)\] 
using Bruner's $\Ext$-program \cite{Brunerprogram}. See \Cref{brunerextcharts} for classical Adams $E_2$-charts for $tmf \wedge V(0)$ and $tmf \wedge Y$, and see \cite[Chapter 13]{tmfbook} for $tmf$.

\begin{lem}\label{lemnuelsenot}
In $\pi_*tmf \wedge Y$,
\begin{enumerate} 

\item  \label{nu31} $\nu a[31,1] =0$ 

\item  \label{nu36} $\nu a[36,2] =0$

Dually, we have
\item \label{nu134}  $\nu a[134,2] =0$ 
\item  \label{nu39}  $\nu a[139,3] =0$ 
\end{enumerate}

\end{lem}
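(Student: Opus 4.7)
The plan is first to reduce to two cases by duality, then to rule out those two using the classical Adams spectral sequence.

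By \Cref{lem:duality-for-extensions} applied to $\nu\in \pi_3(tmf)$, a non-trivial multiplication by $\nu$ on stem $k$ of $\pi_*(tmf\wedge Y)$ is equivalent to a non-trivial multiplication by $\nu$ on stem $173-k-3$. Since $173-31-3 = 139$ and $173-36-3 = 134$, extension (1) is equivalent to extension (4) and extension (2) is equivalent to extension (3). It therefore suffices to prove $\nu a[31,1]=0$ and $\nu a[36,2]=0$.

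For these, the plan is to pass to the classical Adams spectral sequence
\[\Ext_{\mathcal{A}}^{s,t}(H^*(tmf\wedge Y;\mathbb{F}_2), \mathbb{F}_2) \Longrightarrow \pi_{t-s}(tmf\wedge Y)_{2}^{\wedge},\]
computed via Bruner's $\Ext$-program (see \Cref{brunerextcharts}), where multiplication by $\nu$ is represented by $h_2\in \Ext^{1,4}_{\mathcal{A}}$. Any non-trivial $\nu$-multiplication in $\pi_*(tmf\wedge Y)$ must be witnessed in the Adams spectral sequence by an $h_2$-multiplication, possibly as a hidden extension. Accordingly, I would locate the Adams representatives of $a[31,1]$ and $a[36,2]$, compute their products with $h_2$ on the Adams $E_2$-page, and verify that the result is either zero or lies in a bidegree that cannot detect a non-zero target in the elliptic spectral sequence. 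Combined with a filtration count that rules out hidden $h_2$-extensions between the relevant bidegrees, this gives the desired vanishing.

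The main obstacle will be the identification of Adams representatives for $a[31,1]$ and $a[36,2]$, and of the short list of candidate targets (which, by $\kappabar\nu=0$, $v_1$-linearity, and the structure of $E_\infty$ computed earlier, is already quite restricted). I would anchor this matching via the maps of classical Adams spectral sequences induced by $i\colon V(0)\to Y$ and $p\colon Y\to \Sigma^2 V(0)$, together with the known Adams computation of $tmf\wedge V(0)$ from \cite[Chapter 9]{BrunerRognesbook} and the elliptic computation in \Cref{secV0diffext}. Once the matching is fixed, the needed verifications become routine chart inspections in the $\Ext$-output for $tmf\wedge Y$.
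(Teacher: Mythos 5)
Your proposal follows essentially the same route as the paper: reduce (3) and (4) to (1) and (2) by duality, then pass to the classical Adams spectral sequence, pin down the Adams filtrations of the source and target (using the maps induced by $i\colon V(0)\to Y$ and $p\colon Y\to\Sigma^2V(0)$ and known data for $tmf$ and $tmf\wedge V(0)$) so that any extension would be non-hidden, and finally check with Bruner's $\Ext$-program that the relevant $h_2$-multiplication on the Adams $E_2$-page misses the target. The paper carries out exactly this plan, showing the filtrations are $7$ versus $8$ for (1) and $8$ versus $9$ for (2).
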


\begin{proof}
To show this, we need to prove that
\begin{enumerate} 
\item   $\nu a[31,1]  \neq a[34,6]$, 

\item $\nu a[36,2] \neq a[39,7]$.
\end{enumerate}
In our Atiyah--Hirzebruch notation, we can rewrite these extensions as
\begin{enumerate}
\item  \label{nu31-2} $\nu \cdot \kappa^2[3] \neq \kappa \kappabar [0]$,
\item  \label{nu36-2} $\nu \cdot \Delta \nu^3 [3] \neq \Delta \eta \kappa [0]$.
\end{enumerate}
We give a proof for \eqref{nu31-2} that $\nu \cdot \kappa^2[3] \neq \kappa \kappabar [0]$ using the classical Adams Spectral Sequence.
We consider the Adams Spectral Sequence for $tmf \wedge Y$ and its subquotients. We will show that the Adams filtration of $\kappa^2[3]$ is 7 and the Adams filtration of $\kappa \kappabar [0]$ is 8. The fact that there is no such $\nu$-extension follows from the algebraic fact that on the Adams $E_2$-page, the $h_2$-multiple of the first element is not the second element, which is checked by a computer program.

For the class $\kappa \kappabar [0]$, it is clear that the Adams filtration of $\kappa \kappabar$ in $\pi_{34} tmf$ is 8, (it is detected by the element $d_0g$,) and it maps nontrivially on the Adams $E_2$-pages along the map $tmf \rightarrow tmf \wedge Y$. The image under this map, which we denoted by $d_0g[0]$, is a permanent cycle. It cannot be killed due to filtration reasons. Therefore, the class $\kappa \kappabar [0]$ is detected by $d_0g[0]$ and, in particular, it has Adams filtration 8.

For the class $\kappa^2[3]$, we first consider the class $\kappa^2[1]$ in $\pi_{29} tmf \wedge V(0)$. 
Since $\pi_{29}tmf = 0, \pi_{30}tmf =0$, we have $\pi_{30}tmf \wedge V(0)=0$. This forces three nonzero Adams differentials eliminating the three elements in the Adams $E_2$-page for $tmf \wedge V(0)$.  In particular, we learn that $\kappa^2[1]$ in $\pi_{29} tmf \wedge V(0)$ is detected by the only remaining element $j[0]$ in Adams filtration 7, and that there is a nonzero $d_3$-differential from $(t-s, s)$-bidegrees $(31,6)$ to $(30,9)$. 

Considering the quotient map $tmf \wedge Y \rightarrow tmf \wedge \Sigma^2 V(0)$, we learn that $\kappa^2[3]$ is detected in Adams filtration at most 7. Considering the induced map on the Adams $E_2$-pages, we also learn that it is an isomorphism on the $(t-s, s)$-bidegrees $(31,6)$ and $(30,9)$. So, in particular, the element in $(t-s, s)$-bidegree $(31,6)$ does not survive. Therefore, $\kappa^2[3]$ is detected in Adams filtration exactly 7.

For \eqref{nu36-2}, that $\nu \cdot \Delta \nu^3 [3] \neq \Delta \eta \kappa [0]$, we use the Adams spectral sequence again in a very similar way.
We will show that the Adams filtration of $\Delta \nu^3 [3]$ is 8 and the Adams filtration of $\Delta \eta \kappa [0]$ is 9. The fact that there is no such extensions then follows as in \eqref{nu31-2}.

For the class $\Delta \eta \kappa [0]$, it is clear that the Adams filtration of $\Delta \eta \kappa$ in $\pi_{39} tmf$ is 9, (it is detected by the element $u$,) and it maps nontrivially on the Adams $E_2$-pages along the map $tmf \rightarrow tmf \wedge Y$. The image under this map, which we denoted by $d_0g[0]$, is a permanent cycle. It cannot be killed due to filtration reasons. Therefore, the class $\Delta \eta \kappa [0]$ is detected by $u[0]$, and in particular it has Adams filtration 9.

For the class $\Delta \nu^3[3]$, we first consider the class $\Delta \nu^3$ in $\pi_{33}tmf$. 
The class $\Delta \nu^3$ in $\pi_{33}tmf$ is detected in the Adams filtration 8. Considering the quotient map $tmf \wedge Y \rightarrow \Sigma^3 tmf $, we learn that $\Delta \nu^3[3]$ is detected in Adams filtration at most 8. To show that it is detected in  Adams filtration 8, we will show that the only other element in lower filtration, the class in $(t-s, s)$-bidegree $(36,7)$, supports a nonzero $d_2$-differential.   

The maps in the zigzag 
\[\xymatrix{
tmf \wedge S^1 & tmf \wedge V(0) \ar[l] \ar[r] & tmf \wedge Y
}\]
 are isomorphisms in $(t-s, s)$-bidegrees $(36,7)$ and $(35,9)$ on Adams $E_2$-pages. So the claimed nonzero $d_2$-differential follows from the one in the Adams spectral sequence of $tmf$, from $(t-s, s)$-bidegrees $(35,7)$ and $(34,9)$.
\end{proof}

We now turn to the study of the $v_1$-extensions. First, recall the discussion on $v_1$-self maps and $A_1$ from \Cref{v1selfmaps}. The homotopy groups of $tmf\wedge A_1$ are studied by the third author in \cite{PhamA1}. Furthermore, the knowledge of the homotopy groups of $tmf\wedge A_1$ are sufficient to allow us to deduce much of the action of $v_1$ on the homotopy groups of $tmf\wedge Y$, via the long exact sequence on homotopy of the cofiber sequence
\[ tmf\wedge \Sigma^2 Y\xrightarrow{v_1} tmf\wedge Y \rightarrow tmf\wedge A_1.
\]
Since the outcome depends on the choice of the $v_1$-self-map, we call a $v_1$-self-map of type $I$ if its cofiber is $A_1[01]$ or $A_1[10]$ and of type $II$, otherwise. Again, see \Cref{v1selfmaps} for the definition of $A_1[ij]$.
\begin{lem}\label{lemv1ext}
 (a) For all $v_1$-self maps of $Y$, there exotic $v_1$-extensions and those induced by $\kappabar$-linearity:
\begin{enumerate}
	\item \label{normal1} $v_1 a[9,1] = a[11,3]$ 
	\item $v_1  a[15,1] =  a[17,3]$ 
	\item $v_1 a[30,2] =  \kappabar a[12,2]$
	\item $v_1 a[33,1] = a[35,3]$
	\item $v_1 a[38,2] = \kappabar a[20,2]$
 \item $v_1 \Delta^{2} a[9,1] = \Delta^{2}a[11,3]$ 
	\item $v_1 a[99,1] = \kappabar^3 a[21,3]$
	\item $v_1 a[104,2] = \kappabar^4 a[26,0]$
	\item \label{intrus1} $v_1 a[105,1] = a[107,3]$ 
	\item $v_1(v_1 a[108,2]) = \kappabar^3 a[52,0]$
	\item $v_1 a[114,2] = \kappabar^4 a[36,2]$
	\item either \label{intrus2}$v_1 a[116,2] = \kappabar^2 a[78,0]$ or $v_1 a[116,2]  = \kappabar a[98,0]$ 
	\item $v_1 \kappabar a[105,1] = \kappabar^3 a[67,3]$ 
	\item $v_1 a[129,1] = a[131,3]$ 
	\item\label{intrus3} either $v_1 a[131,3] = \kappabar^2 a[93,3]$ or $v_1  a[131,3] = \kappabar a[113,3]$.
	\item $v_1 a[134,2] = \kappabar a[116,2]$ 
	\item $ v_1 \kappabar a[115,3] = \kappabar a[117,13]$
		\item $v_1(v_1 a[139,3]) = \kappabar^3 a[83,3]$ 
\item $v_1 \kappabar a[120,3] = \kappabar a[122,14]$
	\item $v_1(v_1 \kappabar a[124,0]) = \kappabar^4 a[68,2]$ 
	\item $v_1 a[147,1] = \kappabar a[129,1]$ 
	\item $v_1 a[152,2] = \kappabar a[134,2]$ 
	\item $v_1 a[156,10] = a[158,16]$
	\item $v_1 a[162,2] = \kappabar^2 a[124,0]$ 
\end{enumerate}

(b) For $v_1$-self-maps of type $I$, there are also the following $v_1$-extensions, and those induced from these by $\kappabar$-linearity:
\begin{enumerate}
\item $v_1 a[68,2] = \kappabar^2 a[30,2]$ 
	\item $v_1 a[83,3] = \kappabar^4 a[15,1]$ 
\end{enumerate}
\end{lem}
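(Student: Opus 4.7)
The plan is to exploit the cofiber sequence $tmf\wedge \Sigma^2 Y \xrightarrow{v_1} tmf\wedge Y \to tmf\wedge A_1$ in combination with the computation of $\pi_*(tmf\wedge A_1)$ carried out by the third author in \cite{PhamA1}. The induced long exact sequence
\begin{align*}
\cdots \to \pi_{n+2}(tmf\wedge Y) \xrightarrow{v_1} \pi_n(tmf\wedge Y) \to \pi_n(tmf\wedge A_1) \to \pi_{n+1}(tmf\wedge Y) \to \cdots
\end{align*}
translates each $v_1$-extension question in $\pi_*(tmf\wedge Y)$ into a question about the map $tmf\wedge Y \to tmf\wedge A_1$, which, once $\pi_*(tmf\wedge A_1)$ is known, is essentially algebraic: a proposed extension $v_1 \alpha = \beta$ holds precisely when $\alpha$ lifts along the connecting map to a class of $\pi_*(tmf\wedge A_1)$ whose image in $\pi_*(tmf\wedge Y)$ is $\beta$.

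For each of the twenty-four extensions in part (a), I would verify the claim as follows. A substantial subset — for example the low-stem items $(1)$--$(6)$ and their $\Delta^{8k}$-periodic repeats — follows by direct inspection of the $E_\infty$-page combined with $\kappabar$-linearity, since they are forced by $v_1$-multiplications already visible at $E_2$ (or visible on $V(0)$) together with the known patterns of $\kappabar$-divisibility. For the remaining items I would transfer known multiplications from $tmf_*V(0)$ across $i_*$ and $p_*$ in the long exact sequence of the cofiber $V(0)\to Y \to \Sigma^2 V(0)$, using that $v_1$ commutes with both maps. For the extensions in the upper stems, such as items $(21)$--$(24)$, I would invoke the duality argument of \Cref{rem:usingduality} to move each question to a lower, already-settled stem; this both supplies the extensions and rules out the possible alternatives.

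The conditional extensions (12) and (15) are the cleanest illustration of the obstruction inherent in the problem. Both potential targets differ by the image of a $\kappabar$-multiple along the long exact sequence, and the choice of $A_1$-model can resolve the ambiguity either way; the statement as given records only the disjunction, which holds regardless of the model. For part (b), the extensions $v_1 a[68,2] = \kappabar^2 a[30,2]$ and $v_1 a[83,3] = \kappabar^4 a[15,1]$ depend on which of the four $A_1[ij]$ appears as the cofiber; tracking the $Sq^4$-action described in \Cref{v1selfmaps} through the Adams-style filtration on $\pi_*(tmf\wedge A_1)$ shows that these two extensions occur precisely for the type-I models $A_1[01]$ and $A_1[10]$.

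The main obstacle is the bookkeeping: each of the four models $A_1[ij]$ potentially gives different detection for every $v_1$-multiplication, and one must verify that the two extensions isolated in part (b) are the \emph{only} ones sensitive to this choice, while the items in part (a) are insensitive. This is ultimately a finite but delicate stem-by-stem comparison against the charts of \cite{PhamA1}, organized so that duality and $\kappabar$-linearity reduce the work to the generators of $\kappabar$-free families identified in \Cref{E7gen}.
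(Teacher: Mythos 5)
Your central strategy --- running the long exact sequence of $tmf\wedge\Sigma^2 Y\xrightarrow{v_1} tmf\wedge Y\to tmf\wedge A_1$ against the computation of $\pi_*(tmf\wedge A_1)$ from \cite{PhamA1} --- is exactly the paper's main tool, and for most of part (a) and all of part (b) the argument is the one the paper gives: one checks that the claimed \emph{target} maps to zero in $\pi_*(tmf\wedge A_1[ij])$, hence by exactness lies in the image of multiplication by $v_1$, and then degree and filtration considerations leave only the stated source. (Your indexing is off: a $v_1$-self map raises degree by $2$, so the map is $\pi_{n-2}(tmf\wedge Y)\to\pi_n(tmf\wedge Y)$, and the clean criterion is exactness at the target, not a lift of the source along the connecting map.)

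There are, however, genuine gaps. First, items (1)--(6) are \emph{exotic} extensions: for instance $v_1^3a[5,1]=0$ on $E_2$, so $v_1a[9,1]=a[11,3]$ is a filtration-jumping product that is not ``already visible at $E_2$''; it too must come from the $A_1$ comparison, not from inspection of the $E_\infty$-page. Second, your appeal to duality for items (21)--(24) is unsupported: \Cref{rem:usingduality} applies to multiplication by elements $r\in\pi_*tmf$, whereas $v_1$ is one of eight non-canonical self-maps of $Y$, and Brown--Comenetz duality has not been shown to intertwine a chosen $v_1$-self-map with another; the paper handles these items with the same $A_1$ argument as the rest. Third, item (9), $v_1a[105,1]=a[107,3]$, does not follow from the $A_1$ comparison nor from any periodicity visible on $E_\infty$ (the class $\Delta^4$ supports differentials, so $\Delta^4$-periodicity is not available on $\pi_*(tmf\wedge Y)$); the paper needs a separate argument, passing to the truncated quotient $\mathrm{sk}_4(tmf\wedge Y)$ of \Cref{sectrunss}, where $\Delta^4$ survives to $\pi_{96}(\mathrm{sk}_4(tmf))$ and acts, so that item (1) can be propagated by $\Delta^4$-linearity. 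Your proposal has no mechanism to reach this item. Finally, for items (12) and (15) the paper does \emph{not} claim that the choice of $A_1$-model resolves the ambiguity --- these remain undecided (see \Cref{remundetermined}); only the disjunction is established.
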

\begin{proof}
 For all parts, except for $(\ref{intrus1})$, $(\ref{intrus2})$, $(\ref{intrus3})$, we see, by inspecting the relevant parts of the homotopy groups of appropriate $tmf\wedge A_1[ij]$, that the targets of the stated $v_1$ extensions are sent to zero via the natural map
 \[\pi_*(tmf\wedge Y)\rightarrow \pi_*(tmf\wedge A_1[ij]).\] 
 Therefore, they are in the image of a $v_1$-multiplication and the stated $v_1$-extensions are the only possibilities.
 
 For part (\ref{intrus1}), consider 
 \[\mathrm{sk}_{4}(tmf\wedge Y) = (tmf\wedge Y) / (tmf\wedge Y)_{5},\] 
 where $(tmf\wedge Y)_{5}$ is the $5$th term in the $X(4)$-Adams tower of $tmf\wedge Y$. It is a module over $\mathrm{sk}_{4}(tmf)$. 
 Since $\Delta^4 \in \pi_{96}(\mathrm{sk}_{4}(tmf))$,
 this element acts on $\pi_*\mathrm{sk}_{4}(tmf\wedge Y)$. We see that the induced map $\pi_*(tmf\wedge Y) \rightarrow \pi_*\mathrm{sk}_{4}(tmf\wedge Y) $ sends $a[9,1]$ and $a[11,3]$ to non-trivial elements, which we denote by the same names. Furthermore, it sends $a[105,1]$ and $a[107,3]$ to elements detected by the products $\Delta^4 a[9,1]$ and $\Delta^4 a[11,3]$. Since $v_1 a[9,1] = a[11,3]$ by part (\ref{normal1}), 
 \[v_1 \Delta^4 a[9,1] = \Delta^4 v_1 a[9,1] = \Delta^4 a[11,1]\] 
 in $\pi_*\mathrm{sk}_{4}(tmf\wedge Y)$. It follows that $v_1 a[105,1]$ must be detected by $a[107,3]$ in the $E_{\infty}$-term of the elliptic spectral sequence of $tmf\wedge Y$.
\end{proof}

\begin{rem}\label{remundetermined}
 We are left with two undecided $v_1$-extensions, namely \eqref{intrus2} and \eqref{intrus3} in \Cref{lemv1ext}. We expect that some of these unsettled $v_1$-extensions can be resolved using comparison with the classical Adams Spectral Sequence for $tmf\wedge V(0)$, $tmf\wedge Y$ and $tmf\wedge V(0)/v_1^4$. These will soon appear in upcoming work of \cite{BrunerRognesbook}.
\end{rem}
\newpage

\begin{figure}[H]
\includegraphics[page=1, width=0.9\textwidth]{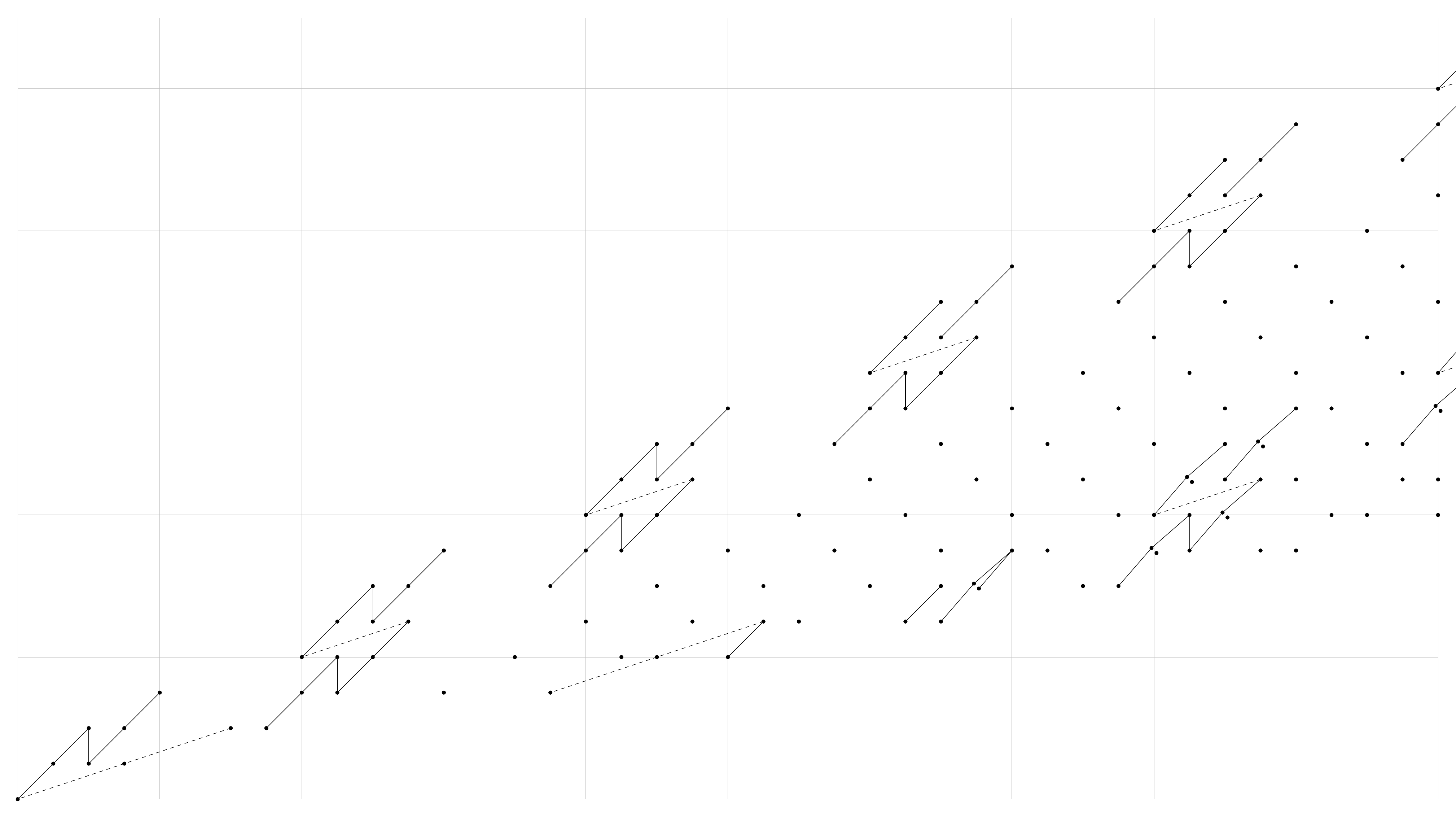}
\vspace{0.3in}
\includegraphics[page=1, width=0.9\textwidth]{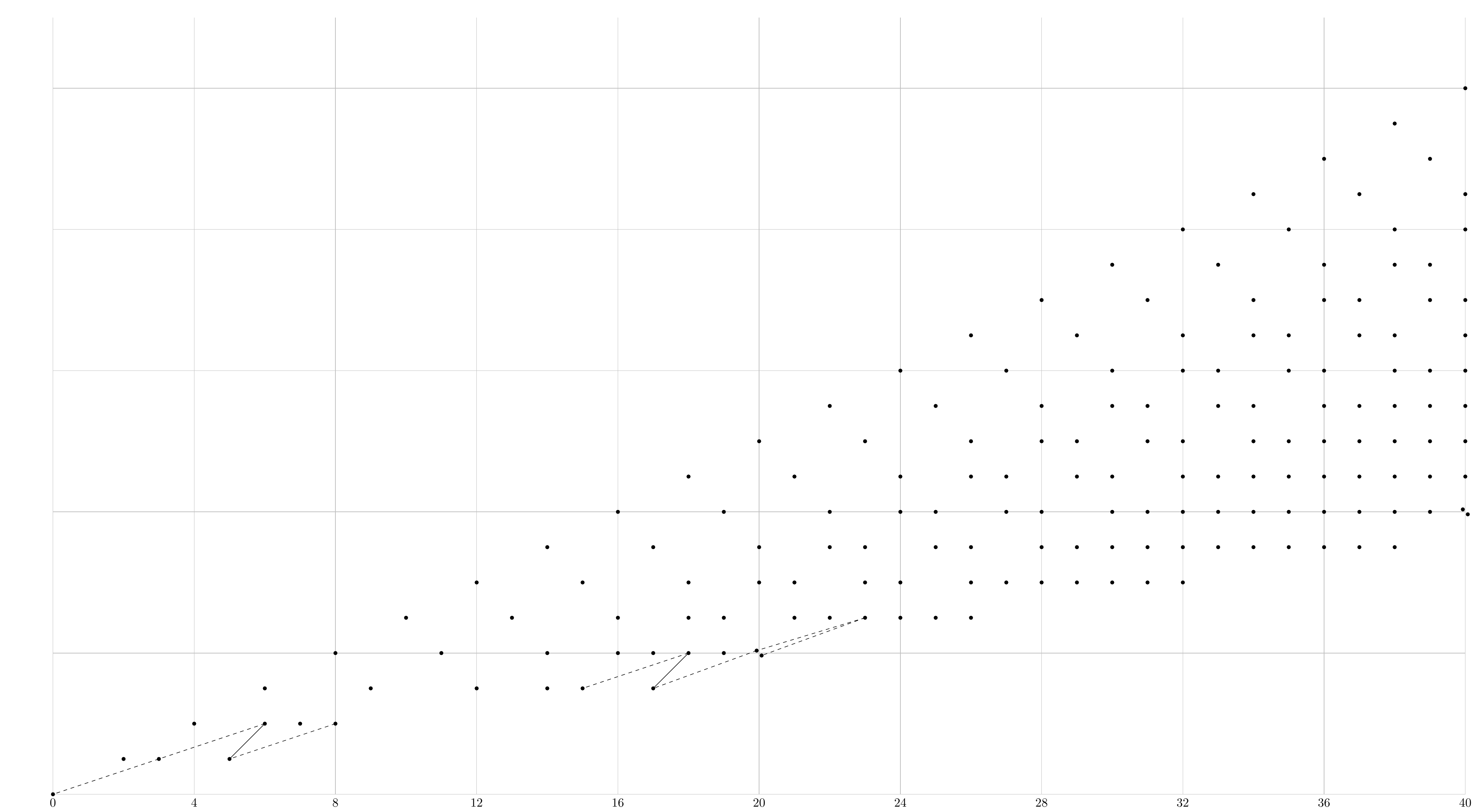}
\caption{Classical Adams Spectral Sequence $E_2$-pages for $tmf \wedge V(0)$ (top) and $tmf \wedge Y$ (bottom) computed with Bruner's $\Ext$-program \cite{Brunerprogram}.}
\label{brunerextcharts}
\end{figure}


\newpage 

\begin{figure}[H]
\includegraphics[page=1, width=\textwidth]{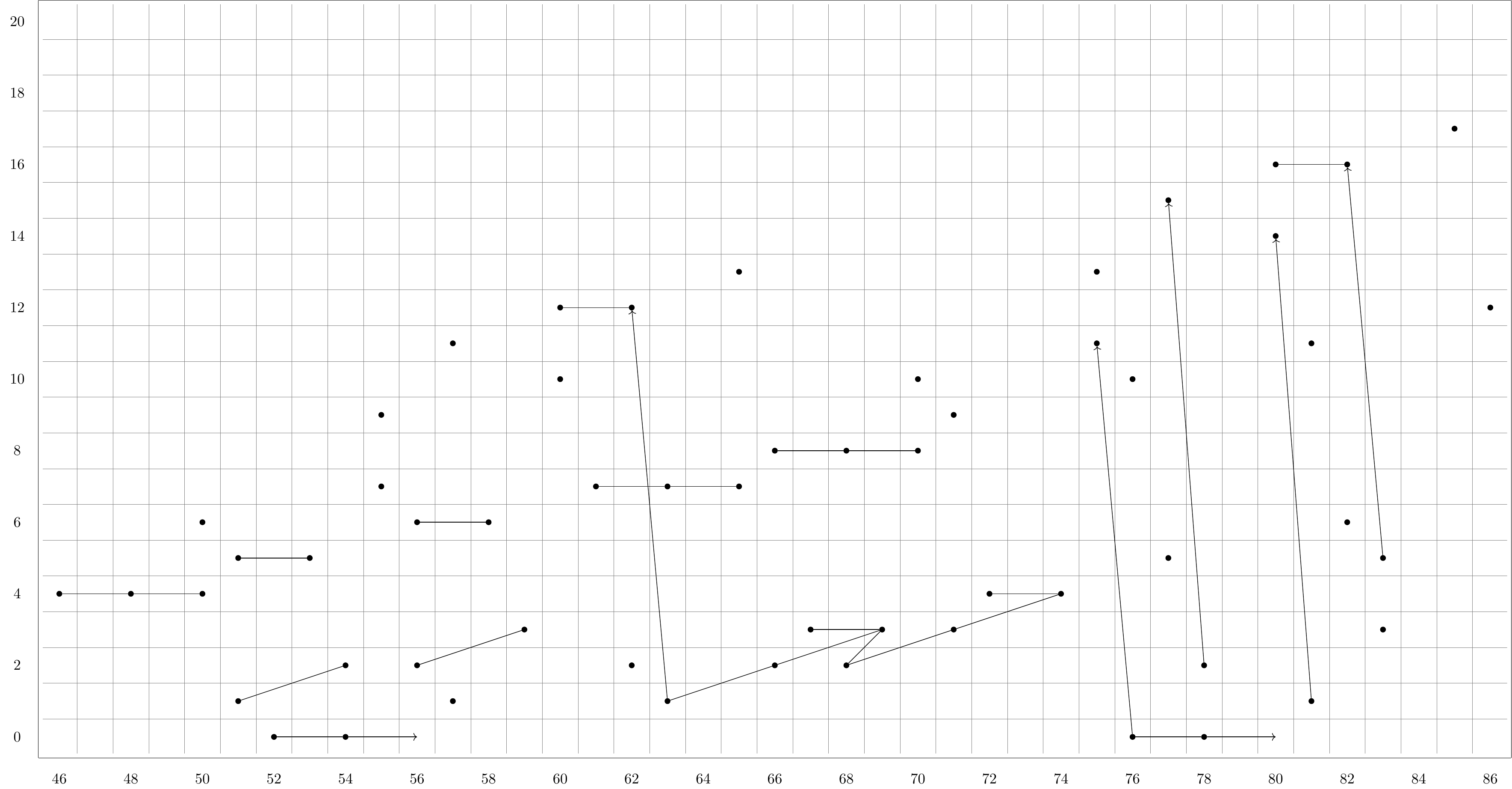}

\vspace{0.3in}

\includegraphics[page=1, width=\textwidth]{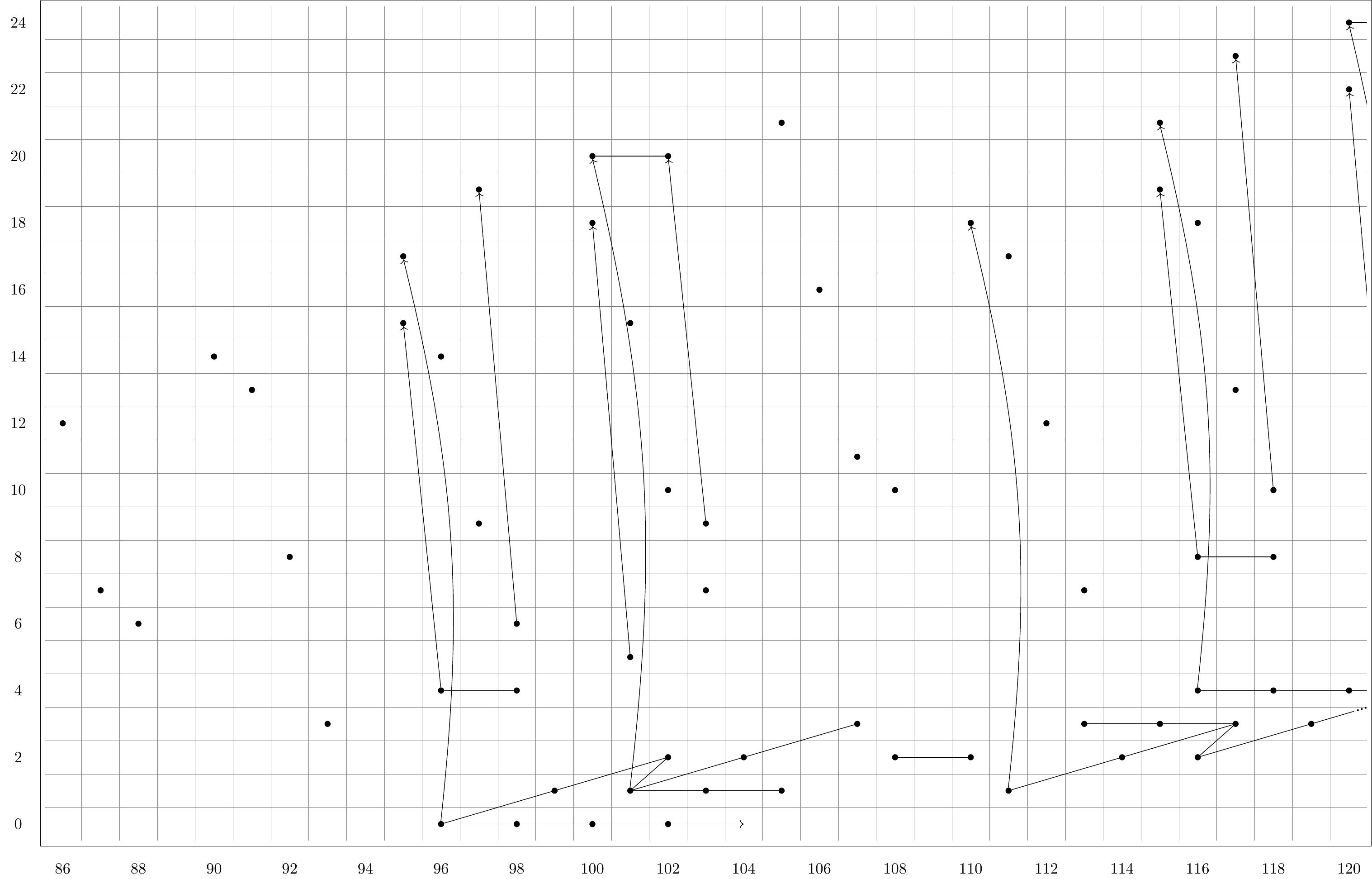}
\caption{$d_{11}$ to $d_{23}$ differentials in stems 46 to 120}
\label{d11d23one}
\end{figure}

\newpage

\begin{figure}[H]
\includegraphics[page=1, width=\textwidth]{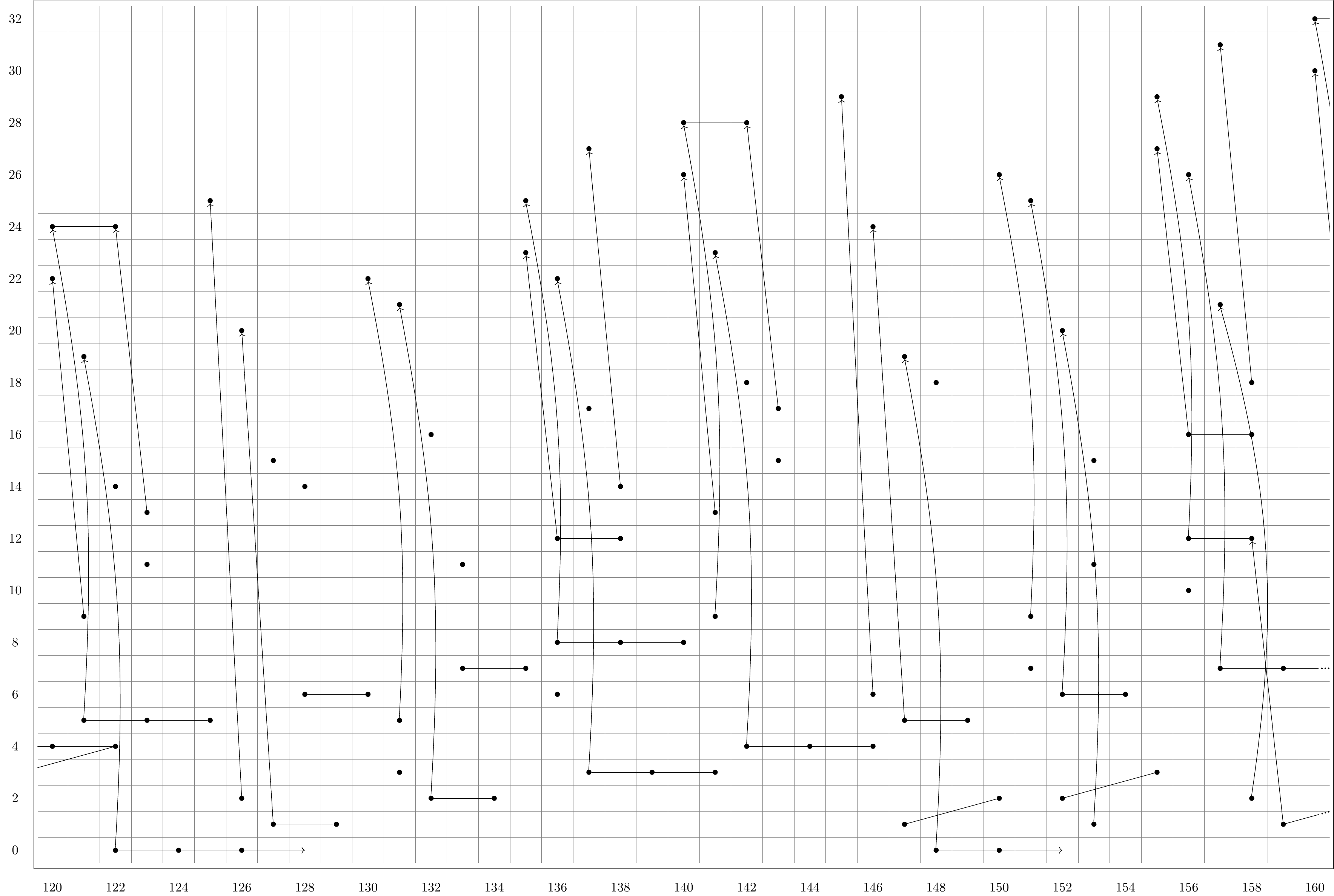}

\vspace{0.3in}

\includegraphics[page=1, width=\textwidth]{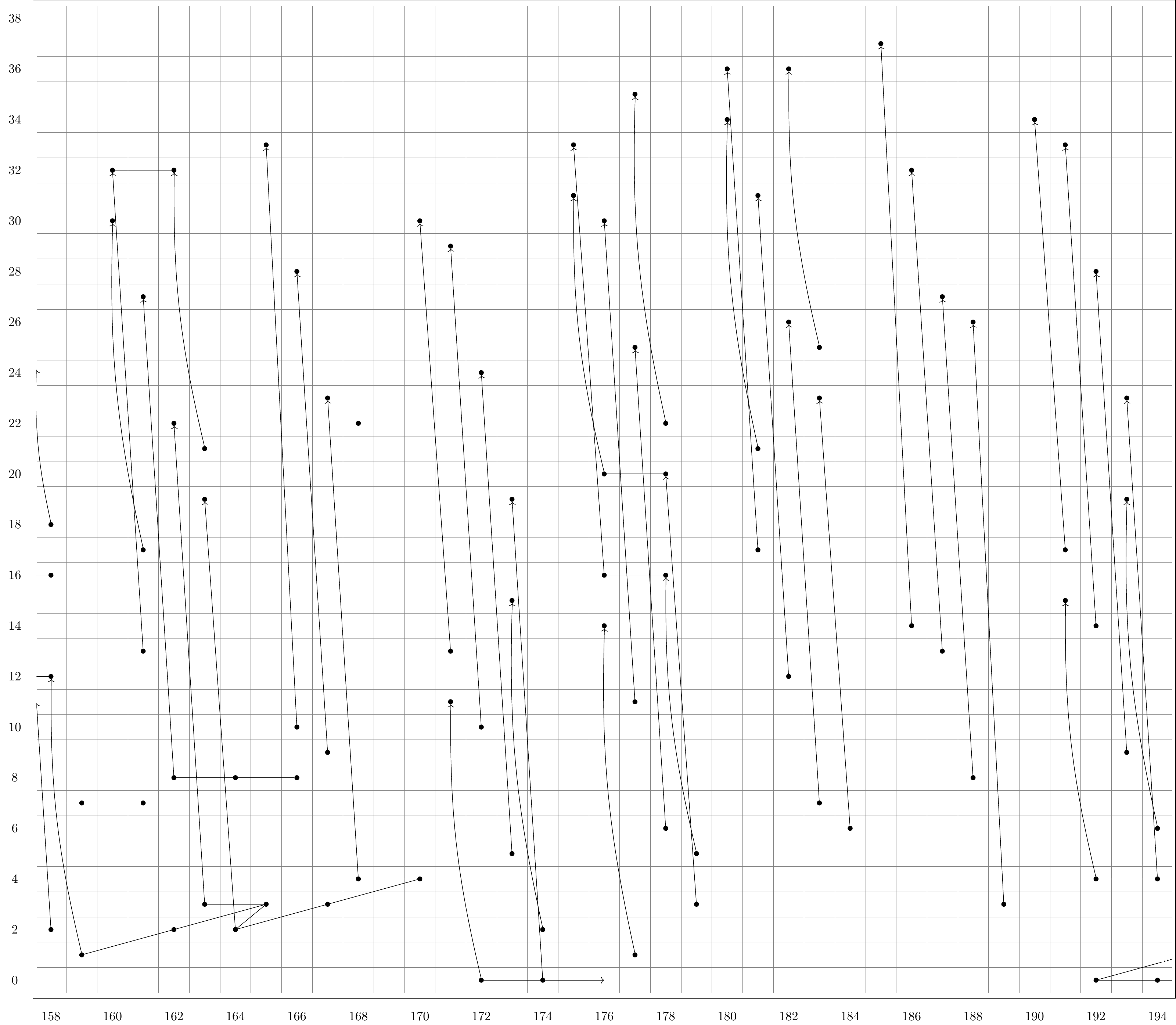}
\caption{$d_{11}$ to $d_{23}$ differentials in stems $120$ to $194$}
\label{d11d23four}
\end{figure}
\newpage

\begin{figure}[H]
\includegraphics[page=1, width=\textwidth]{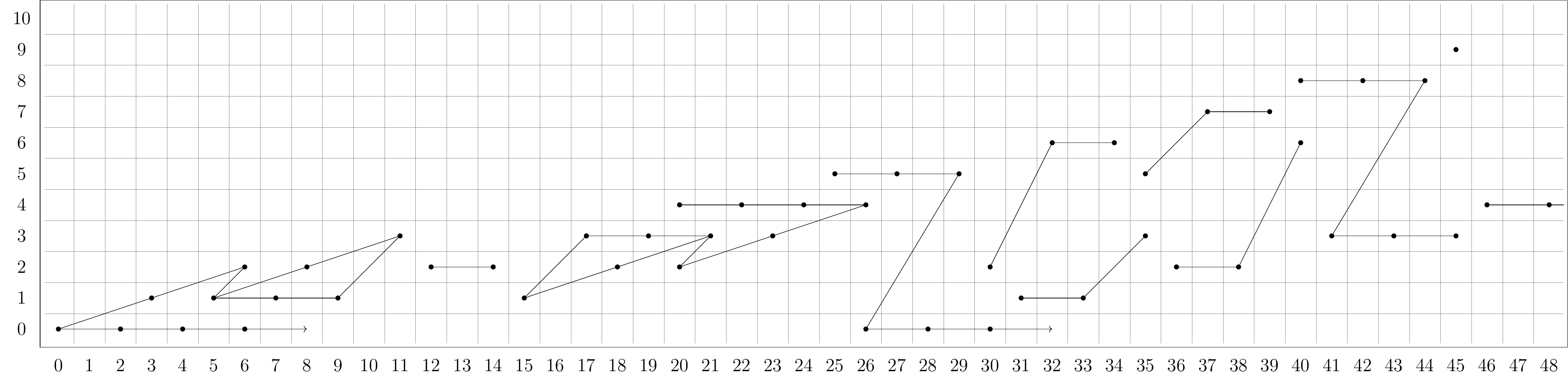}

\vspace{0.3in}

\includegraphics[page=1, width=\textwidth]{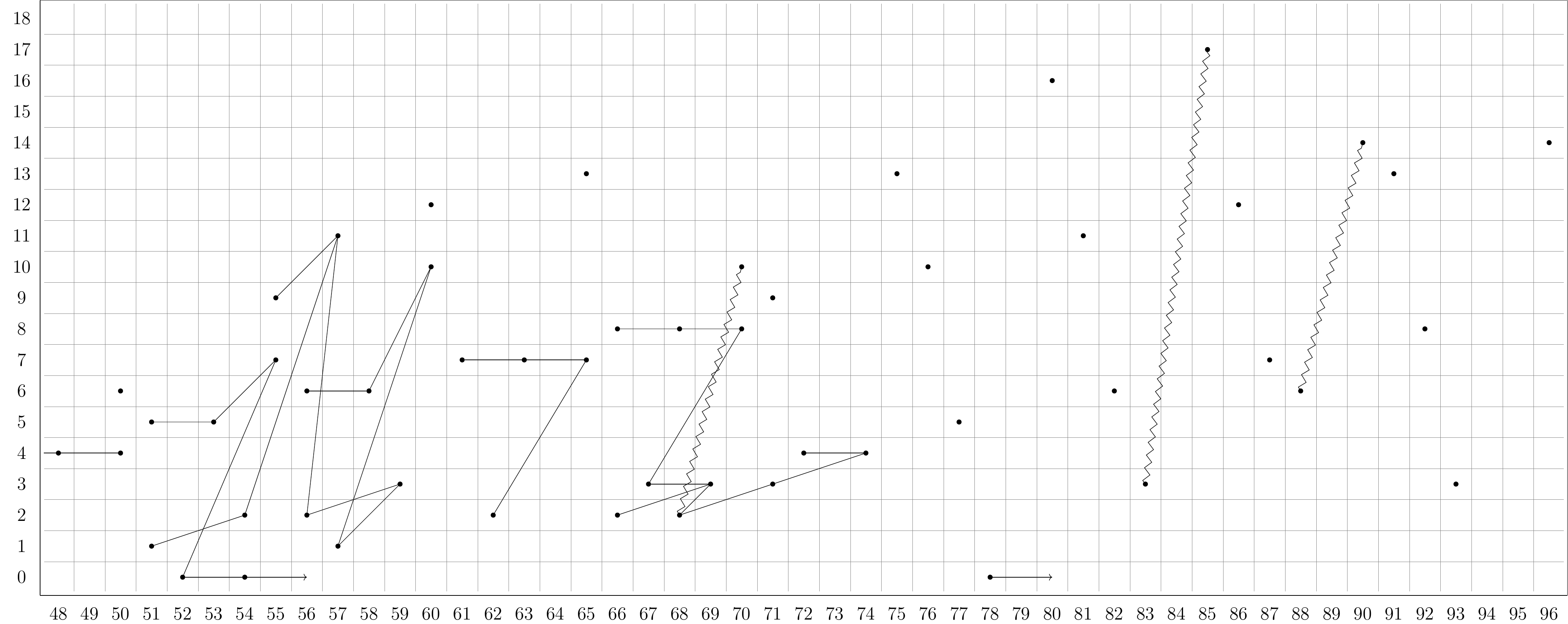}
\caption{Exotic extensions in the elliptic spectral sequence of $tmf\wedge Y$. This records $tmf_*Y \cong \widetilde{tmf}_{*+3}(\R P^2 \wedge \mathbb{C} P^2)$. The zigzags denote exotic $v_1$-extensions that occur only for certain choices of $v_1$ self-maps. }
\label{exoextY096}
\end{figure}

\begin{figure}[H]
\includegraphics[page=1, width=\textwidth]{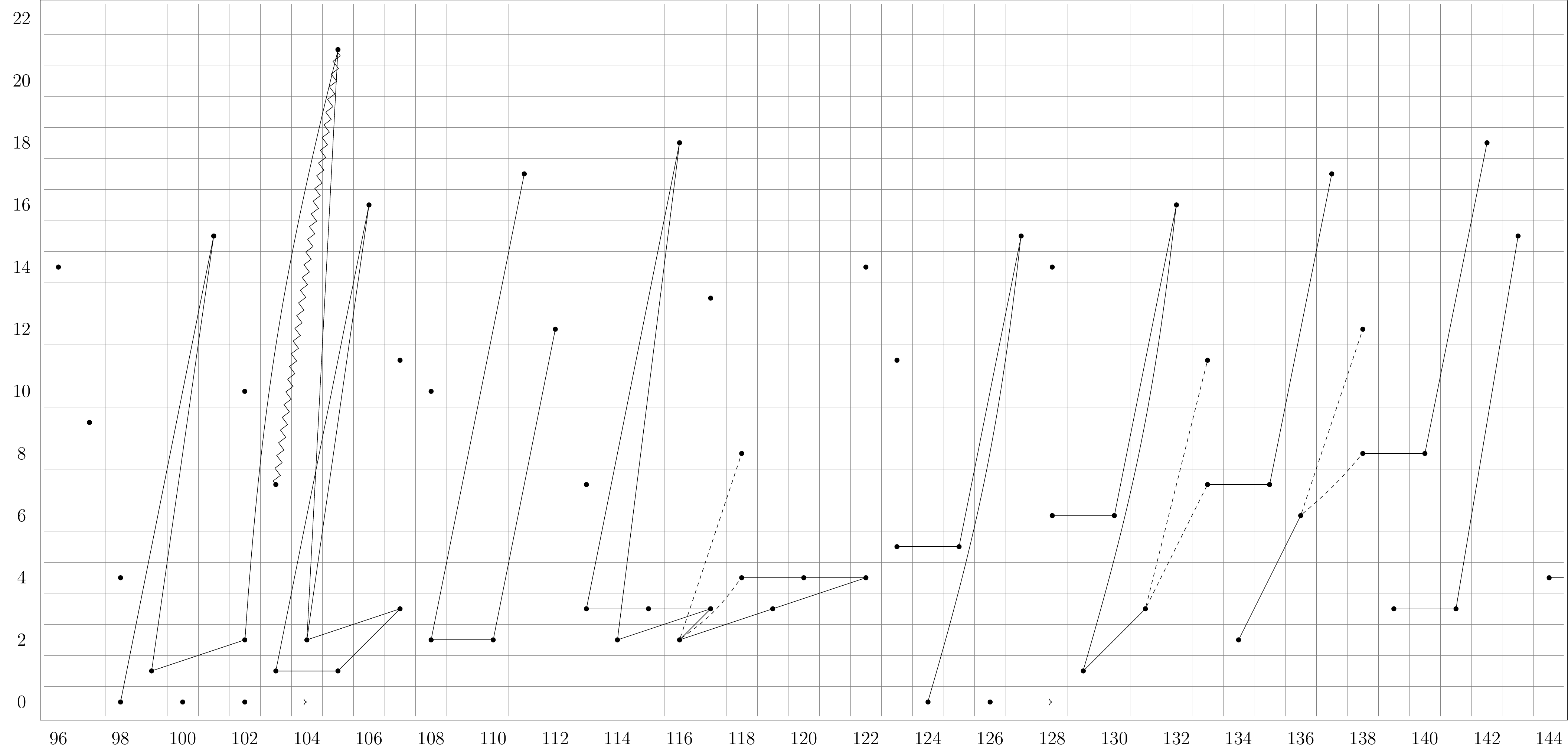}

\vspace{0.3in}

\includegraphics[page=1, width=\textwidth]{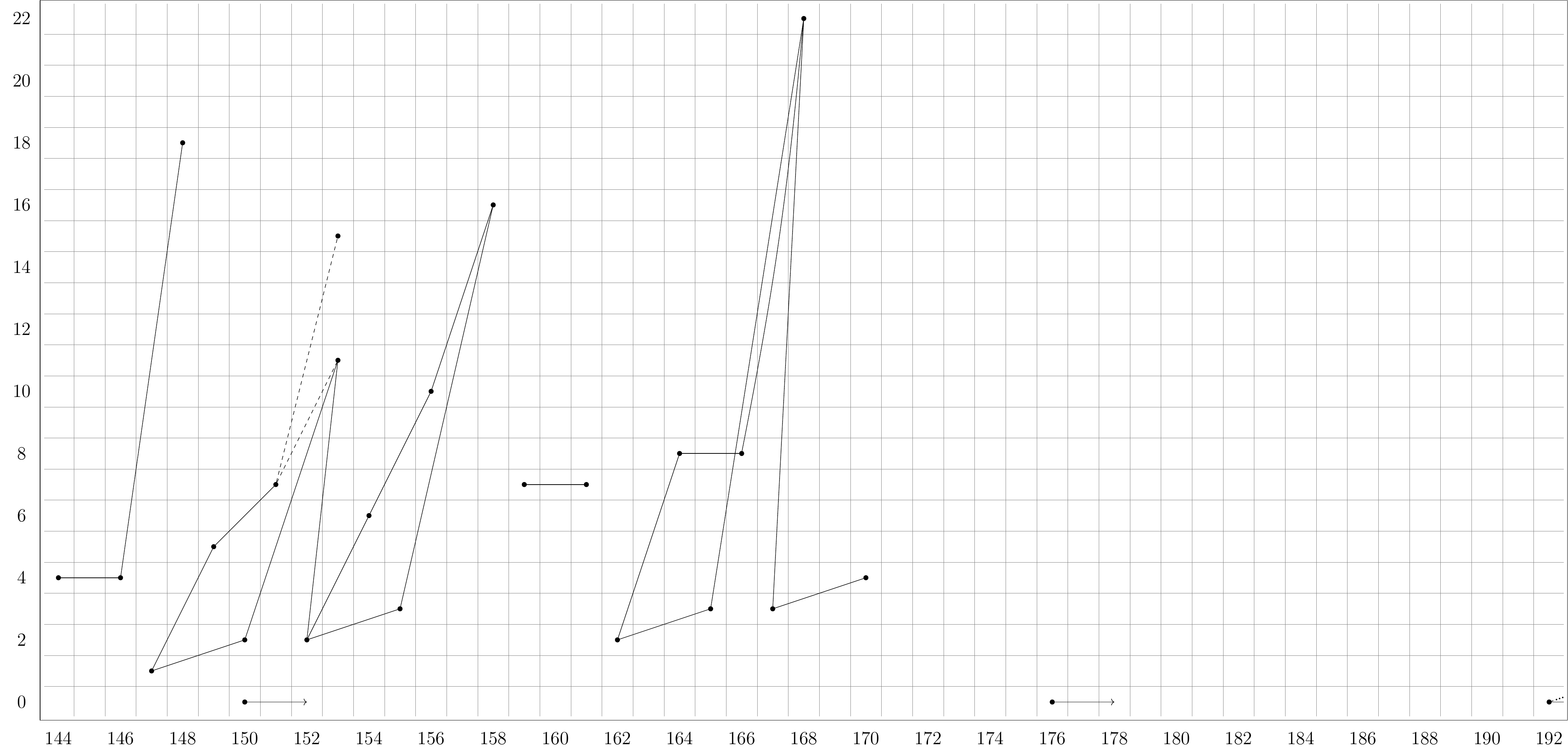}
\caption{Exotic extensions in the elliptic spectral sequence of $tmf\wedge Y$. This records $tmf_*Y \cong \widetilde{tmf}_{*+3}(\R P^2 \wedge \mathbb{C} P^2)$. The zigzags denote exotic $v_1$-extensions that occur only for certain choices of $v_1$ self-maps. }
\label{exoextY96144}
\end{figure}

\bibliographystyle{alpha}
\bibliography{bib}

\end{document}